\documentclass[11pt,a4paper,english,reqno,a4paper]{amsart}
\usepackage{amsmath,amssymb,amsthm, comment,graphicx}
\usepackage{tikz}
\usepackage{fancyhdr}
\usepackage{epsfig}
\usepackage{mathrsfs}
\usepackage[colorlinks,linkcolor=blue,anchorcolor=blue,citecolor=blue]{hyperref}
\usepackage{pifont}
\usepackage{amsfonts}
\usepackage{geometry}
\usepackage{graphicx}
\usepackage[compress,sort]{cite}
\usepackage[title]{appendix}

\geometry{left=2.5cm,right=2.5cm,top=3cm,bottom=3cm}
\allowdisplaybreaks
\newcommand{\RNum}[1]{\uppercase\expandafter{\romannumeral #1\relax}}
\numberwithin{equation}{section}
\numberwithin{figure}{section}

\newcounter{En}[section]
\renewcommand{\theEn}{\thesection.\arabic{En}}
\newcommand{\Eqn}[1][]{\refstepcounter{En}(\theEn)}

\newcounter{Cond}[section]
\renewcommand{\theCond}{\textbf{C\arabic{Cond}}}
\newcommand{\Condition}[1][]{\noindent\refstepcounter{Cond}\indent(\theCond)\ }

\newcounter{CondE}[section]
\renewcommand{\theCondE}{\textbf{C$_{\rm E}$\arabic{CondE}}}
\newcommand{\ConditionE}[1][]{\noindent\refstepcounter{CondE}\indent(\theCondE)\ }

\newcounter{CondP}[section]
\renewcommand{\theCondP}{\textbf{C$_{\rm P}$\arabic{CondP}}}
\newcommand{\ConditionP}[1][]{\noindent\refstepcounter{CondP}\indent(\theCondP)\ }

\newcounter{TH}[section]
\renewcommand{\theTH}{\thesection.\arabic{TH}}
\newcommand{\Theorem}[1][]{\noindent\textbf{Theorem}\ \refstepcounter{TH}\textbf{\theTH.}\ }

\newcounter{CA}
\renewcommand{\theCA}{\arabic{CA}}
\newcommand{\Case}[1][]{\noindent\textbf{Case}\ \refstepcounter{CA}\textbf{\theCA.}\ }

\newcounter{LM}[section]
\renewcommand{\theLM}{\thesection.\arabic{LM}}
\newcommand{\Lemma}[1][]{\noindent\textbf{Lemma}\ \refstepcounter{LM}\textbf{\theLM.}\ }

\newcounter{PS}[section]
\renewcommand{\thePS}{\thesection.\arabic{PS}}
\newcommand{\Proposition}[1][]{\noindent\textbf{Proposition}\ \refstepcounter{PS}\textbf{\thePS.}\ }
\newcounter{DF}[section]
\renewcommand{\theDF}{\thesection.\arabic{DF}}
\newcommand{\Definition}[1][]{\noindent\textbf{Definition}\ \refstepcounter{DF}\textbf{\theDF.}\ }

\newcounter{Rm}[section]
\renewcommand{\theRm}{\thesection.\arabic{Rm}}
\newcommand{\Remark}[1][]{\noindent\textbf{Remark}\ \refstepcounter{Rm}\textbf{\theRm.}\ }

\newcounter{Co}[section]
\renewcommand{\theCo}{\thesection.\arabic{Co}}
\newcommand{\Corollary}[1][]{\noindent\textbf{Corollary}\ \refstepcounter{Co}\textbf{\theCo.}\ }

\usepackage{enumitem}
\allowdisplaybreaks[4]

\newcounter{math}
\addtocounter{math}{1}
\newcommand{\dd}{{\rm d}}
\newtheoremstyle{mystyle}{3pt}{3pt}{\it}{0cm}{\bf}{.}{0.5em}{}
\theoremstyle{mystyle}
\newtheorem{theorem}{Theorem}[section]
\newtheorem{lemma}{Lemma}[section]
\newtheorem{proposition}{Proposition}[section]
\newtheorem{definition}{Definition}[section]
\newtheorem{remark}{Remark}[section]
\newtheorem{corollary}{Corollary}[section]

\newcommand{\blue}{\color{blue}}
\begin{document}
	\title[Inverse Problems for 2-D Steady Supersonic Euler Flows past Curved Wedges]{On Inverse Problems for Two-Dimensional Steady Supersonic Euler Flows past Curved Wedges}
\date{\today}
\author{Gui-Qiang G. Chen}
\address{ Gui-Qiang G. Chen: \, Mathematical Institute, University of Oxford,
Radcliffe Observatory Quarter, Woodstock Road, Oxford, OX2 6GG, UK}

\email{\tt  chengq@maths.ox.ac.hk}

\author{Yun Pu}
\address{ Yun Pu: \, Academy of Mathematics and Systems Science, Chinese Academy of Sciences, Beijing 100190, China;
School of Mathematical Sciences, Fudan University, Shanghai 200433, China}
\email{\tt ypu@amss.ac.cn}

\author{Yongqian Zhang}
\address{Yongqian Zhang: \, School of Mathematical Sciences, Fudan University, Shanghai 200433, China}
\email{\tt  yongqianz@fudan.edu.cn}

\keywords{Steady Euler equations, inverse problems, Lipschitz boundaries of wedges, pressure distribution, stability,
supersonic full Euler flow,
supersonic potential flow, wave-front tracking algorithms, Glimm-type functional.}
\subjclass[2020]{35B07, 35B20, 35D30, 35L65, 35L67, 76J20, 76L05, 76N10}

\begin {abstract}
We are concerned with the well-posedness of an inverse problem for determining the wedge boundary and associated two-dimensional steady
supersonic Euler flow past the wedge, provided that the pressure distribution on the boundary surface of the wedge and the incoming state
of the flow in the $x$--direction are given.
We first establish the existence of wedge boundaries and associated entropy solutions of the inverse problem, when the pressure on the wedge boundary is larger than
that of the incoming flow but less than a critical value, and the total variation of
the incoming flow and the pressure distribution is sufficiently small.
This is achieved by a careful construction of
suitable approximate solutions
and corresponding approximate boundaries
via developing a wave-front tracking algorithm
and the rigorous proof of their strong convergence
subsequentially to a global entropy solution and a wedge boundary, respectively.
Then we establish the $ L^{\infty}$--stability of the wedge boundaries, by introducing a modified Lyapunov functional
for two different solutions with two distinct boundaries, each of which may contain a strong shock-front.
The modified Lyapunov functional is carefully designed to control the distance between the two boundaries
and is proved to be Lipschitz continuous with respect to the differences of the incoming flow and the pressure on the wedge,
which leads to
the existence of the Lipschitz semigroup $\mathfrak{S}_x$
as a converging limit of the approximate solutions and boundaries.
Finally, when the pressure distribution on the wedge boundary is sufficiently close to that of the incoming flow,
using this semigroup, we compare two solutions of the inverse problem in the respective supersonic full Euler flow and
potential flow and prove that, at $x>0$, the distance between the two boundaries
and the difference of the two solutions
are of the same order of $x$ multiplied by
the cube of the perturbations of the initial boundary data in $L^\infty\cap BV$.
\end{abstract}
\date{}
\maketitle

\section{Introduction}\label{sect-intro}
We are concerned with the well-posedness of an inverse problem for two-dimensional steady supersonic Euler flows
past wedges; see Fig.\,\ref{fig:1.1}.
The inviscid compressible flows are governed
by the following two-dimensional steady Euler system:
\begin{equation}
\left\{\begin{aligned}
	&(\rho u)_x+(\rho v)_y=0, \\
	&(\rho u^2+p)_x+(\rho uv)_y=0,\\
	&(\rho uv)_x+(\rho v^2+p)_y=0,\\
	&\big(\rho u(E+\frac{p}{\rho})\big)_x
	+\big(\rho v(E+\frac{p}{\rho})\big)_y=0,
\end{aligned}
\right.\label{eqn:TwoDFEuler}
\end{equation}
where $\textbf{u}=(u, v)^\top$ is the velocity, $p$ the pressure, $\rho$ the density, and $E$ the total energy:
\begin{align*}
E=\frac{1}{2}|\textbf{u}|^2+e(\rho,p),
\end{align*}
with the internal energy $e$ as a given function of $(\rho,p)$.\par

For ideal gases, the relation between pressure $p$ and internal energy $e$ can be expressed as
\begin{equation}
p=\rho RT,\quad\,\, e=c_{\nu}T
\label{eqn:thermoRela}
\end{equation}
with $T$ standing for the temperature, $S$ the entropy, and $\gamma=1+\frac{R}{c_{\nu}}>1$ for some constant $R>0$.
In particular, using the thermodynamic variables $(\rho, S)$, we have
\begin{equation}
p=p(\rho,S)=\kappa\rho^{\gamma}e^{S/c_{\nu}},\quad\,\, e=\frac{\kappa   }{\gamma-1}\rho^{\gamma-1}e^{S/c_{\nu}}=\frac{RT}{\gamma-1},
\label{eqn:thermoRela2}
\end{equation}
where $\kappa$ and $c_{\nu}>0$ are constants.
For the isentropic polytropic gas, $\gamma>1$, while, for the isothermal flow, $\gamma=1$.
The sonic speed of the flow is $c:=\sqrt{p_\rho(\rho,S)}$. For polytropic gases, $c=\sqrt{\gamma p/\rho}$.\par

\smallskip
For isentropic and irrotational flow,
the governing equations form the potential flow system:
\begin{equation}
\left\{\begin{aligned}
	&(\rho u)_x+(\rho v)_y=0, \\
	&v_x-u_y=0,
\end{aligned}
\right.\label{eqn:TwoDPotential}
\end{equation}
obeying the Bernoulli law:
\begin{equation}
\dfrac{1}{2}(u^2+v^2)+\frac{\gamma \rho^{\gamma-1}}{\gamma-1}=B_{\infty}.
\label{eqn:Bernouli}
\end{equation}
where we have used the pressure-density relation: $p=\rho^\gamma$ without loss of generality by scaling.

\begin{figure}
\begin{center}
	\includegraphics{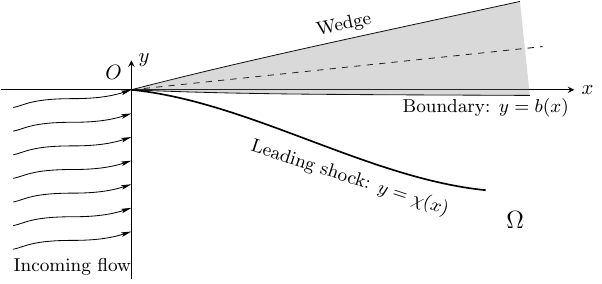}
\end{center}
\caption{The inverse problem for two-dimensional steady Euler equations}
\label{fig:1.1}
\end{figure}

The mathematical analysis for two-dimensional (2-D) steady supersonic flows past wedges
initiated in the 1940s ({\it cf.} Courant-Friedrichs \cite{Courant1948}).
As indicated in \cite{Courant1948}, when a supersonic flow passes a straight-sided wedge whose vertex angle
is less than the critical angle ({\it i.e.}, the sonic angle),
a supersonic shock issuing from the wedge vertex can be determined, and
both of the constant states connected by the shock are supersonic.
Moreover, the opening angle of the wedge completely determines whether there exists such a supersonic shock.
If the wedge is a perturbation of a straight-sided one, local solutions near the wedge vertex were
first studied in Gu \cite{Gu1962}, Li \cite{Li1980}, Schaeffer \cite{Schaeffer1976}, and the references cited therein.
The global existence of solutions for potential flows was obtained
in \cite{Chen1997,Chen1998,Chen1998J,Zhang1999,Zhang2003} in different types of setups.
For the
full Euler equations, Chen-Zhang-Zhu \cite{Chen2006} first established
the existence of global solutions for supersonic Euler flows past wedges and
the stability of the strong shock-front attached to the vertex via a modified Glimm scheme.
Later on, Chen-Li in \cite{Chen2008} used a wave-front tracking method to
establish the ${L}^1$--stability of entropy solutions with strong shock-fronts
and obtained the uniform estimates for the Lipschitz semigroup $S_x$ defined by the limit of the wave-front
tracking approximate solutions,
based on which the uniqueness of solutions within a broader class of viscosity solutions was proved.
Recently, Chen-Kuang-Xiang-Zhang studied the $L^1$--stability problem for hypersonic similarity laws
for steady compressible full Euler flows over 2-D Lipschitz wedges via a wavefront tracking approach
and obtained the optimal convergence rate in \cite{Chen2023,Chen2024} with {\it large data}.
For supersonic Euler flows over almost straight walls, the existence of the full Euler solutions and the stability
of large vortex sheets and entropy waves in BV were first established by Chen-Zhang-Zhu \cite{Chen2007},
while Chen-Kukreja \cite{Chen2022} established the well-posedness of that problem.
Later on, Zhang \cite{Zhang2007Z} made full use of the semigroup corresponding to the Cauchy problems
to prove that, at $x>0$, the solutions of the supersonic potential flow system approach that of the full Euler system
at an order of $x$ multiplied by  the cube of the perturbations of the initial boundary data;
(also {\it cf.} \cite{Bianchini2002,Saint-Raymond2000}).  See also \cite{Chen1998} and the references cited therein.
 \par

 Corresponding to the stability problems for shocks, vortex sheets, and entropy waves,
 two inverse problems have been investigated.
 One of them is to determine the shape of the wedge in a 2-D steady supersonic flow, provided that
 the location of the leading shock front is {\it a priori} given.
 This inverse problem
 was considered
 by Li-Wang in \cite{Wang2007,Li2009,Li2006,Li2007,Wang2014,Wang2019},
 in which a smooth leading shock is assumed and then the characteristic methods are applied for seeking
 a piecewise smooth solutions containing only one discontinuity, the leading shock; see also \cite{Li2022}.
 The other
 inverse problem is to determine the shape of the wedge or the cone with given pressure distribution
 on the wedge boundary in a 2-D steady supersonic flow or an axisymmetric conical steady supersonic flow;
 see \cite{Pu2023} for the inverse problem for the 2-D case and see\cite{Chen2023b} for the 3-D axisymmetric case.
 This inverse problem plays crucial roles in the aircraft design, especially in the inverse design;
 see \cite{Abbott1959,Abzalilov2005,Caramia2019,Goldsworthy1952,Golubkin1988,Golubkin1994,Maddalena2020,Mohammadi2001,
 Robinson1956,Vorobev1998}.
 Though some
 numerical methods and linearized algorithms to deal with this problem have been developed,
 it seems that there is no available rigorous mathematical analysis on the well-posedness of solutions to
 the inverse problem for a supersonic steady Euler flow past wedges.\par

In this paper, for completeness, we first establish the existence of entropy solutions and wedge boundaries of
the second inverse problem by employing the wave-front tracking method,
given the pressure distribution (whose total variation is suitably small)
on the wedge and the incoming flow (a BV perturbation of a uniform flow); see Fig.\,\ref{fig:1.1}.
Then we investigate the ${L}^\infty$--stability of the wedge boundary and the ${L}^1$--stability of entropy solutions
via a modified Lyapunov functional.
Based on these, we are able to deduce a uniformly Lipschitz semigroup $\mathfrak{S}_x$,
which is defined by the converging limit of both approximate boundaries and approximate solutions
generated by the wave-front tracking algorithm.
Finally, we use this semigroup to compare two solutions of the inverse problem in the respective
supersonic full Euler flow and potential flow, assuming the pressure distribution on the wedge boundary
is sufficiently close to the pressure of the incoming flow.
As a result, we prove that, at $x>0$, the distance between two boundaries and the difference of two solutions
are of the same order of $x$ multiplied by the cube of the perturbations of initial boundary data,
\textit{i.e.},
$O(1)x \|(\widetilde{\textbf{u}}_{\infty},
\tilde{p}_{b})\|_{ L^{\infty}\cap BV}^3$.
\par

\smallskip
To be precise, denote $U:=(\textbf{u}^\top,p,\rho)^\top=(u,v,p,\rho)^\top$, and consider vector functions:
\begin{align*}
W(U)&=
(\rho u, \rho u^2+p, \rho uv, \rho u(h+\frac{u^2+v^2}{2}))^\top,\\
H(U)&=
(\rho v, \rho uv, \rho v^2 + p, \rho v(h+\frac{u^2+v^2}{2}))^\top,
\end{align*}
with $h=\frac{\gamma p}{(\gamma-1)\rho}$.
Then system (\ref{eqn:TwoDFEuler}) is reformulated into the conservative form:
\begin{equation}
W(U)_x+H(U)_y=0.
\label{eqn:ConseForm}
\end{equation}

Our problem is to seek the wedge boundary $y=b(x)<0$ and solve (\ref{eqn:ConseForm}) in the corresponding domain:
\begin{align*}
\Omega=\{(x,y)\,:\,x\geq0,\,y<b(x)\}
\end{align*}
with the upper boundary
\begin{align*}	
\Gamma=\{(x,y)\,:\,x\geq0,\,y= b(x)\},
\end{align*}
such that
\begin{equation}
\textbf{u}\cdot\textbf{n}|_{\Gamma}=0,
\label{eqn:Boundary}
\end{equation}
with
\begin{align*}	
\textbf{n}=\textbf{n}(x,b(x))=\frac{(-b'(x),1)^\top}{\sqrt{1+(b'(x))^2}}
\end{align*}
as the corresponding outer normal vector to $\Gamma$ when $b(x)$ is differentiable.

To solve the inverse problem, we assume that the initial-boundary data satisfy the following condition:

\smallskip
\Condition\label{Cond:1} $\, U_\infty(y):=\overline{U}_{\infty}+\widetilde{U}_{\infty}(y)$ is the incoming flow at $x = 0$ such that
\begin{itemize}
	\item[(a)] $\overline{U}_{\infty}=(\overline{\textbf{u}}_{\infty}^\top,\overline{p}_{\infty},\overline{\rho}_{\infty})^\top$ is a constant vector with
	\begin{align}\label{flow-angle}
		\begin{aligned}
			&\overline{u}_{\infty}>0,\quad \overline{v}_{\infty}>0,\quad
			\overline{M}_{\infty}:=\dfrac{|\overline{\textbf{u}}_{\infty}|}{\overline{c}_{\infty}}=|\overline{\textbf{u}}_{\infty}|\sqrt{\frac{\overline{\rho}_{\infty}}{\gamma \overline{p}_{\infty}}}>1,\\
			&\dfrac{\overline{v}_{\infty}}{\overline{u}_{\infty}}=\tan\overline{\theta}_{\infty}
			:=\dfrac{\frac{\bar{p}_{b}}{\overline{p}_{\infty}}-1}{\gamma \overline{M}_{\infty}^2-\frac{\bar{p}_{b}}{\overline{p}_{\infty}}+1}
			\sqrt{\dfrac{(1+\frac{\gamma-1}{\gamma+1})(\overline{M}_{\infty}^2-1)-(\frac{\bar{p}_{b}}{\overline{p}_{\infty}}-1)}{\frac{\bar{p}_{b}}{\overline{p}_{\infty}}+\frac{\gamma-1}{\gamma+1}}}.
		\end{aligned}	
	\end{align}
\item[(b)] $\widetilde{U}_{\infty}(y)=(\widetilde{\textbf{u}}_\infty^\top,\widetilde{p}_\infty,\widetilde{\rho}_\infty)^\top(y)\in( L^1\cap\text{BV})(\mathbb{R};\mathbb{R}^4)$
is a BV perturbation at $x=0$ with sufficiently small
   $\|\widetilde{U}_{\infty}\|_{ L^{\infty}\cap BV}:=\|\widetilde{U}_{\infty}\|_{ L^{\infty}}+\text{T.V.}(\widetilde{U}_{\infty})$.
\end{itemize}

\smallskip
\Condition\label{Cond:2} $\,$ $p_b(x):=\bar{p}_b+\tilde{p}_{b}(x)$ is the pressure distributions
on the wedge boundary such that
\begin{itemize}
	\item[(a)] $\bar{p}_b$ a constant so that
	\begin{align*}
		\overline{p}_{\infty}<\bar{p}_b<p_{\text{\rm sonic}},
	\end{align*}
	where $p_{\text{\rm sonic}}$ is a critical pressure to be specified later in (\ref {eqn:sonic});
	\item[(b)] $\tilde{p}_{b}(x)\in( L^1\cap\text{BV})(\mathbb{R};\mathbb{R})$ is a BV perturbation with
     sufficiently small $\|\tilde{p}_{b}\|_{ L^{\infty}\cap BV}$.
\end{itemize}

\smallskip
With these setups, it suffices to consider the following inverse problem
for $(U, b)$:\\[5pt]
\textbf{Incoming Flow Condition:}
\begin{equation}
U|_{x=0}=U_\infty(y),
\label{eqn:Cauchyda}
\end{equation}
\textbf{Free Boundary Conditions:}
\begin{align}
&p(x,b(x))=p_b(x), \label{eqn:Boundarycon}\\
&(u,v)\cdot(-b'(x),1)=0. \label{1.9a}
\end{align}

\medskip
We seek entropy solutions of the inverse problem (\ref{eqn:ConseForm})--(\ref{1.9a}) in the following sense:

\begin{definition}[Entropy Solutions]\label{def:EntrSol}
A wedge boundary $\Gamma=\{(x,b(x))\,:\, b(x)\in\text{\rm Lip}([0,\infty))\}$ and
a vector function $U=(\textbf{u}^\top,p,\rho)^\top\in\text{BV}(\Omega)$
form an entropy solution of the inverse problem \eqref{eqn:ConseForm}--\eqref{1.9a}
if they satisfy the following{\rm :}
\begin{itemize}
\item[\rm(i)] $U$ is a global weak solution of \eqref{eqn:ConseForm} satisfying \eqref{eqn:Cauchyda}--\eqref{1.9a}
 in the trace sense{\rm ;}
\item[\rm (ii)] For any $a(S)\in C^1$ with $a'(s)\ge 0$, the entropy inequality{\rm :}
\begin{equation}	
(\rho u a(S))_x+(\rho v a(S))_y \geq 0
\label{eqn:Clausius}
\end{equation}
holds in the distributional sense on $\Omega\cup\Gamma$.
\end{itemize}
\end{definition}

\smallskip
The first main theorem of this paper is

\smallskip
\noindent
\textbf{Main Theorem I} (Well-posedness). There exists $\epsilon_\infty>0$ such that,
when $\|(\widetilde{U}_{\infty}, \tilde{p}_{b})\|_{ L^{\infty}\cap BV}
<\epsilon_\infty$, the following results hold{\rm :}
\begin{itemize}
\item [(\rm i)] \textit{Global existence}:
The wedge boundary $y=b(x)=\int_{0}^{x}b'_+(\xi)\,\dd \xi$, with $b'_+(x)\in {\rm BV}(\mathbb{R}_{+})$ as the right-derivative, can be determined
by the initial-boundary conditions (\ref{eqn:Boundary})--(\ref{1.9a}),
which is a small perturbation of
the straight wedge boundary $y=b_{0}x$, and a corresponding global entropy solution $U(x,y)$
that satisfies the requirements of Definition \ref{def:EntrSol} can be obtained, which has bounded total variation:
	\begin{equation}	
		\sup_{x>0}\text{T.V.}\{U(x,y)\, :\, -\infty<y<b(x)\}<\infty,
		\label{eqn:bdTV}
	\end{equation}	
and contains a strong shock $y=\chi(x)=\int_{0}^xs(\xi)\dd \xi$, where
$s(x)\in {\rm BV}(\mathbb{R}_{+})$ and $\chi(x)$ is a small perturbation of $y=s_{0}x$,
the straight strong shock corresponding to the straight wedge.

\item[\rm (ii)] \textit{Existence of semigroup}:
There exists $\varepsilon>0$ such that, for any $(b(0),U_\infty(\cdot+b(0)),p_b)\in\mathbb{D}^{\varepsilon}$ (see Definition \ref{def:Domain} below),
the solution of the inverse problem determines a uniform Lipschitz semigroup $\mathfrak{S}_{x}$:
	\begin{align*}
			(b(0),U_\infty(\cdot+b(0)),p_b)\mapsto (b(x),U(x,\cdot+b(x)),\iota_xp_b)
		\end{align*}		
satisfying that
\begin{align*}
&\mathfrak{S}_{0}(b(0),U_\infty(\cdot+b(0)),p_b)=(b(0),U_\infty(\cdot+b(0)),p_b),\\
&\mathfrak{S}_{x_1}\mathfrak{S}_{x_2}(b(0),U_\infty(\cdot+b(0)),p_b)=\mathfrak{S}_{x_1+x_2}(b(0),U_\infty(\cdot+b(0)),p_b),
\end{align*}
and there exist $L^\sharp$ and $L^\flat>0$ so that, for $(b_i(0),U_{\infty,i}(\cdot+b_i(0)),p_{b,i})\in\mathbb{D}^{\varepsilon}$ and any $x_i\ge0$, $i=1,\,2$,
\begin{align*}
		&\qquad\|\mathfrak{S}_{x_1}(b_1(0),U_{\infty,1}(\cdot+b_1(0)),p_{b,1})-\mathfrak{S}_{x_2}(b_2(0),U_{\infty,2}(\cdot+b_2(0)),p_{b,2})\|_{Y}\\
		&\qquad\,\leq L^\sharp\,\|(b_1(0),U_{\infty,1}(\cdot+b_1(0)),p_{b,1})-(b_2(0),U_{\infty,2}(\cdot+b_2(0)),p_{b,2})\|_{Y}+L^\flat\,|x_1-x_2|,
		\end{align*}
where $\iota_x$ and $\|\cdot\|_Y$ are given in \eqref{5.1a} and \eqref{5.2a} below, respectively.
\end{itemize}

\medskip
To compare the difference of the full Euler flows and the potential flows in solving the above inverse problem,
we also need to study the inverse problem for (\ref{eqn:TwoDPotential})--(\ref{eqn:Bernouli}).
Similarly, we have parallel results for the potential flows: There exist both a wedge boundary $y=b_{\rm P}(x)<0$
and an entropy solution $\textbf{u}_{\rm P}=(u_{\rm P},v_{\rm P})^\top$ in
\begin{align*}
	\Omega_{\rm P}=\{(x,y)\,:\, x\geq0,y<b_{\rm P}(x)\}
\end{align*}
with the upper boundary:
\begin{align*}	
	\Gamma_{\rm P}=\{(x,y)\,:\, x\geq0,y= b_{\rm P}(x)\},
\end{align*}
satisfying

\medskip
\noindent
{\bf Incoming Flow Condition:}
\begin{equation}
	\textbf{u}_{\rm P}|_{x=0}=\textbf{u}_{\infty},
\label{eqn:WSLCauchyData}
\end{equation}
{\bf Free Wedge Boundary Conditions:}
\begin{align}
&p_{\rm P}(x,b(x))=p_b(x), \label{eqn:WSLPFBoundaryData}\\[1mm]
&\textbf{u}_{\rm P}\cdot\textbf{n}_{\rm P}|_{\Gamma_{\rm P}}=0.
\label{eqn:WSLBoundaryConstruction}
\end{align}

\medskip
\noindent
The initial-boundary data are assumed to satisfy the following condition:\\[5pt]
\ConditionP \label{JS:1} $\,\textbf{u}_{\infty}(y):=\overline{\textbf{u}}_{\infty}+\widetilde{\textbf{u}}_{\infty}(y)$ is the incoming flow
at $x=0$ such that
\begin{itemize}
\item[(a)] $\overline{\textbf{u}}_{\infty}=(\overline{u}_\infty,0)^\top$ is a constant vector with
\begin{equation}
	\overline{u}_\infty>0,\quad \frac{2(\gamma-1)B_{\infty}}{\gamma+1}<|\overline{u}_{\infty}|^2<2B_{\infty}.
		\label{eqn:WSLIBVU}
	\end{equation}
	\item[(b)] $\widetilde{\textbf{u}}_{\infty}(y)=(\tilde{u}_\infty,\tilde{v}_\infty)^\top(y)\in( L^1\cap\text{BV})(\mathbb{R};\mathbb{R}^2)$ is a BV perturbation at $x=0$ with sufficiently small
   $\|\widetilde{\textbf{u}}_{\infty}\|_{ L^{\infty}\cap BV}$.
\end{itemize}

\smallskip
\ConditionP\label{JS:2}
$\, p_b(x):=\overline{p}_b+\tilde{p}_{b}(x)$  is the pressure distribution on the wedge boundary such that
\begin{itemize}
	\item[(a)] $\bar{p}_b=(\mathscr{R}(|\overline{\textbf{u}}_\infty|))^{\gamma}$ with
	\begin{equation}
	\mathscr{R}(r)=\frac{\gamma-1}{\gamma}\,\big(B_{\infty}-\frac{r^2}{2}\big)^{\frac{1}{\gamma-1}}.
		\label{eqn:WSLBonuli}
	\end{equation}	
	\item[(b)] $\tilde{p}_{b}(x)\in( L^1\cap\text{BV})(\mathbb{R};\mathbb{R})$ is a BV perturbation with
   sufficiently small $\|\tilde{p}_{b}\|_{ L^{\infty}\cap BV}$.
\end{itemize}

\medskip
Meanwhile, in solving inverse problems for the full Euler equations, we assume that\\[5pt]
\ConditionE\label{JS:3} At $x=0$, the incoming flow $U_\infty(y)=(\textbf{u}_\infty^\top,(\mathscr{R}(|\textbf{u}_\infty|))^{\gamma},\mathscr{R}(|\textbf{u}_\infty|))^\top$.\\
\ConditionE\label{JS:4} The pressure distribution  $p_b(x):=
\overline{p}_b+\tilde{p}_{b}(x)$ on the wedge boundary satisfies
\begin{itemize}
	\item[(a)] $\bar{p}_b=(\mathscr{R}(|\overline{\textbf{u}}_\infty|))^{\gamma}$, where $\mathscr{R}(r)$ is defined in (\ref{eqn:WSLBonuli}).
	\item[(b)] $\tilde{p}_{b}(x)\in( L^1\cap\text{BV})(\mathbb{R};\mathbb{R})$ is a BV perturbation with sufficiently small
  $\|\tilde{p}_{b}\|_{ L^{\infty}\cap BV}$.
\end{itemize}

\smallskip
\begin{remark}
In the above conditions, if $\bar{p}_b>(\mathscr{R}(|\overline{\textbf{u}}_\infty|))^{\gamma}$ but smaller than a critical value,
following the same argument for the proof of {\rm Main Theorem I},
we can also obtain the existence and stability of the solutions of the inverse problem
for the potential flow equations containing a strong shock.
\end{remark}
	
In particular, when only weak waves are involved, we have

\smallskip
\noindent
\textbf{Main Theorem II} (Comparison of the two models). Assume that (\ref{JS:1})--(\ref{JS:2}) and (\ref{JS:3})--(\ref{JS:4}) hold.
Let $U_{\rm E}=(\textbf{u}_{\rm E}^\top,p_{\rm E},\rho_{\rm E})^\top$ be the entropy solution
of (\ref{eqn:ConseForm})--(\ref{1.9a}) with the corresponding boundary function $y=b_{\rm E}(x)$,
and let $U_{\rm P}=(\textbf{u}_{\rm P}^\top,(\mathscr{R}(|\textbf{u}_{\rm P}|))^{\gamma},\mathscr{R}(|\textbf{u}_{\rm P}|))^\top$
be the entropy solution of (\ref{eqn:TwoDPotential})--(\ref{eqn:Bernouli})
and (\ref{eqn:WSLPFBoundaryData})--(\ref{eqn:WSLBoundaryConstruction}) with the corresponding boundary
function $y=b_{\rm P}(x)$.
Then there exist $\epsilon_{\rm c}>0$ and $C>0$ such that, when $\|(\widetilde{\textbf{u}}_{\infty}, \tilde{p}_{b})\|_{ L^{\infty}\cap BV}
<\epsilon_{\rm c}$, for any $x>0$,
\begin{align*}
\|(b_{\rm E}(x),U_{\rm E}(x,\cdot+b_{\rm E}(x)),p_b)-(b_{\rm P}(x),U_{\rm P}(x,\cdot+b_{\rm P}(x)),p_b)\|_{Y}
\leq Cx\, \|(\widetilde{\textbf{u}}_\infty, \tilde{p}_{b})\|_{ L^{\infty}\cap BV}^3.
\end{align*}

\smallskip
In comparison with the previous results on the initial-boundary value problem with fixed boundary
and the Cauchy problem, one of the new main difficulties in solving the inverse problem
is how the unknown wedge boundary is determined, especially in the solution space of low regularity, $L^\infty\cap BV$.
For the $ L^1$--stability of solutions of the initial-boundary value problem with fixed boundary,
the only thing that needs to be compared is the difference between two different solutions in the same fixed domain.
Thus, the Lyapunov functional that measures the $ L^1$--difference of two solutions
can be designed and then, after a careful analysis of the changes of this functional near the boundary,
a group of weights can be chosen to make the functional decrease, by combining the interaction estimates
only involving weak waves and the fact that the total strengths of weak waves in the other families are dominated
by the strength of that strong leading shock.
However, for the inverse problem, new phenomena occur, mainly owing to the unknown boundaries.
To establish the stability of solutions of the inverse problem, we have to determine a region
on which two solutions are compared and
the additional distance between the two boundaries is required to be controlled.
At a glance, it is ambiguous whether a corresponding Lyapunov functional could be constructed,
not to mention how it could be non-increasing. \par

To overcome this difficulty, our method is to consider the difference of the two solutions and the two boundaries simultaneously.
We first extend two solutions to a larger domain (exactly the union of the two domains on which the two inverse problems solve).
Then we construct a Lyapunov functional that controls the $ L^\infty$--norm of the difference of the two boundaries.
Using the boundary condition that the pressure near the boundary coincides with the given pressure distribution
on the wedge boundary (implying the additional quantitative relations),
we can obtain the precise estimates near the boundary ({\it cf.} (\ref{eqn:BoundaryLyapunov})),
which suggests a proper choice of the weights of the Lyapunov functional.
Then, employing the interaction estimates only involving weak waves and using the strength of the leading shock to control
the total strengths of weak waves in the other families, we obtain that the functional is non-increasing in the flow direction.\par

Furthermore, it seems to be difficult to directly compare the two solutions of the full Euler equations and the potential flow equations.
On the other hand, following \cite{Zhang2007Z}, we can make full use of the semigroup $\mathfrak{G}_x$
after taking the boundary influence into consideration.
We first compare the difference of the solutions of the Riemann-type problems including the Riemann-type inverse problems
in the two different models, and then establish an estimate of the difference between the two corresponding approximate solutions locally.
Combining the local estimates and properties of the semigroup (see Proposition \ref{prop:daoshudier}), we finally obtain our result as desired.

We organize the rest of this paper as follows:
In \S \ref{sect-pre}, we study some basic properties of the full Euler equations and related Riemann-type problems
including the Riemann-type inverse problems. We also obtain the corresponding nonlinear wave interaction estimates.
In \S \ref{sect-appro}, we first introduce a wave-front tracking algorithm to construct approximate boundaries and solutions.
Then an interaction potential $Q$ is given by considering both the influence of pressure changing on the wedge boundary
and the interaction estimates of wave-fronts together, based on which we design a Glimm-type functional
and prove that it is non-increasing.
Therefore, the global existence of entropy solutions of the inverse problem is obtained.
In \S \ref{sect-Lya}, a modified Lyapunov functional $\mathfrak{F}$ for the two solutions
is given, which is equivalent to the $ L^1$--distance between the two solutions and
the $ L^\infty$--distance between the two boundaries.
Then we prove that $\mathfrak{F}$ is non-increasing as $x$ increases,
which leads to
the $ L^1$--stability of the solutions that contain a strong shock
and the $ L^\infty$--stability of two boundaries.
Next, in \S \ref{sect-semi}, from the elementary estimates established in \S\ref{sect-appro}--\S \ref{sect-Lya},
we show that there exists a Lipschitz semigroup $\mathfrak{S}_x$ generating the entropy solutions and boundaries of
the inverse problem.
Finally, in \S\ref{sect-compare}, we use this semigroup to compare the two solutions of the inverse problem
in a supersonic Euler flow and a supersonic potential flow, and prove that, at $x>0$,
the distance between the two boundaries and the difference of the two solutions in $L^1$  are of the same order of $x$
multiplied by
the cube of the perturbation of initial boundary data, \textit{i.e.},
$O(1)x \,\|(\widetilde{\textbf{u}}_{\infty},\tilde{p}_{b})\|_{ L^{\infty}\cap BV}^3$.

\smallskip
\section{Steady Euler Equations and Riemann Problems}\label{sect-pre}
In this section, we first give some basic
properties of
system (\ref{eqn:ConseForm})
and then study nonlinear waves and related interaction estimates that are used in the subsequent development.

When $u>c$, system (\ref{eqn:ConseForm}) has four eigenvalues:
\begin{equation}\label{eqn:Eigenval}
\begin{aligned}
	\lambda_{j}&=\frac{uv+(-1)^{j}c\sqrt{u^2+v^2-c^2}}{u^2-c^2}\qquad \mbox{for $j=1,\,4$},\\
	\lambda_{k}&=\frac{v}{u}\qquad \mbox{for $k=2,\,3$},
\end{aligned}
\end{equation}
and corresponding eigenvectors:
\begin{equation}
\begin{aligned}
	\textbf{r}_{j}&=\mathfrak{k}_j(-\lambda_j,1,\rho(\lambda_ju-v),\frac{\rho(\lambda_ju-v)}{c^2})^\top\qquad \mbox{for $j=1,\,4$},\\
	\textbf{r}_{2}&=(u,v,0,0)^\top,\qquad \textbf{r}_{3}=(0,0,0,\rho)^\top,
\end{aligned}\label{eqn:Eigenvec}
\end{equation}
where
\begin{equation}
	\mathfrak{k}_j=\mathfrak{k}_j(U)=\frac{2}{\gamma+1}\,\frac{(u^2-c^2)\lambda_j-uv}{(1+\lambda_j^2)(\lambda_ju-v)}\
\qquad \mbox{for $j=1,\,4$}.\label{eqn:Coeffi}
\end{equation}
It is direct to see that $\textbf{r}_{j}\cdot\nabla\lambda_j=1$ for $j=1,\,4$, and $\textbf{r}_{k}\cdot\nabla\lambda_k=0$ for $k=2,\,3$.\par
The discontinuous wave curves of (\ref{eqn:ConseForm}) must satisfy the Rankine-Hugoniot conditions:
\begin{equation}
s[W(U)]=[H(U)],
\label{eqn:RHcondi}
\end{equation}
where $s$ is the discontinuity speed.\par

\smallskip
The contact Hugoniot curves $C_k(U_{0}), k=2,3$, through $U_{0}$ are

\smallskip
\indent\textit{Vortex sheets}:
\begin{equation}
C_2(U_{0}):\qquad s=\frac{v}{u}=\frac{v_{0}}{u_{0}},\quad p=p_{0},\quad \rho=\rho_{0};
\label{eqn:VorShe}
\end{equation}
\indent\textit{Entropy waves}:
\begin{equation}
C_3(U_{0}):\qquad s=\frac{v}{u}=\frac{v_{0}}{u_{0}},\quad \textbf{u}=\textbf{u}_{0},\quad  p=p_{0}.
\label{eqn:EntrWave}
\end{equation}
Corresponding to the repeated eigenvalues $\lambda_2=\lambda_3=\frac{v}{u}$,
there are two linearly independent eigenvectors.
Hence, in the physical $(x,y)$--plane, the vortex sheet and the entropy wave appear
as one characteristic discontinuity, while in the phase space, they need to be determined by two parameters independently. \par

The nonlinear $j$-waves, $j=1,\,4$, are shock waves or rarefaction waves.
In the state space, the rarefaction wave curves $R_j^-(U_0)$ through $U_0$ are given by
\begin{equation}
R_j^-(U_{0}):\quad \dd u=-\lambda_j\dd v,\quad \rho(\lambda_ju-v)\dd v=\dd p,\quad  \dd p=c^2\dd \rho
  \qquad \text{for $\rho<\rho_0$},\quad j=1,4.
\label{eqn:Rarefaction}
\end{equation}
The speeds of shock waves are
\begin{equation}
s_j:=
\frac{u_{0}v_{0}+(-1)^{j}\bar{c}_{0}\sqrt{u_{0}^2+v_{0}^2-\bar{c}_{0}^2}}{u_{0}^2-\bar{c}_{0}^2}
\qquad \mbox{for $j=1,\,4$},
\label{eqn:Shockspeed}
\end{equation}
where $\bar{c}_{0}^2=\frac{c_{0}^2}{b_{0}}\,\frac{\rho}{\rho_{0}}$
and $b_{0}=\frac{\gamma+1}{2}-\frac{\gamma-1}{2}\,\frac{\rho}{\rho_{0}}$.
Substituting $s_j$ into (\ref{eqn:RHcondi}) leads to
the $j$-Hugoniot curve $S_j(U_{0})$ across $U_{0}$:
\begin{equation}
S_j(U_{0}):\qquad [u]=-s_j[v],\quad [p]=\frac{c_{0}^2}{b_{0}}[\rho],\quad \rho_{0}(s_ju_{0}-v_{0})[v]=[p]\qquad\,\,\mbox{for $j=1,4$}. \label{eqn:Shockwave}
\end{equation}
For a piecewise smooth solution $U$ that contains a shock, each of the following conditions
is equivalent to (\ref{eqn:Clausius}) (see also \cite{Chen2006,Chen2007}):
\begin{itemize}
\item[(\rm i)] The density increases across the shock along the flow direction:
\begin{equation}	
	\rho_{\text{\rm back}} < \rho_{\text{\rm front}}.
\label{eqn:physiEntrCon}
\end{equation}
\item[(\rm ii)] The speed $s_j$ of the $j$th-shock satisfies
\begin{equation}	
	\qquad \lambda_j(\text{\rm back})<s_j<\lambda_j(\text{\rm front})\,\,\,\,\mbox{for $j=1,\,4$},
	\qquad\,\, s_1<\lambda_{2,3}(\text{\rm back}),\quad \lambda_{2,3}(\text{\rm back})<s_4.
\label{eqn:LaxEntrCon1}
\end{equation}
\end{itemize}

In the phase space, we denote the part of $S_j(U_{0})$ with $\rho>\rho_{0}$ by $S_j^+(U_{0})$, $j=1,\,4$.
In the $(x,y)$--plane, any state on $S_j^+(U_{0})$ leads to a shock connecting to the below state $U_{0}$
satisfying the entropy condition (\ref{eqn:Clausius}) so that $S_j^+(U_{0})$ are called shock curves.
Moreover, curves $S_j^+(U_{0})$ coincide with $R_j^-(U_{0})$ at state $U_{0}$ up to the second order for $j=1,\,4$.\par

As in \cite{Bressan2000,Smoller1983,DCM1}, we parameterize $R_j(U_{0})$ and $S_j(U_{0})$
by $\alpha_{j}\mapsto R_{j}(\alpha_{j})(U_{0})$ and $\alpha_{j}\mapsto S_{j}(\alpha_{j})(U_{0})$, respectively, such that
\begin{align*}
\left.\frac{\dd}{\dd\alpha}R_{j}(\alpha_{j})(U_{0})\right|_{\alpha_{j}=0}=	\left.\frac{\dd}{\dd\alpha}S_{j}(\alpha_{j})(U_{0})\right|_{\alpha_{j}=0}=\textbf{r}_{j}(U_{0}).
\end{align*}
Then we define the nonlinear wave curves $T_j(U_{0})=R_j^-(U_{0})\cup S_j^+(U_{0})$ and parameterize $T_j(U_{0})$ by $\alpha_{j}\mapsto\Phi_{j}(\alpha_{j};U_{0})$ so that
\begin{align*}
\Phi_{j}(\alpha_{j};U_{0})=
\left\{\begin{aligned}
	&R_{j}(\alpha_{j})(U_{0}),\quad \alpha_{j}\geq0, \\
	&S_{j}(\alpha_{j})(U_{0}),\quad \alpha_{j}<0,
\end{aligned}
\right.	\qquad \text{for } j=1,\,4.
\end{align*}
For the linearly degenerate case, $T_{k}(U_{0})=C_{k}(U_{0})$, and we choose parameter $\alpha_{k}\mapsto\Phi_{k}(\alpha_{k};U_{0})$ such that
\begin{equation}
\frac{\dd \Phi_{k}(\alpha_{k};U_{0})}{\dd \alpha}=\textbf{r}_{k}(U_{0})\qquad
\mbox{for $k=2,\,3$}.
\label{eqn:WeakWaveDiff}
\end{equation}
With these, we define
\begin{equation}
\Phi(\alpha_1,\alpha_2,\alpha_3,\alpha_4;U_{0})=\Phi_{4}(\alpha_{4};\Phi_{3}(\alpha_{3};\Phi_{2}(\alpha_{2};\Phi_{1}(\alpha_{1};U_{0})))),\label{eqn:WeakWavecurv}
\end{equation}
and denote the $i$-th component of $\Phi(\alpha_1,\alpha_2,\alpha_3,\alpha_4;U_{0})$
by $\Phi^{(i)}(\alpha_1,\alpha_2,\alpha_3,\alpha_4;U_{0})$, $i=1,\,2,\,3,\,4$.

\subsection{Riemann-type problems and Riemann solutions}
We now investigate several Riemann-type problems and their solutions,
which are essential in the construction of approximate solutions for
the inverse problem (\ref{eqn:ConseForm})--(\ref{1.9a}) when carrying out the front tracking algorithm.\\

\noindent\textit{Inverse Riemann problem.} Consider an inverse Riemann problem with a boundary $\Gamma$ to be determined:
\begin{equation}
\left\{\begin{aligned}
	&\text{(\ref{eqn:ConseForm})}, \\
	&U|_{\{x=\bar{x},\,y<\bar{y}\}}=U_{-},\\
	&p|_{\Gamma}=p_{+},\quad \textbf{u}\cdot\textbf{n}|_{\Gamma}=0,
\end{aligned}
\right.\label{eqn:InvRie}
\end{equation}
where $U_{-}=(\mathbf{u}_{-}^\top,p_{-},\rho_{-})^\top$ satisfies
$|\mathbf{u}_{-}|>\sqrt{\frac{\gamma p_-}{\rho_-}}$,
and $\Gamma$ starts at $(\bar{x},\bar{y})$.
\begin{figure}[ht]
	\begin{center}
		\includegraphics{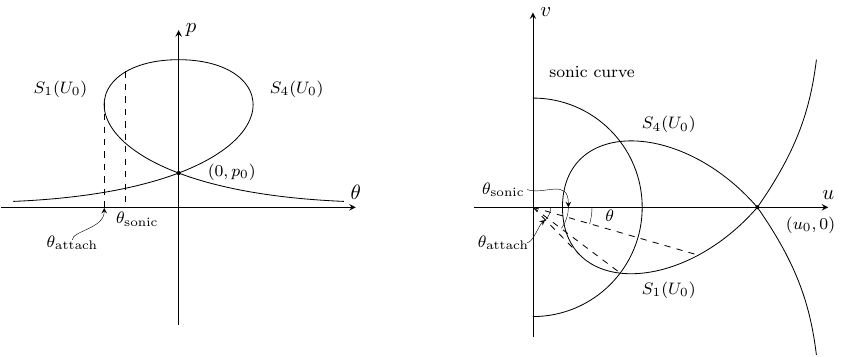}
	\end{center}
	\caption{Shock polar and critical angle}
	\label{fig:2.1}
\end{figure}
 Set $U_{0}=(|\textbf{u}_{-}|,0,p_{-},\rho_{-})^\top$ and denote the shock polar through $U_{0}$
 by $S(U_{0})=S_1(U_{0})\cup S_4(U_{0})$.
 For any state $U=(u,v,p,\rho)^\top$ on the shock polar $S(U_{0})$,
 we use $\theta=\arctan(\frac{v}{u})$ to denote the angle of
 the flow direction.
 Then, from \cite{Courant1948} (also see \cite{Chen2020}), we know that there is a critical angle
 $\theta_{\text{\rm sonic}}<0$
 such that $\theta_{\text{\rm sonic}}<\theta<0$.
 On the $(u,v)$--plane, ray $v=u\tan\theta$ with $u\geq0$ intersects with curve $S_1^+(U_{0})$
 at a supersonic state $U=(u,v,p,\rho)^\top$, $ u^2+v^2>c^2:=
\frac{\gamma p}{\rho}$; see Fig.\,\ref{fig:2.1}. Furthermore, from the relation (see \cite{Chen2020,Courant1948}):
\begin{align*}
	\tan\theta=-\dfrac{\frac{p}{p_{-}}-1}{\gamma M_{0}^2-\frac{p}{p_{-}}+1}
	\sqrt{\dfrac{(1+\frac{\gamma-1}{\gamma+1})(M_{0}^2-1)-(\frac{p}{p_{-}}-1)}{\frac{p}{p_{-}}+\frac{\gamma-1}{\gamma+1}}},\quad\, M_{0}^2=\dfrac{|\textbf{u}_{-}|^2\rho_{-}}{\gamma p_{-}},
	\end{align*}
for the critical angle $\theta_{\text{\rm sonic}}$, we can find a corresponding critical pressure:
\begin{equation}\label{eqn:sonic}
	p_{\text{\rm sonic}}=p_{\text{\rm sonic}}(|\textbf{u}_{-}|,p_{-},\rho_{-})
\end{equation}
such that, given $p_{-}<p<p_{\text{\rm sonic}}$,
there is a unique supersonic state $U=(u,v,p,\rho)^\top\in S_1^+(U_{0})$.\par
\begin{figure}[ht]
	\begin{center}
		\includegraphics{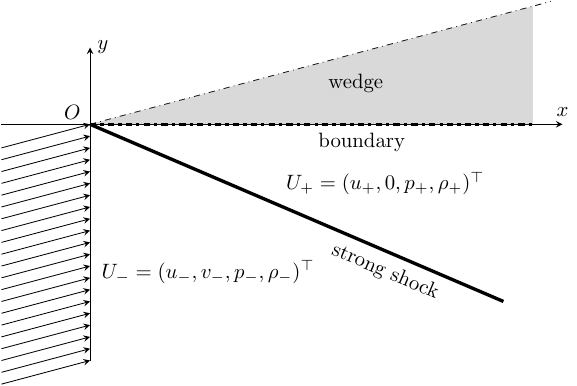}
	\end{center}
	\caption{Background solutions}
	\label{fig:Background}
\end{figure}
Moreover, as in \cite{Courant1948} (see also \cite{Chen2020}), recalling (\ref{flow-angle}),
it can be shown that, when
\begin{equation}
(\bar{x},\bar{y})=(0,0),\quad p_{+}=\bar{p}_{b},\quad U_{-}=\overline{U}_{\infty},	\label{eqn:Backstate}
\end{equation}
there is a unique entropy solution of the above inverse problem,
consisting of two constant states $U_{-}$ and
$U_{+}=(u_{+},0,p_{+},\rho_{+})^\top$ with $u_{\pm}> c_{\pm}>0$,
connected by a 1-shock wave of speed $s_{0}$, and the boundary is thus determined
as $\Gamma=\{(x,0)\,:\,x>0\}$; see Fig.\, \ref{fig:Background}. \\

\noindent\textit{Riemann problem $($only weak waves$)$}: Consider the Riemann problem:
\begin{equation}
\left\{\begin{aligned}
	&\text{(\ref{eqn:ConseForm})}, \\
	&U|_{x=\bar{x}}=	\left\{\begin{aligned}
		&U_{l}\quad\,\, \text{for $y<\bar{y}$}, \\
		&U_{r}\quad\,\, \text{for $y>\bar{y}$},
	\end{aligned}
	\right.
\end{aligned}
\right.\label{eqn:RiePro}
\end{equation}
where the constant states $U_{l}$ and $U_{r}$ are the below state
and the above state with respect to line $y=\bar{y}$,
respectively.
Then there exists $\epsilon_{\rm w}>0$ such that, for $U_{l}$, $U_{r}\in O_{\epsilon_{\rm w}}(U_{-})$, or $U_{l}$,
$U_{r}\in O_{\epsilon_{\rm w}}(U_{+})$, problem (\ref{eqn:RiePro}) has a unique admissible solution containing
at most four waves $(\alpha_1,\alpha_2,\alpha_3,\alpha_4)$ that connect $U_l$ and $U_r$ by $U_r=\Phi(\alpha_1,\alpha_2,\alpha_3,\alpha_4;U_l)$.\\

\noindent\textit{Riemann problem $($containing strong $1$-shock$)$}:  As in \cite{Chen2006,Chen2008}, we have
\begin{lemma}\label{Lem:strongshock}
If $U_{+}$ is connected with $U_{-}$ through a $1$-shock wave of speed $s_{0}$
with $\rho_{+}>\rho_{-}$, \textit{i.e.},
	\begin{equation}
		s_{0}\big(W(U_{-})-W(U_{+})\big)=H(U_{-})-H(U_{+}),
		\label{eqn:background}
	\end{equation}
then
\begin{align*}
&s_{0}<0,\quad u_{+}<u_{-}<(1+\dfrac{1}{\gamma})u_{+},\\
&\det(\nabla_UH(U_{+})-s_0\nabla_U W(U_{+}))>0.
\end{align*}
In addition, there exists $\epsilon_{\rm s}>0$ such that,
for any $U_{0}\in O_{\epsilon_{\rm s}}(U_{-})$, $S_{1}^{+}(U_{0})\cap O_{\epsilon_{\rm s}}(U_{+})$ can be parameterized
by the shock speed $s$ as{\rm :} $s\mapsto G(s;U_{0})$ near $(s_{0},U_{-})$
with $G=\left(G^{(1)},G^{(2)},G^{(3)},G^{(4)}\right)^\top$ $\in C^2$ and $G(s_{0};U_{-})=U_{+}$.
\end{lemma}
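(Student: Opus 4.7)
The statement splits into four assertions: (i) $s_0<0$; (ii) $u_+<u_-<(1+\tfrac{1}{\gamma})u_+$; (iii) the determinant is strictly positive; (iv) existence of a $C^2$ parameterization $G(\cdot\,;U_0)$ by shock speed. My plan is to handle (i)--(ii) by directly inserting the background configuration $v_+=0$, $v_->0$, $\overline{u}_\infty>0$, $\overline{M}_\infty>1$ with $\overline{\theta}_\infty$ small into the explicit shock-speed formula \eqref{eqn:Shockspeed} and the tangential-velocity continuity across the shock, then to obtain (iii) by expanding the determinant using the eigenvalue decomposition of $\nabla_U W^{-1}\nabla_U H$, and finally to obtain (iv) by the implicit function theorem applied to the Rankine--Hugoniot system with (iii) as the non-degeneracy hypothesis.

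For (i), I will set $U_0=U_-$ in \eqref{eqn:Shockspeed} so that $s_0=\bigl(u_-v_- - \bar c_-\sqrt{u_-^2+v_-^2-\bar c_-^2}\bigr)/(u_-^2-\bar c_-^2)$. Squaring and rearranging,
\[
u_-^2 v_-^2 - \bar c_-^2(u_-^2+v_-^2-\bar c_-^2) = -(u_-^2-\bar c_-^2)(\bar c_-^2-v_-^2),
\]
which is negative because $U_-$ is supersonic ($u_->\bar c_-$) and the flow angle $\overline{\theta}_\infty$ is small so $v_-<\bar c_-$; hence the numerator in $s_0$ is negative and $s_0<0$. For (ii), combining the Rankine--Hugoniot relations for $W_1,W_3$ yields $u_+-u_-=s_0(v_+-v_-)=-s_0 v_-$, i.e.\ tangential-velocity continuity across the oblique shock; with $s_0<0$, $v_->0$ this gives $u_+<u_-$ immediately. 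For the upper bound, I combine the Prandtl-type relation $u'_- u'_+ = c_*^2$ for oblique shocks in a polytropic gas with the universal compression bound $\rho_+/\rho_- < (\gamma+1)/(\gamma-1)$ coming from the Hugoniot curve; expressing normal components $u'_\pm=(-s_0 u_\pm+v_\pm)/\sqrt{1+s_0^2}$ and using $u_+-u_-=-s_0 v_-$ will reduce the desired bound to $\gamma|s_0|v_- < u_+$, which follows from the smallness of $s_0$ guaranteed by \eqref{flow-angle} for the background configuration.

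For (iii), I will use the identity $\det(\nabla_U H-s\nabla_U W)=\det(\nabla_U W)\prod_{k=1}^{4}(\lambda_k(U)-s)$ (valid because the generalized eigenvalues of the pencil $(\nabla_U H,\nabla_U W)$ are exactly $\lambda_1,\ldots,\lambda_4$ from \eqref{eqn:Eigenval}). Evaluated at $U_+$, we have $\lambda_{2}(U_+)=\lambda_3(U_+)=v_+/u_+=0$, the Lax inequalities \eqref{eqn:LaxEntrCon1} give $\lambda_1(U_+)<s_0$, and $\lambda_4(U_+)>0>s_0$ by (i), so $\prod_{k}(\lambda_k(U_+)-s_0)=(\lambda_1(U_+)-s_0)\,s_0^2\,(\lambda_4(U_+)-s_0)<0$. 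A cofactor expansion of $\nabla_U W(U_+)$ along the $v$-column (where only $\rho_+u_+$ in row~3 survives because $v_+=0$) reduces the computation to a $3\times 3$ determinant and yields $\det\nabla_U W(U_+)=-\rho_+^2 u_+^2(u_+^2-c_+^2)/(\gamma-1)<0$, using the supersonic condition on $U_+$. The product of the two negative quantities is positive.

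For (iv), I apply the implicit function theorem to
\[
F(U,s;U_0):=s\bigl(W(U_0)-W(U)\bigr)-\bigl(H(U_0)-H(U)\bigr)=0,
\]
viewed as four smooth equations in the four unknowns $U=(u,v,p,\rho)^\top$ with parameters $(s,U_0)$. At the reference point $(U_+,s_0,U_-)$ we have $F=0$ by \eqref{eqn:background}, and $\partial_U F(U_+,s_0,U_-)=\nabla_U H(U_+)-s_0\nabla_U W(U_+)$, which is invertible by (iii). This produces a unique $C^2$ map $U=G(s;U_0)$ on a neighborhood of $(s_0,U_-)$ with $G(s_0;U_-)=U_+$, and $G(s;U_0)$ automatically lies on $S_1^+(U_0)$ by continuity of $\rho$ and the entropy condition $\rho_+>\rho_-$. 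The main obstacle I anticipate is the verification of the sharp quantitative bound $u_-<(1+1/\gamma)u_+$ in (ii): the lower bound $u_+<u_-$ is a one-line consequence of RH, whereas the upper bound requires a careful combination of the polytropic Hugoniot relation and the smallness of the shock inclination encoded in \eqref{flow-angle}.
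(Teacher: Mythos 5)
The paper itself does not prove Lemma \ref{Lem:strongshock} --- it is quoted from \cite{Chen2006,Chen2008} --- so your argument can only be measured against the cited literature. Your architecture (explicit shock-speed formula for $s_0<0$, Rankine--Hugoniot manipulations for the velocity bounds, the pencil identity $\det(\nabla_UH-s\nabla_UW)=\det(\nabla_UW)\prod_k(\lambda_k-s)$, and the implicit function theorem for $G$) is sound, and parts (iii)--(iv) check out: a direct computation gives $\det(\nabla_UH(U_+)-s_0\nabla_UW(U_+))=\frac{s_0^2\rho_+^2u_+^2}{\gamma-1}\bigl(c_+^2-s_0^2(u_+^2-c_+^2)\bigr)$, which is positive exactly when $\lambda_1(U_+)<s_0<\lambda_4(U_+)$, the inequality you invoke. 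One caution: a literal reading of \eqref{eqn:physiEntrCon}--\eqref{eqn:LaxEntrCon1} (which forces ``back''$=U_-$, ``front''$=U_+$) would yield $s_0<\lambda_1(U_+)$, the opposite of what you use and of what makes the determinant positive; the correct statement is the standard Lax condition with $U_-$ as the below state, $\lambda_1(U_+)<s_0<\lambda_1(U_-)$, so state explicitly which convention you are using. Also, in part (i), $\bar c_-$ in \eqref{eqn:Shockspeed} is not the sonic speed of $U_-$ but the modified quantity $\bar c_0^2=\frac{c_0^2}{b_0}\frac{\rho}{\rho_0}$ depending on $\rho_+/\rho_-$, so the chain $v_-<\bar c_-<u_-$ you need does not follow from supersonicity of $U_-$ plus a small flow angle and must be verified for the background configuration.

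The genuine gap is the upper bound $u_-<(1+\frac{1}{\gamma})u_+$. First, your displayed relation $u_+-u_-=s_0(v_+-v_-)$ has a sign slip (the paper's \eqref{eqn:Shockwave} gives $[u]=-s[v]$, hence $u_+-u_-=s_0v_-<0$); as written, your formula would give $u_+>u_-$. More seriously, you reduce the upper bound to $\gamma|s_0|v_-<u_+$ and then assert this follows from ``the smallness of $s_0$ guaranteed by \eqref{flow-angle}''. That justification fails: $s_0$ is the slope of the \emph{strong leading shock}, determined by the prescribed pressure ratio $\bar p_b/\bar p_\infty$, and it is not small. The inequality is nevertheless true without any smallness. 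The mass flux through the shock is $m=\rho_+(v_+-s_0u_+)=\rho_+|s_0|u_+$, and the normal momentum balance $m[v]=-[p]$ gives $p_+-p_-=mv_-=\rho_+|s_0|u_+v_-$; combined with $m[u]=s_0[p]$ one gets
\begin{align*}
u_--u_+=|s_0|v_-=\frac{p_+-p_-}{\rho_+u_+}<\frac{p_+}{\rho_+u_+}=\frac{c_+^2}{\gamma u_+}<\frac{u_+}{\gamma},
\end{align*}
using only $p_->0$ and the supersonicity $u_+>c_+$ of the downstream state (this also reproves $u_+<u_-$ from $p_+>p_-$). Replacing your Prandtl-relation detour by this computation closes the gap.
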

	
Now we give an explicit formula of the solution of the Riemann problem (\ref{eqn:RiePro})
in a forward neighbourhood $B_+((\bar x,\bar y),r)$ of $(\bar x,\bar y)$:
\begin{align*}
B_+((\bar x,\bar y),r)
=\{(x,y)\,:\,\mbox{$(x-\bar x)^2+(y-\bar y)^2<r^2$ and $x>\bar x$ for some $r>0$}\}.
\end{align*}
When only weak waves are involved,
assume that $U_{r}=\Phi(\alpha_1,\alpha_2,\alpha_3,\alpha_4;U_{l})$.
Then any solution of this Riemann problem generally has the following form:
\begin{equation}
U_{Rw}(x,y)=\left\{\begin{aligned}
	&U_{m_0} =
 U_{l},\ &&\xi<\sigma_1^-,\\
	&\Phi_1\left(\xi-\lambda_1(U_{m_0});U_{m_0}\right),\ &&\sigma_1^-<\xi<\sigma_1^+,\\
	&U_{m_1},\ &&\sigma_1^+<\xi<\sigma_2^-,\\
	&U_{m_3},\ &&\sigma_3^+<\xi<\sigma_4^-,\\
	&\Phi_4\left(\xi-\lambda_4(U_{m_3});U_{m_3}\right),\ &&\sigma_4^-<\xi<\sigma_4^+,\\
	&U_{m_4}=
 U_{r},\ &&\sigma_4^+<\xi,\\
\end{aligned}
\right.\label{eqn:SimRiemannSolv}
\end{equation}
where
\begin{equation}
\begin{aligned}
	&\xi=\frac{y-\bar y}{x-\bar x},\qquad U_{m_1}=\Phi(\alpha_1,0,0,0;U_{l}),
      \qquad U_{m_3}=\Phi(0,\alpha_2,\alpha_{3},0;U_{m_1}),\\
	&\sigma_j^+=\sigma_j^-=\sigma_j,\qquad  \sigma_j\text{ is the speed of the weak shock $\alpha_j$  when $\alpha_j<0$ for $j=1, 4$},\\
	&\sigma_j^-=\lambda_j(U_{m_{j-1}}),\quad \sigma_j^+=\lambda_j(U_{m_{j}})\qquad \text{ when $\alpha_j>0$ for $j=1, 4$},\\
	&\sigma_2^-=\sigma_3^+=\lambda_2(U_{m_1})=\lambda_3(U_{m_3}).
\end{aligned}
\label{eqn:SimRiemannSolvNota}
\end{equation}
When a strong 1-shock wave is involved,
saying $U_{r}=\Phi(0,\alpha_2,\alpha_3,\alpha_4;G(s;U_{l}))$,
it suffices to let $U_{m_1}=G(s;U_{l})$ and $\sigma_1^+=\sigma_1^-=s$ in (\ref{eqn:SimRiemannSolv}).

\subsection{Wave interaction and reflection estimates}
In the following estimates, $O(1)$ is denoted to be bounded so that the bound of $|O(1)|$ depends only on $U_-$, $U_+$,
and system (\ref{eqn:TwoDFEuler}). Firstly, we have estimates of the interactions among weak waves; see \cite{Chen2008,Chen2006}.

\begin{lemma}\label{lem:Iww}
There is a positive constant $\varepsilon_{w}$ such that,
for three constant states $U_{l}$, $U_{m}$, $U_{r}\in O_{\varepsilon_{w}}(U_{-})$, or $U_{l}$, $U_{m}$, $U_{r}\in O_{\varepsilon_{w}}(U_{+})$,
with $U_{m}=\Phi(\alpha_{1},\alpha_{2},\alpha_{3},\alpha_{4};U_{l})$ and
$U_{r}=\Phi(\beta_{1},\beta_{2},\beta_{3},\beta_{4};U_{m})$,
we can find $(\gamma_{1},\gamma_{2},\gamma_{3},\gamma_{4})$ such that
$U_{r}=\Phi(\gamma_{1},\gamma_{2},\gamma_{3},\gamma_{4};U_{l})$ and
	\begin{align*}
		\gamma_i=\alpha_i+\beta_i+O(1)\triangle(\alpha,\beta)\qquad \mbox{for $i=1,\,2,\,3,\,4$},
	\end{align*}
where $\triangle(\alpha,\beta)=\sum_{1\leq j<i\leq4}|\alpha_i||\beta_j|+\sum_{k=1}^4\triangle_k(\alpha,\beta)$ with
	\begin{equation*}
		\triangle_k(\alpha,\beta)=\left\{\begin{aligned}
			&0\ &&\mbox{when $\alpha_k\geq0$ and $\beta_k\geq0$}, \\
			&|\alpha_k||\beta_k|\ &&\text{otherwise}.
		\end{aligned}
		\right.
	\end{equation*}
\end{lemma}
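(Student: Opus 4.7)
The plan is to read off the interaction estimates from a Taylor expansion of the composed wave curves. Define the resulting weak-wave amplitudes $\gamma=(\gamma_1,\gamma_2,\gamma_3,\gamma_4)$ implicitly by
\[
\Phi(\gamma;U_l)=\Phi(\beta;\Phi(\alpha;U_l)).
\]
Since $\Phi$ is $C^2$ in its parameters near the origin and its Jacobian at $(\alpha,\beta)=0$ has columns $\mathbf r_1(U_l),\ldots,\mathbf r_4(U_l)$, which are linearly independent at a strictly supersonic state, the implicit function theorem produces a $C^2$ map $(\alpha,\beta)\mapsto\gamma(\alpha,\beta;U_l)$ in a neighbourhood of the origin, shrinking $\varepsilon_{w}$ if necessary.

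The key observation is the pair of trivial identities $\gamma(\alpha,0;U_l)=\alpha$ and $\gamma(0,\beta;U_l)=\beta$, coming from composition with the zero wave. Hence $\gamma_i-\alpha_i-\beta_i$ vanishes whenever either $\alpha$ or $\beta$ equals zero, and a second-order Taylor expansion yields
\[
\gamma_i-\alpha_i-\beta_i=\sum_{j,k=1}^{4} C^{i}_{jk}(U_l)\,\alpha_j\beta_k+o(|\alpha|\,|\beta|)
\]
for smooth coefficients $C^{i}_{jk}$ determined by \eqref{eqn:ConseForm}. The task then reduces to identifying which coefficients must be kept. For cross-family indices with $j\geq i$ (the non-approaching configurations in the $y$-ordering implied by the nested definition \eqref{eqn:WeakWavecurv}), a direct commutator calculation along the $R$- and $S$-curves shows that the corresponding $C^{i}_{jk}$ vanish, leaving only the $j<i$ terms that appear in the first sum of $\Delta(\alpha,\beta)$. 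For the same-family diagonal $i=j=k$, when both $\alpha_i,\beta_i\geq0$ (consecutive rarefactions), the second-order agreement of $R_i^-(\cdot;U_l)$ and $S_i^+(\cdot;U_l)$ at $U_l$ makes their composition equivalent to a single rarefaction of strength $\alpha_i+\beta_i$ up to a cubic remainder, which is absorbed into the $o$-term; the remaining shock--shock, rarefaction--shock, and shock--rarefaction cases each produce a genuine $|\alpha_i|\,|\beta_i|$ contribution, tracked by $\Delta_k(\alpha,\beta)$.

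A separate argument is needed for the linearly degenerate pair $k=2,3$ because of the repeated eigenvalue $\lambda_2\equiv\lambda_3$. Here $\Phi_2$ and $\Phi_3$ are exact integral curves of $\mathbf r_2,\mathbf r_3$ by \eqref{eqn:WeakWaveDiff}, and any successive $2$- and $3$-wave can be fused into a single contact discontinuity smoothly parameterized by $(\alpha_2,\alpha_3)$. In particular, no approaching-product beyond those already recorded in $\Delta(\alpha,\beta)$ is generated by their interaction, and the cross-field coefficients $C^{i}_{23},C^{i}_{32}$ can be put in the $j<i$ form or cancelled. Collecting the surviving bilinear terms and absorbing the cubic remainder by further shrinking $\varepsilon_w$ yields the claimed bound, and the whole argument transfers verbatim to a neighbourhood of $U_+$ since $U_+$ is also strictly supersonic and the characteristic families $1,4$ remain genuinely nonlinear there.

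The main obstacle will be the careful bookkeeping of the nesting order in \eqref{eqn:WeakWavecurv}: one must verify that the chosen composition $\Phi_4\circ\Phi_3\circ\Phi_2\circ\Phi_1$ places $\alpha$-waves below $\beta$-waves in the physical $y$-ordering in just the right way, so that the surviving cross-family coefficients are indexed by $j<i$ as in $\Delta(\alpha,\beta)$ and not by its transpose. Apart from this sign-tracking, all ingredients are classical (\emph{cf.} \cite{Bressan2000,Smoller1983}) and the proof parallels the weak-wave interaction estimates already established for this system in \cite{Chen2006,Chen2008}, so I expect the argument to follow textbook lines once the ordering is pinned down.
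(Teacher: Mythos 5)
The paper itself does not prove Lemma \ref{lem:Iww}; it states the estimate with a pointer to \cite{Chen2006,Chen2008}, where it is obtained by the classical Glimm interaction argument. Your strategy is precisely that classical argument, so you are on the same route as the sources the paper relies on, and most of your ingredients (linear independence of the $\mathbf{r}_i$ at supersonic states, exactness of the nested composition \eqref{eqn:WeakWavecurv} for non-approaching configurations, commutation of $\mathbf{r}_2$ and $\mathbf{r}_3$, second-order tangency of $S_j^+$ and $R_j^-$, and the $j<i$ ordering convention) are correctly identified.

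There is, however, one step that does not hold as written: the reduction of the bound $\gamma_i-\alpha_i-\beta_i=O(1)\triangle(\alpha,\beta)$ to a second-order Taylor expansion with remainder $o(|\alpha|\,|\beta|)$ followed by ``identifying which coefficients must be kept.'' Discarding the non-approaching bilinear coefficients $C^i_{jk}(U_l)$ does not dispose of the remainder, and $o(|\alpha|\,|\beta|)$ is not controlled by $\triangle(\alpha,\beta)$, which vanishes identically on a large set (e.g. $\alpha=(\alpha_1,0,0,0)$, $\beta=(0,0,0,\beta_4)$, or two consecutive same-family rarefactions). A toy example shows the failure: $f(\alpha_1,\beta_4)=\alpha_1^2\beta_4$ vanishes on both axes and has vanishing mixed second derivative at the origin, yet is not $O(\triangle)$ when $(1,4)$ is a non-approaching pair. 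What is actually needed is the \emph{identity} $\gamma(\alpha,\beta)=\alpha+\beta$ on the whole set $\{\triangle=0\}$, not merely the vanishing of second-order coefficients there. The standard repair, which is what \cite{Chen2006,Smoller1983} carry out, is: first treat two single incoming fronts, where either the pair is non-approaching and the composition is exact (by the nesting in \eqref{eqn:WeakWavecurv}, the flow property of the integral curves in \eqref{eqn:WeakWaveDiff} and of the rarefaction parameterization, and the second-order agreement of $S_j^+$ with $R_j^-$), or the pair is approaching and the crude $C^{1,1}$ vanishing-on-the-axes bound $O(|\alpha_j|\,|\beta_k|)$ already coincides with $O(\triangle)$; then handle general $\alpha,\beta$ by commuting the $\beta$-fronts past the $\alpha$-fronts one at a time and summing the errors, the secondary waves being quadratically small. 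With that substitution your outline becomes a complete proof.
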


To balance the alteration of the pressure distribution on the boundary,
we allow $1$-waves emanating from the boundary when solving the following initial-boundary value problem:
\begin{align*}
\left\{\begin{aligned}
	&\text{(\ref{eqn:ConseForm})}, \\
	&U|_{\{x=\bar{x},\,y<\bar{y}\}}=U_1,\\
	&p|_{\Gamma}=p_2,\ \textbf{u}\cdot\textbf{n}|_{\Gamma}=0.
\end{aligned}
\right.
\end{align*}
When only weak waves involve, we have
\begin{lemma}\label{lem:WCe}
There is a positive constant $\varepsilon_{p}$ such that, for any $U_1=(u_{1},v_{1},p_{1},\rho_{1})^\top\in O_{\varepsilon_{p}}(U_{+})$
and $p_{1}$, $p_{2}\in O_{\varepsilon_{p}}(p_{+})$, the equation{\rm :}
	\begin{equation}
		\Phi^{(3)}(\delta_{1},0,0,0;U_{1})=p_{2}  \label{eqn:Boundarycorner}
	\end{equation}
	determines a unique twice differentiable function $\delta_{1}=\delta_{1}(p_{2},U_{1})$.
Furthermore, there exists a bounded quantity $K_{b}$, whose bound is independent of $\delta_{1}$ and $p_{1}-p_{2}$, such that
\begin{align*}
\delta_{1}=K_{b}(p_{2}-p_{1}).
\end{align*}
\end{lemma}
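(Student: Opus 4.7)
The plan is to apply the implicit function theorem to solve for $\delta_1$ as a $C^2$ function of $(p_2, U_1)$ near the reference configuration, and then to extract the linearity in $p_2 - p_1$ by integrating along the parameter segment.

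First, define
\begin{align*}
F(\delta_1, p_2, U_1) := \Phi^{(3)}(\delta_1, 0, 0, 0; U_1) - p_2.
\end{align*}
Because $\Phi(0,0,0,0;U_1) = U_1$ by construction, the triple $(0, p_1, U_1)$ satisfies $F = 0$, and in particular $F(0, p_+, U_+) = 0$. The key non-degeneracy condition is
\begin{align*}
\partial_{\delta_1}F\big|_{\delta_1 = 0} = \textbf{r}_1^{(3)}(U_1) = \mathfrak{k}_1(U_1)\,\rho_1\,\bigl(\lambda_1(U_1)\,u_1 - v_1\bigr),
\end{align*}
by \eqref{eqn:Eigenvec}. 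At the background state $U_+ = (u_+, 0, p_+, \rho_+)^\top$, supersonicity gives $\lambda_1(U_+) = -c_+/\sqrt{u_+^2 - c_+^2} < 0$ from \eqref{eqn:Eigenval}, while \eqref{eqn:Coeffi} reduces to $\mathfrak{k}_1(U_+) = \tfrac{2}{\gamma+1}(u_+^2 - c_+^2)/\bigl((1+\lambda_1^2(U_+))\,u_+\bigr) > 0$. Hence $\partial_{\delta_1}F\big|_{(0, p_+, U_+)} = \mathfrak{k}_1(U_+)\,\rho_+\,\lambda_1(U_+)\,u_+ \neq 0$. Since $\Phi_1$ is $C^2$ in $\delta_1$ across the origin (the rarefaction and shock branches of $T_1$ have second-order contact at $\delta_1 = 0$ for a genuinely nonlinear field), the implicit function theorem in the $C^2$ category produces $\varepsilon_p > 0$ and a unique $C^2$ solution $\delta_1 = \delta_1(p_2, U_1)$ of $F = 0$ for $U_1 \in O_{\varepsilon_p}(U_+)$ and $p_1, p_2 \in O_{\varepsilon_p}(p_+)$.

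For the linear representation, note that the observation $F(0, p_1, U_1) = 0$ together with uniqueness forces $\delta_1(p_1, U_1) \equiv 0$. By the fundamental theorem of calculus along the segment from $p_1$ to $p_2$,
\begin{align*}
\delta_1(p_2, U_1) = \Bigl(\int_0^1 \partial_{p_2}\delta_1\bigl(p_1 + t(p_2 - p_1), U_1\bigr)\,\dd t\Bigr)\,(p_2 - p_1).
\end{align*}
Implicit differentiation of $F = 0$ in $p_2$ yields $\partial_{p_2}\delta_1 = 1/\partial_{\delta_1}\Phi^{(3)}(\delta_1, 0, 0, 0; U_1)$, which is continuous and bounded away from zero on the chosen neighborhood by the computation in the preceding paragraph. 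Setting $K_b$ equal to the bracketed integral gives the claimed identity, and $|K_b|$ is dominated by a constant depending only on $(U_-, U_+)$ and system \eqref{eqn:TwoDFEuler}, independently of $\delta_1$ and $p_1 - p_2$. The only technical subtlety worth flagging is the $C^2$ regularity of $\Phi_1$ at the origin, which is the standard second-order agreement between $R_1^-$ and $S_1^+$ for a genuinely nonlinear characteristic field and so causes no obstacle.
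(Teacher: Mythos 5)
Your proof is correct and follows essentially the same route as the paper: verify that $\partial_{\delta_1}\Phi^{(3)}(0,0,0,0;U_+)=\mathfrak{k}_1(U_+)\rho_+\lambda_1(U_+)u_+\neq 0$, invoke the implicit function theorem, and extract $K_b$ as a bounded quantity. The only difference is that you spell out the fundamental-theorem-of-calculus step producing $\delta_1=K_b(p_2-p_1)$ and the $C^2$ matching of the two branches of $T_1$ at the origin, details the paper leaves implicit.
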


\begin{proof}
Since $\Phi^{(3)}(0,0,0,0;U_{1})=p_{1}$, differentiating (\ref{eqn:Boundarycorner}) with respect to $\delta_{1}$, we have
\begin{align*}
	\dfrac{\partial\Phi^{(3)}(\delta_{1},0,0,0;U_{1})}{\partial\delta_{1}}\Big|_{\{\delta_{1}=0,\,U_{1}=U_+\}}
 =\mathfrak{k}_{1}(U_+)
 \rho_{+}\big(\lambda_{1}(U_+)u_{+}-v_{+}\big)\neq0.
\end{align*}
Then the implicit function theorem gives the result.
Furthermore, the bound of $K_{b}$ depends only on $U_{+}$ and system (\ref{eqn:TwoDFEuler}).
\end{proof}

Also, with the presence of a strong $1$-shock wave (see \cite{Chen2006,Chen2008}), we have
\begin{lemma}
\label{lem:SBe} There exists $\varepsilon_s>0$ such that,
when $p_2\in O_{\varepsilon_s}(p_{+})$ and $U_1\in O_{\varepsilon_s}(U_{-})$, the equation{\rm :}
\begin{equation}
	G^{(3)}(s;U_1)=p_2  \label{eqn:StrongOrigin}
\end{equation}
determines a unique twice differentiable function $s=s(p_2,U_1)$ with
\begin{align*}
	&s=s_0+ K_{bs}(|p_2-p_+|+|p_1-p_-|),
\end{align*}
where $|K_{bs}|$ has a bound depending only on $U_{-}$, $U_{+}$, and system \eqref{eqn:TwoDFEuler}.
\end{lemma}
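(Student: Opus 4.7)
The plan is to invoke the implicit function theorem on $G^{(3)}(s; U_1) = p_2$ at the base point $(s, p_2, U_1) = (s_0, p_+, U_-)$, and then convert the resulting smooth solution branch into the quantitative form stated. Lemma \ref{Lem:strongshock} does most of the work for us: $G$ is $C^2$ near $(s_0, U_-)$ with $G(s_0; U_-) = U_+$, and the matrix $\nabla_U H(U_+) - s_0 \nabla_U W(U_+)$ is invertible. It remains to check a scalar non-degeneracy condition in $s$, apply the IFT, and Taylor-expand.

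For the non-degeneracy, I would differentiate the Rankine-Hugoniot identity $s\bigl(W(U_-) - W(G(s; U_-))\bigr) = H(U_-) - H(G(s; U_-))$ in $s$ and evaluate at $s = s_0$, which yields the linear system
\[
W(U_-) - W(U_+) = \bigl(\nabla_U H(U_+) - s_0 \nabla_U W(U_+)\bigr)\,\partial_s G(s_0; U_-).
\]
Lemma \ref{Lem:strongshock} gives invertibility of the matrix on the right, so $\partial_s G(s_0; U_-)$ is a well-defined non-zero vector in $\mathbb{R}^4$ (as $U_+ \neq U_-$). Its third component must be non-zero because, along the shock curve $S_1^+(U_-)$, the map from the speed parameter $s$ to the pressure $G^{(3)}(s;U_-)$ is strictly monotone, an immediate consequence of the entropy condition $\rho_+ > \rho_-$ combined with the Hugoniot relations in (\ref{eqn:Shockwave}). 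Were $\partial_s G^{(3)}(s_0; U_-) = 0$, the curve $s \mapsto G^{(3)}(s; U_-)$ would have a critical point at $s_0$, contradicting this monotonicity along the 1-shock polar.

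Granted the non-degeneracy, the implicit function theorem produces a unique $C^2$ function $s = s(p_2, U_1)$ on a neighbourhood of $(p_+, U_-)$ satisfying $G^{(3)}(s(p_2, U_1); U_1) = p_2$ and $s(p_+, U_-) = s_0$. Writing $s - s_0$ as the line integral of its first-order partials along the segment from $(p_+, U_-)$ to $(p_2, U_1)$, uniform $C^1$ bounds on $s$ over a compact neighbourhood provide a bounded coefficient $K_{bs}$, depending only on $U_{\pm}$ and system (\ref{eqn:TwoDFEuler}), such that $s = s_0 + K_{bs}(|p_2 - p_+| + |p_1 - p_-|)$; the contribution from the non-pressure components of $U_1 - U_-$ is absorbed into the same bounded factor $K_{bs}$ by using the Rankine-Hugoniot relations among the conserved quantities, in direct parallel with the argument used for Lemma \ref{lem:WCe}.

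The main obstacle is the non-degeneracy step: passing from the vector-valued identity to the non-vanishing of the specific pressure component requires genuinely invoking the entropy condition $\rho_+ > \rho_-$, rather than the invertibility statement of Lemma \ref{Lem:strongshock} alone. Once that hurdle is cleared, the construction of $s(p_2, U_1)$ and the Lipschitz estimate reduce to routine applications of the implicit function theorem and the mean value theorem.
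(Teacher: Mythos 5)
The paper does not prove this lemma itself: it is quoted from \cite{Chen2006,Chen2008}, so your implicit-function-theorem strategy is the natural and surely the intended route, and the setup via Lemma \ref{Lem:strongshock} is correct. The genuine gap is in the non-degeneracy step, which you yourself identify as the main obstacle and then do not actually clear. You assert that $s\mapsto G^{(3)}(s;U_-)$ is strictly monotone ``as an immediate consequence of the entropy condition $\rho_+>\rho_-$ combined with the Hugoniot relations,'' and then deduce that $\partial_sG^{(3)}(s_0;U_-)\neq0$. Neither step holds as stated: strict monotonicity does not preclude a critical point (consider $s\mapsto s^3$ at $s=0$), and the monotonicity of the pressure in the speed parameter along $S_1^+(U_-)$ is precisely the quantitative fact to be proved --- the inequality $\rho_+>\rho_-$ holds along the entire compressive branch and by itself says nothing about how $p$ varies with $s$. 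What is needed is the explicit value of the third component of $\partial_sG(s_0;U_-)$, obtained e.g.\ by Cramer's rule from the linear system you derived (note the sign: differentiating the Rankine--Hugoniot identity gives $W(U_-)-W(U_+)=-\bigl(\nabla_UH(U_+)-s_0\nabla_UW(U_+)\bigr)\partial_sG(s_0;U_-)$), or by differentiating the Hugoniot relations \eqref{eqn:Shockwave} along the polar; that computation is the substantive content of the lemma and cannot be replaced by an appeal to the entropy condition.

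The final quantitative step is also not justified. The mean-value argument you describe yields $|s-s_0|\le C\bigl(|p_2-p_+|+|U_1-U_-|\bigr)$, with the full state difference $|U_1-U_-|$; the claim that the non-pressure components of $U_1-U_-$ can be ``absorbed'' via the Rankine--Hugoniot relations does not work --- perturbing the velocity or density of $U_1$ while keeping $p_1=p_-$ and $p_2=p_+$ changes the shock polar and hence $s$, while the right-hand side of the stated expansion would vanish. The estimate the implicit function theorem actually delivers (and the only one used later, e.g.\ in Proposition \ref{Prop:Important2}, where $U_1$ is held fixed and only $p_2$ varies) controls $s-s_0$ by $|p_2-p_+|+|U_1-U_-|$; to obtain the form printed in the lemma one must either read $|p_1-p_-|$ as $|U_1-U_-|$ or supply an argument that is genuinely absent from your proposal.
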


\smallskip
Next, we give the estimates of reflections of weak waves on the boundary.
\begin{lemma}
\label{lem:WBe}
There exists a positive constant $\varepsilon_{r}$ such that, for any $U_{1}$, $U_{2}\in O_{\varepsilon_{r}}(U_{+})$
with $U_{2}=\Phi(0,\beta_{2},\beta_{3},\beta_{4};U_{1})$, the equation{\rm :}
\begin{equation}
	\Phi^{(3)}(\delta_{1},0,0,0;U_1)=\Phi^{(3)}(0,\beta_{2},\beta_{3},\beta_{4};U_{1})
 \label{eqn:BoundaryWeak}
\end{equation}
determines a unique twice differentiable function $\delta_{1}=\delta_{1}(\beta_{2},\beta_{3},\beta_{4},U_{1})$
satisfying
\begin{align*}
	\delta_{1}=K_{b2}\beta_{2}+K_{b3}\beta_{3}+K_{b4}\beta_{4},
\end{align*}
where $K_{bi}$, $i=2,\,3,\,4$, are $C^2$--functions of $(\beta_{2},\beta_{3},\beta_{4},U_{1})$ satisfying
\begin{align*}
	\left\{\begin{aligned}
		&K_{b2}|_{\{\beta_{2}=\beta_{3}=\beta_{4}=0,\,U_{1}=U_{+}\}}=K_{b3}|_{\{\beta_{2}=\beta_{3}=\beta_{4}=0,\,U_{1}=U_{+}\}}=0, \\
		&K_{b4}|_{\{\beta_{2}=\beta_{3}=\beta_{4}=0,\,U_{1}=U_{+}\}}=-1.
	\end{aligned}
	\right.
\end{align*}
\end{lemma}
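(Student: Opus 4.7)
The plan is to adopt the same implicit-function-theorem-plus-Taylor-expansion template used in the proof of Lemma \ref{lem:WCe}, with the key new content being the explicit evaluation of the three coefficients $K_{b2},K_{b3},K_{b4}$ at the base point.

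First, I would define
$$
F(\delta_1,\beta_2,\beta_3,\beta_4,U_1) := \Phi^{(3)}(\delta_1,0,0,0;U_1)-\Phi^{(3)}(0,\beta_2,\beta_3,\beta_4;U_1),
$$
so that (\ref{eqn:BoundaryWeak}) is equivalent to $F=0$, and note that $F=0$ at the base point $(\delta_1,\beta_2,\beta_3,\beta_4,U_1)=(0,0,0,0,U_+)$. The required transversality
$$
\left.\frac{\partial F}{\partial \delta_1}\right|_{\text{base}} = \mathfrak{k}_1(U_+)\rho_+\bigl(\lambda_1(U_+)u_+-v_+\bigr)\neq 0
$$
is exactly the nonvanishing derivative already invoked in the proof of Lemma \ref{lem:WCe}, since $\partial_{\delta_1}\Phi(0,0,0,0;U_+) = \textbf{r}_1(U_+)$ by the parameterization convention. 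The implicit function theorem then produces $\varepsilon_r>0$ and a unique $C^2$ function $\delta_1=\delta_1(\beta_2,\beta_3,\beta_4,U_1)$ on the claimed neighbourhood.

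Next, I would extract the linear structure. Since $\Phi^{(3)}(0,0,0,0;U_1)=p_1$, the choice $\delta_1=0$ solves (\ref{eqn:BoundaryWeak}) when $\beta_2=\beta_3=\beta_4=0$, so uniqueness gives $\delta_1(0,0,0,U_1)=0$. The fundamental theorem of calculus then yields
$$
\delta_1(\beta_2,\beta_3,\beta_4,U_1) = \sum_{i=2}^{4} K_{bi}(\beta_2,\beta_3,\beta_4,U_1)\,\beta_i, \quad K_{bi}:=\int_0^1 \frac{\partial \delta_1}{\partial \beta_i}(t\beta_2,t\beta_3,t\beta_4,U_1)\,\dd t,
$$
with each $K_{bi}\in C^2$ inherited from $\delta_1$.

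Finally, I would identify the values of $K_{bi}$ at the base point via implicit differentiation of (\ref{eqn:BoundaryWeak}) in $\beta_i$, which produces
$$
\left.K_{bi}\right|_{\text{base}} = \frac{[\textbf{r}_i(U_+)]_3}{[\textbf{r}_1(U_+)]_3} \qquad \text{for } i=2,3,4,
$$
where $[\cdot]_3$ denotes the third (pressure) slot. By (\ref{eqn:Eigenvec}), $[\textbf{r}_2(U_+)]_3=[\textbf{r}_3(U_+)]_3=0$, giving $K_{b2}|_{\text{base}}=K_{b3}|_{\text{base}}=0$ at once. For $i=4$, using $v_+=0$ from the background construction (\ref{eqn:Backstate}), (\ref{eqn:Eigenval}) yields $\lambda_4(U_+)=-\lambda_1(U_+)$, while (\ref{eqn:Coeffi}) yields $\mathfrak{k}_1(U_+)=\mathfrak{k}_4(U_+)$ because only $\lambda_j^2$ survives in $\mathfrak{k}_j$ once $v=0$; hence
$$
\left.K_{b4}\right|_{\text{base}}=\frac{\mathfrak{k}_4(U_+)\lambda_4(U_+)u_+}{\mathfrak{k}_1(U_+)\lambda_1(U_+)u_+}=\frac{\lambda_4(U_+)}{\lambda_1(U_+)}=-1.
$$
The only substantive step is this last algebraic collapse for $K_{b4}$, which crucially exploits $v_+=0$; everything else is a routine application of the implicit function theorem and the fundamental theorem of calculus once the eigenvector structure from (\ref{eqn:Eigenvec})--(\ref{eqn:Coeffi}) is recalled.
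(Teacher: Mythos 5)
Your proposal is correct and follows essentially the same route as the paper's proof: the implicit function theorem with the same transversality condition as in Lemma~\ref{lem:WCe}, followed by implicit differentiation of \eqref{eqn:BoundaryWeak} in $\beta_i$ and evaluation of the eigenvector components from \eqref{eqn:Eigenvec}--\eqref{eqn:Coeffi} at $U_+=(u_+,0,p_+,\rho_+)^\top$, where $v_+=0$ gives $\lambda_4(U_+)=-\lambda_1(U_+)$ and $\mathfrak{k}_1(U_+)=\mathfrak{k}_4(U_+)$. You merely spell out the algebra for $K_{b4}=-1$ (and the vanishing of the third components of $\textbf{r}_2,\textbf{r}_3$) that the paper leaves implicit.
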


\begin{proof}
The existence of  $\delta_{1}$ can be proved analogously as in Lemma \ref{lem:WCe}.
Differentiating (\ref{eqn:BoundaryWeak}) with respect to $\beta_{i}$ gives
\begin{align*}
	\frac{\partial \Phi^{(3)}(\delta_{1},0,0,0;U_1)}{\partial\delta_{1}}K_{bi}=\frac{\Phi^{(3)}(0,\beta_{2},\beta_{3},\beta_{4};U_{1})}{\partial\beta_{i}}.
\end{align*}
Using $U_{+}=(u_{+},0,p_{+},\rho_{+})^\top$, (\ref{eqn:Eigenvec})--(\ref{eqn:Coeffi}), and (\ref{eqn:WeakWaveDiff})--(\ref{eqn:WeakWavecurv}), we obtain our results.
\end{proof}

Furthermore, the following estimates of interactions with the presence of a strong shock are needed;
see \cite{Chen2008, Chen2006} for their proofs.

\begin{lemma}
	 \label{lem:Isr}
There exists a positive constant $\varepsilon_{1}$ such that,
for any $U_{l}\in O_{\varepsilon_{1}}(U_{-})$ and $U_{m}$, $U_{r}\in O_{\varepsilon_{1}}(U_{+})$
with $U_{m}=\Phi(0,\alpha_2,\alpha_3,\alpha_{4};G(s;U_{l}))$ and $U_{r}=\Phi(\beta_{1},0,0,0;U_{m})$, the equation{\rm :}
	\begin{equation}
		\Phi(0,\delta_{2},\delta_{3},\delta_{4};G(s';U_{l}))=\Phi(\beta_{1},0,0,0;\Phi(0,\alpha_2,\alpha_3,\alpha_{4};G(s;U_{l})))
  \label{eqn:WeakRefelctStrong}
\end{equation}
determines twice differentiable functions $(s',\delta_{2},\delta_{3},\delta_{4})$ with
	\begin{align*}
		s'=s+K_{s1}\beta_{1},\qquad \delta_{2}=\alpha_2+K_{s2}\beta_{1},
    \qquad \delta_{3}=\alpha_3+K_{s3}\beta_{1},
    \qquad \delta_{4}=\alpha_{4}+K_{s4}\beta_{1}.
	\end{align*}
	Moreover,
	\begin{align*}
		|K_{s4}|\big|_{\{\alpha_{4}=\beta_{1}=0,\,s=s_{0},\,U_{l}=U_{-}\}}
  =\left|\dfrac{\lambda_{1}(U_{+})-s_{0}}{\lambda_{4}(U_{+})-s_{0}}\right|\,
  \left|\dfrac{s_{0}u_{-}N-u_{+}\lambda_{4}(U_{+})M}{s_{0}u_{-}N+u_{+}\lambda_{4}(U_{+})M}\right|<1,
	\end{align*}
	and $|K_{si}|$ are bounded for $i=1,\,2,\,3$, where
	\begin{align*}
		M=\dfrac{c_{+}^2}{\gamma-1}(2u_{+}-u_{-})+u_{+}^2(u_{+}-u_{-}),\qquad
		N=-\dfrac{c_{+}^2}{\gamma-1}<0.
	\end{align*}
In particular, the following holds{\rm :}
	\begin{align*}
		|K_{s4}|\big|_{\{\alpha_{4}=\beta_{1}=0,\,s=s_{0},\,U_{l}=U_{-}\}}\left|\dfrac{\lambda_{4}(U_{+})-s_{0}}{\lambda_{1}(U_{+})-s_{0}}\right|
=\left|\dfrac{s_{0}u_{-}N-u_{+}\lambda_{4}(U_{+})M}{s_{0}u_{-}N+u_{+}\lambda_{4}(U_{+})M}\right|<1.
	\end{align*}
\end{lemma}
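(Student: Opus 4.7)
The plan is to apply the implicit function theorem to the system \eqref{eqn:WeakRefelctStrong} regarded as four scalar equations in the four unknowns $(s',\delta_2,\delta_3,\delta_4)$, with parameters $(\beta_1,\alpha_2,\alpha_3,\alpha_4,s,U_l)$. At the background state $(\beta_1,\alpha_2,\alpha_3,\alpha_4,s,U_l)=(0,0,0,0,s_0,U_-)$, both sides of \eqref{eqn:WeakRefelctStrong} equal $U_+$, and the obvious solution is $(s',\delta_2,\delta_3,\delta_4)=(s_0,0,0,0)$. The Jacobian of the left-hand side with respect to $(s',\delta_2,\delta_3,\delta_4)$ at this reference point has columns $\partial_{s'}G(s_0;U_-)$, $\mathbf{r}_2(U_+)$, $\mathbf{r}_3(U_+)$, $\mathbf{r}_4(U_+)$. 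Its nonsingularity follows from Lemma \ref{Lem:strongshock}: differentiating the Rankine–Hugoniot relation $s(W(U_-)-W(G(s;U_-)))=H(U_-)-H(G(s;U_-))$ at $s=s_0$ yields an expression for $\partial_{s'}G(s_0;U_-)$, and the determinant condition $\det(\nabla_UH(U_+)-s_0\nabla_UW(U_+))>0$ guarantees this tangent is transverse to $\mathrm{span}\{\mathbf{r}_2(U_+),\mathbf{r}_3(U_+),\mathbf{r}_4(U_+)\}$. This gives $C^2$ dependence and hence the existence of the functions $(s',\delta_2,\delta_3,\delta_4)$ stated in the lemma.

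Next, I would extract the linear expansion in $\beta_1$ by differentiating \eqref{eqn:WeakRefelctStrong} at the background state. When $\beta_1=0$, uniqueness forces $s'=s$ and $\delta_i=\alpha_i$ for $i=2,3,4$, so it suffices to track the $\beta_1$-derivatives. Setting $s=s_0$, $\alpha_i=0$, $U_l=U_-$ and writing $K_{si}=\partial_{\beta_1}(\cdot)\big|_{\mathrm{bg}}$, differentiation produces the $4\times4$ linear system
\begin{equation*}
K_{s1}\,\partial_{s'}G(s_0;U_-)+K_{s2}\,\mathbf{r}_2(U_+)+K_{s3}\,\mathbf{r}_3(U_+)+K_{s4}\,\mathbf{r}_4(U_+)=\mathbf{r}_1(U_+).
\end{equation*}
Applying Cramer's rule, together with the explicit formulas \eqref{eqn:Eigenval}--\eqref{eqn:Coeffi} at $U_+=(u_+,0,p_+,\rho_+)^\top$ (where $\lambda_1(U_+)=-\lambda_4(U_+)$ and $\lambda_2(U_+)=\lambda_3(U_+)=0$), reduces the computation of $K_{s4}$ to a ratio of two $4\times4$ determinants that collapse after cancellation to the claimed closed form involving $M$ and $N$.

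The main obstacle is the explicit evaluation of $\partial_{s'}G(s_0;U_-)$ and the algebraic verification of the strict bound $|K_{s4}|<1$. The tangent vector $\partial_{s'}G(s_0;U_-)$ must be expressed in $(u,v,p,\rho)$-components via the Rankine–Hugoniot relations and then substituted in the Cramer ratio; considerable care is needed because several minors in this ratio vanish at $v_+=0$, so the simplification relies on using the Bernoulli-type relations $c_+^2=\gamma p_+/\rho_+$ and the jump conditions across the strong shock relating $u_-,u_+,p_\pm,\rho_\pm$. Once the closed form
\begin{equation*}
|K_{s4}|\big|_{\mathrm{bg}}=\Bigl|\tfrac{\lambda_1(U_+)-s_0}{\lambda_4(U_+)-s_0}\Bigr|\,\Bigl|\tfrac{s_0u_-N-u_+\lambda_4(U_+)M}{s_0u_-N+u_+\lambda_4(U_+)M}\Bigr|
\end{equation*}
is obtained, the strict inequality $|K_{s4}|<1$ follows by showing that the two factors are both reflection coefficients of modulus strictly less than one: the first factor is less than one because $s_0<0<\lambda_4(U_+)$ and $\lambda_1(U_+)<s_0$ (from the Lax entropy condition applied to the strong $1$-shock), while the second factor, a Möbius image $|(A-B)/(A+B)|$ with $A=s_0u_-N$ and $B=u_+\lambda_4(U_+)M$, is less than one provided $AB>0$, which in turn is a consequence of $N<0$, $s_0<0$, $u_\pm>0$, $\lambda_4(U_+)>0$, and the sign of $M$, the latter being controlled through $\gamma>1$ and the inequality $u_+<u_-<(1+\tfrac{1}{\gamma})u_+$ from Lemma \ref{Lem:strongshock}. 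The boundedness of $|K_{s1}|,|K_{s2}|,|K_{s3}|$ is a byproduct of the same Cramer computation.
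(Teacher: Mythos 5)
The paper itself offers no proof of Lemma \ref{lem:Isr}: it defers entirely to \cite{Chen2006,Chen2008}, and your architecture --- implicit function theorem for \eqref{eqn:WeakRefelctStrong} in $(s',\delta_2,\delta_3,\delta_4)$, differentiation in $\beta_1$ at the background state to get the linear system $K_{s1}\,\partial_{s'}G(s_0;U_-)+\sum_{i=2}^4K_{si}\,\textbf{r}_i(U_+)=\textbf{r}_1(U_+)$, then Cramer's rule and explicit evaluation at $U_+=(u_+,0,p_+,\rho_+)^\top$ using \eqref{eqn:Eigenval}--\eqref{eqn:Coeffi} --- is exactly the route taken in those references. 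Your treatment of the first factor of $|K_{s4}|$ (via $\lambda_1(U_+)<s_0<0<\lambda_4(U_+)$ and $\lambda_1(U_+)=-\lambda_4(U_+)$) is correct. However, two of your supporting claims do not hold as stated. First, the nonsingularity of the Jacobian $\big(\partial_{s'}G(s_0;U_-),\textbf{r}_2(U_+),\textbf{r}_3(U_+),\textbf{r}_4(U_+)\big)$ is not a consequence of $\det(\nabla_UH(U_+)-s_0\nabla_UW(U_+))>0$; that condition only makes $G'(s_0)=(\nabla_UH(U_+)-s_0\nabla_UW(U_+))^{-1}\big(W(U_+)-W(U_-)\big)$ well defined. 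Transversality of this vector to $\mathrm{span}\{\textbf{r}_2,\textbf{r}_3,\textbf{r}_4\}$ amounts to the nonvanishing of its $1$-component in the eigenbasis at $U_+$, which is an additional computation that must be carried out (it is, in the cited works), not a formal corollary of Lemma \ref{Lem:strongshock}.

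Second, and more substantively, your justification of $M>0$ (needed for $AB>0$ and hence for the second factor to be a genuine contraction) does not close. Writing $u_-=u_++d$ with $0<d<u_+/\gamma$ from Lemma \ref{Lem:strongshock}, one has $M=\tfrac{c_+^2}{\gamma-1}(u_+-d)-u_+^2d$; the best bound obtainable from the stated inequality alone is $M>\tfrac{u_+}{\gamma}\,(c_+^2-u_+^2)$, which is \emph{negative} precisely because the downstream state is supersonic, $u_+>c_+$. So the sign of $M$ cannot be ``controlled through $\gamma>1$ and $u_+<u_-<(1+\tfrac1\gamma)u_+$'' as you assert; it requires the quantitative Rankine--Hugoniot/Bernoulli relations across the strong oblique shock linking $d$ to $(u_\pm,c_\pm,\rho_\pm)$. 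Together with the fact that the closed-form identity for $K_{s4}$ itself (the equality half of the lemma) is only announced rather than derived, the decisive algebra of the lemma is left unexecuted, so the proposal should be regarded as a correct plan with the key computations --- the explicit form of $\partial_{s'}G(s_0;U_-)$, the collapse of the Cramer ratio to the $(M,N)$ expression, and the sign of $M$ --- still open.
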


\begin{lemma}
	\label{lem:Isrbelow}
 There exists a positive constant $\varepsilon_{2}$ such that,
 for any $U_{l}$, $U_{m}\in O_{\varepsilon_{2}}(U_{-})$ and $U_{r}\in O_{\varepsilon_{2}}(U_{+})$
 with $U_{m}=\Phi(\alpha_{1},\alpha_{2},\alpha_{3},\alpha_{4};U_{l})$ and $U_{r}=\Phi(0,\beta_{2},\beta_{3},\beta_{4};G(s;U_{m}))$,
 the equation{\rm :}
	\begin{equation}
		\Phi(0,\delta_{2},\delta_{3},\delta_{4};G(s';U_{l}))=\Phi(0,\beta_{2},\beta_{3},\beta_{4};G(s;\Phi(\alpha_{1},\alpha_{2},\alpha_{3},\alpha_{4};U_{l})))\label{eqn:WeakRefelctStrongbelow}
	\end{equation}
	determines twice differentiable functions $(s',\delta_{2},\delta_{3},\delta_{4})$ with
\begin{align*}
s'=s+\sum_{i=1}^{4}\hat{K}_{1i}\alpha_{i},\quad \delta_{2}=\beta_{2}+\sum_{i=1}^{4}\hat{K}_{2i}\alpha_{i},
\quad	\delta_{3}=\beta_{3}+\sum_{i=1}^{4}\hat{K}_{3i}\alpha_{i}, \quad \delta_{4}=\beta_{4}+\sum_{i=1}^{4}\hat{K}_{4i}\alpha_{i},
	\end{align*}
	where $|\hat{K}_{ji}|$, $i,j=1, 2, 3, 4$, are bounded, depending only on $U_{-}$, $U_{+}$ and
 system \eqref{eqn:TwoDFEuler}.
\end{lemma}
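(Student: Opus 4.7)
The plan is to set up the interaction equation \eqref{eqn:WeakRefelctStrongbelow} as an implicit function problem in the unknowns $(s',\delta_2,\delta_3,\delta_4)$, with $\alpha=(\alpha_1,\alpha_2,\alpha_3,\alpha_4)$, $s$, $\beta=(\beta_2,\beta_3,\beta_4)$, and $U_l$ regarded as parameters, and then to Taylor expand the resulting solution in $\alpha$ at $\alpha=0$. Specifically, I would introduce
\begin{align*}
F(s',\delta_2,\delta_3,\delta_4;\alpha,s,\beta,U_l)
:=\Phi(0,\delta_2,\delta_3,\delta_4;G(s';U_l))-\Phi(0,\beta_2,\beta_3,\beta_4;G(s;\Phi(\alpha;U_l))),
\end{align*}
which is $C^2$ by the regularity of $\Phi$ and $G$. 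When $\alpha=0$, one has $\Phi(\alpha;U_l)=U_l$, so the choice $(s',\delta_2,\delta_3,\delta_4)=(s,\beta_2,\beta_3,\beta_4)$ solves $F=0$ identically in $(s,\beta,U_l)$; at the base point $(s,U_l,\beta)=(s_0,U_-,0)$, this reference solution passes through the strong-shock background configuration determined by $U_-,U_+$ in Lemma~\ref{Lem:strongshock}.

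The core step is to verify that the Jacobian $\partial F/\partial(s',\delta_2,\delta_3,\delta_4)$ is invertible at the base point. Its columns are $\partial_{s'}G(s_0;U_-)$ and $\mathbf{r}_j(U_+)$ for $j=2,3,4$. The Rankine--Hugoniot relation \eqref{eqn:background} combined with the non-degeneracy condition $\det(\nabla_U H(U_+)-s_0\nabla_U W(U_+))>0$ from Lemma~\ref{Lem:strongshock} shows that differentiating \eqref{eqn:background} in $s$ yields a vector $\partial_{s_0}G(s_0;U_-)$ that is transversal to the $2,3,4$-eigenspace at $U_+$; indeed, any linear dependence would force a nonzero vector to lie simultaneously in the kernel of $\nabla_U H(U_+)-s_0\nabla_U W(U_+)$, contradicting the determinant bound. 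Hence the four columns are linearly independent, and the implicit function theorem delivers $C^2$ functions $(s',\delta_2,\delta_3,\delta_4)$ of $(\alpha,s,\beta,U_l)$ for $\varepsilon_2$ sufficiently small.

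Next, because the reference solution coincides with $(s,\beta_2,\beta_3,\beta_4)$ on the hyperplane $\{\alpha=0\}$, a first-order Taylor expansion in $\alpha$ at $\alpha=0$ (with $(s,\beta,U_l)$ fixed near the background) produces
\begin{align*}
s'=s+\sum_{i=1}^{4}\hat{K}_{1i}\alpha_i+O(|\alpha|^2),\qquad
\delta_j=\beta_j+\sum_{i=1}^{4}\hat{K}_{ji}\alpha_i+O(|\alpha|^2)\quad(j=2,3,4),
\end{align*}
where the coefficients $\hat{K}_{ji}$ are obtained by solving the linear system $\partial F/\partial(s',\delta)\cdot(\partial s'/\partial\alpha_i,\partial\delta/\partial\alpha_i)^\top=-\partial F/\partial\alpha_i$ at the base point. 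The quadratic remainders can be absorbed into the linear coefficients by the usual device of writing $\hat{K}_{ji}=\int_0^1\partial_{\alpha_i}(\cdot)\,\dd t$, so that the stated identities hold exactly (not only to leading order); this is the same trick used in Lemma~\ref{lem:Isr} and in \cite{Chen2008,Chen2006}. Finally, continuity of the derivatives of $F$ on the compact parameter set and the uniform invertibility of the Jacobian near the background yield the uniform bounds on $|\hat{K}_{ji}|$ depending only on $U_-$, $U_+$, and \eqref{eqn:TwoDFEuler}.

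The main obstacle is the transversality/invertibility step: one must carefully use the non-degeneracy of the strong 1-shock to exclude any collapse between the strong-shock direction $\partial_s G$ and the weak 2,3,4-eigenvectors at $U_+$. All other steps are routine applications of the implicit function theorem together with parameterization properties already recorded in Lemmas~\ref{Lem:strongshock}--\ref{lem:Isr}.
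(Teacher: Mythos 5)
The paper itself does not prove Lemma \ref{lem:Isrbelow}: it defers to \cite{Chen2006,Chen2008}. Your overall strategy --- recast \eqref{eqn:WeakRefelctStrongbelow} as an implicit-function problem in $(s',\delta_2,\delta_3,\delta_4)$, observe that $(s,\beta_2,\beta_3,\beta_4)$ solves it identically on the slice $\{\alpha=0\}$, and write the coefficients $\hat K_{ji}$ in Hadamard integral form so that the stated identities hold exactly with coefficients bounded uniformly near the background configuration --- is exactly the standard route of those references, and those parts of your argument are fine.

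The gap is in your justification of the key step, the invertibility of $\partial F/\partial(s',\delta_2,\delta_3,\delta_4)$ at the background, i.e.\ of $\det\big(\partial_sG(s_0;U_-),\mathbf{r}_2(U_+),\mathbf{r}_3(U_+),\mathbf{r}_4(U_+)\big)\neq0$. You assert this follows from $\det(\nabla_UH(U_+)-s_0\nabla_UW(U_+))>0$ because ``any linear dependence would force a nonzero vector to lie in the kernel'' of that matrix. It would not. Set $A:=\nabla_UH(U_+)-s_0\nabla_UW(U_+)$. Differentiating \eqref{eqn:background} along the Hugoniot curve gives $A\,\partial_sG(s_0;U_-)=W(U_+)-W(U_-)$, while $A\,\mathbf{r}_j(U_+)=(\lambda_j(U_+)-s_0)\,\nabla_UW(U_+)\,\mathbf{r}_j(U_+)$. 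If $\partial_sG$ were a linear combination of $\mathbf{r}_2,\mathbf{r}_3,\mathbf{r}_4$, applying $A$ would merely express $W(U_+)-W(U_-)$ as a combination of the vectors $\nabla_UW\,\mathbf{r}_j$; no vector in the kernel of $A$ arises, so the determinant bound yields no contradiction. What invertibility of $A$ actually buys is the equivalence of the desired transversality with $\det\big(W(U_+)-W(U_-),\nabla_UW\mathbf{r}_2,\nabla_UW\mathbf{r}_3,\nabla_UW\mathbf{r}_4\big)\neq0$ at $U_+$ (using $\lambda_j(U_+)\neq s_0$), and this is a genuinely separate, Majda-type stability condition on the strong $1$-shock that must be checked by an explicit computation on the shock polar --- which is what \cite{Chen2006} does. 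As written, the crux of your proof is unsupported, even though the statement you need is true.
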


\section{Construction of approximate solutions}\label{sect-appro}
In this section, a modified wavefront tracking algorithm is developed to construct approximate boundaries and solutions.
Moreover, some necessary estimates are given for the boundary value problem (\ref{eqn:TwoDFEuler}) and (\ref{eqn:ConseForm})--(\ref{1.9a}). \par

We choose
\begin{equation}
\hat\lambda>\sup\{\lambda_4(U)\,:\, U\in O_{\varepsilon_{0}}(U_{-})\cup O_{\varepsilon_{0}}(U_{+})\},\label{eqn:nonphyspeed}
\end{equation}
where $\varepsilon_0$ is a small constant satisfying
\begin{equation}
0<\varepsilon_0<\min\{\varepsilon_{p},\varepsilon_s,\varepsilon_{r},\varepsilon_{1},\varepsilon_{2}\}.\label{eqn:epsilon0}
\end{equation}
For any $\mu>0$, there exist $\Delta x>0$ and $\Delta y=2\hat\lambda\Delta x$ such that
$p_b$ and $U_{\infty}$ are approached by piecewise constant functions $p^{\mu,\Delta x}_{b}$
and $U^{\mu,\Delta x}_{\infty}$, respectively:
\begin{align*}
&p^{\mu,\Delta x}_{b}(x)= p^{\mu,\Delta x,h}_{b}
  &&\mbox{for $(h-1)\Delta x\leq x<h\Delta x,\ h=1,\,2,\,\cdots,\,N_0$},\\
&U^{\mu,\Delta x}_{\infty}(y)=U^{\mu,\Delta x,l}_{\infty}
   &&\mbox{for $\Delta y\leq x<(l+1)\Delta y,\ l=-1,\,-2,\,\cdots,\,-N_1$},
\end{align*}
with
\begin{align*}
&\text{T.V.}(p^{\mu,\Delta x}_{b})\leq \text{T.V.}(p_b)=\text{T.V.}(\tilde{p}_{b}),&&
\|p^{\mu,\Delta x}_{b}(\cdot)-p_b(\cdot)\|_{ L^1(\mathbb{R}_+)}<\mu,\\
&\text{T.V.}(U^{\mu,\Delta x}_{\infty})\leq \text{T.V.}(U_{\infty})=\text{T.V.}(\widetilde{U}_{\infty}),
&&
\|U^{\mu,\Delta x}_{\infty}(\cdot)-U_{\infty}(\cdot)\|_{ L^1(\mathbb{R}_-)}<\mu.
\end{align*}
We construct approximate solutions $U^{\mu,\Delta x}(x,y)$ of the Riemann problem
in a forward neighborhood of the origin.
In our construction, we also obtain
an approximate strong shock
$\mathtt{S}^{\mu,\Delta x}=\{(x,\chi^{\mu,\Delta x})\,:\, x\ge0\}$
and an approximate boundary
$\Gamma^{\mu,\Delta x}=\{(x,b^{\mu,\Delta x})\,:\, x\ge0\}$
in the forward neighborhood of the origin.\par

Then the approximate solutions are piecewise constant vector-valued functions that are separated by wavefronts.
We let every wavefront travel freely until it collides with other wavefronts or the boundary.
A new Riemann problem arises when different wavefronts collide and interact at some point. Also, on the approximate boundary,
due to the change of $p^{\mu,\Delta x}_{b}$, new Riemann problems come out at $x=h\Delta x$ for $h\in\mathbb{N}_+$. \par

To construct approximate solutions to those new Riemann problems, following \cite{Bressan2000,Chen2020,Kuang2020},
we introduce two kinds of Riemann solvers, each of which contains shocks, contact discontinuities, rarefaction fronts, and non-physical fronts. \par

Let $\delta=\delta(\mu)>0$ be a parameter that is larger than the maximum strength of rarefaction fronts, whose value is
determined later.
Moreover, we define non-physical fronts to be of family $5$ of speed $\hat\lambda$
and use $U_{r}=T_5(\epsilon)(U_{l})$ to indicate that the below state $U_{l}$ is connected with
the above state $U_{r}$ by a non-physical front of strength $\epsilon=|U_{r}-U_{l}|$.
Non-physical waves also belong to weak waves.\\

$\bullet$ \textit{Accurate Riemann solver}. The accurate Riemann solver provides us with an approximate solution of
the Riemann problem (\ref{eqn:ConseForm}), where the rarefaction region in the real solution of the Riemann problem
is replaced by piecewise constants that are separated by rarefaction wavefronts. To be specific,
suppose that a $j$-wave $\alpha_j$, originated at $(\bar x,\bar y)$, is a rarefaction wave connecting two constant states $U_{m_{j-1}}$ and $U_{m_j}$, $j=1,\,4$. Taking the minimal $n\in\mathbb{N}_+$ such that $\alpha_j<n\delta$,
we then define
\begin{align*}
U^\delta_{j,\alpha_j}(x,y):=
U_{m_{j-1}}+\sum_{i=1}^n(U_{m_{j-1},i}-U_{m_{j-1},i-1})H(y-\bar y-\lambda_j(U_{m_{j-1},i-1})(x-\bar x)),\quad\,
j=1, 4,
\end{align*}
where
\begin{align*}
U_{m_{j-1},i}=R_j(\frac{i\alpha_1}{n})(U_{m_{j-1}}),\qquad i=0,1,\cdots,n,\ j=1,\,4,
\end{align*}
and $H$ is the Heaviside function. To obtain an accurate Riemann solver $U^\delta_A(U_{l},U_{r};x,y)$,
we use $U^\delta_{j,\alpha_j}(x,y)$ to substitute
$R_j(\frac{x}{t}-\lambda_j(U_{m_{j-1}}))(U_{m_{j-1}})$ in the real solution (\ref{eqn:SimRiemannSolv}) when $(x,y)$ belongs to the rarefaction region
$\sigma_j^-(x-\bar x)<y-\bar y<\sigma_j^+(x-\bar x)$ for $j=1,\,4$.\par

\smallskip
$\bullet$ \textit{Simplified Riemann solver}. We have
several different cases.

\smallskip
\noindent
\textit{Case $1$}: \textit{Interactions of weak waves}. Assume that two weak waves collide at $(\bar x,\bar y)$ with below state $U_{l}$, middle state $U_{m}=\Phi_i(\beta;U_{l})$,
and above state $U_{r}=\Phi_j(\alpha;U_{m})$, $\,i, j=1,2,3,4$.
We introduce the auxiliary above state
\begin{align*}
U'_{r}=\left\{\begin{aligned}
	&\Phi_j(\beta;\Phi_i(\alpha;U_{l})),&& j>i,\\
	&\Phi_j(\alpha+\beta;U_{l}),&& j=i.\\
\end{aligned}
\right.
\end{align*}
Then the simplified Riemann solver in a forward neighbourhood of $(\bar x,\bar y)$ is defined as
\begin{align*}
U^\delta_{S}(x,y)=\left\{\begin{aligned}
	&U^\delta_{A}(U_{l},U'_{r};x,y),\ &&y-\bar y<\hat\lambda(x-\bar x), \\
	&U_{r},\ &&y-\bar y>\hat\lambda(x-\bar x).
\end{aligned}
\right.
\end{align*}

\noindent
\textit{Case $2${\rm :} Interactions of a non-physical wave with another weak wave}. Assume that a non-physical wave collides another weak wave at $(\bar x,\bar y)$ with below state $U_{l}$, middle state $U_{m}=\Phi_5(\epsilon;U_{l})$, and above state $U_{r}=\Phi_j(\alpha;U_{m})$
for $j=1, 2, 3, 4$.
We choose the auxiliary above state as
\begin{align*}
U'_{r}=\Phi_j(\alpha;U_{l}),
\end{align*}
and the simplified Riemann solver as
\begin{align*}
U^\delta_{S}(x,y)=\left\{\begin{aligned}
	&U^\delta_{A}(U_{l},U'_{r};x,y),\ &&y-\bar y<\hat\lambda(x-\bar x), \\
	&U_{r},\ &&y-\bar y>\hat\lambda(x-\bar x).
\end{aligned}
\right.
\end{align*}

\noindent
\textit{Case $3${\rm :} Interactions of a weak wave with the strong shock from above}.
Assume that a weak wave collides the strong shock at $(\bar x,\bar y)$ with below state $U_{l}$, middle state $U_{m}=G(s;U_{l})$, and above state $U_{r}=\Phi_j(\alpha;U_{m})$ for $j=1, 2, 3, 4$.
We define the simplified Riemann solver as
\begin{align*}
U^\delta_{S}(x,y)=\left\{\begin{aligned}
	&U_{l},\ &&y-\bar y<s(x-\bar x), \\
	&U_{m},\ &&\hat\lambda(x-\bar x) > y-\bar y> s(x-\bar x), \\
	&U_{r},\ &&y-\bar y>\hat\lambda(x-\bar x).
\end{aligned}
\right.
\end{align*}

\textit{Case $4${\rm :} Interactions of a weak wave with the strong shock from below}.
Assume that a weak wave collides the strong shock at $(\bar x,\bar y)$ with below state $U_{l}$, middle state $U_{m}=\Phi_j(\alpha;U_{l})$, and above state $U_{r}=G(s;U_{m})$ for $j=1, 2, 3, 4$.
We define the simplified Riemann solver as
\begin{align*}
U^\delta_{S}(x,y)=\left\{\begin{aligned}
	&U_{l},\ &&y-\bar y<s(x-\bar x), \\
	&G(s;U_{l}),\ &&\hat\lambda(x-\bar x)> y-\bar y> s(x-\bar x), \\
	&U_{r},\ &&y-\bar y>\hat\lambda(x-\bar x).
\end{aligned}
\right.
\end{align*}

Now we can define the approximate solutions inductively.
For simplicity of notation, in the section, we omit superscript $\delta$
and always take
$\|(\tilde{p}_{b}, \widetilde{U}_{\infty})\|_{ L^{\infty}\cap BV}$
small enough such that the conditions of Lemmas \ref{Lem:strongshock}--\ref{lem:Isrbelow} are satisfied (as we will prove later).
Given $p^{\mu,\Delta x,1}_{b}$, Lemma \ref{lem:SBe} provides us
with $s^{\mu,\Delta x}_{1}$ such that
\begin{align*}
G^{(3)}(s^{\mu,\Delta x}_{1};\,U^{\mu,\Delta x,-1}_{\infty})=p^{\mu,\Delta x,1}_{b}.
\end{align*}
For $x\in[0,\Delta x)$, define
\begin{align*}
&U^{\mu,\Delta x}_{b}(x)=G(s^{\mu,\Delta x}_{1};\,U^{\mu,\Delta x,-1}_{\infty}),
\qquad
b^{\mu,\Delta x}(x)=\int_0^x\frac{v^{\mu,\Delta x}_{b}(t)}{u^{\mu,\Delta x}_{b}(t)}\,{\rm d}t,\\
&s^{\mu,\Delta x}(x)=s^{\mu,\Delta x}_{1},\qquad \chi^{\mu,\Delta x}(x)=\int_0^xs^{\mu,\Delta x}(t)\,{\rm d}t.
\end{align*}
Then we solve the standard Riemann problems at points $(0,l\Delta y)$,
$l\in\mathbb{N}_{-}$, with $U^{\mu,\Delta x,l}_{\infty}$
and $U^{\mu,\Delta x,l-1}_{\infty}$ being the corresponding above
and below states. Thus,
we can construct the approximate solutions and the approximate strong shocks on the region:
\begin{align*}
\Omega^{\mu,\Delta x,1}
=
\big\{(x,y)\,:\,y<b^{\mu,\Delta x}(x),\ x\in[0,\Delta x)\big\}.
\end{align*}
\begin{figure}
\begin{center}
	\includegraphics[width=\textwidth]{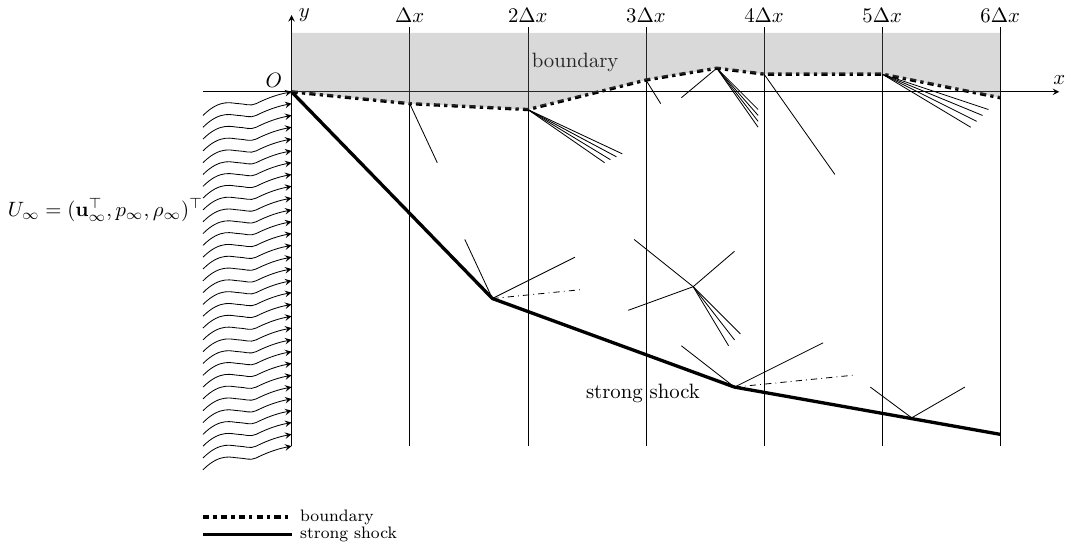}
\end{center}
\caption{Approximate solutions}
\label{fig:4.1}
\end{figure}

Suppose now that our approximate solutions $U^{\mu,\Delta x}(x,y)$
are constructed on
\begin{align*}
\bigcup_{k=1}^{h}\Omega^{\mu,\Delta x,k}
:=
\bigcup_{k=1}^{h}\big\{(x,y)\,:\, y<b^{\mu,\Delta x}(x),\ x\in[(k-1)\Delta x,k\Delta x)\big\},
\end{align*}
and contain the jumps of rarefaction fronts $\mathcal{R}$, weak shock fronts $\mathcal{S}$,
contact discontinuities $\mathcal{C}$, non-physical fronts $\mathcal{NP}$, and strong shock fronts $\mathcal{S}_s$.
We also suppose that the approximate solutions are defined on the approximate boundaries $b^{\mu,\Delta x}(x)$
as $U^{\mu,\Delta x}_{b}=U^{\mu,\Delta x}(x,b^{\mu,\Delta x}(x))$ for $x\in[0,h\Delta x)$.
Moreover, in $\Omega^{\mu,\Delta x,k}$, the number of these wavefronts is finite for $k=1,\cdots,h$.
\par

To extend our solutions, we generally let wavefronts travel ahead until they collide with another wavefront.
In order to avoid the cases that more than two wavefronts collide at one point and that more than one wavefront
interact with the boundary or the strong shock, we can modify the speeds of these wavefronts slightly such that
the difference between the modified speeds and the original speeds is no more than $\mu$. \par

When two wavefronts collide at some point $(\bar x,\bar y)$ for $\bar x\in[h\Delta x,(h+1)\Delta x)$,
three states separated by the wavefronts, from below to above, are labeled as $U_{l}$, $U_{m}$, and $U_{r}$.
To make the number of wavefronts remain finite on region
$\{x<h\Delta x, \,\, h\in\mathbb{N}_+\}$,
we need to use the simplified Riemann solver
to continue our construction; see \cite{Bressan2000}.\par

Let $\nu>0$ be a threshold parameter to determine when an accurate or a simplified Riemann solver is applied. With little abuse of notation,
a front itself is denoted by $\alpha$, while its strength is
denoted by $|\alpha|$.

\smallskip
\textit{Case $1${\rm :} $\alpha$ and $\beta$ are weak waves colliding at $(\bar x,\bar y)$}. We solve the Riemann problem $(\ref{eqn:RiePro})$ as follows:\\[2mm]
$\bullet$ When $\alpha$ and $\beta$ are physical with $|\alpha\beta|>\nu$, we apply the accurate Riemann solver.\\
$\bullet$ When $\alpha$ and $\beta$ are physical with $|\alpha\beta|\leq\nu$, or one of them is non-physical, we apply the simplified Riemann solver.

\smallskip
\textit{Case $2${\rm :} A weak wave $\alpha$ interacts with a strong shock $s$
at $(\bar x,\bar y)$}. We solve the corresponding Riemann problem as follows:\\[2mm]
$\bullet$ When $\alpha$ is physical with $|\alpha|>\nu$, we apply the accurate Riemann solver.\\
$\bullet$ When $\alpha$ is non-physical or $|\alpha|\leq\nu$, we apply the simplified Riemann solver.

\smallskip
\textit{Case $3${\rm :} When the approximate pressure changes at $(h\Delta x,b^{\mu,\Delta x}(h\Delta x))$, we solve the Riemann problem $(\ref{eqn:RiePro})$ as follows}: Let
\begin{align*}
U_{1}=U^{\mu,\Delta x}(h\Delta x-,b^{\mu,\Delta x}(h\Delta x-)-),\qquad p_2=p^{\mu,\Delta x,h}_{b}.
\end{align*}
Then Lemma \ref{lem:WCe} provides us with $\delta_1$ such that
\begin{align*}
	\Phi^{(3)}(\delta_1,0,0,0;U_{l})=p^{\mu,\Delta x,h}_{b}.
\end{align*}
We define $U^{\mu,\Delta x}_{b}=\Phi(\delta_1,0,0,0;U_{l})$ and always apply the accurate Riemann solver.\\

\textit{Case $4${\rm :}. When a weak physical wave $\alpha$ separating states $U_{r}$ {\rm (}the above one{\rm )} and $U_{l}$ hits the boundary}:

\smallskip
\noindent
$\bullet$ If $|\alpha|>\nu$, let
\begin{align*}
	U_{1}=U_l,\quad p_2=p^{\mu,\Delta x,h}_{b}.
\end{align*}
$\,\,\,\,$ Then Lemma \ref{lem:WBe} provides us with $\delta_1$ such that
\begin{align*}
	\Phi^{(3)}(0,0,0,\alpha;U_{l})=\Phi^{(3)}(\delta_1,0,0,0;U_{l}).
\end{align*}
$\,\,\,\,$ Define $U^{\mu,\Delta x}_{b}=\Phi(\delta_1,0,0,0;U_{l})$, where an accurate Riemann solver is used.\\
$\bullet$ If $|\alpha|\leq\nu$, we let $\alpha$ cross the boundary and change $U^{\mu,\Delta x}_{b}$ from $U_{r}$ to $U_{l}$.\\

\textit{Case 5}. When $U_{r}$ (the above) and $U_{l}$ are separated by a non-physical weak wave that
hits the boundary at $(x,b^{\mu,\Delta x}(x))$ for $x\in((h-1)\Delta x,h\Delta x)$,
we allow it cross the boundary and define $U^{\mu,\Delta x}_{b}=U_{l}$. \par

\smallskip
Finally, let
\begin{align*}
&\chi^{\mu,\Delta x}(x)=\int_0^xs^{\mu,\Delta x}(t){\rm d}t,\,\, b^{\mu,\Delta x}(x)=\int_{0}^x\frac{v^{\mu,\Delta x}_{b}}{u^{\mu,\Delta x}_{b}}(t)\,{\rm d}t
\qquad\,\mbox{for $x\in[h\Delta x,(h+1)\Delta x)$},
\end{align*}
and let our approximate solutions $U^{\mu,\Delta x}$ have been constructed on the region
\begin{align*}
\Omega^{\mu,\Delta x,h}:=
\big\{(x,y)\,:\,y<b^{\mu,\Delta x}(x),\ x\in[h\Delta x,(h+1)\Delta x)\big\}
\end{align*}
with approximate strong shocks $\mathtt{S}^{\mu,\Delta x}$,
where $y=b^{\mu,\Delta x}(x)$ is our approximate boundary. In addition, on approximate boundaries, we define
\begin{align*}
U^{\mu,\Delta x}(x,b^{\mu,\Delta x}(x))=U^{\mu,\Delta x}_{b}(x)
\qquad\mbox{for $x\in[0,(h+1)\Delta x)$}.
\end{align*}

To show these approximate solutions can be well defined in
\begin{align*}
\Omega^{\mu,\Delta x}\cup \Gamma^{\mu,\Delta x}
:=
\bigcup_{h=1}^\infty\Omega^{\mu,\Delta x,h}\cup\bigcup_{h=1}^\infty \Gamma^{\mu,\Delta x,h},
\end{align*}
via the steps exhibited above, we need to prove that there exists a uniform bound, where $\Gamma^{\mu,\Delta x,h}=\{(x,b^{\mu,\Delta x}(x))\, :\, (h-1)\Delta x\leq x< h\Delta x\}$ for $h\in\mathbb{N}_{+}$.
Assume that $U^{\mu,\Delta x}$ has been defined on
$\big(\bigcup_{k=1}^h\Omega^{\mu,\Delta x,k}\big)\bigcup\big(\bigcup_{k=1}^{h} \Gamma^{\mu,\Delta x,h}\big)$ and, furthermore, the following conditions hold:

\begin{itemize}[leftmargin=5em,,rightmargin=5em]
\item[$\rm H_1(h)$:]  There is a strong 1-shock
$$
\mathtt{S}^{\mu,\Delta x,k}=\big\{(x,\chi^{\mu,\Delta x}(x))\,:\, (k-1)\Delta x\leq x< k\Delta x\big\}
$$
in $\Omega^{\mu,\Delta x,k}$ for $1\leq k\leq h$, dividing $\Omega^{\mu,\Delta x,k}$ into $\Omega^{\mu,\Delta x,k}_{-}$ and $\Omega^{\mu,\Delta x,k}_{+}$, where $\chi^{\mu,\Delta x}(x)=\int_0^xs^{\mu,\Delta x}(t)\,{\rm d}t$, and $\Omega^{\mu,\Delta x,k}_{+}$ is the part bounded by $\mathtt{S}^{\mu,\Delta x}$ and $\Gamma^{\mu,\Delta x}$;
\item[$\rm H_2(h)$:] $U^{\mu,\Delta x}|_{\Omega^{\mu,\Delta x,k}_{-}}\in O_{\varepsilon_{0}}(U_{-})$ and $U^{\mu,\Delta x}|_{\Omega^{\mu,\Delta x,k}_{+}}\in O_{\varepsilon_0}(U_{+})$ in each $\Omega^{\mu,\Delta x,k}$ for $1\leq k\leq h$, and $U^{\mu,\Delta x}|_{\Gamma^{\mu,\Delta x}}=U^{\mu,\Delta x}_{b}(x)\in O_{\varepsilon_0}(U_+)$ for $x\in((h-1)\Delta x,h\Delta x)$, where $\varepsilon_0>0$ is introduced in (\ref{eqn:epsilon0});
\item[$\rm H_3(h)$:] $U^{\mu,\Delta x}|_{\bigcup_{k=1}^h\Omega^{\mu,\Delta x,k}}$ is piecewise constant and contains the jumps of rarefaction fronts $\mathcal{R}$, weak shock fronts $\mathcal{S}$, contact discontinuities $\mathcal{C}$, non-physical fronts $\mathcal{NP}$, and the number of these wavefronts is finite, saying $N_h$.
\end{itemize}

It suffices to prove that $U^{\mu,\Delta x}$ can be extended to $\Omega^{\mu,\Delta x,h+1}$ and satisfy $\rm H_1(h+1)$, $\rm H_2(h+1)$, and $\rm H_3(h+1)$. To this end, we introduce a Glimm-type functional and prove that it is non-increasing, which ensures the smallness of total variations of approximate solutions. The following lemmas are needed in our proofs.

\begin{lemma}
\label{Lem:BVe} The following statements hold{\rm :}
\begin{itemize}
	\item[\rm(i)] If $U_2=\Phi(\alpha_1,\alpha_2,\alpha_3,\alpha_4;U_1)$ with $U_1$, $U_2\in O_{\varepsilon_0}(U_{-})$ or $U_1$, $U_2\in O_{\varepsilon_0}(U_{+})$, then
	\begin{align*}
		|U_1-U_2|\leq C_1(|\alpha_1|+|\alpha_2|+|\alpha_3|+|\alpha_4|);
	\end{align*}
	\item[\rm(ii)] If $U_2=\Phi(\alpha_1,\alpha_2,\alpha_3,\alpha_4;U_1)$, $U_3=G(s;U_1)$, and $U_4=G(s;U_2)$ with $U_1$, $U_2\in O_{\varepsilon_0}(U_{-})$ and $U_3$, $U_4\in O_{\varepsilon_0}(U_{+})$, then
	\begin{align*}
		|U_3-U_4|\leq C_2(|\alpha_1|+|\alpha_2|+|\alpha_3|+|\alpha_4|),
	\end{align*}
\end{itemize}
where $C_1>0$ and $C_2>0$ are constants depending only on $U_{-}$, $U_{+}$, and  system \eqref{eqn:TwoDFEuler}.
\end{lemma}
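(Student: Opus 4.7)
The plan is to derive both estimates from the regularity of the wave curves already established in Section 2: each elementary wave curve $\Phi_j(\alpha_j;U)$ is $C^2$ with $\Phi_j(0;U)=U$, the composed map $\Phi$ is $C^2$ with $\Phi(0,0,0,0;U)=U$, and the strong shock curve $G(s;U_0)$ is $C^2$ on a neighborhood of $(s_0,U_-)$ by Lemma~\ref{Lem:strongshock}. Combining these with the mean value theorem on the compact closures $\overline{O_{\varepsilon_0}(U_\pm)}$ should yield both bounds with constants depending only on $U_-$, $U_+$ and the system.

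For part (i), I would proceed by induction on the composition defining $\Phi$ in \eqref{eqn:WeakWavecurv}. Setting $V_0=U_1$ and $V_j=\Phi_j(\alpha_j;V_{j-1})$ for $j=1,2,3,4$, one has $V_4=U_2$ and
\begin{equation*}
|V_j-V_{j-1}|=|\Phi_j(\alpha_j;V_{j-1})-\Phi_j(0;V_{j-1})|\le \Big(\sup_{|\alpha|\le\varepsilon_0,\,V\in\overline{O_{\varepsilon_0}(U_\pm)}}\Big|\tfrac{\partial \Phi_j}{\partial\alpha}(\alpha;V)\Big|\Big)|\alpha_j|.
\end{equation*}
The supremum is finite because $\Phi_j\in C^2$, and it depends only on $U_\pm$ and the system. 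Summing over $j$ via the triangle inequality gives $|U_2-U_1|\le C_1\sum_{i=1}^4|\alpha_i|$, which is (i). As long as we take $\varepsilon_0$ small enough that each intermediate state $V_j$ remains in $O_{\varepsilon_0}(U_-)$ (resp.\ $O_{\varepsilon_0}(U_+)$), the estimate is uniform.

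For part (ii), the idea is to factor
\begin{equation*}
U_4-U_3=G(s;U_2)-G(s;U_1),
\end{equation*}
and use that, by Lemma~\ref{Lem:strongshock}, $G$ is $C^2$ on a neighborhood of $(s_0,U_-)$, so in particular
\begin{equation*}
|G(s;U_2)-G(s;U_1)|\le \Big(\sup_{(s,U)\in K}\Big|\tfrac{\partial G}{\partial U}(s;U)\Big|\Big)|U_2-U_1|,
\end{equation*}
where $K$ is the compact set of admissible $(s,U_0)$-arguments arising from the hypotheses. Applying part (i) to $|U_2-U_1|$ yields $|U_4-U_3|\le C_2\sum_{i=1}^4|\alpha_i|$ with $C_2$ depending only on $U_\pm$ and the system.

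The only genuinely delicate point will be bookkeeping the smallness parameter $\varepsilon_0$ so that (a) each intermediate state in the composed curve stays in the correct neighborhood, and (b) the strong-shock curve $G(s;\cdot)$ is defined and $C^2$ on the image of $U_1,U_2$. Both are ensured by choosing $\varepsilon_0$ in accordance with \eqref{eqn:epsilon0}, after which the argument is a routine application of the mean value theorem, and no wave interaction estimates are needed.
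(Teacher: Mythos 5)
Your proof is correct. The paper actually states Lemma \ref{Lem:BVe} without proof, treating it as a routine consequence of the $C^2$ regularity of the wave curves $\Phi_j$ and of the strong-shock parameterization $G$ from Lemma \ref{Lem:strongshock}; your argument (mean value theorem along the chain of intermediate states for (i), Lipschitz dependence of $G(s;\cdot)$ on the below state combined with (i) for (ii)) is exactly the standard argument the authors have in mind.
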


\begin{lemma}
\label{Lem:NPw} For any $U_1$,  $U_2$, $U_3\in O_{\varepsilon_{0}}(U_{-})$ or $U_1$, $U_2$, $U_3\in O_{\varepsilon_{0}}(U_{+})$, suppose that
\begin{itemize}
	\item [\rm(i)] $U_2=\Phi_{i}(\beta_j;\Phi_{i}(\alpha_i;U_1))$ and $U_3=\Phi_{i}(\alpha_i+\beta_j;U_1)$ for $i\in\{1,\,2,\,3,\,4\},$
	\item[\rm(ii)] $U_2=\Phi_{j}(\beta_j;\Phi_{i}(\alpha_i;U_1))$ and $U_3=\Phi_{i}(\alpha_i;\Phi_{j}(\beta_j;U_1))$ for $i\neq j$ and $i,\,j\in\{1,\cdots,5\}$.
\end{itemize}
Then $|U_3-U_2|=O(1)|\alpha_i||\beta_j|$.
\end{lemma}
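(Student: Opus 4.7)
The plan is as follows. In both parts, I define $F(\alpha_i,\beta_j):=U_3-U_2\in\mathbb{R}^4$, regarded as a map of two real parameters $(\alpha_i,\beta_j)$ near the origin, depending smoothly on the base state $U_1$. The strategy is to verify that $F$ vanishes identically on each coordinate axis through the origin, and then to exploit the $C^2$-regularity of the weak-wave maps $\Phi_j$, $j=1,\ldots,5$, on a uniform neighborhood of $U_\pm$, together with Hadamard's lemma, in order to conclude $|F(\alpha_i,\beta_j)|=O(1)|\alpha_i||\beta_j|$.

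For part (i), setting $\alpha_i=0$ yields $\Phi_i(\alpha_i;U_1)=U_1$, so $U_2=\Phi_i(\beta_j;U_1)=\Phi_i(0+\beta_j;U_1)=U_3$; setting $\beta_j=0$ analogously gives $U_2=\Phi_i(\alpha_i;U_1)=U_3$. Hence $F(0,\beta_j)=F(\alpha_i,0)=0$. For part (ii), at $\alpha_i=0$ both $U_2$ and $U_3$ collapse to $\Phi_j(\beta_j;U_1)$, while at $\beta_j=0$ both collapse to $\Phi_i(\alpha_i;U_1)$; again $F$ vanishes on both axes. Granting the requisite smoothness of $F$ in $(\alpha_i,\beta_j)$, Hadamard's lemma applied first in $\alpha_i$ (with $\beta_j$ fixed) and then in $\beta_j$ produces a continuous function $H$ on a neighborhood of the origin such that $F(\alpha_i,\beta_j)=\alpha_i\beta_j\,H(\alpha_i,\beta_j;U_1)$. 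Taking the supremum of $|H|$ over $U_1$ in the compact neighborhoods $O_{\varepsilon_0}(U_-)$ and $O_{\varepsilon_0}(U_+)$ yields the claimed bound, with an $O(1)$ constant depending only on $U_-$, $U_+$, and system \eqref{eqn:TwoDFEuler}.

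The one step requiring care, though by now classical, is the uniform $C^2$-regularity of $\alpha_i\mapsto\Phi_i(\alpha_i;U_1)$ in the base state $U_1$. For $i\in\{2,3\}$, the curves $C_k(U_0)$ are integral curves of the smooth eigenvector fields $\textbf{r}_k$ in \eqref{eqn:Eigenvec} and are therefore $C^\infty$; for $i=5$, the nonphysical-wave map is by construction a smooth linear perturbation in the state variable, and so $C^2$ is trivial. The delicate case is $i\in\{1,4\}$, where $\Phi_i(\alpha_i;U_1)$ is $R_i^-(U_1)$ for $\alpha_i\geq 0$ and $S_i^+(U_1)$ for $\alpha_i<0$; here one must invoke the standard second-order contact between the rarefaction curve and the Hugoniot curve through any $U_1\in O_{\varepsilon_0}(U_\pm)$, together with the parametrization fixed above, to conclude that $\Phi_i$ is $C^2$ at $\alpha_i=0$ with $C^2$-norm bounded uniformly in $U_1$. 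This is the main technical point, but it is precisely the property that already underlies Lemmas \ref{Lem:strongshock}--\ref{lem:Isrbelow}, so no new work is required. With it in hand, the Hadamard argument delivers the uniform $O(1)$ bound and completes both parts of the lemma.
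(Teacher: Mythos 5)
Your proof is correct, and it follows the standard Glimm--Lax argument (vanishing of $U_3-U_2$ on both parameter axes, then the double Hadamard/integral-remainder factorization using the uniform $C^2$ regularity of the wave curves, including the second-order contact of $S_i^+$ and $R_i^-$ at $\alpha_i=0$); the paper states this lemma without proof, deferring to exactly this classical argument from the references. The only point worth flagging is the family-$5$ case, where one should read $\Phi_5$ as the affine map $U\mapsto U+w$ with the jump vector $w$ as parameter (so the factorization is in $(w,\alpha)$ rather than in a scalar strength), but this notational issue is inherited from the paper's own statement and your treatment of it is consistent with the construction of the non-physical fronts.
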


\smallskip
\begin{definition}[Approaching]\
\begin{itemize}
\item[\rm(i)] Suppose that two fronts $\alpha$ and $\beta$ are located at points $y_{\alpha}<y_{\beta}$ and belong to the characteristic families $i_{\alpha}$, $i_{\beta}\in\{1,\cdots,5\}$, respectively. Then we say that they are approaching if $i_{\alpha}>i_{\beta}$ or if $i_{\alpha}=i_{\beta}$ and one of them is a shock. In this case, we denote the approaching relation by $(\alpha,\beta)\in\mathcal{A}$.
\item[\rm(ii)] For any weak front $\alpha\in\bigcup_{h=1}^{\infty}\Omega^{\mu,\Delta x}_{h,+}$ of family $1$, or $\alpha\in\bigcup_{h=1}^{\infty}\Omega^{\mu,\Delta x}_{h,-}$ of family $i$, $i\in\{1,\cdots,5\}$, we say that it approaches the strong shock and write $\alpha\in\mathcal{A}_s$ in this case.
\item[\rm(iii)] For any weak front $\alpha$ of family $4$ or $5$, we say that it approaches the boundary and write $\alpha\in\mathcal{A}_b$.
\end{itemize}
\end{definition}

Similar to \cite{Chen2008}, we define the following Glimm-type functionals.
Let the weighted strength for an $i$-weak wave $\alpha$ be
\begin{equation}
b_{\alpha}=\left\{
\begin{aligned}
&K_{-}\alpha\quad &\mbox{for $\alpha\in\bigcup_{h=1}^{\infty}\Omega^{\mu,\Delta x}_{-}$},\\
&\alpha &\mbox{for $\alpha\in\bigcup_{h=1}^{\infty}\Omega^{\mu,\Delta x}_{+}$},
\end{aligned}\right.\label{eqn:weistren}
\end{equation}
where $K_{-}=2\max_{1\leq i\leq4,2\leq j\leq4}\{|\hat{K}_{ij}|\}+4C_2$ with coefficients $\hat{K}_{ij}$ in Lemma \ref{lem:Isrbelow}.
Then, for each $x\in[h\Delta x,(h+1)\Delta x)$ with $h\in\mathbb{N}_+$, the weighted total strength of weak waves in $U^{\mu,\Delta x}(x,\cdot)$ is defined to be
\begin{align*}
L(x)=\sum_{\alpha}|b_{\alpha}|.
\end{align*}
The interaction potential is defined as
\begin{equation}
Q(x)=K_{0}\sum_{i>h}\omega_i+K_s\sum_{\alpha\in\mathcal{A}_{s}}|b_{\alpha}|+\sum_{\beta\in\mathcal{A}_{b}}|b_{\beta}|+K\sum_{(\alpha,\beta)\in\mathcal{A}}|b_{\alpha}||b_{\beta}|,\label{eqn:GlimPotential}
\end{equation}
where $\omega_i=|p^{\mu,\Delta x,i+1}_{b}-p^{\mu,\Delta x,i}_{b}|$, $K_{0}$, $K_s$, and $K$ are constants that need to be specified later.

\smallskip
\begin{definition}
\label{Def:GTF}
For each $x\in[h\Delta x,(h+1)\Delta x)$ with $h\in\mathbb{N}_+$,
define
	\begin{align*}
		F(x)=L(x)+\mathcal{K}Q(x)+|U_{\diamond}(x)-U_{\infty}^{-}|+|U^{\diamond}(x)-U_{\infty}^{+}|
	\end{align*}
with $\mathcal{K}>0$ to be determined later, where vector $U_{\diamond}$
{\rm (}$U^{\diamond}${\rm )} is the below $($above$)$ state of the large shock at \textit{time} $x$, and $U_{\infty}^{-}$ {\rm (}$U_{\infty}^{+}${\rm )} is the below $($above$)$ state of the large shock at $x = 0$.
\end{definition}

When two wavefronts collide (a wavefront hits the boundary or the boundary pressure changes) at $(\tau,\xi)$,
we have the following proposition.

\begin{proposition}\label{prop:befIm}
There are constants $K_{0}$, $K_s$, $K$,
and $\mathcal{K}$ so that there exists $\varepsilon>0$ such that,
when $F(\tau-)\leq\varepsilon$,
	\begin{align*}
		F(\tau+)\leq F(\tau-).
	\end{align*}
	\end{proposition}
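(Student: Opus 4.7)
The plan is to verify $F(\tau+)\leq F(\tau-)$ by a case-by-case analysis of the event at $(\tau,\xi)$: (a) two weak waves $\alpha,\beta$ collide in the interior; (b) a weak family-$1$ front from $\Omega_+^{\mu,\Delta x}$ hits the strong shock; (c) a weak front from $\Omega_-^{\mu,\Delta x}$ crosses the strong shock; (d) a weak family-$4$ (or non-physical) front hits the approximate boundary; and (e) the boundary pressure jumps at $x=h\Delta x$. For each event, the relevant interaction or reflection estimate from Lemmas \ref{lem:Iww}--\ref{lem:Isrbelow} identifies the outgoing weak fronts explicitly, after which I expand $\Delta L$, $\Delta Q$, and the shock-state changes $|\Delta U_{\diamond}|+|\Delta U^{\diamond}|$, and show $\Delta F\leq 0$ for a suitable choice of the weights $K_0$, $K_s$, $K$, $\mathcal{K}$ together with the smallness assumption $F(\tau-)\leq\varepsilon$.

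For event (a), Lemma \ref{lem:Iww} gives $\gamma_i=\alpha_i+\beta_i+O(1)\triangle(\alpha,\beta)$, so $\Delta L=O(1)\triangle(\alpha,\beta)$. The interaction removes the approaching pair $(\alpha,\beta)$ from the quadratic sum in $Q$, a drop of order $|b_\alpha||b_\beta|$; the cross terms newly generated by the $\gamma_i$ are bounded by $C\,L(\tau-)\triangle(\alpha,\beta)$, which is of lower order when $F(\tau-)$ is small. Taking $K$ large then gives $\Delta L+\mathcal{K}K\cdot(\text{net cross drop})\leq 0$. Event (e) is parallel: Lemma \ref{lem:WCe} gives a newly created $\delta_1=K_b(p_b^{\mu,\Delta x,h+1}-p_b^{\mu,\Delta x,h})$, while simultaneously $K_0\omega_h$ is subtracted from the first sum in $Q$; taking $K_0$ sufficiently large absorbs the resulting increases in $L$ and in the $\mathcal{A}_s$-sum.

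The shock--boundary cases are the heart of the argument. For event (c), Lemma \ref{lem:Isrbelow} yields $\delta_i=\beta_i+\sum_j\hat{K}_{ji}\alpha_j$, so the unweighted post-interaction strength on the $\Omega_+$ side is at most $\sum|\hat{K}_{ji}||\alpha|$, dominated by the weighted loss $K_-|\alpha|$ by the very choice $K_-=2\max_{i,j}|\hat{K}_{ji}|+4C_2$; combined with removal of $\alpha\in\mathcal{A}_s$ from $Q$ (a drop of order $K_sK_-|\alpha|$) and the shock-state change $O(|\alpha|)$, this gives $\Delta F\leq 0$. For event (b), Lemma \ref{lem:Isr} gives $s'-s=K_{s1}\alpha$ and $\delta_i\approx K_{si}\alpha$ with $|K_{s4}|<1$; here $\Delta L=(|K_{s2}|+|K_{s3}|+|K_{s4}|-1)|\alpha|$ may be positive, but the linear change in $Q$ is $(-K_s+|K_{s4}|)|\alpha|$, strictly negative provided $K_s>|K_{s4}|$, so taking $\mathcal{K}$ large absorbs the $\Delta L$-increase. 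For event (d), Lemma \ref{lem:WBe} yields $\delta_1=K_{b4}\alpha$ with $K_{b4}|_{\text{background}}=-1$; hence $\Delta L=O(|\alpha|^2)$ to leading order, while the linear change in $Q$ from the exchange $\mathcal{A}_b\to\mathcal{A}_s$ is $(K_s|K_{b4}|-1)|\alpha|=(K_s-1)|\alpha|+O(|\alpha|^2)$, strictly negative provided $K_s<1$.

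The main obstacle is to reconcile the two opposing constraints on $K_s$ that events (b) and (d) impose, namely $|K_{s4}|<K_s<1$. This window is open precisely because the sharp bound $|K_{s4}|<1$ of Lemma \ref{lem:Isr} is strict; fixing any $K_s$ in that interval and then taking $K_-$, $K_0$, $K$, $\mathcal{K}$ sufficiently large (in that order) forces every individual increment to be non-positive. The smallness $F(\tau-)\leq\varepsilon$ controls all quadratic remainders uniformly, including higher-order corrections to $K_{b4}$ and $K_{si}$ away from the background, secondary cross terms generated by the resolved fronts, and the shock-state contributions $|\Delta U_{\diamond}|+|\Delta U^{\diamond}|$. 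Non-physical fronts (Case 2 of the simplified solver, and Case 5 at the boundary) are treated identically, since they enter the same weighted sums with controlled strengths and are absorbed by the same decreasing quantities.
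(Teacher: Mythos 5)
Your proposal is correct and follows essentially the same route as the paper's proof: the same five-case decomposition (interior weak--weak interaction, weak front hitting the strong shock from above and from below, boundary pressure change, and weak front reflecting off the boundary), the same Lemmas \ref{lem:Iww}--\ref{lem:Isrbelow}, and the same key observation that the window $|K_{s4}|<K_s<1/|K_{b4}|$ is nonempty, after which $K_0$, $K$, $\mathcal{K}$ are taken large and $\varepsilon$ small. The only cosmetic imprecision is describing $\Delta L$ in the reflection case as $O(|\alpha|^2)$ rather than $(|K_{b4}|-1)|\alpha|$ with $|K_{b4}|-1$ controlled by $\varepsilon$, but you note the correct mechanism elsewhere, so this is not a gap.
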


\begin{proof}
Assume that, on $x=\tau$, only one interaction happens.
Our proof is divided into the following five cases, according to the location where an interaction takes place.
Let $C>0$ be a universal constant that may vary at different occurrences,
depending only on $U_-$, $U_+$, and system (\ref{eqn:TwoDFEuler}).

 \smallskip\noindent
 \Case\label{Case1} {\it Interior interactions between weak waves}.
 A weak wavefront $\alpha_i$ of family $i$ interacts the other weak wavefront $\beta_j$
 of $j$-family at $(\tau,\xi)$ for $i, j\in\{1,\cdots,5\}$.
 Then Lemma \ref{lem:Iww} leads to
\begin{align*}
&L(\tau+)-L(\tau-)\leq C|\alpha_i||\beta_j|,\\
&\mathcal{K}Q(\tau+)-\mathcal{K}Q(\tau-)\leq \mathcal{K}\big(-K(1-CL(\tau-))+(K_s+1)C\big)|\alpha_i||\beta_j|,
\end{align*}
which implies
\begin{align*}
F(\tau+)-F(\tau-)\leq\mathcal{K}\big(-K(1-CL(\tau-))+C(K_s+1)\big)|\alpha_i||\beta_j|+ C|\alpha_i||\beta_j|.
\end{align*}

\begin{figure}
		\begin{center}
			\includegraphics{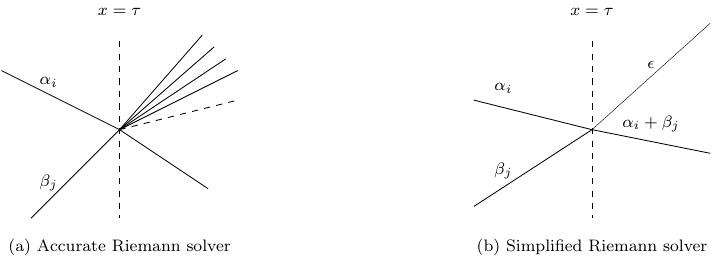}
		\end{center}
		\caption{Case 1. {\it Interior interactions between weak waves}}
		\label{fig:5.3}
\end{figure}

\Case\label{Case2} {\it A weak physical wavefront collides the strong shock}. A weak physical wavefront $\beta_1$ of $1$-family
interacts the shock wavefront $s$ at $(\tau,\xi)$ from above.

\smallskip
$\bullet$ When an accurate Riemann solver is used, then Lemmas \ref{lem:Isr} and \ref{Lem:BVe} imply that
\begin{align*}
		&L(\tau+)-L(\tau-)\leq\sum_{i=2}^{4}|K_{si}||\beta_1|-|\beta_1|,\\
		&\mathcal{K}Q(\tau+)-\mathcal{K}Q(\tau-)
        \leq \mathcal{K}\Big(KL(\tau-)\sum_{i=2}^{4}|K_{si}|-K_s+|K_{s4}|\Big)|\beta_1|,\\[2mm]
		&|U_{\diamond}(\tau-)-U_{\infty}^{-}|+|U^{\diamond}(\tau-)-U_{\infty}^{+}|
		-|U_{\diamond}(\tau+)-U_{\infty}^{-}|+|U^{\diamond}(\tau+)-U_{\infty}^{+}|\\
		&\,\,\leq|U_{\diamond}(\tau-)-U_{\diamond}(\tau+)|+|U^{\diamond}(\tau-)-U^{\diamond}(\tau+)|
         \leq C_{1}\Big(\sum_{i=2}^{4}|K_{si}|+1\Big)|\beta_1|.
	\end{align*}
Thus, we have
\begin{align*}
F(\tau+)-F(\tau-)\leq C|\beta_1|-\mathcal{K}\Big(K_s-|K_{s4}|-KL(\tau-)\sum_{i=2}^{4}|K_{si}|\Big)|\beta_1|.
\end{align*}
	
$\bullet$ When a simplified Riemann solver is used, then Lemma \ref{Lem:BVe} indicates
	\begin{align*}
		&L(\tau+)-L(\tau-)=-|\beta_1|+C_{1}|\beta_1|,\\
		&\mathcal{K}Q(\tau+)-\mathcal{K}Q(\tau-)
             \leq -\mathcal{K}K_s|\beta_{1}|,
	\end{align*}
	which gives
	\begin{align*}
		F(\tau+)-F(\tau-)\leq-\big(\mathcal{K}K_s+1-C_{1}\big)|\beta_{1}|.
	\end{align*}\begin{figure}
		\begin{center}
			\includegraphics{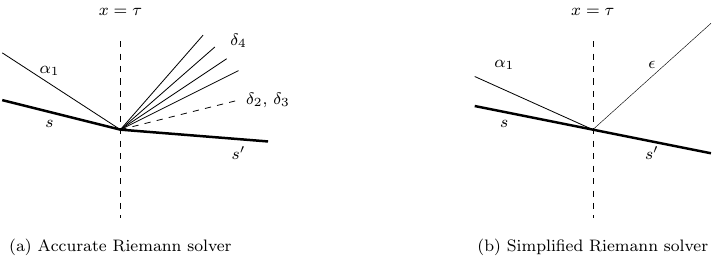}
		\end{center}
		\caption{Case 2. {\it A weak physical wave collides the strong shock from above}}
		\label{fig:5.4}
	\end{figure}

 \Case\label{Case3} {\it A weak wavefront collides the strong shock}. A weak wavefront $\alpha_i$ of $i$-family
 interacts the strong shock wavefront $s$ at $(\tau,\xi)$ from below.

 \smallskip
$\bullet$ When an accurate Riemann solver is used, then Lemmas \ref{lem:Isrbelow} and \ref{Lem:BVe}
 lead to
	\begin{align*}
		&L(\tau+)-L(\tau-)\leq\sum_{i=2}^{4}|\hat{K}_{ij}||\alpha_{i}|-|b_{\alpha_{i}}|,\\
		&\mathcal{K}Q(\tau+)-\mathcal{K}Q(\tau-)
          \leq \mathcal{K}\Big(KL(\tau-)\sum_{j=2}^{4}|\hat{K}_{ji}||\alpha_i|-|b_{\alpha_i}|+|\hat{K}_{4i}||\alpha_{i}|\Big),\\[2mm]
		&|U_{\diamond}(\tau-)-U_{\infty}^{-}|+|U^{\diamond}(\tau-)-U_{\infty}^{+}|
		-|U_{\diamond}(\tau+)-U_{\infty}^{-}|+|U^{\diamond}(\tau+)-U_{\infty}^{+}|\\
		&\,\,\leq |U_{\diamond}(\tau-)-U_{\diamond}(\tau+)|+|U^{\diamond}(\tau-)-U^{\diamond}(\tau+)|\leq C_{1}\Big(\sum_{j=2}^{4}|\hat{K}_{ji}|+1\Big)|\alpha_i|.
	\end{align*}
	Then we obtain
	\begin{align*}
		F(\tau+)-F(\tau-)&\leq -\Big(K_{-}-\sum_{i=2}^{4}|\hat{K}_{ij}|\Big)|\alpha_{i}|
           -\mathcal{K}\Big(K_{-}-KL(\tau-)\sum_{j=2}^{4}|\hat{K}_{ji}|-|\hat{K}_{4i}|\Big)|\alpha_{i}|\\
		&\quad\,\,+ C_{1}\Big(\sum_{j=2}^{4}|\hat{K}_{ji}|+1\Big)|\alpha_i|.
	\end{align*}

	$\bullet$ When a simplified Riemann solver is used, then Lemma \ref{Lem:BVe} indicates
	\begin{align*}
		&L(\tau+)-L(\tau-)=-K_{-}|\alpha_i|,\\[1mm]
		&\mathcal{K}Q(\tau+)-\mathcal{K}Q(\tau-)
            \leq \mathcal{K}\big(KL(\tau-)C_{2}-K_sK_{-}+C_{2}\big)|\alpha_i|,\\[1mm]
		&|U_{\diamond}(\tau-)-U_{\infty}^{-}|+|U^{\diamond}(\tau-)-U_{\infty}^{+}|
		-|U_{\diamond}(\tau+)-U_{\infty}^{-}|+|U^{\diamond}(\tau+)-U_{\infty}^{+}|\\
		&\,\,\leq|U_{\diamond}(\tau-)-U_{\diamond}(\tau+)|+|U^{\diamond}(\tau-)-U^{\diamond}(\tau+)|
		\leq (C_{2}+C_{1})|\alpha_i|,
	\end{align*}
	which gives
	\begin{align*}
		F(\tau+)-F(\tau-)\leq-K_{-}|\alpha_i|-\mathcal{K}\big(K_{-}K_s-KL(\tau-)C_{2}-C_{2}\big)|\alpha_i|+(C_{2}+C_{1})|\alpha_i|.
	\end{align*}\begin{figure}
		\begin{center}
			\includegraphics{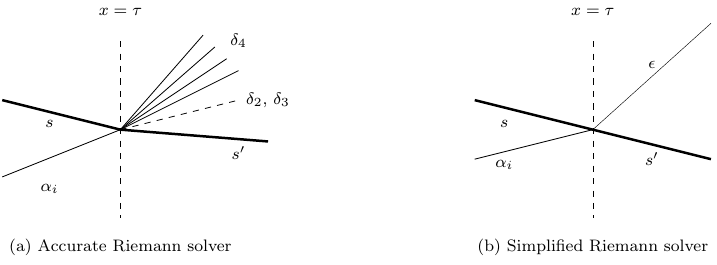}
		\end{center}
		\caption{Case 3. {\it A weak wavefront collides the strong shock from below}}
		\label{fig:5.42}
	\end{figure}
	
	\Case\label{Case4} {\it A weak physical wavefront is generated from the boundary}. A weak physical wavefront $\alpha_1$ of $1$-family
 is generated from the boundary at $(\tau,\xi)=(h\Delta x,b^{\mu,\Delta x}(h\Delta x))$.

 \smallskip
 	$\bullet$ When the accurate Riemann solver is applied, then Lemma \ref{lem:WCe} leads to
	\begin{align*}
		L(\tau+)-L(\tau-)=\,|&\alpha_1|\leq|K_{b}|\omega_h,\\
		\mathcal{K}Q(\tau+)-\mathcal{K}Q(\tau-)
        &\leq \mathcal{K}(-K_{0}\omega_h+KL(\tau-)|\alpha_1|+K_s|\alpha_1|)\\
		&\leq \mathcal{K}\big(-K_{0}+KL(\tau-)|K_{b}|+K_s|K_{b}|\big)\omega_h.
	\end{align*}
	Therefore, we have
	\begin{align*}
		F(\tau+)-F(\tau-)\leq|K_{b}|\omega_h-\mathcal{K}\big(K_{0}-KL(\tau-)|K_{b}|-K_s|K_{b}|\big)\omega_h.
	\end{align*}\begin{figure}[ht]
		\begin{center}
			\includegraphics{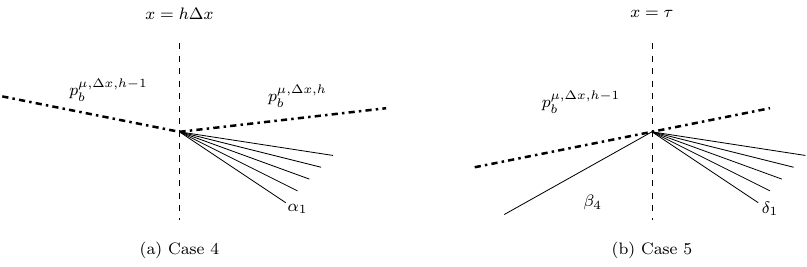}
		\end{center}
		\caption{Case 4 \& 5. {\it Interaction involving a weak physical wave and the boundary}}
		\label{fig:5.5}
	\end{figure}
	
 \Case\label{Case5} {\it A weak physical wavefront hits the boundary}. A weak physical wavefront $\beta_4$ of $4$-family
 hits the boundary at $(\tau,\xi)$.\par

	$\bullet$ When an accurate Riemann solver is used, then Lemma \ref{lem:WBe} implies
	\begin{align*}
		L(\tau+)-L(\tau-)=\,|&\delta_1|-|\beta_4|\leq \big(|K_{b4}|-1\big)|\beta_4|,\\
		\mathcal{K}Q(\tau+)-\mathcal{K}Q(\tau-)
        &\leq \mathcal{K}\big(KL(\tau-)|\delta_1|+K_{s}|\delta_1|-|\beta_{4}|\big)\\
		&\leq \mathcal{K}\big(KL(\tau-)|K_{b4}|+K_{s}|K_{b4}|-1\big)|\beta_{4}|,
	\end{align*}
	which leads to
	\begin{align*}
		F(\tau+)-F(\tau-)\leq \big(|K_{b4}|-1\big)|\beta_4|-\mathcal{K}\big(1-K_{s}|K_{b4}|-KL(\tau-)|K_{b4}|\big)|\beta_4|.
	\end{align*}
	To conclude, from Lemma \ref{lem:WBe}, when $\varepsilon$ is small enough, we may take
	\begin{align*}
		\max\{\dfrac{1}{2},|K_{s4}|\}<K_s<\min\{1,\dfrac{1}{|K_{b4}|}\}.
	\end{align*}
Moreover, we take $K_{0}$, $K$, and $\mathcal{K}$ large enough, and then take $\varepsilon$ smaller if necessary to obtain from the above estimates:
	\begin{align*}
		F(\tau+)\leq F(\tau-).
	\end{align*}
This completes the proof.
\end{proof}

Using a similar argument as in \cite{Chen2008}, we can obtain
\begin{corollary}\label{coro:EstofGlim}
If $F(0)<\varepsilon$, then $F(x)<\varepsilon$ for any $x >0$.
\end{corollary}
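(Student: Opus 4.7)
The plan is to upgrade the local, per-interaction non-increase established in Proposition \ref{prop:befIm} to the global bound $F(x)<\varepsilon$ by an induction over the (at most countable, discrete in $x$) set of interaction events in the approximate solution.

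First, I would observe that between consecutive interaction events $F$ is locally constant in $x$: the weighted strengths $b_\alpha$ of all existing fronts, the set of approaching pairs $\mathcal{A}$, $\mathcal{A}_s$, $\mathcal{A}_b$, the constant states $U_\diamond(x)$, $U^\diamond(x)$ adjacent to the strong shock, and the residual pressure sum $\sum_{i>h}\omega_i$ all remain unchanged until the next collision of two fronts, the impingement of a front on $\mathtt{S}^{\mu,\Delta x}$ or $\Gamma^{\mu,\Delta x}$, or the next boundary pressure jump at $x=h\Delta x$. Because the front speeds have been perturbed by at most $\mu$ during the construction to avoid simultaneous collisions, every such event falls into exactly one of the five configurations analyzed in Cases \ref{Case1}--\ref{Case5} of Proposition \ref{prop:befIm}, and the events form a sequence $0<\tau_1<\tau_2<\cdots$ with no accumulation point in any compact $x$-interval (the number of fronts being finite in each strip by $\rm H_3(h)$).

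Next, I would argue inductively on $n\ge 1$ that $F(\tau_n+)\le F(0)<\varepsilon$. For the base case, since $F$ is constant on $[0,\tau_1)$, we have $F(\tau_1-)=F(0)<\varepsilon$, so Proposition \ref{prop:befIm} applies with the common threshold $\varepsilon$ and gives $F(\tau_1+)\le F(\tau_1-)<\varepsilon$. The inductive step is identical: assuming $F(\tau_n+)<\varepsilon$, constancy of $F$ on $(\tau_n,\tau_{n+1})$ yields $F(\tau_{n+1}-)=F(\tau_n+)<\varepsilon$, so Proposition \ref{prop:befIm} again applies and $F(\tau_{n+1}+)\le F(\tau_{n+1}-)<\varepsilon$. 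For any $x>0$ that is not an interaction point, writing $\tau_n$ for the last event in $[0,x]$, local constancy gives $F(x)=F(\tau_n+)<\varepsilon$; for $x=\tau_n$ itself the bound $F(\tau_n+)<\varepsilon$ is the assertion just proved.

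The only subtlety is to verify that Proposition \ref{prop:befIm} is \emph{applicable} at each induction step, that is, that the inductive hypotheses $\rm H_1(h)$--$\rm H_3(h)$ continue to hold so that Lemmas \ref{Lem:strongshock}--\ref{lem:Isrbelow} may be invoked in Cases \ref{Case1}--\ref{Case5}. This is exactly what the bound $F<\varepsilon$ controls: $L(x)\le F(x)<\varepsilon$ bounds the (weighted) total variation of the weak waves, and by Lemma \ref{Lem:BVe} this, together with $|U_\diamond(x)-U_\infty^-|+|U^\diamond(x)-U_\infty^+|\le F(x)<\varepsilon$, keeps the piecewise constant states in $\Omega^{\mu,\Delta x}_{\mp}$ inside $O_{\varepsilon_0}(U_\mp)$ provided $\varepsilon$ has been shrunk, once and for all, so that the constants $C_1$, $C_2$, $K_-$ from Lemmas \ref{Lem:BVe}--\ref{Lem:NPw} absorb the equivalence between weighted strengths and physical distances. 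Choosing $\varepsilon$ small enough to simultaneously satisfy this smallness and the smallness required by Proposition \ref{prop:befIm}, and assuming $\|(\widetilde U_\infty,\tilde p_b)\|_{L^\infty\cap BV}$ small enough so that $F(0)<\varepsilon$, the induction closes and the corollary follows.
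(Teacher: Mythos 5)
Your proof is correct and is precisely the standard argument the paper has in mind when it writes ``using a similar argument as in \cite{Chen2008}'': $F$ is constant between the discretely spaced interaction/boundary events, and Proposition \ref{prop:befIm} is applied inductively at each event, with the smallness of $F$ itself propagating the hypotheses $\rm H_1$--$\rm H_3$ needed to invoke the interaction lemmas at the next step. No gap; this matches the paper's intended route.
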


\smallskip
Next, we give an estimate of the non-physical waves. Let $\epsilon(t)$ be a non-physical wave of $U^{\mu,\Delta}$ crossing $x=t$.
Then we can obtain the following estimates of the total strength of non-physical waves,
whose proof will be given in Appendix \ref{appen:A1}.
\begin{proposition}\label{prop:EoNP}
For every $x>0$, there exists $\nu_0>0$ such that, when the threshold parameter $\nu\in (0,\nu_0)$,
\begin{align*}
		\sum_{\epsilon\in\mathcal{NP}}\epsilon(x)<\mu.
\end{align*}
\end{proposition}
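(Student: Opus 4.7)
The strategy is to bound the total non-physical strength $V(x):=\sum_{\epsilon\in\mathcal{NP}}|\epsilon(x)|$ by exploiting that non-physical fronts arise only through the simplified Riemann solver, whose use is regulated by the threshold parameter $\nu$, and by combining this with the Glimm-potential bounds from Corollary~\ref{coro:EstofGlim}.

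First, I would enumerate the events along $x$ that can change $V$. By inspecting the construction in Section~\ref{sect-appro}, a non-physical front is \emph{created} only at a Case~1 simplified Riemann solver between two physical weak waves $\alpha,\beta$ with $|\alpha\beta|\le\nu$, and by Lemma~\ref{Lem:NPw} its birth strength is $|U_r-U_r'|=O(1)|\alpha||\beta|\le O(1)\nu$. A non-physical front is \emph{modified} only at a Case~2 simplified Riemann solver upon meeting a physical weak wave $\alpha$, where Lemma~\ref{Lem:NPw} applied to the pair of ordered compositions gives a new strength bounded by $|\epsilon|(1+O(|\alpha|))$. It is \emph{eliminated} only when it crosses the approximate boundary (Case~5 of the boundary handling) or merges into the strong shock. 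All other events leave $V$ unchanged.

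Next, I would estimate the total growth of $V$ over $[0,x]$ by summing the above pointwise increments and obtain $V(x)\le \sum_{\text{Case 1 sim}}O(1)|\alpha||\beta|+\sum_{\text{Case 2 sim}}O(1)|\alpha||\epsilon|$. Since non-physical fronts belong to family~$5$ and every physical weak wave sitting above such a front forms an approaching pair contributing $|\alpha||\epsilon|$ to the potential $Q$ in \eqref{eqn:GlimPotential}, the monotonicity of $F$ from Corollary~\ref{coro:EstofGlim} controls $\sum_{\text{Case 2 sim}}|\alpha||\epsilon|$ by a constant multiple of the cumulative decrease of $F$, which tightens further once an a priori bound on the supremum of non-physical strengths is available. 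The first sum is at most $O(1)\nu\cdot\#\{\text{Case 1 sim events in }[0,x]\}$, since each such event has $|\alpha\beta|\le\nu$.

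Finally, I would bound the number of Case~1 simplified events whose contribution persists in $V(x)$. Because every non-physical front travels at speed $\hat\lambda$ strictly larger than the slope of the Lipschitz approximate wedge boundary $\Gamma^{\mu,\Delta x}$, each one is absorbed at the boundary after a uniformly bounded $x$-time, so for every fixed $\nu>0$ only finitely many Case~1 births remain active at time $x$. A bootstrap between the individual-strength bound $|\epsilon|\le O(\nu)$, the Glimm-potential control of $\sum|\alpha||\epsilon|$, and this finiteness of active NP fronts then delivers $V(x)<\mu$ once $\nu$ is taken smaller than some $\nu_0=\nu_0(x,\mu)$. The principal technical obstacle is making the bootstrap rigorous, particularly counting Case~1 events as $\nu\to 0$ and showing that the $\nu$-factor defeats their possible proliferation; this is handled by a refined counting argument along the lines of Bressan's wave-front tracking methodology \cite{Bressan2000}, adapted to the present wedge-boundary setting, with the full details deferred to Appendix~\ref{appen:A1}.
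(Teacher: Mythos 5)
There is a genuine gap, and it sits exactly where you place your ``principal technical obstacle'': you never actually bound the number of non-physical births in a way that is uniform as $\nu\to0$. Your first two steps are fine and match the paper --- each non-physical front is born with strength $O(1)\nu$ (from a simplified solver applied when $|\alpha\beta|\le\nu$, or when a weak $1$-front of size $\le\nu$ meets the strong shock), and its subsequent amplification is controlled through the approaching-wave functional; the paper makes the latter precise by showing that $x\mapsto\epsilon(x)\exp\{M_1(L_\epsilon(x)+\mathcal{K}Q(x))\}$ is non-increasing, which yields the \emph{per-front} bound $\epsilon(x)\le M_2\nu$ rather than a bound on the sum. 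But your closing argument --- that non-physical fronts travel at speed $\hat\lambda$ and are absorbed by the boundary in bounded $x$-time, hence ``only finitely many Case~1 births remain active at time $x$'' --- does not deliver what you need. Finiteness at fixed $\nu$ is automatic from the construction; the issue is that as $\nu$ decreases, more and more interactions fall below the threshold, so the number of active non-physical fronts at level $x$ can a priori grow like a negative power of $\nu$ and defeat the factor $O(\nu)$ in the product. Absorption at the boundary limits each front's lifetime but says nothing about the birth rate. The ``bootstrap'' you invoke is precisely the unproved step.

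The paper closes this gap with a generation-order decomposition, which is the idea missing from your proposal. One assigns each front an order $Od(\alpha)$ counting the interactions needed to create it (boundary and strong shock always order $1$), and introduces the truncated functionals $L_m,Q_m$ restricted to fronts of order $\ge m$. The interaction estimates give the recursion $\widetilde{L}_m\le\mathcal{K}\widetilde{Q}_{m-1}$ and $\widetilde{Q}_m\le C(\mathcal{K}^2+\mathcal{K})F(0)\,\widetilde{Q}_{m-1}$, so for $F(0)$ small the total strength of all fronts of order $>m$ is at most $\mathcal{K}\varepsilon\eta^{m}$ with $\eta<1$, \emph{independently of how many such fronts there are}. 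Meanwhile the number of fronts of order $\le m$ is bounded by a polynomial $P_m(N_\mu,\delta^{-1})$ depending only on the number of initial/boundary fronts and the rarefaction-splitting parameter $\delta$ --- crucially, not on $\nu$. Splitting $\sum_{\epsilon\in\mathcal{NP}}\epsilon$ accordingly gives the bound $\mathcal{K}\varepsilon\eta^{m}+M_2\nu P_m(N_\mu,\delta^{-1})$; one first picks $m$ to make the first term $<\mu/2$ and then $\nu_0$ to make the second $<\mu/2$. Without this (or an equivalent $\nu$-independent control on the count of low-order fronts versus a strength bound on high-order ones), your argument does not close.
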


By Corollary \ref{coro:EstofGlim}, applying a similar argument as in \cite{Chen2008}, we obtain
\begin{proposition}\label{prop:Important2}
There exists $\tilde\varepsilon>0$
such that, given $\|(\tilde{p}_{b}, \widetilde{U}_{\infty})\|_{ L^{\infty}\cap BV}<\tilde\varepsilon$,
then, for every $\mu$, $\Delta x$, and $\delta >0$,
the modified wave-front tracking algorithm provides global approximate solutions $U^{\mu,\Delta x}$
and the corresponding approximate boundaries $\Gamma^{\mu,\Delta x}$ and $1$-strong shocks $\mathtt{S}^{\mu,\Delta x}$
in $\Omega^{\mu,\Delta x}$ satisfying all $H_1(h)$--$H_3(h)$ for any $h\geq1$,
	\begin{align*}
		&\text{T.V.}\{U^{\mu,\Delta x}(x,\cdot):\ (-\infty,\chi^{\mu,\Delta x}(x))\}
        <C\,\text{T.V.}\{(\tilde{p}_{b},\widetilde{U}_{\infty})\},\\
		&\text{T.V.}\{U^{\mu,\Delta x}(x,\cdot):\ (\chi^{\mu,\Delta x}(x),b^{\mu,\Delta x}(x))\}
		<C\,\text{T.V.}\{(\tilde{p}_{b}, \widetilde{U}_{\infty})\},
	\end{align*}
	and
	\begin{align*}
		&|b^{\mu,\Delta x}(x+t)-b^{\mu,\Delta x}(x)|\leq\big(|\dfrac{v_{+}}{u_{+}}|+\mathcal{E}\big)|t|,\\
		&|\chi^{\mu,\Delta x}(x+t)-\chi^{\mu,\Delta x}(x)|\leq\big(|s_{0}|+\mathcal{E}\big)|t|,
	\end{align*}
	for any $x\geq0$ and $t>0$, where $\mathcal{E}>0$ and $C>0$ are constants
 depending only on $U_-$, $U_+$, and system \eqref{eqn:TwoDFEuler}.\\
	\end{proposition}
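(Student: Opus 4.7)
The plan is to argue by induction on $h$, using Proposition \ref{prop:befIm} and Corollary \ref{coro:EstofGlim} to propagate uniform Glimm-type bounds, and Proposition \ref{prop:EoNP} to control the non-physical production. First, I would verify that, for $\|(\tilde{p}_{b},\widetilde{U}_{\infty})\|_{L^{\infty}\cap BV}$ sufficiently small, the initial value $F(0)$ is smaller than the threshold $\varepsilon$ furnished by Proposition \ref{prop:befIm}. This uses the fact that the initial strong shock at $x=0$ is the background 1-shock connecting $\overline{U}_\infty$ and $U_+$, and that $L(0)$ is bounded by a constant times $\mathrm{T.V.}(\widetilde{U}_\infty)$, while the pressure-potential contribution $K_0\sum_i\omega_i$ is bounded by $K_0\,\mathrm{T.V.}(\tilde p_b)$.

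In the inductive step, assume $U^{\mu,\Delta x}$ has been constructed on $\bigcup_{k=1}^{h}\Omega^{\mu,\Delta x,k}$ with $H_1(h)$--$H_3(h)$ valid. Between interaction times, each wavefront travels along a straight line, so the construction extends trivially. At each interaction point $(\tau,\xi)$ with $\tau\in[h\Delta x,(h+1)\Delta x)$, Proposition \ref{prop:befIm} gives $F(\tau+)\le F(\tau-)$, so Corollary \ref{coro:EstofGlim} yields $F(x)<\varepsilon$ throughout. Since $L(x)\le F(x)<\varepsilon$, Lemma \ref{Lem:BVe}(i) together with the weighted-strength definition \eqref{eqn:weistren} bounds $\mathrm{T.V.}\{U^{\mu,\Delta x}(x,\cdot):(-\infty,\chi^{\mu,\Delta x}(x))\}$ and $\mathrm{T.V.}\{U^{\mu,\Delta x}(x,\cdot):(\chi^{\mu,\Delta x}(x),b^{\mu,\Delta x}(x))\}$ by $O(1)\,\mathrm{T.V.}\{(\tilde p_b,\widetilde U_\infty)\}$, and hence keeps $U^{\mu,\Delta x}$ in $O_{\varepsilon_0}(U_{\mp})$ on the respective sides, so $H_2(h+1)$ holds. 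The strong 1-shock persists because Lemma \ref{lem:SBe} and Lemma \ref{lem:Isr} (or its below-variant Lemma \ref{lem:Isrbelow}) determine the new shock speed from the incoming weak front, yielding $H_1(h+1)$.

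The main obstacle is ensuring $H_3(h+1)$, namely keeping the number of fronts finite. For this I would combine two devices: (a) the threshold parameter $\nu$ that switches from the accurate to the simplified Riemann solver for interactions whose product strength (or single strength against the boundary/strong shock) is below $\nu$, and (b) Proposition \ref{prop:EoNP}, which keeps the total strength of non-physical fronts below $\mu$ once $\nu<\nu_0$. Since an accurate solver is invoked only finitely often (the potential $Q$ drops by a definite amount $\nu$ at each such interaction, while $Q$ is initially bounded), and each simplified solver introduces at most one new non-physical front, the total number of fronts in $\Omega^{\mu,\Delta x,h+1}$ remains finite. Slight perturbations of front speeds (of order $\mu$) rule out triple coincidences, so our case analysis in Proposition \ref{prop:befIm} covers every interaction.

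Finally, the Lipschitz bounds on $b^{\mu,\Delta x}$ and $\chi^{\mu,\Delta x}$ follow directly from the construction: by the boundary condition \eqref{1.9a} we have $\frac{{\rm d}}{{\rm d}x}b^{\mu,\Delta x}(x)=\frac{v^{\mu,\Delta x}_{b}(x)}{u^{\mu,\Delta x}_{b}(x)}$ wherever defined, and Lemma \ref{Lem:BVe}(i) applied to $U^{\mu,\Delta x}_b(x)-U_+$ shows $|v^{\mu,\Delta x}_b/u^{\mu,\Delta x}_b-v_+/u_+|\le\mathcal{E}$ once the initial total variation is small enough; integrating gives the first inequality. For the second, Lemmas \ref{Lem:strongshock}, \ref{lem:Isr}, and \ref{lem:Isrbelow} imply that each interaction modifies the strong shock speed by $O(1)$ times the strength of the interacting weak wave, so $|s^{\mu,\Delta x}(x)-s_0|\le\mathcal{E}$ uniformly by summing over wavefronts with weights absorbed by $F(x)<\varepsilon$, and integrating yields the claimed Lipschitz bound.
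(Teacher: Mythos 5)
Your proposal is correct and follows essentially the same route as the paper, which proves this proposition by invoking Corollary \ref{coro:EstofGlim} together with the standard wave-front tracking argument of Chen--Li: induction on the slabs $\Omega^{\mu,\Delta x,h}$, the non-increasing Glimm-type functional to preserve $H_1$--$H_2$ and the BV bounds (via the chain $\text{T.V.}\le C_1L(x)\le C_1F(x)\le C_1F(0)\le C\,\text{T.V.}\{(\tilde p_b,\widetilde U_\infty)\}$, which uses the monotonicity of $F$ rather than merely $F(x)<\varepsilon$), the threshold $\nu$ plus Proposition \ref{prop:EoNP} for finiteness of fronts, and the closeness of $v_b/u_b$ to $v_+/u_+$ and of $s$ to $s_0$ for the Lipschitz bounds on $b^{\mu,\Delta x}$ and $\chi^{\mu,\Delta x}$. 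No gaps beyond the level of detail the paper itself omits.
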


Furthermore, we now give estimates about the strong $1$-shock and the boundary.
\begin{proposition}\label{Prop:Important2}
There is a constant $\bar{M}>0$ such that
\begin{align*}
&T.V.\{\sigma^{\mu,\Delta x}(\cdot)\,:\,[0,\infty)\}
  =\sum_{\tau\in\Lambda}|\sigma^{\mu,\Delta x}(\tau+)-\sigma^{\mu,\Delta x}(\tau-)|\leq \bar M,\\
&T.V.\{U^{\mu, \Delta x}_{b}(\cdot)\,:\,[0,\infty)\}
 =\sum_{\tau\in\Lambda}|U^{\mu, \Delta x}_{b}(\tau+)-U^{\mu, \Delta x}_{b}(\tau-)|\leq \bar M(1+\mu),
\end{align*}
where $\Lambda$ is a set consisting of all the $x$-coordinates at which a colliding happens,
and $\bar{M}$ is independent of $\mu$ and $\Delta x$.
\end{proposition}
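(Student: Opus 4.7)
The plan is to observe that both $\sigma^{\mu,\Delta x}(\cdot)$ and $U^{\mu,\Delta x}_{b}(\cdot)$ are piecewise constant in $x$, with jumps occurring only at the discrete interaction times $\tau\in\Lambda$; to bound each individual jump linearly by the strength of the incoming weak wavefront (or by the size of the boundary-pressure jump) using the reflection/interaction lemmas of Section~\ref{sect-pre}; and then to sum these local estimates by exploiting the non-increasing property of the Glimm-type functional $F$ from Proposition~\ref{prop:befIm}, together with the control of non-physical fronts from Proposition~\ref{prop:EoNP}.

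First I would treat $\sigma^{\mu,\Delta x}$. Its value can change only at an interaction time where a weak wavefront collides with $\mathtt{S}^{\mu,\Delta x}$, either from above (as in Case~\ref{Case2} of the proof of Proposition~\ref{prop:befIm}) or from below (Case~\ref{Case3}). Lemmas~\ref{lem:Isr} and~\ref{lem:Isrbelow} give at every such $\tau$
\begin{equation*}
|\sigma^{\mu,\Delta x}(\tau+)-\sigma^{\mu,\Delta x}(\tau-)|\leq C\,|\alpha|,
\end{equation*}
with $\alpha$ the strength of the incoming weak front and $C$ depending only on $U_{-}$, $U_{+}$, and system~\eqref{eqn:TwoDFEuler}. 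The interaction potential $Q$ in~\eqref{eqn:GlimPotential} contains the term $K_{s}\sum_{\alpha\in\mathcal{A}_{s}}|b_{\alpha}|$; each such collision removes one summand from it, while the reflected/transmitted fronts are absorbed by the other decreasing contributions to $F$ as already shown in Proposition~\ref{prop:befIm}. Hence the total strength of all weak fronts that ever hit $\mathtt{S}^{\mu,\Delta x}$ is bounded by $(\mathcal{K}K_{s})^{-1}F(0)\leq C\,\|(\tilde p_{b},\widetilde U_{\infty})\|_{L^{\infty}\cap BV}$, uniformly in $\mu$ and $\Delta x$; summing then yields the first bound $\bar M$.

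For $U^{\mu,\Delta x}_{b}$, jumps come from four sources: (a) a boundary-pressure discontinuity at $x=h\Delta x$, for which Lemma~\ref{lem:WCe} gives $|U^{\mu,\Delta x}_{b}(\tau+)-U^{\mu,\Delta x}_{b}(\tau-)|\leq C\omega_{h}$; (b) a physical $4$-front of strength $>\nu$ hitting $\Gamma^{\mu,\Delta x}$, for which Lemma~\ref{lem:WBe} combined with Lemma~\ref{Lem:BVe} yields a jump of order $|\beta_{4}|$; (c) a physical weak front of strength $\leq\nu$ that is simply allowed to cross, producing a jump of order its strength; and (d) a non-physical front crossing the boundary, whose contribution is exactly its strength. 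Summing (a) over $h$ is bounded by $C\,\text{T.V.}(\tilde p_{b})$; summing (b)--(c) is controlled by the $\mathcal{A}_{b}$-term in~\eqref{eqn:GlimPotential} and hence by $CF(0)\leq C\|(\tilde p_{b},\widetilde U_{\infty})\|_{L^{\infty}\cap BV}$; and summing (d) is bounded by $\mu$ via Proposition~\ref{prop:EoNP}. Together these three contributions produce the total-variation bound $\bar M(1+\mu)$.

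The hard part is the careful bookkeeping: at every interaction type one must verify that the jump of $\sigma^{\mu,\Delta x}$ or of $U^{\mu,\Delta x}_{b}$ is dominated by a strictly negative contribution to $F$ of comparable size, and, dually, that each secondary wave that will later strike $\mathtt{S}^{\mu,\Delta x}$ or $\Gamma^{\mu,\Delta x}$ is already accounted for in the non-increasing estimate of $F$. Once this is done with the weights fixed in Proposition~\ref{prop:befIm}, the telescoping sum over $\Lambda$ produces constants $\bar M$ depending only on $U_{-}$, $U_{+}$, and system~\eqref{eqn:TwoDFEuler}, and in particular independent of $\mu$ and $\Delta x$, as required.
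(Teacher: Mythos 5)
Your proposal is correct and follows essentially the same route as the paper: bound each jump of $\sigma^{\mu,\Delta x}$ and $U^{\mu,\Delta x}_{b}$ by the strength of the incoming front or pressure jump via Lemmas \ref{lem:WCe}--\ref{lem:Isrbelow}, sum these over $\Lambda$ by telescoping the decrease of the Glimm-type functional $F$ established in Proposition \ref{prop:befIm} (the paper packages this as a quantity $E_{U^{\mu,\Delta x}}(\tau)\le \eta_1^{-1}\bigl(F(\tau-)-F(\tau+)\bigr)$), and isolate the non-physical fronts crossing the boundary, whose total contribution is $O(\mu)$ by Proposition \ref{prop:EoNP}, yielding the extra $\bar M\mu$ term. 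The only cosmetic difference is that you track which individual term of $Q$ decreases at each interaction type, whereas the paper abstracts this into the single dominated quantity $E_{U^{\mu,\Delta x}}(\tau)$.
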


\begin{proof}
For any $\tau\in\Lambda$, we define
\begin{equation}
E_{U^{\mu,\Delta x}}(\tau):=
\left\{\begin{aligned}
			&\eta_{1}|\alpha_i||\beta_j|\quad &&\text{ for Case \ref{Case1},} \\
			&\eta_{1}|\beta_1| &&\text{ for Case \ref{Case2},} \\
			&\eta_{1}|\alpha_i| &&\text{ for Case \ref{Case3},} \\
			&\eta_{1}|\omega_h|&&\text{ for Case \ref{Case4},}\\
			&\eta_{1}|\beta_4| &&\text{ for Case \ref{Case5},}
\end{aligned}
\right.\label{eqn:DecreaseQuantities}
\end{equation}
for some $\eta_{1}>0$.
From the proof of Proposition \ref{prop:befIm}, when $\eta_{1}$ is sufficiently small,
it can be verified that
\begin{align*}
\sum_{\tau\in\Lambda}E_{U^{\mu,\Delta x}}(\tau)\leq\sum_{\tau\in\Lambda}\eta_{1}^{-1}(F(\tau-)-F(\tau+))\leq\eta_{1}^{-1}F(0).
\end{align*}
Meanwhile, Lemmas \ref{lem:SBe} and \ref{lem:Isr} imply that
	\begin{align*}
		\sum_{\tau\in\Lambda}|s^{\mu,\Delta x}(\tau+)-s^{\mu,\Delta x}(\tau-)|
		\leq \sum_{\tau\in\Lambda}(|K_{bs}|+|K_{s1}|)E_{U^{\mu,\Delta x}}(\tau)
		\leq(\sup|K_{bs}|+\sup|K_{s1}|)\eta_{1}^{-1}F(0).
	\end{align*}
	
As for the estimates of the boundary, we need to note that there are errors produced
due to non-physical waves.
However, those non-physical waves do not change, once they hit the boundary.
Hence, before $x=t$, the total strength of non-physical waves that hit the boundary
is less than the total strength of all the non-physical waves at $x=t$, which is less than $\mu$,
by Proposition \ref{prop:EoNP}. Then similar argument shows
\begin{align*}
\sum_{\tau\in\Lambda}|U^{\mu, \Delta x}_{b}(\tau+)-U^{\mu, \Delta x}_{b}(\tau-)|
\leq C\mu+\sum_{\tau\in\Lambda}CE_{U^{\mu,\Delta x}}(\tau)
=C\mu+C\eta_{1}^{-1}F(0).
\end{align*}
Combining these estimates yields the result.
\end{proof}

Combining Proposition \ref{prop:EoNP}--\ref{Prop:Important2}, we now conclude one of our main theorems (see \cite{Zhang2003,Chen2006,Chen2007}). For completeness, a proof is provided in Appendix \ref{appen:A2}.
\begin{theorem}\label{thm:EaS}
There exists $\tilde\varepsilon>0$ such that,
when $\|(\widetilde{U}_{\infty},\tilde{p}_{b})\|_{ L^{\infty}\cap BV}
<\tilde\varepsilon$,
there is a subsequence $\{\mu_l\}_{l=1}^\infty$ and corresponding $\{\Delta x_l\}_{l=1}^\infty$ so that
\begin{itemize}
\item[\rm(i)] In any bounded $x$-interval, $(b^{\mu_l,\Delta x_l}, \chi^{\mu_l,\Delta x_l})$ converges to $(b, \chi)$ uniformly{\rm ;}
\item[\rm(ii)] $U^{\mu_l, \Delta x_l}_{b}$ converges to $U_{b}\in \text{BV}([0,\infty))$ a.e. with
$\dot{b}(x)=\frac{v_b(x)}{u_b(x)}$ a.e.,
while $s^{\mu_l,\Delta x_l}$ converges to $s\in \text{BV}([0,\infty))$ a.e. with $\dot{\chi}(x)=s(x)$ a.e.{\rm ;}
\item[\rm(iii)] For every $x>0$, $U^{\mu_l,\Delta x_l}$ converges to $U$
in $ L^1_{\text{\rm loc}}\big((-\infty,b(x));\mathbb{R}^4\big)$,
which is an entropy solution to problem \eqref{eqn:TwoDFEuler}
in $\Omega=\{(x,y)\,:\,x\geq0,y<b(x)\}$ satisfying \eqref{eqn:ConseForm}--\eqref{1.9a}.
\end{itemize}
\end{theorem}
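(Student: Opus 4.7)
The plan is to extract convergent subsequences through a three-stage compactness argument, then verify that the limit is an entropy solution. First, from Proposition \ref{prop:Important2}, the approximate boundaries $b^{\mu,\Delta x}$ and strong shocks $\chi^{\mu,\Delta x}$ are uniformly Lipschitz continuous in $x$ with constants $|v_+/u_+|+\mathcal{E}$ and $|s_0|+\mathcal{E}$, respectively, and vanish at $x=0$. Hence the Arzel\`a–Ascoli theorem yields a subsequence $\{\mu_l,\Delta x_l\}$ along which $(b^{\mu_l,\Delta x_l},\chi^{\mu_l,\Delta x_l})$ converges uniformly on every bounded $x$-interval to a pair of Lipschitz functions $(b,\chi)$, which gives (i). Next, from Proposition \ref{Prop:Important2}, the functions $x\mapsto U^{\mu,\Delta x}_b(x)$ and $x\mapsto s^{\mu,\Delta x}(x)$ have uniformly bounded total variation on $[0,\infty)$, so Helly's selection theorem extracts a further subsequence (still labeled $\{\mu_l,\Delta x_l\}$) along which they converge a.e.\ to BV limits $U_b$ and $s$; the identifications $\dot b(x)=v_b(x)/u_b(x)$ and $\dot\chi(x)=s(x)$ follow by passing to the limit in the defining integral representations $b^{\mu_l,\Delta x_l}(x)=\int_0^x v_b^{\mu_l,\Delta x_l}/u_b^{\mu_l,\Delta x_l}\,\dd t$ and $\chi^{\mu_l,\Delta x_l}(x)=\int_0^x s^{\mu_l,\Delta x_l}\,\dd t$, using dominated convergence (the integrands are uniformly bounded by the Lipschitz bounds).

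For part (iii), the argument is a two-dimensional Helly-type compactness. For each $x>0$, Proposition \ref{prop:Important2} gives $\mathrm{T.V.}\{U^{\mu,\Delta x}(x,\cdot):(-\infty,b^{\mu,\Delta x}(x))\}\le C\,\mathrm{T.V.}(\tilde p_b,\widetilde U_\infty)$ uniformly in $(\mu,\Delta x)$. To upgrade pointwise $x$-slice compactness into joint $L^1_{\mathrm{loc}}$ compactness, the plan is to prove a Lipschitz estimate of the form
\begin{equation*}
\|U^{\mu,\Delta x}(x_1,\cdot)-U^{\mu,\Delta x}(x_2,\cdot)\|_{L^1_{\mathrm{loc}}}\le C|x_1-x_2|
\end{equation*}
by tracking the discontinuities in a strip $x_1<x<x_2$: each wavefront has speed bounded by $\hat\lambda$, so the $L^1$ displacement of $U^{\mu,\Delta x}$ between the two slices is controlled by $\hat\lambda |x_1-x_2|$ times the total variation. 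Combined with the uniform BV bound in $y$ and a standard diagonalization, this yields a further subsequence with $U^{\mu_l,\Delta x_l}\to U$ in $L^1_{\mathrm{loc}}$ for a.e.\ $x$, where $U(x,\cdot)\in BV$ uniformly.

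The main obstacle is showing that $U$ is an entropy solution satisfying (\ref{eqn:ConseForm})--(\ref{1.9a}) on the limit domain $\Omega=\{y<b(x)\}$. For any test function $\varphi$ supported in $\Omega\cup\Gamma$, one writes the defect in the weak formulation of (\ref{eqn:ConseForm}) for $U^{\mu_l,\Delta x_l}$ as a sum of jumps across wavefronts. The jumps across shocks, contact discontinuities, and strong shocks contribute zero by the Rankine--Hugoniot relation; the jumps across discretized rarefaction fronts are bounded by $O(\delta_l)\to0$; and the jumps across non-physical fronts contribute $O(1)\sum_{\epsilon\in\mathcal{NP}}|\epsilon|\le O(1)\mu_l\to0$ by Proposition \ref{prop:EoNP}. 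The pressure-boundary condition $p(x,b(x))=p_b(x)$ and the slip condition $(u,v)\cdot(-b',1)=0$ pass to the limit because the trace $U^{\mu_l,\Delta x_l}|_{\Gamma^{\mu_l,\Delta x_l}}=U^{\mu_l,\Delta x_l}_b$ converges a.e.\ to $U_b$ and $\Gamma^{\mu_l,\Delta x_l}\to\Gamma$ uniformly, so one may compare values on the moving approximate boundaries using the Lipschitz trace estimate. The entropy inequality (\ref{eqn:Clausius}) is verified in the same way: shock jumps produce a sign-definite contribution, rarefaction and non-physical contributions vanish in the limit. Assembling these ingredients yields (iii) and completes the theorem.
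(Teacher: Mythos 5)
Your proposal is correct and follows essentially the same route as the paper: Arzel\`a--Ascoli for the boundaries and shock curves, Helly plus a uniform $L^1$--Lipschitz-in-$x$ estimate for the solutions, and a divergence-theorem/jump-summation argument with Proposition \ref{prop:EoNP} handling the non-physical fronts. One minor imprecision: since the algorithm perturbs wavefront speeds by up to $\mu$ to avoid multiple simultaneous collisions, physical shock and contact jumps contribute $O(1)\mu|\alpha|$ to the weak-formulation defect rather than exactly zero, but this still vanishes as $\mu_l\to0$, so the conclusion is unaffected.
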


\section{A modified Lyapunov functional}\label{sect-Lya}
Relying on the analysis in \S \ref{sect-appro},
we now analyze the well-posedness of this system.
Compared to the problem with the given wedge boundary, our solutions may be constructed
on the different regions.
As a result, it seems hard to measure the distance of two weak solutions $U_1$ and $U_2$
in $ L^1$ directly.
However, in this paper, we extend two different solutions to the same domain and compare
the corresponding  $ L^1$--norm of two extended solutions in this domain.
With this setup, we can introduce a Lyapunov-type functional
to measure the new $ L^1$--distance, as well as
the $ L^{\infty}$--distance between the two boundaries.
We first extend $U^{\mu,\Delta x}(x,y)$ to
\begin{align*}
U^{\mu,\Delta x}_E(x,y)=\left\{\begin{aligned}
	&U^{\mu,\Delta x}(x,y) \quad &&\mbox{for all $y\leq b^{\mu,\Delta x}(x)$}, \\
	&U^{\mu,\Delta x}(x,b^{\mu,\Delta x}(x))\quad &&\mbox{for all $y>b^{\mu,\Delta x}(x)$},
\end{aligned}
\right.
\end{align*}
for all $x>0$, where $b^{\mu,\Delta x}(x)$ is the corresponding boundary.
As a result, our weak solutions are extended as
\begin{align*}
U_E(x,y)=\left\{\begin{aligned}
	&U(x,y) &&\mbox{for all $y\leq b(x)$}, \\
	&U(x,b(x)) \quad&&\mbox{for all $y>b(x)$}.
\end{aligned}
\right.
\end{align*}

To simplify the notation, we omit subscript $_E$ and superscript $^{\mu,\Delta x}$, and use $U$
to denote an extended approximate solution.
Given two suitable initial data functions $U_{\infty,1}$ and $U_{\infty,2}$
with corresponding pressure distributions
$p_{b,1}$ and $p_{b,2}$ on the boundaries, according to our previous construction,
for every $\mu>0$, there are two $\mu$-approximate solutions $U_1$ and $U_2$
with approximate boundaries $y=b_1(x)$ and $y=b_2(x)$, respectively.
Set
\begin{align*}
b_{\max}(x):=\max\{b_1(x),b_2(x)\},\qquad\, b_{\min}(x):=\min\{b_1(x),b_2(x)\},
\end{align*}
and call
\begin{align*}
\Gamma_{\max}:=\big\{(x,b_{\max}(x))\,:\,x\ge 0\big\},\qquad\, \Gamma_{\min}(x)=\big\{(x,b_{\min}(x))\,:\, x\ge 0\big\}
\end{align*}
the outer and inner boundary of $U_1$ and $U_2$, respectively.
Similar to \cite{Bressan2000,Bressan1999,Chen2008,Holden2015,Lewicka2000,Lewicka2001,Lewicka2002,Liu1999},
fixing $x$ and given $U_1$, $U_2$, the scalar functions $h_i(y)$ are implicitly defined by
\begin{itemize}
\item $U_2\big(x,y\big)=H\big(h_1(y),h_2(y),h_3(y),h_{4}(y);U_1(x,y)\big)$, $U_2\in O_{\varepsilon_{0}}(U_+)$,
and $U_1\in O_{\varepsilon_{0}}(U_-)\cup O_{\varepsilon_{0}}(U_+)$;
\item $U_1\big(x,y\big)=H\big(h_1(y),h_2(y),h_3(y),h_{4}(y);U_2(x,y)\big)$, $U_1\in O_{\varepsilon_{0}}(U_+)$,
and $U_2\in O_{\varepsilon_{0}}(U_-)$,
\end{itemize}
where
\begin{align*}
H(h_1,h_2,h_3,h_4;U)
:=
S_4(h_4)\big(\Phi_{3}(h_3;\Phi_{2}(h_2;S_1(h_1)(U)))\big),
\end{align*}
and $S_i$ are $i$-Hugoniot curves, $i=1, 2, 3,4$.
Moreover, $h_{b,i}(x), i=1,2,3,4$, are implicitly defined by
\begin{equation}
U_2(x,b_{\max}(x))=H(h_{b,1}(x),h_{b,2}(x),h_{b,3}(x),h_{b,4}(x);U_1(x,b_{\min}(x))).
\label{eqn:StabiBoundary}
\end{equation}
Furthermore, for an $i$-wave that connects two states on the $i$-Hugoniot curve,
let $\lambda_{i}(x)$ be its speed.
Then we define the weighted $ L^1$--strengths:
\begin{equation}
\begin{aligned}
	&q_i(y)=\left\{\begin{aligned}
		&c_i^uh_i(y)\quad &&\text{if }U_1,\ U_2\in O_{\varepsilon_{0}}(U_+),\\
		&c_i^mh_i(y)&&\text{if }U_1,\ U_2\text{ are in different domains},\\
		&c_i^lh_i(y)&&\text{if }U_1,\ U_2\in O_{\varepsilon_{0}}(U_-),
	\end{aligned}
	\right.\\[2mm]
	&q_{b,i}(x)=c_i^uh_{b,i}(x),
\end{aligned}
\label{eqn:L1weight}
\end{equation}
where $c_i^u$, $c_i^m$, $c_i^l<1$ are constants that remain to be determined based on
the interaction and reflection estimates obtained in Lemma \ref{lem:Iww}--\ref{lem:Isrbelow}.
In addition, when $h_1$ is a large shock that connects a state in $O_{\varepsilon_{0}}(U_-)$
and the other state in $O_{\varepsilon_{0}}(U_+)$,
we set $q_1=B$, where $B<1$ is a constant larger than the total strength of small waves
of the other families, which is regarded as the ``strength" of the strong shock. \par

For each $x>0$, the set of all weak waves in $U_1$ and $U_2$ is denoted
by $\mathcal{J}=\mathcal{J}(U_1)\cup\mathcal{J}(U_2)$,
and the strength of a $k_\alpha$-wave $\alpha\in\mathcal{J}$,
located at point $y_\alpha$, is denoted by $|\alpha|$.
Then we define the following quantities:
\begin{align*}
B_i(y)&=\Bigg(
\sum_{\substack{\alpha\in\mathcal{J}(U_1)\\y_\alpha<y,\, i<k_\alpha\leq4}}
+\sum_{\substack{\alpha\in\mathcal{J}(U_2)\\y_\alpha<y,\, i<k_\alpha\leq4}}
+\sum_{\substack{\alpha\in\mathcal{J}(U_1)\\y_\alpha>y,\, 1\leq k_\alpha<i}}
+\sum_{\substack{\alpha\in\mathcal{J}(U_2)\\y_\alpha>y,\, 1\leq k_\alpha<i}}\Bigg)
|\alpha|,\\
C_i(y)&=\left\{
\begin{aligned}
	&\left(\textstyle\sum\nolimits_{\alpha\in\mathcal{J}(U_1),\, y_\alpha<y,\, k_\alpha=i}
	+\sum\nolimits_{\alpha\in\mathcal{J}(U_2),\, y_\alpha>y,\, k_\alpha=i}\right)|\alpha|\qquad\text{if $q_i(y)<0$,}\\
	&\left(\textstyle\sum\nolimits_{\alpha\in\mathcal{J}(U_2),\, y_\alpha<y,\, k_\alpha=i}
	+\sum\nolimits_{\alpha\in\mathcal{J}(U_1),\, y_\alpha>y,\, k_\alpha=i}\right)|\alpha|\qquad\text{if $q_i(y)>0$,}
\end{aligned}
\right.\\[1mm]
D_i(y)&=B-D^{c}_{i}(y),\\[1mm]
D^{c}_i(y)&=\left\{
\begin{aligned}
	&B\quad&&\text{if $i=1$ and $U_1$, $U_2$ are in different domains}\\
	&  \quad &&\text{\,\,\,\, or $i=2,3$ and $U_1$, $U_2\in O_{\varepsilon_{0}}(U_+)$}, \\
	&0 \quad &&\text{other cases,}
\end{aligned}
\right.\\
F_i(y)&=\Bigg(\sum_{\substack{\alpha\in\mathcal{J}\\y_\alpha>y,\, k_\alpha=1\\
\text{both states joined by $\alpha$}\\ \text{are located in $O_{\varepsilon_{0}}(U_+)$}}}
+\sum_{\substack{\alpha\in\mathcal{J}\\y_\alpha<y,\, k_\alpha=1\\
\text{both states joined by $\alpha$}\\ \text{are located in $O_{\varepsilon_{0}}(U_-)$}}}
\Bigg)|\alpha|.
\end{align*}
Let
\begin{equation}
A_i(y)=B_i(y)+D_i(y)
+\left\{
\begin{aligned}
	&C_i(y)\quad &&\text{if }q_i(y) \text{ is small},\\
	&F_i(y)&&\text{if }i=1\text{ and }q_i(y)=B \text{ is large},
\end{aligned}
\right.
\label{eqn:DefofA}
\end{equation}
where the ``small" and the ``large" mean a wave connecting both states in either
$O_{\varepsilon_{0}}(U_+)$ or $O_{\varepsilon_{0}}(U_-)$ and a strong shock wave connecting a state in  $O_{\varepsilon_{0}}(U_-)$ and the other in $O_{\varepsilon_{0}}(U_+)$, respectively.
Thus, $A_i(y)$ equals to the total strength of the waves in $U_1$ and $U_2$ approaching
the $i$-wave $q_i(y)$. We now introduce the following modified Lyapunov functional:
\begin{align*}
\mathfrak{F}(U_1(x),U_2(x))=c_b\int_0^x\big(|h_{b,1}(\tau)|+|h_{b,4}(\tau)|\big)\,\dd\tau
+\sum_{i=1}^{4}\int_{-\infty}^{b_{\max}} W_i(y)|q_i(y)|\,\dd y,
\end{align*}
where $c_0>0$ to be chosen and
\begin{align*}
W_i(y)=1+\kappa_1A_i(y)+\kappa_2\big(Q(U_1)+Q(U_2)\big),
\end{align*}
with the two constants $\kappa_1$ and $\kappa_2$ to be determined later and the interaction potential $Q$
introduced in (\ref{eqn:GlimPotential}). Taking the initial value of the Glimm functional $F(0)$ small enough,
we can prove
\begin{align*}
&1\leq W_i(y)\leq C_{0} \qquad \mbox{for $i=1, 2, 3, 4$},
\end{align*}
where $C_0$ is independent of $x$ and $\mu$.

\smallskip
\begin{proposition}\label{prop:GeneralInteractionpoint}
At each
$x>0$ where two fronts of $U_1$ or $U_2$ interact, or one of the approximate pressure corresponding to the outer boundary changes,
or a physical wavefront of $U_1$ or $U_2$ hits the outer boundary, then
	\begin{equation*}
		\mathfrak{F}(U_1(x+),U_2(x+))\leq\mathfrak{F}(U_1(x-),U_2(x-)).
	\end{equation*}
	\end{proposition}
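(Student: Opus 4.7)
Since the boundary integral $c_b\int_0^x\bigl(|h_{b,1}(\tau)|+|h_{b,4}(\tau)|\bigr)\,\dd\tau$ is absolutely continuous in $x$, it contributes no jump at $\tau$; hence it suffices to bound the jump of the interior sum $\Sigma(x):=\sum_{i=1}^{4}\int_{-\infty}^{b_{\max}(x)}W_i(y)\,|q_i(y)|\,\dd y$. The plan is a case analysis on the event at $\tau$: (A) two weak fronts of the same solution collide in the interior; (B) a weak front interacts with a strong $1$-shock; (C) the approximate pressure on $\Gamma_{\max}$ jumps; (D) a physical weak front of the outer solution hits $\Gamma_{\max}$. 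In every case the goal is to show that the various increases in $|q_i|$ are dominated by the compensating decrease of the weights $W_i$, which itself is controlled by the decrease of $A_i(y)$ (weighted by $\kappa_1$) and by the decrease $\Delta\bigl(Q(U_1)+Q(U_2)\bigr)\le 0$ supplied by Proposition \ref{prop:befIm} (weighted by $\kappa_2$).

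In cases (A) and (B), only one of the two solutions is modified, so the jump of $h_i(y)$ is concentrated at a single height and is controlled by the interior interaction estimates of \S\ref{sect-pre}: Lemma \ref{lem:Iww} for weak--weak collisions, and Lemmas \ref{lem:Isr}--\ref{lem:Isrbelow} for interactions with the strong shock. Adapting the Bressan--Liu--Yang scheme as in Chen--Li \cite{Chen2008}, the increase of $|q_i|$ produced by the new outgoing fronts is absorbed by the decrease of $A_i(y)$ on the side where the incoming approaching pair no longer contributes (via $\kappa_1 A_i$ in $W_i$), together with the $\kappa_2$-multiple of the potential decrease. The strict inequality $|K_{s4}|<1$ in Lemma \ref{lem:Isr} is what makes the reflection off the strong shock contractive in the $c_i^u,c_i^m,c_i^l$--weighted norms, and the special definition of $A_1$ via $F_1$ when $q_1=B$ is what allows the large shock itself to be tracked correctly.

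The essentially new situations are (C) and (D), which reflect the inverse character of the problem. In (C), Lemma \ref{lem:WCe} emits into the outer solution a $1$-wave of strength $\delta_1=K_b\,\Delta p_{b,\max}(\tau)$; in (D), Lemma \ref{lem:WBe} produces a reflected $1$-wave with leading coefficient $K_{b4}\approx -1$ at $U_+$. Either event alters the boundary trace of the outer solution, hence the values of $h_i(y)$ throughout the extension strip $y\in(b_{\min}(\tau),b_{\max}(\tau))$; by \eqref{eqn:StabiBoundary} the jump of $h_{b,i}$ is controlled by $|\delta_1|$, while the strip width $b_{\max}-b_{\min}$ is itself bounded by the current value of $\mathfrak F/c_b$ and hence small. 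The jump of $\Sigma$ then splits into a contribution from the new $\delta_1$-front, a strip contribution of size $O(|b_{\max}-b_{\min}|\,|\delta_1|)$, and a change of the weights $W_i$. The main obstacle is that $|K_{b4}|$ is not strictly less than $1$, so there is no wave-strength contraction at $\Gamma_{\max}$; the deficit must be paid entirely by the $\kappa_2$-times-potential decrease furnished by cases 4--5 of Proposition \ref{prop:befIm}. This dictates the order of choice of constants: first $c_i^u,c_i^m,c_i^l$ from cases (A)--(B); then $\kappa_1,\kappa_2$ taken large; finally $c_b$ chosen small enough that the boundary-strip contribution is subordinate while still keeping $W_i\ge 1$. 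Taking $\|(\widetilde U_\infty,\tilde p_b)\|_{L^{\infty}\cap BV}$ sufficiently small keeps all states inside the $\varepsilon_0$-neighborhoods where the linear estimates of \S\ref{sect-pre} apply, and collecting the four case estimates yields $\mathfrak F(U_1(\tau+),U_2(\tau+))\le \mathfrak F(U_1(\tau-),U_2(\tau-))$.
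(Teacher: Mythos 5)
Your proposal mislocates the content of this proposition. The key observation in the paper's proof --- which your plan never states --- is that at an interaction time $\tau$ the functions $q_i(\cdot)$ do \emph{not} change at all (up to a null set): the approximate solutions are piecewise constant, the incoming and outgoing fronts all sit at the single point $(\tau,\xi)$, and the Riemann solvers preserve the extreme below/above states, so $U_j(\tau+,y)=U_j(\tau-,y)$ for every $y\neq\xi$ and hence $h_i(\tau+,y)=h_i(\tau-,y)$ a.e. The same holds for an event on the \emph{outer} boundary: the emitted or reflected $1$-front changes the outer solution only at the single point $y=b_{\max}(\tau)$, and the extension strip $(b_{\min},b_{\max})$ carries the trace of the \emph{inner} solution, not the outer one, so it is untouched. (Events on the inner boundary, which genuinely change $h_i$ on the whole strip, are deliberately excluded here and handled in Proposition \ref{prop:InnerInteractionpoint}, where $\mathfrak F$ is only shown to increase by a factor $1+O(1)|\alpha|$.) Consequently the jump of $\mathfrak F$ reduces to $\sum_{i}\int_{-\infty}^{b_{\max}}\bigl(W_i(\tau+,y)-W_i(\tau-,y)\bigr)|q_i(y)|\,\dd y$, and the whole proof is that the weights decrease: by Proposition \ref{prop:befIm} one has $\Delta Q(U_j)\le-\tfrac14 E_{U_j}(\tau)$ while $\Delta A_i=O(1)\bigl(E_{U_1}(\tau)+E_{U_2}(\tau)\bigr)$, so taking $\kappa_2$ large relative to $\kappa_1$ gives $\Delta W_i\le0$.

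Your case analysis instead tries to control "increases of $|q_i|$ produced by the new outgoing fronts'' and a "strip contribution of size $O(|b_{\max}-b_{\min}|\,|\delta_1|)$''; at the jump time these quantities are zero, and the bookkeeping you describe (absorbing new wave strengths via $|K_{s4}|<1$, the non-contractivity $|K_{b4}|\approx 1$ at the boundary, the choice of $c_1^u<c_4^u$ and of $c_b$ small) is precisely the content of the \emph{derivative} estimate between interaction times, i.e.\ Proposition \ref{prop:NonInteractionpoint}, not of this proposition. So while the ingredients you invoke do appear in the paper, using them here reflects a wrong picture of what actually jumps at $x=\tau$, and the proposal as written does not establish the stated inequality; the missing step is the a.e.\ invariance of $q_i$ across the interaction and the resulting reduction to monotonicity of the weights.
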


\begin{proof}
A direct computation leads to
\begin{align*}
	\mathfrak{F}(U_1(x+),U_2(x+))-\mathfrak{F}(U_1(x-),U_2(x-))
  =\sum_{i=1}^{4}\int_{-\infty}^{b_{\max}}\big(W_i(x+,y)-W_i(x-,y)\big)|q_i(y)|\,\dd y.
\end{align*}
Then Propositions \ref{prop:befIm} and \ref{Prop:Important2} yield
\begin{align*}
	&Q(U_j(x+))-Q(U_j(x-))\leq-\frac{1}{4}E_{U_j}(x),\\
	&A_i(x+,y)-A_i(x-,y)=O(1)(E_{U_1}(x)+E_{U_2}(x)),
\end{align*}
where $E_{U_j}$, $j=1,\,2$, are given in (\ref{eqn:DecreaseQuantities}).
Therefore, if $\kappa_2$ is large enough, all the weight functions $W_i(y), i=1,2,3,4$, decrease.
\end{proof}

\begin{proposition}\label{prop:NonInteractionpoint}
Let $F_j(t)$ denote the Glimm functional of $U_j$, $j=1, 2$.
If there are no interactions at $x>0$, and $c_b$ and $F_j(0)$ are sufficiently small for $j=1,\,2$,
then
\begin{equation}
	\frac{\dd}{\dd x}\mathfrak{F}(U_1(x),U_2(x))
 \leq O(1)\mu+O(1)\big|p_{b,2}^{\mu,\Delta x}(x)-p_{b,1}^{\mu,\Delta x}(x)\big|.\label{eqn:StabilityDiff}
\end{equation}
\end{proposition}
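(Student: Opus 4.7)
The plan is to compute $\frac{\dd}{\dd x}\mathfrak F(U_1(x),U_2(x))$ at the non-interaction point $x$ and split it into three pieces: first, the contribution $c_b\bigl(|h_{b,1}(x)|+|h_{b,4}(x)|\bigr)$ from differentiating the first integral in the definition of $\mathfrak F$; second, the interior contributions from the motion of each wavefront of $U_1$ or $U_2$ located strictly below $y=b_{\max}(x)$; and third, a boundary contribution at $y=b_{\max}(x)$ produced by the motion of the outer approximate wedge boundary. Since $x$ is a non-interaction point, the potentials $Q(U_1),Q(U_2)$ are locally constant, and both $A_i(\cdot)$ and $q_i(\cdot)$ are piecewise constant in $y$, with jumps only at the wavefront positions $y=y_\alpha(x)$. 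Differentiating under the integral yields
\begin{equation*}
\frac{\dd}{\dd x}\int_{-\infty}^{b_{\max}(x)}W_i(y)\,|q_i(y)|\,\dd y
=W_i(b_{\max}-)\,|q_{b,i}(x)|\,\dot b_{\max}(x)
-\sum_{\alpha}\dot y_\alpha\,[W_i|q_i|]_\alpha,
\end{equation*}
so the boundary term is singled out and the remaining sum collects the interior contributions.

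First, I would treat the interior part via the standard Bressan--Liu--Yang weighted-strength framework, adapted to the presence of the strong $1$-shock. At each weak wavefront $\alpha$ of family $k_\alpha$ in $U_j$, the jumps $[q_i]_\alpha$ and $[A_i]_\alpha$ are estimated by combining Lemmas \ref{Lem:BVe}--\ref{Lem:NPw} with the interaction and reflection estimates of Lemmas \ref{lem:Iww}--\ref{lem:Isrbelow}. With the three-tier choice of weights $c_i^u,c_i^m,c_i^l<1$ in \eqref{eqn:L1weight} -- calibrated so that the Hugoniot-curve shift of $q_i$ across a $k_\alpha$-shock is dominated by the associated decrease of $A_i$, and so that the reflection coefficients $|K_{s4}|,|K_{b4}|,|K_{si}|,|\hat K_{ji}|$ arising at the strong shock are absorbed -- and with $\kappa_1$ taken sufficiently large, one obtains $-\dot y_\alpha[W_i|q_i|]_\alpha\le 0$ for each physical front and $|\dot y_\alpha[W_i|q_i|]_\alpha|\le O(1)|\alpha|$ for each non-physical front. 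Summing over $\alpha$ and invoking Proposition \ref{prop:EoNP} to bound the total strength of non-physical waves by $\mu$ produces the $O(1)\mu$ term in \eqref{eqn:StabilityDiff}.

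Next, I would treat the boundary contribution at $y=b_{\max}(x)$. As $y\uparrow b_{\max}(x)$, one has $h_i(y)\to h_{b,i}(x)$ by \eqref{eqn:StabiBoundary}, hence $q_i(b_{\max}-)=c_i^u h_{b,i}(x)$, and the boundary term is bounded in modulus by $C\sum_{i=1}^{4}|h_{b,i}(x)|$. The free-boundary conditions $p(x,b_j(x))=p^{\mu,\Delta x}_{b,j}(x)$ and $\mathbf u\cdot\mathbf n|_{\Gamma_j}=0$, applied to $U_1$ at $b_{\min}$ and to $U_2$ at $b_{\max}$, furnish two independent scalar constraints on the four numbers $h_{b,1},\dots,h_{b,4}$, of exactly the same algebraic nature as those solved in Lemmas \ref{lem:WCe} and \ref{lem:WBe}. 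They force $|h_{b,2}|+|h_{b,3}|$ together with one distinguished combination of $|h_{b,1}|,|h_{b,4}|$ to be of order $|p^{\mu,\Delta x}_{b,2}(x)-p^{\mu,\Delta x}_{b,1}(x)|$, leaving the residual $|h_{b,1}|+|h_{b,4}|$ as an unconstrained remainder. Choosing $c_b$ sufficiently large so that this remainder is absorbed -- i.e.\ so that the net coefficient of $|h_{b,1}|+|h_{b,4}|$ after combining piece~(i) with the boundary part of piece~(iii) is non-positive -- delivers the bound \eqref{eqn:StabilityDiff}.

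The main obstacle is precisely this boundary analysis: one must verify that the two scalar boundary conditions on each $U_j$ leave exactly enough freedom so that the positive boundary contribution to $\frac{\dd}{\dd x}\mathfrak F$ decomposes cleanly into a piece absorbable by the first integral of $\mathfrak F$ and a remainder of size $|p^{\mu,\Delta x}_{b,2}-p^{\mu,\Delta x}_{b,1}|$. This is the mechanism announced in the introduction; it dictates the appropriate choice of $c_b$ and of the weights $c_i^u$ in \eqref{eqn:L1weight}, and it explains why the first boundary integral must be present in the definition of $\mathfrak F$ -- without it, the remainder could not be absorbed and the inequality \eqref{eqn:StabilityDiff} would fail.
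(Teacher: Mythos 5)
Your overall decomposition of $\frac{\dd}{\dd x}\mathfrak F$ (the $c_b$-term, the interior telescoping over wavefronts, and the boundary term at $y=b_{\max}$) matches the paper, and your interior analysis is essentially the Bressan--Liu--Yang scheme the paper uses. But the boundary absorption mechanism in your last two paragraphs is reversed, and this is a genuine gap. The first integral $c_b\int_0^x\bigl(|h_{b,1}|+|h_{b,4}|\bigr)\dd\tau$ contributes the \emph{non-negative} term $+c_b\bigl(|h_{b,1}(x)|+|h_{b,4}(x)|\bigr)$ to the derivative; it cannot absorb anything, and taking $c_b$ large only makes the estimate worse (note the proposition's hypothesis explicitly requires $c_b$ small). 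The actual mechanism is the opposite: the pressure boundary condition yields the single scalar relation $|h_{b,1}(x)|\le\eta\,|h_{b,4}(x)|+O(1)\bigl(|p_{b,2}^{\mu,\Delta x}-p_{b,1}^{\mu,\Delta x}|+\mu\bigr)$ with $\eta$ close to $1$, and the weight choice $c_1^u<c_4^u$ (already forced by the strong-shock reflection coefficient $K_{s4}$ in the interior Case of the above strong shock) makes the boundary terms of the \emph{second} integral strictly dissipative, $\sum_iE_{b,i}\le-\zeta|h_{b,4}|+O(1)\bigl(|\Delta p_b|+\mu\bigr)$. It is this $-\zeta|h_{b,4}|$ that absorbs the positive $c_b$-term, which is why $c_b$ must be small. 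Consequently your closing claim --- that without the first integral the inequality would fail --- is also backwards: the first integral is the price paid to make $\mathfrak F$ dominate $\int_0^x|\dot b_1-\dot b_2|$ (hence the $L^\infty$ boundary stability), not an ingredient needed for \eqref{eqn:StabilityDiff}.

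Two smaller inaccuracies: the slip condition does not give a second independent constraint on $(h_{b,1},\dots,h_{b,4})$ at fixed $x$ (it defines $\dot b_j$), so the contact components $h_{b,2},h_{b,3}$ are \emph{not} of order $|\Delta p_b|$; they are harmless only because their relative speeds $\dot b_{\max}-\lambda_{b,k}$, $k=2,3$, are themselves $O(1)\bigl(|h_{b,1}|+|h_{b,4}|\bigr)$, since contact discontinuities do not turn the flow. And the interior physical fronts contribute $O(1)\mu|\alpha|$ each (approximate front speeds deviate from exact ones by $O(\mu)$), not exactly $0$; this is a second source of the $O(1)\mu$ term besides the non-physical fronts.
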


\begin{proof} We divide the proof into three steps.

\smallskip
1. Denote the speed of the $i$-wave $q_i(x)$ and $q_{b,i}(x)$ by $\lambda_i$ and $\lambda_{b,i}$, respectively.
Taking the derivative of $\mathfrak{F}(U_1(x),U_2(x))$ with respect to $x$ leads to
\begin{align*}
&\frac{\dd}{\dd x}\mathfrak{F}(U_1(x),U_2(x))\\
&=c_b\big(|h_{b,1}(x)|+|h_{b,4}(x)|\big)
  +\sum_{\alpha\in\mathcal{J}}\sum_{i=1}^4\big(|q_i^{\alpha-}|W_i^{\alpha-}-|q_i^{\alpha+}|W_i^{\alpha+}\big)\dot{y}_{\alpha}\\
&\quad +\sum_{i=1}^{4}|q_{b,i}(x)|W_i(b_{\max}(x))\dot{b}_{\max}(x)\\
&=c_b\big(|h_{b,1}(x)|+|h_{b,4}(x)|\big)
 +\sum_{\alpha\in\mathcal{J}}\sum_{i=1}^{4}\big(|q_i^{\alpha-}|W_i^{\alpha-}(\dot{y}_{\alpha}-\lambda_i^{\alpha-})
-|q_i^{\alpha+}|W_i^{\alpha+}(\dot{y}_{\alpha}-\lambda_i^{\alpha+})\big)\\
&\quad+\sum_{i=1}^{4}|q_{b,i}(x)|W_i(b_{\max}(x))\big(\dot{b}_{\max}(x)-\lambda_{b,i}(x)\big),
\end{align*}
where $\dot{y}_{\alpha}$ stands for the speed of $\alpha\in\mathcal{J}$, and $q_i^{\alpha\pm}=q_i\big(y_{\alpha}\pm\big)$,
	$W_i^{\alpha\pm}=W_i\big(y_{\alpha}\pm\big)$, $\lambda_i^{\alpha\pm}=\lambda_i\big(y_{\alpha}\pm\big)$,
    and $\lambda_{b,i}(x)=\lambda_{i}(b_{\max}(x)-)$ for $\alpha\in\mathcal{J}(U_j)$. Define
	\begin{align*}
		E_{\alpha,i}
		&=|q_i^+|W_i^+(\lambda_i^+-\dot{y}_{\alpha})
		-|q_i^-|W_i^-(\lambda_i^--\dot{y}_{\alpha}),\\
		E_{b,i}
		&=|q_{b,i}(x)|W_i(b_{\max}(x))\big(\dot{b}_{\max}(x)-\lambda_{b,i}(x)\big),
	\end{align*}
	with $q_i^{\pm}=q_i^{\alpha\pm}$, $W_i^{\pm}=W_i^{\alpha\pm}$, and $\lambda_i^{\pm}=\lambda_i^{\alpha\pm}$.
    Then
	\begin{equation}
		\frac{\dd}{\dd x}\mathfrak{F}(U_1(x),U_2(x))
		=c_b(|h_{b,1}(x)|+|h_{b,4}(x)|)+\sum_{\alpha\in\mathcal{J}}\sum_{i=1}^{4}E_{\alpha,i}+
		\sum_{i=1}^{4}E_{b,i}.
	\label{eqn:StabilityForm}
	\end{equation}

2. We need the following lemma to complete the proof.

 \begin{lemma}\label{lem:Stability1}
 The following estimates hold{\rm :}
		\begin{eqnarray}
			\text{ $\alpha$ is a strong shock}:&	&\sum_{i=1}^4E_{\alpha,i}\leq0,
			\label{eqn:StabilityGoal1}\\
			\text{ $\alpha$ is a non-physical wave}:&	&\sum_{i=1}^4E_{\alpha,i}\leq O(1)|\alpha|,
			\label{eqn:StabilityGoal2}\\
			\text{ $\alpha$ is a weak wave}:& &\sum_{i=1}^4E_{\alpha,i}\leq O(1)\mu|\alpha|,
			\label{eqn:StabilityGoal3}\\
			\text{on the boundary}:&&\sum_{i=1}^4E_{b,i}+c_0(|h_{b,1}(x)|+|h_{b,4}(x)|)\nonumber\\
			&&\leq O(1)|p_{b,2}^{\mu,\Delta x}(x)-p_{b,1}^{\mu,\Delta x}(x)|+O(1)\mu,
			\label{eqn:StabilityGoal4}
	\end{eqnarray}
 where $c_0>0$ is a sufficiently small constant, independent of the waves and the boundary.
 \end{lemma}
	
 \begin{proof} There are
 five cases.

 \smallskip
 \noindent
\textit{Case $1${\rm :} The states connected by $\alpha$ is a weak wave $($either physical or non-physical$)$}.
In this case, we choose $B$ suitably small and take $\kappa_1$ large enough so that
the estimates can be obtained by following Bressan-Liu-Yang \cite{Bressan1999}.

 \smallskip
 \noindent
\textit{Case $2${\rm :} $\alpha$ is the below strong shock in $U_1$ or $U_2$}; see also \cite{Lewicka2002,Chen2008}. Then we have
		\begin{align*}
			E_{\alpha,1}&=BW_1^+(\lambda_1^+-\dot{y}_{\alpha})-|q_1^-|W_1^-(\lambda_1^--\dot{y}_{\alpha})\\
			&\leq O(1)B\sum_{i=1}^{4}|q_{1}^{-}|-\kappa_{1}B|\lambda_1^--\dot{y}_{\alpha}|,
		\end{align*}
		and
\begin{align*}
\sum_{i=2}^4E_{\alpha,i}&=\sum_{i=2}^4\big(|q_i^-|(\lambda_i^--\dot{y}_{\alpha})(W_i^+-W_i^-)
+W_i^-(|q_i^+|(\lambda_i^+-\dot{y}_{\alpha})-|q_i^-|(\lambda_i^--\dot{y}_{\alpha}))\big),\\
&\leq \sum_{i=2}^4O(1)\kappa_1B|q_i^+||\lambda_i^+-\dot{y}_{\alpha}|-\dfrac{3}{4}\sum_{i=2}^4\kappa_1B|q_i^-||\lambda_i^--\dot{y}_{\alpha}|.
\end{align*}
When $\kappa_1$ is large enough, we have
\begin{align*}
\sum_{i=1}^4E_{\alpha,i}
&=\sum_{i=2}^4O(1)\kappa_1B|q_i^+||\lambda_i^+-\dot{y}_{\alpha}|-\dfrac{1}{2}\sum_{i=1}^4\kappa_1B|q_i^-||\lambda_i^--\dot{y}_{\alpha}|.
\end{align*}
Recall that the wave curve and the Hugoniot curve passing through $U_0$ have the same curvature at $U_0$.
Carrying out similar arguments as in Lemma \ref{lem:Isrbelow} (by letting $\alpha_i=h_i^-$, $\beta_i=0$,
and $\delta_i=h_i^+$), we obtain
		\begin{align*}
			|h_k^+|=O(1)\sum_{i=1}^4|h_i^-| \qquad\mbox{for $k=2,\,3,\,4$}.
		\end{align*}
In (\ref{eqn:L1weight}), we take $c_i^l$, $i=1,\,2,\,3,\,4$, larger enough than $c_i^m$, $i=2,\,3,\,4$,
to complete the proof of (\ref{eqn:StabilityGoal1}).

  \smallskip
\noindent
\textit{Case $3${\rm :} $\alpha$ is a weak wave lying between strong shocks}; see also \cite{Lewicka2002,Chen2008}. When $i=1$, we have
		\begin{align*}
		E_{\alpha,1}
		&=B\big((W_1^+-W_1^-)(\lambda_1^\pm-\dot{y}_{\alpha})+W_1^\mp(\lambda_1^+-\lambda_1^-)\big)\\
		&\leq B\big(-\kappa_1|\alpha||\lambda_1^\pm-\dot{y}_{\alpha}|+O(1)|\alpha|\big).
		\end{align*}
		As for $i=2,\,3,\,4$, we can obtain
		\begin{align*}
			E_{\alpha,i}&=|q_i^\pm|(W_i^+-W_i^-)(\lambda_i^\pm-\dot{y}_{\alpha})+W_i^\mp(|q_i^+|(\lambda_i^+-\dot{y}_{\alpha})-|q_i^-|(\lambda_i^--\dot{y}_{\alpha}))\\
			&\leq O(1)\big(O(1)+\kappa_1B\big)(|q_i^+-q_i^-|+|q_i^-||\alpha|)+O(1)|\alpha|.
		\end{align*}
		As a result, summing all the estimates obtained above, we conclude
\begin{align*}
\sum_{i=1}^4E_{\alpha,i}
&\leq B\,\big(-\kappa_1|\alpha||\lambda_1^\pm-\dot{y}_{\alpha}|+O(1)|\alpha|\big)
+\big(O(1)+\kappa_1B\big)O(1) \big(|q_i^+-q_i^-|+|q_i^-||\alpha|\big)
 +O(1)|\alpha|\\
&\leq \big(-\kappa_1cB+O(1)\big)|\alpha|+B\kappa_1\,O(1)\Big(\sum_{i=2}^4|q_i^+-q_i^-|+\sum_{i=2}^4|q_i^-||\alpha|\Big)\\
&= \big(-\frac{\kappa_1cB}{2}+O(1)\big)|\alpha|
 +B\,\Big(-\frac{\kappa_1c}{2}|\alpha|+\kappa_1O(1)\,\Big(\sum_{i=2}^4|q_i^+-q_i^-|+\sum_{i=2}^4|q_i^-||\alpha|\Big)\Big),
\end{align*}
where $c>0$ is a lower bound of the difference between the speed of the strong $1$-shock and a weak shock.
Choose $\kappa_1$ large enough and all the weights $c_i^m$ sufficiently small to obtain
  		\begin{align*}
			\sum_{i=1}^4E_{\alpha,i}\leq0.
		\end{align*}
	
\textit{Case 4{\rm :} $\alpha$ is the above strong shock in $U_1$ or $U_2$}; see also \cite{Lewicka2002,Chen2008}.
Similar arguments as in Lemma \ref{lem:Isr} yield
		\begin{equation}
			h_4^-=K_{s4}h_1^++h_4^+.
		\label{eqn:StabilitySecond1}
		\end{equation}
Due to Lemma \ref{lem:Isr},
when $F_j(0)$, $j=1,\,2$, are sufficiently small, we can choose $c_1^u$  and $c_4^u$ such that
		\begin{equation}
			\begin{aligned}
				&\frac{c_1^u}{c_4^u}<1,\\
				&|K_{s4}|\,\frac{c_4^u}{c_1^u}\,\frac{\lambda_4(U_+)-s_0}{\lambda_1(U_+)-s_0}<\gamma_0<1.
			\end{aligned}
		\label{eqn:StabilitySecond2}
		\end{equation}
		Then, when $i=1$, we obtain
		\begin{align*}
			E_{\alpha,1}&=-BW_1^-(\lambda_1^--\dot{y}_{\alpha})+|q_1^+|W_1^+(\lambda_1^+-\dot{y}_{\alpha})\\
			&\leq O(1)B|q_1^+|-\kappa_1B|q_1^+||\lambda_1^+-\dot{y}_{\alpha}|\\
			&\leq O(1)B|q_1^+|-\kappa_1Bc_1^u|h_1^+||\lambda_1^+-\dot{y}_{\alpha}|.
		\end{align*}
		When $i=2,\,3$, we have
		\begin{align*}
			E_{\alpha,i}&=|q_i^-|(W_i^+-W_i^-)(\lambda_i^--\dot{y}_{\alpha})+W_i^+(|q_i^+|(\lambda_i^+-\dot{y}_{\alpha})-|q_i^-|(\lambda_i^--\dot{y}_{\alpha}))\\
			&\leq-\kappa_1B|q_i^-|(\lambda_i^--\dot{y}_{\alpha}) +O(1)|q_i^+|\\
			&\leq-\kappa_1B|q_i^-|(\lambda_i^--\dot{y}_{\alpha}) +O(1)(|q_i^-|+|q_1^+|).
		\end{align*}
By (\ref{eqn:StabilitySecond1}) and (\ref{eqn:StabilitySecond2}),
\begin{align*}
	E_{\alpha,4}&= W_4^+|q_4^+|(\lambda_4^+-\dot{y}_{\alpha})-|q_4^-|W_4^-(\lambda_4^--\dot{y}_{\alpha})\\
	&\leq(\kappa_1B+O(1))|q_4^+|(\lambda_4^+-\dot{y}_{\alpha})
	-\kappa_1B|q_4^-|(\lambda_4^--\dot{y}_{\alpha})\\
	&\leq (\kappa_1B+O(1))c_4^u(|h_4^-|+K_{s4}|h_1^+|)(\lambda_4^+-\dot{y}_{\alpha})-\kappa_1Bc^m_4|h_4^-|(\lambda_4^--\dot{y}_{\alpha})\\
	&\leq  (\kappa_1B+O(1))\big(c_4^u|h_4^-|(\lambda_4^+-\dot{y}_{\alpha})+\gamma_0c_1^u|h_1^+||\lambda_1^+-\dot{y}_{\alpha}|-c^m_4|h_4^-|(\lambda_4^--\dot{y}_{\alpha})\big).
	\end{align*}
Choosing $c_4^u$ relatively smaller than $c_4^m$ and $\kappa_1$ suitably large, we obtain
\begin{align*}
\sum_{i=1}^4E_{\alpha,i}
\leq&-(1-\gamma_0)\kappa_1B|q_1^+||\lambda_1^+-\dot{y}_{\alpha}|+O(1)|q_1^+||\lambda_1^+-\dot{y}_{\alpha}|+O(1)|q_1^+|\\
&+(\kappa_1B+O(1))\big(c_4^u|h_4^-|(\lambda_4^+-\dot{y}_{\alpha})-c^m_4|h_4^-|(\lambda_4^--\dot{y}_{\alpha})\big)\\
&+\sum_{i=2}^3\big(-\kappa_1B|q_i^-|(\lambda_i^--\dot{y}_{\alpha}) +O(1)|q_i^-|\big)\\
&\leq 0.
\end{align*}
	
 \textit{Case 5{\rm :} Near the boundary}. First of all, let $U=(u,v,p,\rho)^\top$ be in a small neighborhood of $U_+$.
 Consider the following equation:
		\begin{equation}
			H^{(3)}(h_{1},h_{2},h_{3},h_{4};U)=p+r,
		\label{eqn:StabiBoundaryest}
		\end{equation}
where $H^{(3)}(h_{1},h_{2},h_{3},h_{4};U)$ is the third component of $H(h_{1},h_{2},h_{3},h_{4};U)$. Similar to Lemma \ref{lem:WBe}, we deduce from (\ref{eqn:StabiBoundaryest}) that
		\begin{align}
				h_{4}&= h_{4}(h_{1},h_{2},h_{3},r)-h_{4}(0,h_{2},h_{3},r)
				+h_{4}(0,h_{2},h_{3},r)-h_{4}(0,h_{2},h_{3},0)\nonumber\\
				&=h_{1}\int_0^1\frac{\partial h_{4}}{\partial h_{1}}(\lambda h_{1},h_{2},h_{3},r)\,
                  \dd\lambda+O(1)r\label{eqn:BoundaryLyapunov}
		\end{align}
		for $|r|$ small enough. In addition, we have
		\begin{align*}
			\left|\int_0^1\frac{\partial h_{4}}{\partial h_{1}}(\lambda h_{1},h_{2},h_{3},r)\,\dd\lambda\right|\rightarrow 1
            \qquad\,\,\mbox{as $(h_{1},h_{2},h_{3},r)\rightarrow0\,$ and $\,U\to U_+$}.
		\end{align*}
From (\ref{eqn:StabiBoundary}), considering the effect of non-physical waves on the boundary, we have
		\begin{align*}
			H^{(3)}(h_{b,1}(x),h_{b,2}(x),h_{b,3}(x),h_{b,4}(x);U_1(x,b_1(x)))=p_{b,1}^{\mu,\Delta x_1}(x)+p_{b,2}^{\mu,\Delta x_2}(x)-p_{b,1}^{\mu,\Delta x_1}(x)+O(1)\mu.
		\end{align*}
The former arguments tell us that
		\begin{align*}
|h_{b,1}(x)|\leq|h_{b,4}(x)|\eta+O(1)\big(p_{b,2}^{\mu,\Delta x_2}(x)-p_{b,1}^{\mu,\Delta x_1}(x)+\mu\big)
\end{align*}
for some $\eta$ close to $1$.
Note that, crossing a vortex sheet and an entropy wave, the flow direction does not change. Then we have
		\begin{equation}
			|\dot{b}_1(x)-\dot{b}_2(x)|=\left|\dfrac{v_2(x,b_2(x))}{u_2(x,b_2(x))}-\dfrac{v_1(x,b_1(x))}{u_1(x,b_1(x))}\right|=O(1)\big(|h_{b,1}(x)|+|h_{b,4}(x)|\big).
		\label{eqn:StabilityBdry}
		\end{equation}
Therefore, when the initial values of the Glimm functionals $F_j(0)$, $j=1,\,2$, are small enough,
 we obtain
		\begin{align*}
			E_{b,1}&=|q_{b,1}(x)|W_1(b_{\max}(x))\big(\dot{b}_{\max}(x)-\lambda_{b,1}(x)\big)
			=c_1^u|h_{b,1}|\kappa_1B|\lambda_{b,1}|+O(1)|h_{b,1}|\\
			&\leq \kappa_1Bc_1^u\,\eta|h_{b,4}||\lambda_{b,1}|+O(1)|h_{b,4}|
               +O(1)\big(p_{b,2}^{\mu,\Delta x_2}(x)-p_{b,1}^{\mu,\Delta x_1}(x)+\mu\big),\\
			E_{b,k}&=|q_{b,k}(x)|W_k(b_{\max}(x))(\dot{b}_{\max}(x)-\lambda_{b,k}(x))
			=c_k^u|h_{b,k}|O(1)|\dot{b}_{1}-\lambda_{b,k}|=O(1)(|h_{b,1}|+|h_{b,4}|)\\
			&\leq O(1)|h_{b,4}|+O(1)\big(p_{b,2}^{\mu,\Delta x_2}(x)-p_{b,1}^{\mu,\Delta x_1}(x)+\mu\big)
            \qquad \mbox{for $k=2,\,3$,}\\
			E_{b,4}&=|q_{b,4}(x)|W_4(b_{\max}(x))\big(\dot{b}_{\max}(x)-\lambda_{b,4}(x)\big)\\
			&=-c_4^u|h_{b,4}|\kappa_1B|\lambda_{b,1}|
			+c_4^u|h_{b,4}|\kappa_1B(|\lambda_{b,1}|-\lambda_{b,4})+O(1)|h_{b,4}|\\
			&\leq -c_4^u|h_{b,4}|\kappa_1B|\lambda_{b,1}|+O(1)|h_{b,4}|.
		\end{align*}
In (\ref{eqn:StabilitySecond2}), we have chosen $c_1^u<c_4^u$ with $\kappa_1$ suitably large,
we conclude
		\begin{align*}
			\sum_{i=1}^4E_{b,i}
			&\leq \kappa_1Bc_1^u\,\eta|h_{b,4}||\lambda_{b,1}|+O(1)|h_{b,4}|
             +O(1)\big(p_{b,2}^{\mu,\Delta x_2}(x)-p_{b,1}^{\mu,\Delta x_1}(x)+\mu\big)
			-c_4^u|h_{b,4}|\kappa_1B|\lambda_{b,1}|\\
			&\leq (c_1^u\,\eta-c_4^u)\kappa_1B|h_{b,4}||\lambda_{b,1}|+O(1)|h_{b,4}|
                +O(1)\big(p_{b,2}^{\mu,\Delta x_2}(x)-p_{b,1}^{\mu,\Delta x_1}(x)+\mu\big)\\[1mm]
			&\leq -\zeta|h_{b,4}|+O(1)\big(p_{b,2}^{\mu,\Delta x_2}(x)-p_{b,1}^{\mu,\Delta x_1}(x)+\mu\big)
		\end{align*}
		for some $\zeta>0$. When $c_0$ is sufficiently small, we obtain
		\begin{align*}
			\sum_{i=1}^4E_{b,i}+c_0(|h_{b,1}(x)|+|h_{b,4}(x)|)
			&\leq-\zeta|h_{b,4}|+O(1)\big(p_{b,2}^{\mu,\Delta x_2}(x)-p_{b,1}^{\mu,\Delta x_1}(x)+\mu\big)\\
			&\quad\,+c_0\big((\eta+1)|h_{b,4}(x)|+O(1)\big(p_{b,2}^{\mu,\Delta x_2}(x)-p_{b,1}^{\mu,\Delta x_1}(x)+\mu\big)\big)\\[1mm]
			&\leq O(1)\big(p_{b,2}^{\mu,\Delta x_2}(x)-p_{b,1}^{\mu,\Delta x_1}(x)+\mu\big).
		\end{align*}
		This concludes  Lemma \ref{lem:Stability1}.
	\end{proof}
		
3. By Lemma \ref{lem:Stability1}, (\ref{eqn:StabilityBdry}), and (\ref{eqn:StabilityForm}), as long as $F_1(0), F_2(0)$, and
 $c_b$ are chosen small enough, we obtain (\ref{eqn:StabilityDiff}).
This completes the proof of   Proposition \ref{prop:NonInteractionpoint}.
\end{proof}

\begin{proposition}\label{prop:InnerInteractionpoint}
If the approximate pressures $p_{b,i}$, $i=1,\,2$, corresponding to the inner boundary, change at some $x>0$, or there is a reflection on the inner boundary at $x$, then
\begin{equation*}
\mathfrak{F}(U_1(x+),U_2(x+))
\leq \big(1+O(1)|\alpha|\big)\mathfrak{F}(U_1(x-),U_2(x-)),
\end{equation*}
where $|\alpha|=|p_{b,i}(x+)-p_{b,i}(x-)|$ or $|\alpha|$ denotes
the strength of the incoming wavefront that hits the inner boundary.\end{proposition}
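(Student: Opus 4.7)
The plan is to track every change that an inner-boundary event at $x=\tau$ induces on $\mathfrak{F}$ and show each contribution is bounded by $O(|\alpha|)\mathfrak{F}(\tau-)$. Without loss of generality assume $b_1(\tau)<b_2(\tau)$, so that the inner boundary is $\Gamma_1$ and the event affects only $U_1$. Since $U_2$ is unaffected and both approximate boundaries $b_i(\cdot)$ are continuous in $x$, the strip $(b_1(\tau),b_2(\tau))$ and the outer boundary $b_{\max}=b_2$ are continuous through $\tau$, while the physical values $U_1(\tau,y)$ for $y<b_1(\tau)$ are unchanged at the instant of the event. The data that jump at $x=\tau$ are only: (a) the boundary trace $U_1(\tau,b_1(\tau))$, which by Lemmas \ref{lem:WCe} and \ref{lem:WBe} jumps by $O(|\alpha|)$; (b) the extension of $U_1$ above $b_1$, which is constant in $y$ and therefore jumps by $O(|\alpha|)$ throughout the strip; (c) the Glimm functional $F(U_1)$ through the creation of a new weak $1$-wave of strength $O(|\alpha|)$, so that $Q(U_1)$ and the approach measures $A_i$ shift by $O(|\alpha|)$.

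First I would handle the portion $y<b_1(\tau)$ of the interior integral. There the $q_i(y)$ are unchanged, and only the weight $W_i(y)=1+\kappa_1 A_i(y)+\kappa_2(Q(U_1)+Q(U_2))$ shifts, by $O(|\alpha|)$ uniformly in $y$ thanks to (c) and Propositions \ref{prop:befIm}--\ref{Prop:Important2}. Hence
\begin{equation*}
\sum_{i=1}^{4}\int_{-\infty}^{b_1(\tau)}\bigl(W_i(\tau+,y)-W_i(\tau-,y)\bigr)|q_i(y)|\,\dd y\leq O(|\alpha|)\mathfrak{F}(\tau-).
\end{equation*}
Next I would handle the strip $(b_1(\tau),b_2(\tau))$, where both the weight and the strengths jump. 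Because $U_2$ is fixed and the extension of $U_1$ moves by $O(|\alpha|)$, the implicit function theorem applied to the defining identity $U_2=H(h_1,h_2,h_3,h_4;U_1)$ yields $|q_i(\tau+,y)-q_i(\tau-,y)|=O(|\alpha|)$ uniformly on the strip. The key observation that turns this into a multiplicative bound is that the accumulated boundary term in $\mathfrak{F}$ controls the strip length: by \eqref{eqn:StabilityBdry},
\begin{equation*}
|b_2(\tau)-b_1(\tau)|\leq O(1)\int_0^\tau(|h_{b,1}(s)|+|h_{b,4}(s)|)\,\dd s\leq \frac{O(1)}{c_b}\mathfrak{F}(\tau-).
\end{equation*}
Combining these, the strip integral contributes at most $O(|\alpha|)|b_2(\tau)-b_1(\tau)|\leq O(|\alpha|)\mathfrak{F}(\tau-)$.

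Finally, the cumulative boundary term $c_b\int_0^\tau(|h_{b,1}|+|h_{b,4}|)\,\dd s$ is absolutely continuous in $\tau$ and produces no jump at $\tau$. Summing the three pieces gives $\mathfrak{F}(\tau+)-\mathfrak{F}(\tau-)\leq O(|\alpha|)\mathfrak{F}(\tau-)$, which is the claimed inequality. The main obstacle I expect is the uniform estimate $|q_i(\tau+,y)-q_i(\tau-,y)|=O(|\alpha|)$ across the strip when $U_1$ lies in $O_{\varepsilon_0}(U_-)$ while $U_2$ lies in $O_{\varepsilon_0}(U_+)$: in that regime the $h_i$'s are defined through the strong-shock Hugoniot and the Lipschitz dependence on $U_1$ is not automatic. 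Invoking the $C^2$ parametrisation $s\mapsto G(s;U_0)$ provided by Lemma \ref{Lem:strongshock}, together with the smallness of the Glimm functionals guaranteed by Proposition \ref{Prop:Important2}, one verifies that the map $U_1\mapsto(h_1,h_2,h_3,h_4)$ is $C^1$ throughout the strip, which closes the estimate.
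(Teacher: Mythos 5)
Your proof is correct and follows essentially the same route as the paper: split the integral at the inner boundary, bound the strip contribution by $O(1)|\alpha|$ times the strip width, and control that width by the accumulated boundary term in $\mathfrak{F}$ via \eqref{eqn:StabilityBdry}. The only (harmless) deviation is that for the region below the inner boundary you bound the weight shift by $O(1)|\alpha|\,\mathfrak{F}(\tau-)$, whereas the paper simply discards that piece as nonpositive (using the decrease of $Q$ at the reflection); your worry about the strong-shock parametrisation on the strip is moot since the strip lies between the two boundaries, where both solutions are in $O_{\varepsilon_0}(U_+)$.
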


\begin{proof}
	With out loss of generality, denote $b_{\max}(x)=b_1(x)$ and $b_{\min}(x)=b_2(x)$. Then, in both cases,
\begin{align*}
	|U_2(x+)-U_2(x-)|\leq O(1)|\alpha|.
\end{align*}
Then
\begin{align*}
	&\mathfrak{F}(U_1(x+),U_2(x+))-\mathfrak{F}(U_1(x-),U_2(x-))\\
	&=\sum_{i=1}^4\int_{-\infty}^{b_{\max}(x)}W_i(x+,y)|q_i(x+,y)|\,\dd y
 -\sum_{i=1}^4\int_{-\infty}^{b_{\max}(x)} W_i(x-,y)|q_i(x-,y)|\,\dd y\\
	&=\sum_{i=1}^4\int_{b_{\min}(x)}^{b_{\max}(x)}\big(W_i(x+,y)|q_i(x+,y)|-W_i(x-,y)|q_i(x-,y)|\big)\,\dd y\\
	&\quad\,+\sum_{i=1}^4\int_{-\infty}^{b_{\min}(x)} \big(W_i(x+,y)|q_i(x+,y)|-W_i(x-,y)|q_i(x-,y)|\big)\,\dd y\\
	&\leq \sum_{i=1}^4\int_{b_{\min}(x)}^{b_{\max}(x)} \big(W_i(x+,y)|q_i(x+,y)|-W_i(x-,y)|q_i(x-,y)|\big)\,\dd y
	\\
	&\leq O(1)|U_2(x+)-U_2(x-)|\,|b_{\max}(x)-b_{\min}(x)|\\
	&\leq O(1)|\alpha|\,|b_{\max}(x)-b_{\min}(x)|.
\end{align*}
Using the boundary condition (\ref{eqn:Boundary}), we have
\begin{align*}
	|b_{\max}(x)-b_{\min}(x)|\leq&\int_0^{x}\big|\frac{v_1}{u_1}(s)-\frac{v_2}{u_2}(s)\big|\,\dd s\\
	&\leq O(1)\int_0^{x}\big(|h_{b,1}(s)|+|h_{b,4}(s)|\big)\,\dd s\\
	&\leq O(1)\,\mathfrak{F}(U_1(x-),U_2(x-)).
\end{align*}
Therefore, we conclude
\begin{align*}
\mathfrak{F}(U_1(x+),U_2(x+))
\leq \big(1+O(1)|\alpha|\big)\mathfrak{F}(U_1(x-),U_2(x-)).
\end{align*}
\end{proof}

To conclude, we have
\begin{proposition} \label{prop:Stability}
For any $x>0$,
\begin{align*}
&\int_0^x|\dot{b}_1(t)-\dot{b}_2(t)|\,\mathrm{d}t+\int_{-\infty}^{b_{\max}(x)}|U_1(x,y)-U_2(x,y)|\,\dd y\\
&\,\leq
	O(1)x\mu+O(1) \|p_{b,2}^{\mu,\Delta x_2}-p_{b,1}^{\mu,\Delta x_1}\|_{ L^{1}(0,x)}+O(1)\|U_{\infty,2}^{\mu,\Delta x_2}-U_{\infty,1}^{\mu,\Delta x_1}\|_{ L^{1}((-\infty,0))}.
\end{align*}
\end{proposition}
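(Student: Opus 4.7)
The plan is to control the left-hand side by the modified Lyapunov functional $\mathfrak{F}$ itself, and then to track how $\mathfrak{F}$ evolves along $[0,x]$ by stitching together Propositions~\ref{prop:GeneralInteractionpoint}--\ref{prop:InnerInteractionpoint}. The two ingredients are therefore (a) a lower bound $\mathfrak{F}\gtrsim$ LHS and (b) an upper bound on $\mathfrak{F}(U_1(x),U_2(x))$ in terms of data.

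For (a), I would verify separately that the two summands of $\mathfrak{F}$ dominate the two summands of the left-hand side. The boundary-speed integral $\int_0^x|\dot b_1-\dot b_2|\,\dd t$ is dominated by $c_b\int_0^x(|h_{b,1}|+|h_{b,4}|)\,\dd\tau$ thanks to \eqref{eqn:StabilityBdry}. For the spatial piece, the definition of the scalar functions $h_i(y)$ via the map $H$ together with Lemma~\ref{Lem:BVe} gives $|U_1(x,y)-U_2(x,y)|\leq O(1)\sum_i|h_i(y)|$ pointwise on $(-\infty,b_{\max}(x))$ (including on the extension strip $(b_{\min}(x),b_{\max}(x))$, where one of the $U_i$ is held constant in $y$); combined with $W_i(y)\geq 1$ and the uniform positive lower bounds on the weights $c_i^{u,m,l}$, this produces the desired lower bound of $\mathfrak{F}$ over $\int_{-\infty}^{b_{\max}(x)}|U_1-U_2|\,\dd y$.

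For (b), I would list the interaction points $0<\tau_1<\tau_2<\cdots<\tau_N<x$ of $U_1,U_2$ in increasing order. On each $(\tau_k,\tau_{k+1})$, Proposition~\ref{prop:NonInteractionpoint} gives
\begin{equation*}
\frac{\dd}{\dd x}\mathfrak{F}(U_1(x),U_2(x))\leq O(1)\mu + O(1)\big|p_{b,2}^{\mu,\Delta x_2}(x)-p_{b,1}^{\mu,\Delta x_1}(x)\big|.
\end{equation*}
At an interior wave interaction or an outer-boundary event, Proposition~\ref{prop:GeneralInteractionpoint} gives $\mathfrak{F}(\tau_k+)\leq\mathfrak{F}(\tau_k-)$; at an inner-boundary event, Proposition~\ref{prop:InnerInteractionpoint} gives the multiplicative jump $\mathfrak{F}(\tau_k+)\leq(1+O(1)|\alpha_k|)\mathfrak{F}(\tau_k-)$. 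Proposition~\ref{Prop:Important2} ensures that the sum of all such $|\alpha_k|$ is bounded uniformly in $(\mu,\Delta x)$, so $\prod_k(1+O(1)|\alpha_k|)\leq\exp(O(1)\sum_k|\alpha_k|)\leq C$. A discrete Gronwall argument over the intervals and jumps yields
\begin{equation*}
\mathfrak{F}(U_1(x),U_2(x))\leq C\,\mathfrak{F}(U_1(0),U_2(0))+O(1)x\mu+O(1)\big\|p_{b,2}^{\mu,\Delta x_2}-p_{b,1}^{\mu,\Delta x_1}\big\|_{L^1(0,x)}.
\end{equation*}

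It remains to bound the initial value. Since the $\tau$-integral in $\mathfrak{F}$ from $0$ to $0$ vanishes and $b_{\max}(0)=0$, only the spatial part contributes; there $U_i(0,\cdot)=U_{\infty,i}^{\mu,\Delta x_i}$, so Lemma~\ref{Lem:BVe} and the uniform bound $W_i\leq C_0$ yield $\mathfrak{F}(U_1(0),U_2(0))\leq O(1)\|U_{\infty,2}^{\mu,\Delta x_2}-U_{\infty,1}^{\mu,\Delta x_1}\|_{L^1((-\infty,0))}$. Feeding this into the chain from (a)--(b) proves the claim. The step I expect to be the main obstacle is handling the multiplicative factors at inner-boundary reflections: the argument only closes because the total strength of such events is controlled uniformly in $\mu$ and $\Delta x$, which is precisely the content of the Glimm-type estimates of Section~\ref{sect-appro} (Corollary~\ref{coro:EstofGlim} and Propositions~\ref{prop:Important2}--\ref{Prop:Important2}); losing uniformity there would give an unbounded Gronwall constant and destroy the estimate.
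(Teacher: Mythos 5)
Your proposal is correct and follows essentially the same route as the paper's proof: decompose $[0,x]$ at the finitely many interaction times, apply Proposition \ref{prop:NonInteractionpoint} between them, Proposition \ref{prop:GeneralInteractionpoint} at interior/outer-boundary events and Proposition \ref{prop:InnerInteractionpoint} at inner-boundary events, control the product of multiplicative factors via the uniform bound on $\sum_i|\alpha(i)|$ from Propositions \ref{prop:EoNP}--\ref{Prop:Important2}, and conclude by the equivalence of $\mathfrak{F}$ with the left-hand side. Your explicit bound $\mathfrak{F}(U_1(0),U_2(0))\le O(1)\|U_{\infty,2}^{\mu,\Delta x_2}-U_{\infty,1}^{\mu,\Delta x_1}\|_{L^1}$ is a detail the paper leaves implicit, but it is exactly the intended reading.
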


\begin{proof}
For any fixed $x>0$, let
\begin{align*}
P_{d}=\{\tau\in[0,x)\,:\,\text{an interaction occurs at } x=\tau\},\qquad
P_{c}=\{\tau\in[0,x)\,:\,\tau\notin P_{d}\}.
\end{align*}
Then $P_d$ is finite, which is written as
\begin{align*}
	P_{d}=\{x_i, i\in\mathbb{N}\,:\,0=x_0<x_1<\cdots<x_n\leq x<x_{n+1}\}.
\end{align*}
We may assume that, for each $x=x_i$,
there is an interaction or a nonphysical wave $\alpha(i)$ cross the boundary,
or there is a physical wave $\alpha(i)$ hitting the boundary,
or the variance of the approximate pressure on the inner boundary is equal to $|\alpha(i)|$.
If there is no nonphysical wave or only a nonphysical wave crossing the boundary,
we set $\alpha(i)=0$.
Then Propositions \ref{prop:GeneralInteractionpoint}--\ref{prop:InnerInteractionpoint} implies
	\begin{align*}
		&\mathfrak{F}(U_1(x),U_2(x))-\mathfrak{F}(U_1(0),U_2(0))\\
		&=\int_{x_n}^x\frac{\dd}{\dd s}\mathfrak{F}(U_1(s),U_2(s))\,\dd s
		+\mathfrak{F}(U_1(x_n),U_2(x_n))-\mathfrak{F}(U_1(0),U_2(0))\\
		&\leq O(1)(x-x_{n})\mu+O(1)\|p_{b,2}^{\mu,\Delta x_2}-p_{b,1}^{\mu,\Delta x_1}\|_{ L^{1}((x_{n},x))}
		+\exp(O(1)|\alpha_{n}|)\mathfrak{F}(U_1(x_n-),U_2(x_n-))\\
		&\quad\,\,-\mathfrak{F}(U_1(0),U_2(0))\\
		&\leq O(1)\exp\big(O(1)\sum_{i=1}^{n}|\alpha(i)|\big)x\mu
		+ O(1)\exp\big(O(1)\sum_{i=1}^{n}|\alpha(i)|\big)\|p_{b,2}^{\mu,\Delta x_2}-p_{b,1}^{\mu,\Delta x_1}\|_{ L^{1}(0,x)}\\
		&\quad\,\,+\Big(\exp\big(O(1)\sum_{i=1}^{n}|\alpha(i)|\big)-1\Big)\mathfrak{F}(U_1(0),U_2(0)).
	\end{align*}
Since Propositions \ref{prop:EoNP}--\ref{Prop:Important2} give an upper bound of $\sum_{i=1}^{n}|\alpha(i)|$, which is independent of our approximate solutions, we obtain
\begin{align*}
\mathfrak{F}(U_1(x),U_2(x))
\leq O(1)x\mu+O(1)\|p_{b,2}^{\mu,\Delta x_2}-p_{b,1}^{\mu,\Delta x_2}\|_{ L^{1}(0,x)}
+O(1)\,\mathfrak{F}(U_1(0),U_2(0)).
\end{align*}
Finally, the construction of our Lyapunov functional leads to
\begin{align*}
\int_0^x|\dot{b}_1(t)-\dot{b}_2(t)|\,\mathrm{d}t
+\int_{-\infty}^{b_{\max}(x)}|U_1(x,y)-U_2(x,y)|\,\dd y
\leq O(1)\,\mathfrak{F}(U_1(x,\cdot),U_2(x,\cdot)).
\end{align*}
This completes the proof.
\end{proof}

\begin{corollary}
For any $x>0$,
\begin{align*}
&|b_1(x)-b_2(x)|+\int_{-\infty}^{b_{\max}(x)}|U_1(x,y)-U_2(x,y)|\,\dd y\\
&\,\,\leq
 O(1)x\mu+O(1) \|p_{b,2}^{\mu,\Delta x_2}-p_{b,1}^{\mu,\Delta x_1}\|_{ L^{1}(0,x)}
 +O(1)\,\mathfrak{F}(U_1(x,\cdot),U_2(x,\cdot)).
\end{align*}
\end{corollary}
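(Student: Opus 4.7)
The plan is to derive the Corollary as an immediate consequence of two ingredients already produced in the proof of Proposition \ref{prop:Stability}. First, because the two approximate wedge boundaries originate from the common vertex $(0,0)$ in the wave-front tracking construction, we have $b_1(0)=b_2(0)=0$, so that the fundamental theorem of calculus supplies the elementary pointwise bound
$$|b_1(x)-b_2(x)| \leq \int_0^x |\dot b_1(t)-\dot b_2(t)|\,\dd t.$$

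Second, the last display in the proof of Proposition \ref{prop:Stability} shows that the modified Lyapunov functional $\mathfrak{F}$ controls the combined $L^1$--distance
$$\int_0^x|\dot b_1(t)-\dot b_2(t)|\,\dd t+\int_{-\infty}^{b_{\max}(x)}|U_1(x,y)-U_2(x,y)|\,\dd y \leq O(1)\,\mathfrak{F}(U_1(x,\cdot),U_2(x,\cdot)).$$
This equivalence rests on three facts already recorded: the weights satisfy $1\leq W_i(y)\leq C_0$, the weighted strengths $|q_i|$ are comparable to the Hugoniot parameters $|h_i|$ and hence to $|U_1-U_2|$ through Lemma \ref{Lem:BVe}, and the boundary integrand $|h_{b,1}|+|h_{b,4}|$ dominates $|\dot b_1-\dot b_2|$ via \eqref{eqn:StabilityBdry}.

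Chaining the two inequalities produces
$$|b_1(x)-b_2(x)|+\int_{-\infty}^{b_{\max}(x)}|U_1(x,y)-U_2(x,y)|\,\dd y \leq O(1)\,\mathfrak{F}(U_1(x,\cdot),U_2(x,\cdot)),$$
and appending the non-negative terms $O(1)x\mu$ and $O(1)\|p_{b,2}^{\mu,\Delta x_2}-p_{b,1}^{\mu,\Delta x_1}\|_{L^1(0,x)}$ to the right-hand side yields the stated Corollary. In this reading the statement is entirely mechanical; there is no real obstacle beyond bookkeeping, since the full equivalence between $\mathfrak{F}$ and the natural $L^1$--distances (both between the two extended solutions and between the two boundaries) was already built into the construction of the functional and exploited in the previous proof.
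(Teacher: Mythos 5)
Your proposal is correct and follows exactly the route the paper intends: the corollary is stated without a separate proof precisely because it is immediate from the final display in the proof of Proposition \ref{prop:Stability} (the bound of $\int_0^x|\dot b_1-\dot b_2|\,\dd t+\int|U_1-U_2|\,\dd y$ by $O(1)\,\mathfrak{F}(U_1(x,\cdot),U_2(x,\cdot))$), combined with $b_1(0)=b_2(0)=0$ and the fundamental theorem of calculus. The remaining terms on the right-hand side are indeed just appended non-negative quantities, so there is nothing further to verify.
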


\section{Existence of the Semigroup}\label{sect-semi}
Combining all the analysis in \S \ref{sect-appro}--\S\ref{sect-Lya},
we now establish the existence of the semigroup that generates the solution of the inverse problem.

First we introduce the following definitions:

	\begin{itemize}
	\item [\rm(i)] For $f\in L^{1}_{\rm loc}(x,\infty)$, define
	\begin{equation}\label{5.1a}
		\iota_x:\quad L^{1}_{\rm loc}(x,\infty)\rightarrow L^{1}_{\rm loc}(\mathbb{R}_+),\\
		(\iota_x f)(\theta):= f(\theta+x);
	  \end{equation}
\item [\rm(ii)] for $(b_j,\check{U}_j^\top,p_j)^\top \in \mathbb{R}\times L^{1}_{\rm{loc}}(\mathbb{R}_-)\times L^{1}_{\rm{loc}}(\mathbb{R}_+)$, $j=1,\,2$, define
\begin{equation}\label{5.2a}
	\big\|(b_1,\check{U}_1^\top,p_1)^\top-(b_2,\check{U}_2^\top,p_2)^\top\big\|_{Y}
	:=|b_1-b_2|
	+\|\check{U}_1-\check{U}_2\|_{L^1(\mathbb{R}_-)}
	+\|p_1-p_2\|_{L^{1}(\mathbb{R}_+)}.
	\end{equation}
\end{itemize}

\smallskip
\begin{definition}
	\label{def:Domain}Given $\varepsilon>0$, define
	\begin{equation}
		\mathbb{D}^{\varepsilon}=\text{\rm cl}\left\{(b,\check{U}^\top,p)^\top\,:\,b\in\mathbb{R},\
		\begin{aligned}
			&\check{U}\in \textbf{PWC},& &\check{U}-\bar{U}_\flat\in L^1(\mathbb{R}_-;\mathbb{R}^4),\\
			&p\in \textbf{PWC},&&p-\bar{p}_{b}\in L^1(\mathbb{R}_+;\mathbb{R}),
		\end{aligned}
		\,\,F(0;b,\check{U}^\top,p)\le \varepsilon\right\},
		\nonumber
	\end{equation}
where $\textbf{PWC}$ stands for the piecewise constant functions $($vectors$)$, $\bar{U}_\flat(y)$ satisfies
	\begin{equation}
		\bar{U}_\flat(y)=\left\{
		\begin{aligned}
			&U_{-}&&\mbox{for $y<\chi_\flat$},\\
			&U_{+}&&\mbox{for $\chi_\flat<y<0$},
		\end{aligned}\right.
		\nonumber
	\end{equation}
	for some $\chi_\flat\le0$, $F(0;b,\check{U}^\top,p)$ is the Glimm-type functional corresponding to the initial data $\check{U}$
and the pressure distribution $p_b$ (see {\rm Definition \ref{Def:GTF}}), and {\rm cl} represents the closure in $\|\cdot\|_{Y}$.
\end{definition}

\medskip
We also need the following lemma (see Lemma 2.3 in \cite{Bressan2000}).
\begin{lemma}
\label{lem:LipschitzContinuous}
If $U:\mathbb{R}\rightarrow\mathbb{R}^n$ has bounded total variation, then
\begin{align*}
	\int_{-\infty}^{\infty}|U(x+t)-U(x)|\,\mathrm{d}x\leq t\, \text{T.V.}(U)\qquad\,\,\mbox{for any $t>0$}.
\end{align*}	
\end{lemma}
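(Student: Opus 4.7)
The plan is first to prove the inequality for piecewise constant $U$ by a direct Fubini argument, and then to extend to general $\text{BV}$ functions by approximation.

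For piecewise constant $U$ with jumps $\Delta_i := U(x_i+)-U(x_i-)$ at isolated points $\{x_i\}$ satisfying $\sum_i|\Delta_i|=\text{T.V.}(U)$, the telescoping identity gives
\begin{equation*}
U(x+t)-U(x)=\sum_{x<x_i\le x+t}\Delta_i,
\end{equation*}
so that $|U(x+t)-U(x)|\le\sum_{x<x_i\le x+t}|\Delta_i|$. Integrating over $x\in\mathbb{R}$ and exchanging sum and integral (Tonelli, since all terms are non-negative) yields
\begin{equation*}
\int_{-\infty}^{\infty}|U(x+t)-U(x)|\,\dd x\le\sum_i|\Delta_i|\int_{-\infty}^{\infty}\chi_{[x_i-t,\,x_i)}(x)\,\dd x=t\sum_i|\Delta_i|=t\,\text{T.V.}(U).
\end{equation*}

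For a general $U\in\text{BV}(\mathbb{R};\mathbb{R}^n)$, I would approximate by piecewise constant functions $U_n$, for instance $U_n(x):=U(k/n+)$ on $[k/n,(k+1)/n)$. Standard facts give $\text{T.V.}(U_n)\le\text{T.V.}(U)$ and $U_n\to U$ in $L^1_{\rm loc}$, and hence $U_n(\cdot+t)-U_n(\cdot)\to U(\cdot+t)-U(\cdot)$ in $L^1_{\rm loc}$. Applying the already-established inequality to each $U_n$ and invoking Fatou's lemma passes the estimate to the limit. Equivalently, one could bypass the approximation by recalling that $DU$ is a finite Radon measure with $|DU|(\mathbb{R})=\text{T.V.}(U)$ and writing $U(x+t)-U(x)=DU((x,x+t])$ for a.e.\ $x$, after which the same Fubini computation applies to $|DU|$. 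There is no substantial obstacle---this is the classical continuity-in-the-mean statement for $\text{BV}$ functions; the only mild care needed is that the approximants preserve the total-variation bound, which the step-function construction makes transparent.
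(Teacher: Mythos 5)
Your argument is correct. The paper does not prove this lemma at all---it simply cites it as Lemma 2.3 of Bressan's book---and your proof (piecewise-constant case via telescoping plus Tonelli, then passage to general BV by step-function approximation and Fatou, or equivalently the Fubini computation applied to the Radon measure $DU$) is exactly the standard argument given in that reference, so there is nothing to fault beyond the routine caveat of working with a good (say, right-continuous) representative of $U$, which you have implicitly handled.
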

We now establish the existence theorem of the semigroup that generates the solution of this inverse problem.

\begin{theorem}\label{Thm:Important3}
Suppose that $\varepsilon>0$ is sufficiently small. Then, for any $(b(0),\check{U}^\top_{\infty},p_b)^\top\in\mathbb{D}^{\varepsilon}$,
corresponding to the initial data $U_{\infty}^\top(y)=\check{U}^\top_{\infty}(y-b(0))$ for $y<b(0)$ and the pressure distribution $p_b$,
there is a subsequence of $\mu$-approximate solutions $(b^{\mu,\Delta x},U^{\mu,\Delta x})$ converging to a unique solution $(b,U)$ as $\mu\rightarrow0$.
The map{\rm :}
	\begin{align*}
		(b(0),\check{U}^\top_{\infty},p_b,x)^\top\mapsto (b(x),U^\top(x,\cdot+b(x)),\iota_xp_b)^\top:=\mathfrak{S}_{x}(b(0),\check{U}^\top_{\infty},p_b)^\top
	\end{align*}
	is a semigroup that generates the solution of the inverse problem so that, for any
	\begin{align*}
		(b(0),\check{U}_{\infty}^\top,p_{b})^\top,\,(b_i(0),\check{U}_{\infty,i}^\top,p_{b,i})^\top\in \mathbb{D}^{\varepsilon},
	\end{align*}
	and $x_i\ge 0$, $i=1,\,2$,
	\begin{align*}
		&\mathfrak{S}_{0}(b(0),\check{U}_{\infty}^\top,p_{b})^\top=(b(0),\check{U}_{\infty}^\top,p_{b})^\top,\\
  &\mathfrak{S}_{x_1}\mathfrak{S}_{x_2}(b(0),\check{U}_\infty^\top,p_b)^\top=\mathfrak{S}_{x_1+x_2}(b(0),\check{U}_\infty^\top,p_b)^\top.
		\end{align*}
		Moreover, there are constants $L^\sharp>0$ and $L^\flat>0$ such that
		\begin{align*}
			&\big\|\mathfrak{S}_{x_1}(b_1(0),\check{U}_{\infty,1}^\top(\cdot),p_{b,1})^\top-\mathfrak{S}_{x_2}(b_2(0),\check{U}_{\infty,2}^\top(\cdot),p_{b,2})^\top\big\|_{Y}\\
				&\,\leq L^\sharp\,\big\|(b_1(0),\check{U}_{\infty,1}^\top(\cdot),p_{b,1})^\top-(b_2(0),\check{U}_{\infty,2}^\top(\cdot),p_{b,2})^\top\big\|_{Y}+L^\flat\,|x_1-x_2|.
				\end{align*}
	\end{theorem}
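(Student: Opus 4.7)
The plan is to construct the semigroup $\mathfrak{S}_x$ as the limit of the wave-front tracking approximations built in \S \ref{sect-appro}, and then deduce its three required properties—well-definedness, the composition identity, and the joint Lipschitz estimate—from Propositions \ref{prop:Important2}, \ref{Prop:Important2}, \ref{prop:Stability}, and Lemma \ref{lem:LipschitzContinuous}. The key input is the stability estimate of Proposition \ref{prop:Stability}: applied to two approximate solutions emanating from the \emph{same} initial/boundary data but built with approximation parameters $\mu_1$ and $\mu_2$, it forces the family $\{(b^{\mu,\Delta x},U^{\mu,\Delta x})\}$ to be Cauchy in the $Y$-norm on every compact $x$-interval, so the subsequential limit provided by Theorem \ref{thm:EaS} is in fact the limit of the whole family. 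This makes the assignment $(b(0),\check{U}_\infty^\top,p_b)^\top\mapsto(b(x),U^\top(x,\cdot+b(x)),\iota_xp_b)^\top$ unambiguous.

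The Lipschitz dependence on the initial-boundary data (case $x_1=x_2$) is obtained by applying Proposition \ref{prop:Stability} to two distinct data $(b_j(0),\check{U}_{\infty,j}^\top,p_{b,j})$, $j=1,2$, at a common $\mu$ and then passing to the limit $\mu\to0$: the contributions $\|\check{U}_{\infty,1}-\check{U}_{\infty,2}\|_{L^1(\mathbb{R}_-)}$ and $\|p_{b,1}-p_{b,2}\|_{L^{1}(\mathbb{R}_+)}$ already sit inside the $Y$-norm, while $|b_1(0)-b_2(0)|$ enters through the initial value of the Lyapunov functional $\mathfrak{F}$ (the extension strip between the two initial boundaries contributes at most $|b_1(0)-b_2(0)|\cdot\text{T.V.}(\widetilde{U}_\infty)$, by Lemma \ref{lem:LipschitzContinuous}). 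Combining with the corollary of Proposition \ref{prop:Stability}, and noting that the $b$-component of the output is estimated by the same Lyapunov functional, yields a universal constant $L^\sharp$.

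For the Lipschitz dependence in $x$ (case $b_1(0)=b_2(0)$, $\check{U}_{\infty,1}=\check{U}_{\infty,2}$, $p_{b,1}=p_{b,2}$), fix one initial condition and compare $\mathfrak{S}_{x_1}$ with $\mathfrak{S}_{x_2}$ for $x_1<x_2$. The bound $|b(x_1)-b(x_2)|\le(|v_+/u_+|+\mathcal{E})|x_1-x_2|$ comes directly from Proposition \ref{prop:Important2}, and $\|\iota_{x_1}p_b-\iota_{x_2}p_b\|_{L^1(\mathbb{R}_+)}\le|x_1-x_2|\,\text{T.V.}(p_b)$ follows from Lemma \ref{lem:LipschitzContinuous}. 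The remaining term $\|U(x_1,\cdot+b(x_1))-U(x_2,\cdot+b(x_2))\|_{L^1(\mathbb{R}_-)}$ I would split via the triangle inequality into a pure $y$-translation, bounded by $|b(x_1)-b(x_2)|\cdot\sup_x\text{T.V.}(U(x,\cdot))$ through Lemma \ref{lem:LipschitzContinuous} and Proposition \ref{prop:Important2}, and a pure $x$-translation, bounded on the approximate level by $\hat\lambda\,|x_1-x_2|\cdot\text{T.V.}(U^{\mu,\Delta x}(x_1,\cdot))$ thanks to the finite propagation speed \eqref{eqn:nonphyspeed}, and then pass to the limit by lower semi-continuity of total variation. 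Summing these contributions gives $L^\flat$. The semigroup identity $\mathfrak{S}_{x_1}\mathfrak{S}_{x_2}=\mathfrak{S}_{x_1+x_2}$ then follows from the uniqueness of the limit applied to the restriction of the wave-front tracking construction to $\{x\ge x_2\}$, once one checks that the reshifted data $(b(x_2),U(x_2,\cdot+b(x_2)),\iota_{x_2}p_b)$ lie in $\mathbb{D}^{\varepsilon}$, which is a consequence of the non-increase of the Glimm-type functional $F$ established in Proposition \ref{prop:befIm} and Corollary \ref{coro:EstofGlim}. The main obstacle is the $x$-Lipschitz estimate, since it requires combining a BV-translation inequality with a sharp $L^\infty$ bound on wavefront speeds and then transferring these piecewise-constant bounds to the limit without losing the uniformity in $\mu$.
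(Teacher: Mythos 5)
Your proposal is correct and follows essentially the same route as the paper: the Cauchy-in-$\mu$ argument via the Lyapunov-functional stability estimate, uniqueness of the limit yielding the semigroup identity, the data-Lipschitz bound by applying the same estimate to two distinct data and letting $\mu\to0$, and the $x$-Lipschitz bound from finite propagation speed combined with the uniform BV and boundary-slope bounds of Proposition \ref{prop:Important2} (the paper's estimate \eqref{eqn:Lipcon}). The only difference is that you spell out a few steps the paper leaves implicit (the $y$-/$x$-translation splitting and the check that the restarted data remain in $\mathbb{D}^{\varepsilon}$), which does not change the argument.
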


\begin{proof}
For any $\mu_1$, $\mu_2>0$, let $U^{\mu_1}$ and $U^{\mu_2}$ be the $\mu_j$-approximate solutions
of (\ref{eqn:TwoDFEuler}) and (\ref{eqn:ConseForm})--(\ref{1.9a}), whose initial data are $U_{\infty}^{\mu_1}$ and $U_{\infty}^{\mu_2}$, and
pressure distributions are $p_{b}^{\mu_{1}}$ and $p_{b}^{\mu_2}$, respectively.
Fixing $x>0$, by Propositions \ref{prop:GeneralInteractionpoint}--\ref{prop:InnerInteractionpoint} and Lemma \ref{lem:LipschitzContinuous}, we have
\begin{align*}
	&\big\|(b^{\mu_1}(x),(U^{\mu_1}(x,\cdot+b^{\mu_1}(x)))^\top,\iota_xp_{b}^{\mu_1})^\top
    -(b^{\mu_2}(x),(U^{\mu_2}(x,\cdot+b^{\mu_2}(x)))^\top,\iota_xp_{b}^{\mu_2})^\top\big\|_{Y}\\
	&\leq  O(1)\mathfrak{F}(U^{\mu_1}(x),U^{\mu_2}(x))
	+\|p_{b}^{\mu_1}-p_{b}^{\mu_2}\|_{ L^{1}(x,\infty)}\\
	&\leq  O(1)\mathfrak{F}(U^{\mu_1}(0),U^{\mu_2}(0))+O(1)\|p_{b}^{\mu_1}-p_{b}^{\mu_2}\|_{ L^{1}(0,x)}
	+O(1)\max\{\mu_1,\mu_2\}x\\
	&\quad \, +\|p_{b}^{\mu_1}-p_{b}^{\mu_2}\|_{ L^{1}(x,\infty)}\\
	&\leq C\big\|(b^{\mu_1}(0),(\check{U}_{\infty}^{\mu_1})^\top(\cdot),p_{b}^{\mu_1})^\top-(b^{\mu_2}(0),(\check{U}_{\infty}^{\mu_2})^\top(\cdot),p_{b}^{\mu_2})^\top\big\|_{Y}+C\max\{\mu_1,\mu_2\}x.
\end{align*}
Thus, as $\mu_1$, $\mu_2\rightarrow0$, $\|(b^{\mu_1}(x),(U^{\mu_1}(x,\cdot+b^{\mu_1}(x)))^\top,p_{b}^{\mu_1})^\top-(b^{\mu_2}(x),(U^{\mu_2}(x,\cdot+b^{\mu_2}(x)))^\top,p_{b}^{\mu_2})^\top\|_{Y}$ tends to zero, which implies that the sequence is a Cauchy sequence converging to a unique limit, saying $(b(x),U^\top(x,\cdot+b(x)),\iota_xp_b)^\top$. Then the semigroup properties follow from the uniqueness.\par
Finally, for $\mu>0$, let $U_1^{\mu}$ and $U_2^{\mu}$ be $\mu$-approximate solutions to (\ref{eqn:TwoDFEuler}) and (\ref{eqn:ConseForm})--(\ref{1.9a}) satisfying
\begin{align*}
	&\|U_1^{\mu}(0,\cdot+b_1(0))-\check{U}_{\infty,1}(\cdot)\|_{ L^1(\mathbb{R}_-)}<\mu,&&\|U_2^{\mu}(0,\cdot+b_2(0))-\check{U}_{\infty,2}(\cdot)\|_{ L^1(\mathbb{R}_-)}<\mu,\\
	&\|p_{b,1}^{\mu}(\cdot)-p_{b,1}(\cdot)\|_{ L^1(\mathbb{R}_+)}<\mu,&&\|p_{b,2}^{\mu}(\cdot)-p_{b,2}(\cdot)\|_{ L^1(\mathbb{R}_+)}<\mu.
\end{align*}
Then
\begin{align*}
	&\big\|(b_1^{\mu}(x),(U_1^{\mu}(x,\cdot+b_1^\mu(x)))^\top,\iota_xp_{b,1}^{\mu})
      -(b_2^{\mu}(x),(U_2^{\mu}(x,\cdot+b_2^\mu(x)))^\top,\iota_xp_{b,2}^{\mu})\big\|_{Y}\\
	&\leq O(1)\,\mathfrak{F}(U_1^\mu(x),U_2^\mu(x))
	+\|p_{b,1}^{\mu}-p_{b,2}^{\mu}\|_{ L^{1}(x,\infty)}\\
	&\leq O(1)\,\mathfrak{F}(U_1^\mu(0),U_2^\mu(0))+O(1)\|p_{b,1}^{\mu}-p_{b,2}^{\mu}\|_{ L^{1}(0,x)}
	+O(1)\mu x
	+\|p_{b,1}^{\mu}-p_{b,2}^{\mu}\|_{ L^{1}(x,\infty)}\\
	&\leq  C\|(b_1^\mu(0),(\check{U}_{\infty,1}^{\mu})^\top(\cdot),p_{b,1}^{\mu})^\top-(b_2^\mu(0),(\check{U}_{\infty,2}^{\mu})^\top(\cdot),p_{b,2}^{\mu})^\top\|_{Y}+C\mu x.
\end{align*}
Taking $\mu\rightarrow 0$, we obtain
\begin{align*}
	&\|(b_1(x),U_1^\top(x,\cdot+b_1(x)),\iota_xp_{b,1})^\top-(b_2(x),U_2^\top(x,\cdot+b_2(x)),\iota_xp_{b,2})^\top\|_{Y}\\
	&\leq L^\sharp\,\|(b_1(0),U_{\infty,1}^\top(\cdot),p_{b,1})^\top-(b_2(0),U_{\infty,2}^\top(\cdot),p_{b,2})^\top\|_{Y},
\end{align*}
for some $L^\sharp>0$, which gives the Lipschitz continuity. Moreover, from Proposition \ref{prop:Important2}, (\ref{eqn:Lipcon}), and the fact that
\begin{align*}
	\|\iota_xp_{b,1}-\iota_xp_{b,2}\|_{ L^1(\mathbb{R}_+)}\leq\|p_{b,1}-p_{b,2}\|_{ L^1(\mathbb{R}_+)},
\end{align*}
we conclude the Lipschitz continuity on $x$.
\end{proof}
Main Theorem I is a direct corollary of Theorem \ref{thm:EaS} and Theorem \ref{Thm:Important3}.

\section{Approximate the Full Euler Equations by the Potential Flow Equations}\label{sect-compare}
This section focuses on the comparison between the two solutions of the inverse problem,
which are obtained by solving the full Euler equations and the potential flow equations, respectively.
If there is no strong shock, at {\it time} $x$, we show that the difference of two solutions
in the norm $\|\cdot\|_{Y}$ is up to the third order
of the total variation of the initial boundary data, multiplying $x$.

\subsection{Existence and stability of the potential flow equations}
Regarding $x$ as {\it time},
when $u^2+v^2<2B_{\infty}$ and $u>c_{*}=\sqrt{2B_{\infty}(\gamma-1)/(\gamma+1)}$,
system (\ref{eqn:TwoDPotential})--(\ref{eqn:Bernouli}) is strictly hyperbolic,
whose eigenvalues are
\begin{equation}
	\lambda_j(\textbf{u})=\frac{uv+(-1)^{j}c\sqrt{u^2+v^2-c^2}}{u^2-c^2},
	\qquad j=1,\,2,\label{eqn:PFEigenvalue}
\end{equation}
with corresponding eigenvectors (see \cite{Zhang1999})
\begin{equation}
	r_j(\textbf{u})=\frac{(-\lambda_j,1)^\top}{(-\lambda_j(\textbf{u}),1)\cdot\nabla\lambda_j(\textbf{u})},
	\qquad j=1,\,2.\label{eqn:PFEigenvector}
\end{equation}
Using the same method as we have developed in the previous
sections, we have

\begin{theorem}\label{thm:WeishiliuCunzai}
Let \eqref{JS:1}--\eqref{JS:2} hold. Then there is $\check\varepsilon>0$ such that,
 when $\|(\widetilde{\textbf{u}}_{\infty},\tilde{p}_{b})\|_{ L^{\infty}\cap BV}<\check\varepsilon$, there exists a subsequence $\{\mu_l\}_{l=1}^\infty$ and $\{\Delta x_l\}_{l=1}^\infty$ so that
	\begin{itemize}
		\item[\rm(i)] $b^{\mu_l,\Delta x_l}$ converges uniformly to $b$ on any compact subset contained in the $x$-axis{\rm ;}
		\item[\rm(ii)] $\textbf{u}^{\mu_l, \Delta x_l}_{b}$ converges to $\textbf{u}_{b}$ in $\text{BV}([0,\infty))$,  and $\dot{b}(x)=\frac{v_b(x)}{u_b(x)}$\, a.e.{\rm ;}
		\item[\rm(iii)] for any $x>0$, $\textbf{u}^{\mu_l,\Delta x_l}$converges to $\textbf{u}$ in $ L^1_{\text{\rm loc}}\big((-\infty,b(x));\mathbb{R}^2\big)$, and $\textbf{u}$
is an entropy solution of equations \eqref{eqn:TwoDPotential}--\eqref{eqn:Bernouli} satisfying \eqref{eqn:WSLBoundaryConstruction}--\eqref{eqn:WSLPFBoundaryData}.
	\end{itemize}
\end{theorem}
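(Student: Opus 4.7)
The plan is to mirror the program carried out for the full Euler system in Sections~\ref{sect-appro}--\ref{sect-semi}, exploiting the fact that, under hypotheses \eqref{JS:1}--\eqref{JS:2}, the background state is the uniform horizontal flow $(\overline{\textbf{u}}_\infty,0)^\top$ with $\bar{p}_b=\overline{p}_\infty$, so that no strong shock issues from the vertex and only weak $1$- and $2$-waves arise. The upshot is that the interaction potential collapses to its weak-wave part and the Lyapunov functional loses the $B$-weighted terms associated with the strong shock, making every step structurally simpler than in the full Euler case.

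First, I would redo the Riemann-problem analysis of \S\ref{sect-pre} for the $2\times 2$ system (\ref{eqn:TwoDPotential})--(\ref{eqn:Bernouli}), parameterising the $i$-shock and $i$-rarefaction curves $\Phi_i^{\rm P}$ through a base state via the eigenvectors (\ref{eqn:PFEigenvector}); the Bernoulli relation (\ref{eqn:Bernouli}) recovers $\rho$ and hence $p$ from $\textbf{u}$, so the inverse Riemann problem \eqref{eqn:InvRie} is solved by choosing the unique $1$-wave $\delta_1$ with $(\Phi_1^{\rm P})^{(p)}(\delta_1;\textbf{u}_l)=p_2$, which gives the analogue of Lemma \ref{lem:WCe} (with $\delta_1=K_b^{\rm P}(p_2-p_1)$), and a weak reflection law analogous to Lemma \ref{lem:WBe} of the form $\delta_1=K_{b2}^{\rm P}\beta_2+o(|\beta_2|)$ with $K_{b2}^{\rm P}|_{\textbf{u}=\overline{\textbf{u}}_\infty}=-1$. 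The interaction estimate of Lemma \ref{lem:Iww} continues to hold with the same quadratic error $O(1)\triangle(\alpha,\beta)$, and the Lax entropy inequality is replaced by $\rho_{\rm back}<\rho_{\rm front}$, which is again equivalent to compressibility of the $1$- and $2$-shocks.

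Next, I would run the modified wave-front tracking algorithm of \S\ref{sect-appro}, now with only Cases $1$, $4$, $5$ of the interaction analysis (the strong-shock cases $2$ and $3$ are absent) and with the Glimm functional
\[
F^{\rm P}(x)=L^{\rm P}(x)+\mathcal{K}^{\rm P}\Bigl(K_0^{\rm P}\!\!\sum_{i>h}\omega_i+\sum_{\beta\in\mathcal{A}_b}|\beta|+K^{\rm P}\!\!\!\sum_{(\alpha,\beta)\in\mathcal{A}}\!|\alpha||\beta|\Bigr).
\]
Proposition \ref{prop:befIm} and Corollary \ref{coro:EstofGlim} carry over with the choice $\max\{\tfrac12,|K_{b2}^{\rm P}|_{\rm ref}|\}<K_s^{\rm P}<\min\{1,|K_{b2}^{\rm P}|^{-1}\}$, which is available because at the background state the reflection coefficient has modulus strictly less than $1$; non-physical fronts are handled as in Proposition \ref{prop:EoNP}. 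This yields uniform BV bounds, the Lipschitz estimate $|b^{\mu,\Delta x}(x+t)-b^{\mu,\Delta x}(x)|\le \mathcal{E}|t|$, and a bound on T.V.$(\textbf{u}^{\mu,\Delta x}_b)$. A Helly-type compactness argument then extracts a subsequence realising (i)--(iii), with (iii) following because the residuals in the conservation laws and in the entropy inequality tend to zero along the wave-front tracking sequence, exactly as in Appendix~\ref{appen:A2}. Finally, the stability / semigroup half is obtained by repeating \S\ref{sect-Lya} with the Lyapunov functional truncated to $i=1,2$ and no $B$-term; the boundary estimate (\ref{eqn:BoundaryLyapunov}) becomes simpler because $h_4$ is replaced directly by the single outgoing component $h_2^{\rm P}$, so (\ref{eqn:StabilityGoal4}) is immediate once $c_1^{u,\rm P}<c_2^{u,\rm P}$ is chosen using the reflection inequality at $\overline{\textbf{u}}_\infty$.

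The main obstacle I anticipate is the verification of the boundary reflection inequality, namely that, in the potential-flow analogue of (\ref{eqn:StabilitySecond2}), the product $|K_{b2}^{\rm P}|\cdot|\lambda_2-\dot{b}|/|\lambda_1-\dot{b}|$ at the linearisation point is strictly less than $1$. In the full Euler case this was supplied by the explicit calculation in Lemma \ref{lem:Isr}; for the potential flow one must show it directly from (\ref{eqn:PFEigenvalue})--(\ref{eqn:PFEigenvector}) and the linearisation of the pressure boundary condition at $\overline{\textbf{u}}_\infty$. Given this inequality, every other step is a routine transcription of the Euler argument with the weights $c_i^{u}, c_i^{m}, c_i^{l}$ replaced by their two-component analogues, and hence (i)--(iii) of Theorem \ref{thm:WeishiliuCunzai} follow from the corresponding statements in Theorem \ref{thm:EaS} and Theorem \ref{Thm:Important3}.
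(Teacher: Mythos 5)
Your proposal is correct and follows essentially the same route as the paper: the paper gives no separate proof of Theorem \ref{thm:WeishiliuCunzai} beyond the remark that it is obtained ``using the same method as we have developed in the previous sections,'' and your transcription of the Riemann-solver, wave-front tracking, Glimm-functional, and Lyapunov arguments to the $2\times2$ potential system is precisely that method. The one obstacle you single out --- a potential-flow analogue of the product condition (\ref{eqn:StabilitySecond2}) --- in fact does not arise here, since with $\bar p_b=(\mathscr{R}(|\overline{\textbf{u}}_\infty|))^\gamma$ there is no strong shock and hence no second reflecting interface: reflected $1$-waves escape to $y=-\infty$, so only the single boundary reflection coefficient (of modulus close to $1$) enters, and the weights $c_1^{u,\rm P}<c_2^{u,\rm P}$ can always be chosen to absorb it.
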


\begin{definition}
	Let $(b_j,\check{\textbf{u}}_j^\top,p_j)^\top\in \mathbb{R}\times L^{1}_{\rm loc}(\mathbb{R}_-)\times L^{1}_{\rm loc}(\mathbb{R}_+)$, $j=1,\,2$. Define
	\begin{align*}
		\big\|(b_1,\check{\textbf{u}}_1^\top,p_1)^\top-(b_2,\check{\textbf{u}}_2^\top,p_2)^\top\big\|_{Y_{\text{P}}}
        =|b_1-b_2|+\|\check{\textbf{u}}_1-\check{\textbf{u}}_2\|_{L^1(\mathbb{R}_-)}+\|p_1-p_2\|_{ L^{1}(\mathbb{R}_+)}.
	\end{align*}
	\end{definition}

	\begin{definition}
		\label{def:WSDomain}
		Given $\hat{\varepsilon}>0$, define
		\begin{equation*}
			\mathbb{D}^{\hat{\varepsilon}}_{\text{P}}=\text{\rm cl}\left\{(b,\check{\textbf{u}}^\top,p)^\top\,:\, \, b\in\mathbb{R},\,\,\,
			\begin{aligned}
				&\check{\textbf{u}}\in \textbf{PWC},&&\check{\textbf{u}}-\overline{\textbf{u}}_0\in L^1(\mathbb{R}_-;\mathbb{R}^2), \\
				&p\in \textbf{PWC},&&p-\bar{p}_{b}\in L^1(\mathbb{R}_+;\mathbb{R}),
			\end{aligned}
			\,\,\, F_{\rm P}(0;b,\check{\textbf{u}}^\top,p)\le \hat{\varepsilon}
			\right\},
		\end{equation*}
		where $F_{\rm P}(0;b,\check{\textbf{u}}^\top,p)$ is the Glimm-type functional corresponding to the initial data $\check{\textbf{u}}$
and the pressure distribution $p_b$ for the potential flow equations, and {\rm cl} stands for the closure with respect to $\|\cdot\|_{Y_{\text{P}}}$.
		\end{definition}

Furthermore, we have
\begin{theorem} \label{Thm:WSLWendingxing}
	For $\hat{\varepsilon}>0$ sufficiently small, take $(b(0),\check{\textbf{u}}_{\infty}^\top,p_b)^\top\in\mathbb{D}^{\hat{\varepsilon}}_{\text{P}}$.
Then, with the initial value $\textbf{u}_{\infty}^\top(y)=\check{\textbf{u}}_{\infty}^\top(y-b(0))$ for $y<b(0)$ and the pressure distribution $p_b$,
a subsequence of $\mu$-approximate solutions $(b^{\mu,\Delta x},\textbf{u}^{\mu,\Delta x})$ converges to a unique limit $(b,\textbf{u})$, as $\mu\rightarrow0$. As a result,
\begin{equation*}
(b(0),\check{\textbf{u}}_{\infty}^\top,p_b,x)^\top\mapsto (b(x),\textbf{u}^\top(x,\cdot+b(x)),\iota_xp_b)^\top
:=\mathfrak{S}^{\text{P}}_{x}(b(0),\check{\textbf{u}}_{\infty}^\top,p_b)^\top
\end{equation*}
	is a semigroup that generates the solution to the inverse problem for the potential flow system{\rm :} If
	\begin{align*}
		(b(0),\check{\textbf{u}}_{\infty}^\top,p_b)^\top,\,(b_i(0),\check{\textbf{u}}_{\infty,i}^\top,p_{b,i})^\top\in \mathbb{D}^{\hat{\varepsilon}}_{\text{P}},
	\end{align*}
	and $x_i\ge 0$, $i=1,\,2$, then
		\begin{align*}
	&\mathfrak{S}^{\text{P}}_{0}(b(0),\check{\textbf{u}}_\infty^\top(\cdot),p_b)^\top
	=(b(0),\check{\textbf{u}}_\infty^\top(\cdot),p_{b})^\top,\\
	&\mathfrak{S}^{\text{P}}_{x_1}\mathfrak{S}^{\text{P}}_{x_2}(b(0),\check{\textbf{u}}_\infty^\top(\cdot),p_b)^\top
	  =\mathfrak{S}^{\text{P}}_{x_1+x_2}(b(0),\textbf{u}_\infty^\top(\cdot),p_b)^\top.
	\end{align*}
	Moreover, there exist $K^\sharp>0$ and $K^\flat>0$ such that,
	\begin{equation*}
	\begin{aligned}
	&\|	\mathfrak{S}^{\text{P}}_{x_1}(b_1(0),\textbf{u}_{\infty,1}^\top(\cdot),p_{b,1})^\top
	   -\mathfrak{S}^{\text{P}}_{x_2}(b_2(0),\textbf{u}_{\infty,2}^\top(\cdot),p_{b,2})^\top\|_{Y_{\text{P}}}\\
	 \leq& K^\sharp\,\|(b_1(0),\textbf{u}_{\infty,1}^\top(\cdot),p_{b,1})^\top-(b_2(0),\textbf{u}_{\infty,2}^\top(\cdot),p_{b,2})^\top\|_{Y_{\text{P}}}+K^\flat\,|x_1-x_2|.
			\end{aligned}
		\end{equation*}
		\end{theorem}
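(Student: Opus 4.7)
The plan is to mirror the proof of Theorem \ref{Thm:Important3} for the potential flow inverse problem, exploiting two simplifications: the system \eqref{eqn:TwoDPotential}--\eqref{eqn:Bernouli} has only two characteristic families with eigenvalues \eqref{eqn:PFEigenvalue} and eigenvectors \eqref{eqn:PFEigenvector}, and under \eqref{JS:1}--\eqref{JS:2} the choice $\bar p_b=(\mathscr{R}(|\overline{\textbf{u}}_\infty|))^\gamma$ forces the background state to be continuous, so no strong shock arises and only weak waves are generated by the perturbations. First, I would construct $\mu$-approximate solutions $(b^{\mu,\Delta x},\textbf{u}^{\mu,\Delta x})$ by the wave-front tracking algorithm of Section \ref{sect-appro}, adapted to the two-family case: the accurate and simplified Riemann solvers, the non-physical front of artificial speed $\hat\lambda$, and the three boundary cases (pressure jump, physical wave hitting the boundary, non-physical wave crossing the boundary) carry over verbatim. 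A Glimm-type functional $F_{\rm P}$ of the form in Definition \ref{Def:GTF} (with $U_\diamond$, $U^\diamond$ terms dropped, since no strong shock is present) is non-increasing under interactions, and Propositions \ref{prop:EoNP}--\ref{Prop:Important2} hold with identical proofs, delivering uniform BV bounds on $\textbf{u}^{\mu,\Delta x}$, $\textbf{u}^{\mu,\Delta x}_b$, and Lipschitz bounds on $b^{\mu,\Delta x}$.

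Second, I would set up a modified Lyapunov functional
\begin{align*}
\mathfrak{F}_{\rm P}(\textbf{u}_1(x),\textbf{u}_2(x))
=c_b\!\int_0^x\!\bigl(|h_{b,1}(\tau)|+|h_{b,2}(\tau)|\bigr)\,\mathrm{d}\tau
+\sum_{i=1}^{2}\int_{-\infty}^{b_{\max}}W_i(y)\,|q_i(y)|\,\mathrm{d}y,
\end{align*}
exactly parallel to Section \ref{sect-Lya}, where $h_i, h_{b,i}$ are now defined implicitly through the two-family $i$-Hugoniot parameterization, and $W_i(y)=1+\kappa_1A_i(y)+\kappa_2(Q(\textbf{u}_1)+Q(\textbf{u}_2))$. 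Since no strong shock is present, the interior wave analysis reduces to Case \ref{Case1} of Lemma \ref{lem:Stability1} (Bressan--Liu--Yang estimates for weak waves), and the crucial boundary estimate uses the potential-flow analog of the identity \eqref{eqn:BoundaryLyapunov}, expressing the reflected $1$-component $h_{b,1}$ in terms of the incoming $2$-component $h_{b,2}$ plus a term of order $|p_{b,2}^{\mu_2}-p_{b,1}^{\mu_1}|+\mu$. Following the proof of Proposition \ref{prop:NonInteractionpoint}, with $c_1^u,c_2^u$ chosen to satisfy the two-family analog of \eqref{eqn:StabilitySecond2} and $\kappa_1$ sufficiently large, I would obtain the stability estimate analogous to Proposition \ref{prop:Stability}.

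Third, with these estimates in hand, I would imitate the Cauchy sequence argument in the proof of Theorem \ref{Thm:Important3}: for any $\mu_1,\mu_2>0$,
\begin{align*}
&\bigl\|(b^{\mu_1}(x),(\textbf{u}^{\mu_1})^\top(x,\cdot+b^{\mu_1}(x)),\iota_xp_b^{\mu_1})^\top
-(b^{\mu_2}(x),(\textbf{u}^{\mu_2})^\top(x,\cdot+b^{\mu_2}(x)),\iota_xp_b^{\mu_2})^\top\bigr\|_{Y_{\rm P}}\\
&\quad\leq C\bigl\|(b^{\mu_1}(0),(\check{\textbf{u}}^{\mu_1}_\infty)^\top,p_b^{\mu_1})^\top-(b^{\mu_2}(0),(\check{\textbf{u}}^{\mu_2}_\infty)^\top,p_b^{\mu_2})^\top\bigr\|_{Y_{\rm P}}+C\max\{\mu_1,\mu_2\}\,x,
\end{align*}
where the trivial inequality $\|\iota_xp_{b,1}-\iota_xp_{b,2}\|_{L^1(\mathbb{R}_+)}\leq\|p_{b,1}-p_{b,2}\|_{L^1(\mathbb{R}_+)}$ is used for the pressure component. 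Taking $\mu_1,\mu_2\to 0$ yields a unique limit $(b,\textbf{u})$ and defines $\mathfrak{S}^{\text{P}}_x$. The semigroup identities $\mathfrak{S}^{\text{P}}_0=\mathrm{id}$ and $\mathfrak{S}^{\text{P}}_{x_1}\mathfrak{S}^{\text{P}}_{x_2}=\mathfrak{S}^{\text{P}}_{x_1+x_2}$ follow from uniqueness of this limit. Lipschitz continuity in the initial data with constant $K^\sharp$ is inherited directly from the stability estimate above; Lipschitz continuity in $x$ with constant $K^\flat$ follows by combining Lemma \ref{lem:LipschitzContinuous} applied to $\textbf{u}(x,\cdot+b(x))\in\mathrm{BV}$, the uniform Lipschitz bound $|\dot b(x)|\leq|\bar v_+/\bar u_+|+\mathcal{E}$ inherited from the analog of Proposition \ref{Prop:Important2}, and the trivial $L^1$-translation bound for $\iota_xp_b$.

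The main obstacle is establishing the boundary reflection estimate underlying Lemma \ref{lem:Stability1} in the potential flow setting: one must verify that the reflection coefficient relating the outgoing $1$-wave to the incoming $2$-wave at the boundary, in the linearization around $\overline{\textbf{u}}_+$ with pressure condition \eqref{eqn:WSLPFBoundaryData}, is strictly less than $1$ in absolute value, so that the cross-family cancellation in $E_{b,1}+E_{b,2}$ is strictly dissipative up to the errors $O(1)(|p_{b,2}^{\mu_2}-p_{b,1}^{\mu_1}|+\mu)$. This is the potential-flow analog of Lemma \ref{lem:WBe} and requires a direct computation using the eigenvector formula \eqref{eqn:PFEigenvector} together with the subsonic boundary condition $\mathscr{R}(|\overline{\textbf{u}}_\infty|)$, using the Bernoulli constraint \eqref{eqn:Bernouli} to linearize the map from pressure to the flow-angle change at the wall.
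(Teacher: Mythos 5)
Your proposal is correct and follows essentially the same route as the paper, which omits an explicit proof of this theorem and simply asserts that the constructions of \S\ref{sect-appro}--\S\ref{sect-semi} carry over to the two-family potential system without a strong shock; your sketch supplies the details the paper leaves implicit. The only caveat is that the "main obstacle" you flag at the end is milder than you suggest: as in Case 5 of Lemma \ref{lem:Stability1}, the boundary reflection coefficient tends to $-1$ (not something strictly below $1$ in modulus), and strict dissipativity at the wall is obtained by choosing the weights $c_1^u<c_2^u$ so that $c_1^u\eta-c_2^u<0$, which is unconstrained here precisely because there is no competing condition of type \eqref{eqn:StabilitySecond2} from a strong shock.
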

\subsection{Comparison of the wave curves and Riemann-type problems}
It follows from (\ref{JS:1})--(\ref{JS:2}) and (\ref{JS:3})--(\ref{JS:4}) that
$\overline{\textbf{u}}_{\infty}=(\overline{u}_{\infty},0)^\top$ and
$\overline{U}_{\infty}=(\overline{u}_{\infty},0,(\mathscr{R}(|\overline{\textbf{u}}_\infty|))^{\gamma},\mathscr{R}(|\overline{\textbf{u}}_\infty|))^\top$.
According to \cite{Lax1957} (see also \cite{Chen2006,Smoller1983}), there exists $\delta_1>0$ such that,
and, in the neighborhood $O_{\delta_1}(\overline{U}_\infty)$ of
$\overline{U}_\infty=(\overline{\textbf{u}}_\infty^\top,(\mathscr{R}(|\overline{\textbf{u}}_\infty|))^\gamma,\mathscr{R}(|\overline{\textbf{u}}_\infty|))^\top$,
the $j$th wave curve of the full Euler equations through $U_l\in O_{\delta_1}(\overline{U}_\infty)$ is parameterized as
\begin{equation*}
\alpha_j\mapsto\Phi_{\text{E},j}(\alpha_j;U_l),\qquad j=1,\,2,\,3,\,4,
\end{equation*}
in the neighborhood $O_{\delta_1}(\overline{\textbf{u}}_\infty)$ of $\overline{\textbf{u}}_\infty$, the $j$th wave curve
of the potential flow equations through $\textbf{u}_l\in O_{\delta_1}(\overline{\textbf{u}}_\infty)$ is parameterized as
\begin{align*}
\alpha_j\mapsto\Phi_{\text{P},j}(\alpha_j;\textbf{u}_l),\qquad j=1,\,2,
\end{align*}
such that
\begin{align*}
 \frac{\dd\Phi_{\text{P},j}(\alpha_j;\textbf{u}_l)}{\dd \alpha_j}=r_j(\textbf{u}_l).
\end{align*}
We denote
\begin{align*}
&\Phi_{\text{E}}(\alpha_1,\alpha_2,\alpha_3,\alpha_4;U_l)=\Phi_{\text{E},4}(\alpha_4;\Phi_{\text{E},3}(\alpha_3;\Phi_{\text{E},2}(\alpha_2;\Phi_{\text{E},1}(\alpha_1;U_l)))),\\
&\Phi_{\text{P}}(\alpha_1,\alpha_2;\textbf{u}_l)=\Phi_{\text{P},2}(\alpha_2;\Phi_{\text{P},1}(\alpha_1;\textbf{u}_l)),
\end{align*}
and define
\begin{align*}
D_{\text{E}}&:=\Big\{(\textbf{u}^\top,p,\rho)^\top\in O_{\delta_1}(\overline{U}_\infty)\, :\, |\textbf{u}|>c,\, p=\rho^\gamma,\, \frac{|\textbf{u}|^2}{2}+\frac{\gamma p}{(\gamma-1)\rho}=B_{\infty}\Big\},\\
D_{\text{P}}&:=\Big\{\textbf{u}\in O_{\delta_1}(\overline{\textbf{u}}_\infty)\, :\, \frac{2(\gamma-1)B_{\infty}}{\gamma+1}<|\textbf{u}|^2<2B_{\infty} \Big\}.
\end{align*}
For $\textbf{u}_l\in D_{\text{P}}$, define
\begin{align*}
&\Psi_{j}(\alpha_j;\textbf{u}_l):=(\Phi_{\text{P},j}(\alpha_j;\textbf{u}_l)^\top,(\mathscr{R}(|\Phi_{\text{P},j}(\alpha_j;\textbf{u}_l)|))^\gamma,\mathscr{R}(|\Phi_{\text{P},j}(\alpha_j;\textbf{u}_l)|))^\top,\\
&\Psi(\alpha_1,\alpha_{2};\textbf{u}_l):=\Psi_{2}(\alpha_2;\Psi_{1}(\alpha_1;\textbf{u}_l)),
\end{align*}
where $\mathscr{R}(r)$ is given in (\ref{eqn:WSLBonuli}).
We now compare between the wave curves of the full Euler equations and the potential flow equations.
For any $U_l\in	D_{\text{E}}$, denote
\begin{equation*}
\Phi_{\text{E},j}(\alpha_j;U_l):=(u_{\text{E},j}(\alpha_j;U_l),v_{\text{E},j}(\alpha_j;U_l),p_{\text{E},j}(\alpha_j;U_l),\rho_{\text{E},j}(\alpha_j;U_l))^\top.
\end{equation*}

First, for $\Phi_{\text{E},j}$, $j=1,\,4$, we have the following property:
\begin{lemma}
\label{lem:WOBonuli}
For $\alpha_j\ge0$ for $j=1,\,4$,
\begin{equation}
	p_{\text{E},j}=(\rho_{\text{E},j})^\gamma,\qquad\frac{(u_{\text{E},j})^2+(v_{\text{E},j})^2}{2}+\frac{\gamma(\rho_{\text{E},j})^{\gamma-1}}{\gamma-1}=B_{\infty}.\label{eqn:GammaBernoulli}
\end{equation}
\end{lemma}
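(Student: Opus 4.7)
The plan is to regard both claimed identities as conserved quantities along the rarefaction part of the wave curves $\Phi_{\text{E},j}(\alpha_j;U_l)$ for $j=1,\,4$ and $\alpha_j\geq 0$, and verify conservation by differentiating along the defining ODE system. Since $U_l\in D_{\text{E}}$, the initial state already satisfies both relations, so the statement reduces to showing that the $\alpha_j$-derivative of each quantity vanishes identically on the curve.

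First I would unpack the parametrization: for $\alpha_j\geq 0$, the curve $\Phi_{\text{E},j}(\alpha_j;U_l)$ coincides with the rarefaction branch $R_j^-(U_l)$, which solves the ODE system $(2.16)$:
\begin{equation*}
\dd u=-\lambda_j\,\dd v,\qquad \rho(\lambda_j u-v)\,\dd v=\dd p,\qquad \dd p=c^2\,\dd \rho,\qquad j=1,\,4.
\end{equation*}
Using the equation of state $p=p(\rho,S)=\kappa\rho^{\gamma}e^{S/c_\nu}$, so that $p_\rho=c^2$ and $p_S=p/c_\nu>0$, the third relation $\dd p=c^2\,\dd\rho$ combined with $\dd p=p_\rho\,\dd\rho+p_S\,\dd S$ yields $p_S\,\dd S=0$, hence $\dd S=0$ along the curve. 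Since the initial state $U_l\in D_{\text{E}}$ satisfies $p=\rho^\gamma$ (corresponding to the baseline entropy $S_0$ with $\kappa e^{S_0/c_\nu}=1$), the isentropic relation $p_{\text{E},j}=(\rho_{\text{E},j})^\gamma$ persists for all $\alpha_j\geq 0$.

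Next I would verify the Bernoulli identity. Set $\mathcal{B}(U):=\tfrac{1}{2}(u^2+v^2)+\tfrac{\gamma\rho^{\gamma-1}}{\gamma-1}$, which, using the isentropic relation just established, equals $\tfrac{1}{2}(u^2+v^2)+\tfrac{\gamma p}{(\gamma-1)\rho}$. A direct computation along the curve gives
\begin{equation*}
\dd\!\left(\tfrac{u^2+v^2}{2}\right)=u\,\dd u+v\,\dd v=(v-u\lambda_j)\,\dd v,
\end{equation*}
while $\dd p=c^2\,\dd\rho=\tfrac{\gamma p}{\rho}\,\dd\rho$ converts $\dd\!\left(\tfrac{\gamma p}{(\gamma-1)\rho}\right)$ into $\tfrac{\dd p}{\rho}$, and the second ODE relation then yields $\tfrac{\dd p}{\rho}=(\lambda_j u-v)\,\dd v$. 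Adding the two contributions gives $\dd\mathcal{B}=0$ along $\Phi_{\text{E},j}$. Since $U_l\in D_{\text{E}}$ satisfies $\mathcal{B}(U_l)=B_\infty$, we conclude that $\mathcal{B}$ equals $B_\infty$ on the entire rarefaction curve, which is precisely the second identity in \eqref{eqn:GammaBernoulli}.

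There is no serious obstacle: the entire argument is a short invariant-along-characteristics calculation. The only mild subtlety is ensuring that the isentropic relation is used self-consistently, namely that one may freely replace $c^2$ by $\gamma p/\rho=\gamma\rho^{\gamma-1}$ after establishing $\dd S=0$ so that the enthalpy may be written either as $\tfrac{\gamma p}{(\gamma-1)\rho}$ or as $\tfrac{\gamma\rho^{\gamma-1}}{\gamma-1}$ without ambiguity. Once the two invariants are in place, the lemma follows immediately by integrating from $\alpha_j=0$.
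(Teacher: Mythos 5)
Your proposal is correct and follows essentially the same route as the paper: both identities are treated as invariants of the rarefaction branch of $\Phi_{\text{E},j}$, verified by differentiating along the curve and integrating from $\alpha_j=0$, where $U_l\in D_{\text{E}}$ supplies the initial values. The only minor difference is in the first identity, where the paper derives a linear ODE for $p_{\text{E},j}-(\rho_{\text{E},j})^\gamma$ and invokes the zero initial datum, whereas you deduce $\dd S=0$ directly from $\dd p=c^2\,\dd\rho$ and the equation of state --- two equivalent ways of expressing that the entropy is constant along the genuinely nonlinear rarefaction curves.
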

\begin{proof}
Let $\lambda_{\text{E},j}$ be the $j$th eigenvalue of the full Euler equations.
A direct computation leads to
\begin{align*}
\frac{\dd(p_{\text{E},j}-(\rho_{\text{E},j})^\gamma)}{\dd\alpha_j}
&=\mathfrak{k}_j\rho_{\text{E},j}(\lambda_{\text{E},j}u_{\text{E},j}-v_{\text{E},j})
-\gamma(\rho_{\text{E},j})^{\gamma-1}\mathfrak{k}_j\,\frac{(\rho_{\text{E},j})^2(\lambda_{\text{E},j}u_{\text{E},j}-v_{\text{E},j})}{\gamma p_{\text{E},j}}\\
&=\frac{\mathfrak{k}_j\rho_{\text{E},j}(\lambda_{\text{E},j}u_{\text{E},j}-v_{\text{E},j})}{\gamma p_{\text{E},j}}\big(p_{\text{E},j}-(\rho_{\text{E},j})^\gamma\big).
\end{align*}
Notice that the first part of (\ref{eqn:GammaBernoulli}) holds when $\alpha_j=0$ and $p_l-\rho_l^\gamma=0$.
Then, since
	\begin{align*}
		\frac{\dd}{\dd\alpha_j}\Big(\dfrac{(u_{\text{E},j})^2+(v_{\text{E},j})^2}{2}+\frac{\gamma p_{\text{E},j}}{(\gamma-1)\rho_{\text{E},j}}\Big)
		&=-u_{\text{E},j}\mathfrak{k}_j\lambda_{\text{E},j}+\mathfrak{k}_jv_{\text{E},j}+\frac{\gamma \mathfrak{k}_j\rho_{\text{E},j}(\lambda_{\text{E},j}u_{\text{E},j}-v_{\text{E},j})}{(\gamma-1)\rho_{\text{E},j}}\\
		&\quad\,-\frac{\gamma p_{\text{E},j}}{(\gamma-1)(\rho_{\text{E},j})^2}\,\frac{\rho_{\text{E},j}\mathfrak{k}_j(\lambda_{\text{E},j}u_{\text{E},j}-v_{\text{E},j})}{(c_{\text{E},j})^2}\\
		&=0,
	\end{align*}
and $p_{\text{E},j}=(\rho_{\text{E},j})^\gamma$, we obtain
	\begin{align*}
		\dfrac{(u_{\text{E},j})^2+(v_{\text{E},j})^2}{2}+\frac{\gamma (\rho_{\text{E},j})^{\gamma-1}}{\gamma-1}&=\dfrac{(u_{\text{E},j})^2+(v_{\text{E},j})^2}{2}+\frac{\gamma p_{\text{E},j}}{(\gamma-1)\rho_{\text{E},j}}\\
		&=\dfrac{u_l^2+v_l^2}{2}+\frac{\gamma p_l}{(\gamma-1)\rho_l}=B_{\infty}.
	\end{align*}
This completes the proof.
\end{proof}

Next, we have (see also \cite{Zhang2007Z})
\begin{lemma}\label{lem:coincide}
For $U_l=(\textbf{u}_l^\top,p_l,\rho_l)^\top\in D_{\text{E}}$ and $\textbf{u}_l\in D_{\text{P}}$, when $\alpha_j\ge0$,
then
	\begin{equation}
		\frac{\dd}{\dd\alpha_j}\left(u_{\text{E},j}(\alpha_j;U_l),v_{\text{E},j}(\alpha_j;U_l)\right)^\top
  =	\frac{\dd\Phi_{\text{P},\sqrt{j}}(\alpha_j;\textbf{u}_l)}{\dd\alpha_j},\qquad j=1,\,4.\label{eqn:coincide}
	\end{equation}
\end{lemma}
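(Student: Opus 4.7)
The plan is to show that, after projecting the Euler rarefaction curve onto the $(u,v)$-plane, it satisfies exactly the same ODE as the potential-flow rarefaction curve, with the same initial point, so that uniqueness of ODEs forces them to coincide. The index pairing $j=1\leftrightarrow \sqrt{j}=1$, $j=4\leftrightarrow \sqrt{j}=2$ corresponds to matching the ``slow'' and ``fast'' acoustic families in the two systems, which is the reason why $(-1)^{j}$ in \eqref{eqn:Eigenval} aligns with $(-1)^{\sqrt{j}}$ in \eqref{eqn:PFEigenvalue}.

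The first step is to use Lemma \ref{lem:WOBonuli}: along $\Phi_{\text{E},j}(\alpha_j;U_l)$ with $\alpha_j\geq 0$, the relations $p=\rho^{\gamma}$ and $\tfrac{1}{2}|\textbf{u}|^{2}+\tfrac{\gamma}{\gamma-1}\rho^{\gamma-1}=B_{\infty}$ are preserved. Consequently, along the curve, the sonic speed satisfies
$c^{2}=\gamma p/\rho=\gamma\rho^{\gamma-1}=(\gamma-1)\bigl(B_{\infty}-\tfrac{1}{2}|\textbf{u}|^{2}\bigr),$
which is precisely the sonic speed appearing in the potential-flow eigenvalues \eqref{eqn:PFEigenvalue}. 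Substituting this into \eqref{eqn:Eigenval} and comparing with \eqref{eqn:PFEigenvalue}, I get
\begin{equation*}
\lambda_{\text{E},j}\bigl(\Phi_{\text{E},j}(\alpha_j;U_l)\bigr)=\lambda_{\sqrt{j}}\bigl(u_{\text{E},j}(\alpha_j;U_l),v_{\text{E},j}(\alpha_j;U_l)\bigr),\qquad j=1,4.
\end{equation*}

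The second step is a tangent-vector computation. From \eqref{eqn:Eigenvec}, the $(u,v)$-components of the Euler eigenvector $\textbf{r}_{j}$ are $\mathfrak{k}_j(-\lambda_{\text{E},j},1)^{\top}$, so the projection of the curve satisfies
\begin{equation*}
\frac{\dd}{\dd\alpha_j}\bigl(u_{\text{E},j},v_{\text{E},j}\bigr)^{\top}=\mathfrak{k}_j\bigl(-\lambda_{\text{E},j},1\bigr)^{\top}.
\end{equation*}
Meanwhile, from \eqref{eqn:PFEigenvector} and $\frac{\dd\Phi_{\text{P},\sqrt{j}}}{\dd\alpha_j}=r_{\sqrt{j}}(\Phi_{\text{P},\sqrt{j}})$, the potential-flow rarefaction satisfies
\begin{equation*}
\frac{\dd\Phi_{\text{P},\sqrt{j}}}{\dd\alpha_j}=\frac{(-\lambda_{\sqrt{j}},1)^{\top}}{(-\lambda_{\sqrt{j}},1)\cdot\nabla\lambda_{\sqrt{j}}}.
\end{equation*}
Both right-hand sides are scalar multiples of $(-\lambda,1)^{\top}$ with the same $\lambda$, by Step 1; so it only remains to check that the two scalar factors agree.

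The third step is matching the normalizations. The Euler normalization $\textbf{r}_{j}\cdot\nabla_{U}\lambda_{\text{E},j}=1$ translates, on the constraint manifold where $p=\rho^{\gamma}$ and Bernoulli hold, into $\frac{\dd}{\dd\alpha_j}\lambda_{\text{E},j}=1$ along the curve. Since $\lambda_{\text{E},j}$ equals $\lambda_{\sqrt{j}}$ there, chain rule applied to the projected curve gives
\begin{equation*}
\mathfrak{k}_{j}\,(-\lambda_{\sqrt{j}},1)\cdot\nabla_{(u,v)}\lambda_{\sqrt{j}}=1,
\end{equation*}
so $\mathfrak{k}_{j}=1/\bigl((-\lambda_{\sqrt{j}},1)\cdot\nabla\lambda_{\sqrt{j}}\bigr)$ on the surface, which matches the potential-flow normalization. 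Hence the projected Euler curve and the potential-flow curve solve the same ODE with the same initial datum $\textbf{u}_{l}$, and \eqref{eqn:coincide} follows by uniqueness. The main technical obstacle I anticipate is the bookkeeping in Step 3: one must be careful that $\nabla_{U}\lambda_{\text{E},j}$ has nontrivial $(p,\rho)$-components, but these are annihilated when differentiating along a curve that is tangent to the constraint surface, which is guaranteed by Lemma \ref{lem:WOBonuli}; this compatibility is what allows the four-dimensional normalization to reduce cleanly to the two-dimensional one.
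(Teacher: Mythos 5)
Your proposal is correct and follows essentially the same route as the paper: use Lemma \ref{lem:WOBonuli} to identify the sonic speeds and hence the eigenvalues $\lambda_{\text{E},j}=\lambda_{\text{P},\sqrt{j}}$ along the projected curve, observe that both tangent vectors are proportional to $(-\lambda,1)^\top$, match the scalar factors, and conclude by ODE uniqueness from the common initial datum $\textbf{u}_l$. The only (harmless) variation is in matching the normalizations: the paper verifies directly that the explicit coefficient $\mathfrak{k}_j$ from \eqref{eqn:Coeffi}, after substituting $c_{\text{E},j}^2=\gamma(\rho_{\text{E},j})^{\gamma-1}$, coincides with $1/\bigl((-\lambda_{\text{P},\sqrt{j}},1)\cdot\nabla\lambda_{\text{P},\sqrt{j}}\bigr)$, whereas you derive the same identity more abstractly from $\textbf{r}_j\cdot\nabla\lambda_j=1$ and the chain rule on the constraint manifold.
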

\begin{proof}
By (\ref{eqn:Coeffi}), it is direct to see
\begin{align*}
	&\frac{\dd}{\dd\alpha_j}\left(u_{\text{E},j}(\alpha_j;U_l),v_{\text{E},j}(\alpha_j;U_l)\right)^\top\\
 &=	\mathfrak{k}_j(-\lambda_{\text{E},j},1)^\top
 =\frac{2}{\gamma+1}\,\frac{(u_{\text{E},j}^2-c_{\text{E},j}^2)\lambda_{\text{E},j}-u_{\text{E},j}v_{\text{E},j}}{(1+\lambda_{\text{E},j}^2)(\lambda_{\text{E},j}u_{\text{E},j}-v_{\text{E},j})}(-\lambda_{\text{E},j},1)^\top.
\end{align*}
Form Lemma \ref{lem:WOBonuli}, (\ref{eqn:Eigenval}){\blue--(\ref{eqn:Coeffi})}, and (\ref{eqn:PFEigenvalue})--(\ref{eqn:PFEigenvector}), we obtain
\begin{align*}
	\lambda_{\text{E},j}(\Phi_{\text{E},j}(\alpha_j;U_l))=\lambda_{\text{P},\sqrt{j}}((u_{\text{E},j}(\alpha_j;U_l),v_{\text{E},j}(\alpha_j;U_l))^\top),
\end{align*}
where $\lambda_{\text{P},\sqrt{j}}(\textbf{u})$ is the $\sqrt{j}$th eigenvalue of the potential flow equations. By Lemma \ref{lem:WOBonuli}, we see that $c_{\text{E},j}^2=\gamma(\rho_{\text{E},j})^{\gamma-1}
$ so that
\begin{align*}
	&\frac{2}{\gamma+1}\,\frac{(u_{\text{E},j}^2-c_{\text{E},j}^2)\lambda_{\text{E},j}-u_{\text{E},j}v_{\text{E},j}}{(1+\lambda_{\text{E},j}^2)(\lambda_{\text{E},j}u_{\text{E},j}-v_{\text{E},j})}(-\lambda_{\text{E},j},1)^\top\\
	&=\frac{(-\lambda_{\text{P},\sqrt{j}}((u_{\text{E},j}(\alpha_j;U_l),v_{\text{E},j}(\alpha_j;U_l))^\top),1)^\top}{(-\lambda_{\text{P},\sqrt{j}}((u_{\text{E},j}(\alpha_j;U_l),v_{\text{E},j}(\alpha_j;U_l))^\top),1)\cdot\nabla\lambda_{\text{P},\sqrt{j}}((u_{\text{E},j}(\alpha_j;U_l),v_{\text{E},j}(\alpha_j;U_l))^\top)}.
\end{align*}
Note that
\begin{align*}
	\frac{\dd\Phi_{\text{P},\sqrt{j}}(\alpha_j;\textbf{u}_l)}{\dd\alpha_j}
	=\frac{(-\lambda_{\text{P},\sqrt{j}}(\Phi_{\text{P},\sqrt{j}}(\alpha_j;\textbf{u}_l)),1)^\top}{(-\lambda_{\text{P},\sqrt{j}}(\Phi_{\text{P},\sqrt{j}}(\alpha_j;\textbf{u}_l)),1)\cdot\nabla\lambda_{\text{P},\sqrt{j}}(\Phi_{\text{P},\sqrt{j}}(\alpha_j;\textbf{u}_l))},
\end{align*}
$(u_{\text{E},j}(0;U_l),v_{\text{E},j}(0;U_l))^\top=\textbf{u}_l=\Phi_{\text{P},\sqrt{j}}(0;\textbf{u}_l)$.
Therefore, we obtain (\ref{eqn:coincide}).
\end{proof}

By Lemma \ref{lem:WOBonuli}--\ref{lem:coincide}, noting that the wave curves $\Phi_{\text{E},j}$ and $\Psi_j$ are $C^2$ functions, we have (see also \cite{Zhang2007Z})

\begin{lemma}\label{lem:expcoincide}
For $U_l=(\textbf{u}_l^\top,p_l,\rho_l)^\top\in D_{\text{E}}$
and $\textbf{u}_l\in D_{\text{P}}$, if $\alpha_j\ge0$, then, for $j=1,\,4$,
\begin{align*}
&\Phi_{\text{E},j}(\alpha_j;U_l)=\Psi_{j}(\alpha_j;\textbf{u}_l),\\
&\frac{\partial\Psi_{j}(0;\textbf{u}_l)}{\partial\alpha_j}=\textbf{r}_j(U_l),
\qquad
\frac{\partial^2\Phi_{\text{E},j}(0;U_l)}{\partial\alpha_j^2}
=\frac{\partial^2\Psi_{j}(0;\textbf{u}_l)}{\partial\alpha_j^2}.
\end{align*}	
\end{lemma}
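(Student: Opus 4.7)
The plan is to obtain all three statements from Lemmas \ref{lem:WOBonuli}--\ref{lem:coincide}, with the derivative identities following from the pointwise equality of the two wave curves on the rarefaction branch combined with the $C^2$ regularity of both curves at the origin.

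First I would establish the pointwise identity $\Phi_{\text{E},j}(\alpha_j;U_l) = \Psi_j(\alpha_j;\textbf{u}_l)$ for $\alpha_j\ge 0$ and $j\in\{1,4\}$. Writing $\Phi_{\text{E},j} = (u_{\text{E},j},v_{\text{E},j},p_{\text{E},j},\rho_{\text{E},j})^\top$, Lemma \ref{lem:WOBonuli} shows that the Bernoulli relation and the polytropic law $p = \rho^\gamma$ are preserved along the $j$-rarefaction branch, so the last two components are determined algebraically by the first two via $\rho_{\text{E},j} = \mathscr{R}(|(u_{\text{E},j},v_{\text{E},j})|)$ and $p_{\text{E},j} = (\rho_{\text{E},j})^\gamma$. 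Lemma \ref{lem:coincide} then supplies the derivative identity $\frac{\dd}{\dd\alpha_j}(u_{\text{E},j},v_{\text{E},j})^\top = \frac{\dd\Phi_{\text{P},\sqrt{j}}(\alpha_j;\textbf{u}_l)}{\dd\alpha_j}$, and integrating from $0$ to $\alpha_j$ together with the common initial value $(u_{\text{E},j}(0),v_{\text{E},j}(0))^\top = \textbf{u}_l = \Phi_{\text{P},\sqrt{j}}(0;\textbf{u}_l)$ yields $(u_{\text{E},j},v_{\text{E},j})^\top = \Phi_{\text{P},\sqrt{j}}(\alpha_j;\textbf{u}_l)$. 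Inserting this velocity into the algebraic formulas for the density and the pressure reproduces exactly the definition of $\Psi_j$, which gives the first claim.

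For the tangent and curvature identities at $\alpha_j = 0$, I would exploit that $\Phi_{\text{E},j}$ is parameterized so that $\partial_{\alpha_j}\Phi_{\text{E},j}(0;U_l) = \textbf{r}_j(U_l)$. Since both $\Phi_{\text{E},j}$ and $\Psi_j$ are $C^2$ at the origin --- the former by the classical second-order tangency of the shock and rarefaction branches, the latter as a smooth composition of $\Phi_{\text{P},\sqrt{j}}$ with the smooth map $\textbf{u}\mapsto(\textbf{u}^\top,\mathscr{R}(|\textbf{u}|)^\gamma,\mathscr{R}(|\textbf{u}|))^\top$ --- and since they coincide on the one-sided interval $\{\alpha_j\ge 0\}$, their right-derivatives and right-second-derivatives at $\alpha_j = 0$ agree; the $C^2$ regularity then promotes these one-sided derivatives to the ordinary two-sided derivatives. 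This delivers both $\partial_{\alpha_j}\Psi_j(0;\textbf{u}_l) = \textbf{r}_j(U_l)$ and $\partial^2_{\alpha_j}\Phi_{\text{E},j}(0;U_l) = \partial^2_{\alpha_j}\Psi_j(0;\textbf{u}_l)$.

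No step is a genuine obstacle, as the argument is essentially a careful repackaging of Lemmas \ref{lem:WOBonuli}--\ref{lem:coincide}. The only subtlety worth flagging is that the coincidence of the two curves holds only on the rarefaction branch: on $\{\alpha_j<0\}$ the Euler shock carries an entropy jump that the potential-flow shock does not, so the curves do differ there. Bridging the one-sided pointwise equality to two-sided derivative statements at the origin is exactly what the $C^2$ regularity (flagged in the paper's parenthetical preceding the lemma) accomplishes.
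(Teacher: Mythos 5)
Your proposal is correct and follows exactly the route the paper intends: the paper gives no explicit proof here, merely invoking Lemmas \ref{lem:WOBonuli}--\ref{lem:coincide} together with the $C^2$ regularity of the wave curves, and your write-up fills in those steps in the natural way (algebraic determination of $(p,\rho)$ from the velocity via the Bernoulli and isentropic relations, integration of the common tangent field from the common initial state, and promotion of the one-sided derivative identities at $\alpha_j=0$ to two-sided ones via $C^2$ smoothness). Your flag about the curves genuinely differing on $\{\alpha_j<0\}$ is precisely the point the paper's parenthetical about $C^2$ regularity is meant to handle.
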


\begin{proposition}
\label{prop:zhongyao1}
Assume that $U_l=(\textbf{u}_l^\top,p_l,\rho_l)^\top\in D_{\text{E}}$ and $\textbf{u}_l\in D_{\text{P}}$.
For $\alpha_j$ sufficiently small, $j=1,\,4$, the equation{\rm :}
	\begin{align}\label{eqn:compare}
		\Phi_{\text{E}}(\beta_1,\beta_2,\beta_3,\beta_4;U_l)=(\Phi_{\text{P},\sqrt{j}}(\alpha_j;\textbf{u}_l),(\mathscr{R}(|\Phi_{\text{P},\sqrt{j}}(\alpha_j;\textbf{u}_l)|)^{\gamma}),\mathscr{R}(|\Phi_{\text{P},\sqrt{j}}(\alpha_j;\textbf{u}_l)|))^\top		
	\end{align}
	has a unique solution $(\beta_1,\beta_2,\beta_3,\beta_4)$ satisfying
 \begin{align*}
&\beta_{j}=\alpha_{j}+O(1)|\alpha_{j}^{-}|^{3},\\
&\beta_{k}=O(1)|\alpha_{j}^-|^{3},\qquad k\neq j,
\end{align*}
where $a^{-}=\min\{a,0\}$ and the bound of $O(1)$ is independent of $\alpha_{j}$ and $\textbf{u}_l$.
\end{proposition}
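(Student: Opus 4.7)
The plan is to use the implicit function theorem for existence and uniqueness, and then to extract the cubic estimates from a third–order Taylor comparison of the two wave maps. First I would observe that the Jacobian
\[
\frac{\partial \Phi_{\text{E}}}{\partial(\beta_1,\beta_2,\beta_3,\beta_4)}\Big|_{\beta=0} = \bigl[\textbf{r}_1(U_l),\textbf{r}_2(U_l),\textbf{r}_3(U_l),\textbf{r}_4(U_l)\bigr]
\]
is non–singular by strict hyperbolicity of the Euler system at $U_l\in D_{\text{E}}$. Since the right–hand side of \eqref{eqn:compare} reduces to $U_l$ when $\alpha_j=0$, the implicit function theorem yields a unique $C^2$ map $\alpha_j\mapsto(\beta_1,\beta_2,\beta_3,\beta_4)$ in a neighborhood of $\alpha_j=0$, with $\beta=0$ when $\alpha_j=0$. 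When $\alpha_j\ge 0$, Lemma \ref{lem:expcoincide} gives $\Phi_{\text{E},j}(\alpha_j;U_l)=\Psi_j(\alpha_j;\textbf{u}_l)$, so by uniqueness $\beta_j=\alpha_j$ and $\beta_k=0$ for $k\neq j$; since $|\alpha_j^-|=0$ in this case, the estimate is trivial. Hence the work is in the shock regime $\alpha_j<0$.

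For $\alpha_j<0$, I would Taylor expand both sides of \eqref{eqn:compare} to third order in the parameters. Writing $\beta_j=\alpha_j+\gamma_j$ and $\beta_k=\gamma_k$ for $k\neq j$, the first–order balance in $\alpha_j$ is automatic via $\textbf{r}_j(U_l)$, which gives $\gamma=O(\alpha_j^2)$ at once (by linear independence of $\textbf{r}_1,\dots,\textbf{r}_4$). Substituting this crude bound into the second–order term, noting $\Phi_{\text{E}}(\beta_j\textbf{e}_j;U_l)=\Phi_{\text{E},j}(\beta_j;U_l)$, and invoking the matching of second derivatives
\[
\frac{\partial^2\Phi_{\text{E},j}(0;U_l)}{\partial\alpha_j^2}=\frac{\partial^2\Psi_j(0;\textbf{u}_l)}{\partial\alpha_j^2}
\]
from Lemma \ref{lem:expcoincide}, the diagonal quadratic term $\tfrac12\alpha_j^2\partial_{\alpha_j}^2\Psi_j(0;\textbf{u}_l)$ cancels from both sides. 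All remaining quadratic contributions carry a factor $\gamma_k$ (either through $\alpha_j\gamma_j$, through $\gamma_k^2$, or through the mixed second partials $\partial_{\beta_k}\partial_{\beta_m}\Phi_{\text{E}}$ with at least one index $\neq j$), and are therefore $O(\alpha_j^3)$ by the crude bound $\gamma=O(\alpha_j^2)$. The cubic remainder from each Taylor expansion is of course $O(\alpha_j^3)$. Thus the relation reduces to
\[
\sum_{k=1}^{4}\gamma_k\,\textbf{r}_k(U_l)=O(|\alpha_j|^3),
\]
and the invertibility of $[\textbf{r}_1,\textbf{r}_2,\textbf{r}_3,\textbf{r}_4]$ gives $\gamma_k=O(|\alpha_j|^3)$ for every $k$, which is exactly $\beta_j=\alpha_j+O(|\alpha_j^-|^3)$ and $\beta_k=O(|\alpha_j^-|^3)$ for $k\neq j$.

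The main obstacle is the bookkeeping that ensures that \emph{every} non–trivial second–order contribution—on either side—either cancels or is absorbed into $O(|\alpha_j|^3)$; this hinges on the precise statement of Lemma \ref{lem:expcoincide}, which matches not only the first derivative $\textbf{r}_j(U_l)$ but also the diagonal second derivative $\partial^2_{\alpha_j}$ at $\alpha_j=0$, and on the fact that the composition structure $\Phi_{\text{E},4}\circ\Phi_{\text{E},3}\circ\Phi_{\text{E},2}\circ\Phi_{\text{E},1}$ reduces to $\Phi_{\text{E},j}$ along the coordinate axis $\beta_k\equiv 0$ ($k\neq j$), so that the pure–$\beta_j$ second derivative of $\Phi_{\text{E}}$ coincides with that of $\Phi_{\text{E},j}$. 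The uniformity of $O(1)$ in $\textbf{u}_l$ (and $\alpha_j$) follows because all the relevant $C^2$–norms are controlled on the compact neighborhoods $O_{\delta_1}(\overline{U}_\infty)$ and $O_{\delta_1}(\overline{\textbf{u}}_\infty)$ on which Lemma \ref{lem:expcoincide} applies.
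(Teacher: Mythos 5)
Your proposal is correct and follows essentially the same route as the paper: existence and uniqueness via the implicit function theorem with the non-degenerate Jacobian $\det(\textbf{r}_1,\dots,\textbf{r}_4)\neq 0$, the exact identity $\beta_j=\alpha_j$, $\beta_k=0$ for $\alpha_j\ge 0$ from Lemma \ref{lem:expcoincide} and uniqueness, and, for $\alpha_j<0$, the cancellation of the first- and second-order terms using the matching of $\textbf{r}_j(U_l)$ and of the diagonal second derivatives in Lemma \ref{lem:expcoincide}. The paper phrases this by showing $\partial\beta_k/\partial\alpha_j|_{\alpha_j=0}=\delta_{jk}$ and $\partial^2\beta_k/\partial\alpha_j^2|_{\alpha_j=0}=0$ and then Taylor-expanding $\beta_k(\alpha_j)$, which is just the implicit-function form of your direct two-sided expansion.
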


\begin{proof}
	Note that
	\begin{align*}
		\det\left.\Big( \frac{\partial \Phi_{\text{E}}
		}{\partial(\beta_{1},\beta_{2},\beta_{3},\beta_{4})} \Big)\right|_{\beta_{1}=\beta_{2}=\beta_{3}=\beta_{4}=0}=
		\det(\textbf{r}_{1},\textbf{r}_{2},\textbf{r}_{3},\textbf{r}_{4})\neq0.
	\end{align*}
Then the implicit function theorem implies that there exists a unique solution $(\beta_{1},\beta_{2},\beta_{3},\beta_{4})$, $\beta_{k}=\beta_{k}(\alpha_{j},\textbf{u}_l)\in C^{2}$, of equation (\ref{eqn:compare}) with $\beta_{k}|_{\alpha_{j}=0}=0$.

In order to obtain the expansion of $\beta_{k}$, we first differentiate equation (\ref{eqn:compare}) with respect to $\alpha_{j}$ and then let $\alpha_{j}=0$. By Lemma \ref{lem:expcoincide}, we obtain
	\begin{equation*}
		\sum_{k=1}^{4}\left.\frac{\partial \beta_{k}}{\partial \alpha_{j}}\right|_{\alpha_{j}=0}\textbf{r}_{k}(U_l)=\textbf{r}_j(U_l),\qquad j=1,\,4,
	\end{equation*}
which imply
	\begin{equation*}
		\left.\frac{\partial \beta_{k}}{\partial \alpha_{j}}\right|_{\alpha_{j}=0}=\delta_{jk}.\label{eqn:yijiezhankai}
	\end{equation*}
Next, we take the second-order derivatives in equation (\ref{eqn:compare}) with respect to $\alpha_j$
and then let $\alpha_{j}=0$. Therefore, from Lemma \ref{lem:expcoincide} and (\ref{eqn:yijiezhankai}), we have
	\begin{equation*}
		\sum_{k=1}^{4}\left.\frac{\partial^2 \beta_{k}}{\partial \alpha_{j}^{2}}\right|_{\alpha_{j}=0}\textbf{r}_{k}(U_l)+\frac{\partial^{2}\Phi_{\text{E},j}(0;U_l)}{\partial \beta_{j}^{2}}=\frac{\partial^{2}\Psi_{j}(0;\textbf{u}_l)}{\partial \alpha_{j}^{2}},\qquad j=1,\,4,
	\end{equation*}
	which, together with Lemma \ref{lem:expcoincide}, leads to
	\begin{equation}
		\left.\frac{\partial^2 \beta_{k}}{\partial \alpha_{j}^{2}}\right|_{\alpha_{j}=0}=0. \label{eqn:erjiezhankai}
	\end{equation}
Thus, when $a_{j}<0$, from (\ref{eqn:yijiezhankai}) and (\ref{eqn:erjiezhankai}), we obtain the result.
When $\alpha_j\geq0$, from Lemma \ref{lem:expcoincide} and the uniqueness of $(\beta_{1}, \beta_{2}, \beta_{3}, \beta_{4})$,
we see that $\beta_{j}=\delta_{kj}\alpha_{j}$. This completes the proof.
\end{proof}

\begin{proposition}
	\label{prop:zhongyao2}
	For $\textbf{u}_l,\textbf{u}_r\in D_{\text{P}}$ satisfying
	\begin{align*}
	&\textbf{u}_r=\Phi_{\text{P}}(\alpha_1,\alpha_{4};\textbf{u}_l),\\
 &U_r=\Phi_{\text{E}}(\beta_1,\beta_{2},\beta_{3},\beta_{4};U_l),
\end{align*}
with $U_r=(\textbf{u}_r^\top,(\mathscr{R}(|\textbf{u}_r|))^{\gamma},\mathscr{R}(|\textbf{u}_r|))^\top$
and $U_l=(\textbf{u}_l^\top,(\mathscr{R}(|\textbf{u}_l|))^{\gamma},\mathscr{R}(|\textbf{u}_l|))^\top$.
Then
\begin{align*}
&\beta_{j}=\alpha_{j}+O(1)(|\alpha_1^-|+|\alpha_4^-|)^3,\qquad\, j=1,\,4,\\		
&\beta_{k}=O(1)(|\alpha_1^-|+|\alpha_4^-|)^3,\qquad\, k=2,\,3,
\end{align*}
where $a^{-}=\min\{a,0\}$, and the bound of $O(1)$ is independent of  $\textbf{u}_l$ and $\alpha_{j}$, $j=1,2,3,4$.
\end{proposition}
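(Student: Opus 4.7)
The plan is to reduce Proposition 6.2 (the two-wave comparison) to Proposition 6.1 (the single-wave comparison) by splitting the potential-flow composition $\textbf{u}_r=\Phi_{\text{P},2}(\alpha_4;\Phi_{\text{P},1}(\alpha_1;\textbf{u}_l))$ into its two stages and then collapsing the resulting composition of Euler wave curves into a single one via the weak-wave interaction estimate, Lemma 2.2.

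Concretely, I would set $\textbf{u}_m:=\Phi_{\text{P},1}(\alpha_1;\textbf{u}_l)$ and $U_m:=(\textbf{u}_m^\top,(\mathscr{R}(|\textbf{u}_m|))^\gamma,\mathscr{R}(|\textbf{u}_m|))^\top$. Applying Proposition 6.1 to the $1$-wave with parameter $\alpha_1$ based at $U_l$, there exists $\gamma^{(1)}=(\gamma_1^{(1)},\gamma_2^{(1)},\gamma_3^{(1)},\gamma_4^{(1)})$ with $U_m=\Phi_{\text{E}}(\gamma^{(1)};U_l)$ and
\begin{equation*}
\gamma_1^{(1)}=\alpha_1+O(1)|\alpha_1^-|^3,\qquad \gamma_k^{(1)}=O(1)|\alpha_1^-|^3\quad(k\ne1).
\end{equation*}
Applying Proposition 6.1 again to the $2$-wave ($j=4$ in the Euler labeling) with parameter $\alpha_4$ based at $U_m$, there exists $\gamma^{(2)}$ with $U_r=\Phi_{\text{E}}(\gamma^{(2)};U_m)$ and
\begin{equation*}
\gamma_4^{(2)}=\alpha_4+O(1)|\alpha_4^-|^3,\qquad \gamma_k^{(2)}=O(1)|\alpha_4^-|^3\quad(k\ne4).
\end{equation*}

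Composing these two representations gives $U_r=\Phi_{\text{E}}(\gamma^{(2)};\Phi_{\text{E}}(\gamma^{(1)};U_l))$. By Lemma 2.2 this equals $\Phi_{\text{E}}(\beta;U_l)$ with $\beta_i=\gamma_i^{(1)}+\gamma_i^{(2)}+O(1)\Delta(\gamma^{(1)},\gamma^{(2)})$, where $\Delta$ is defined in Lemma 2.2. Writing $\varepsilon:=|\alpha_1^-|+|\alpha_4^-|$, I claim $\Delta(\gamma^{(1)},\gamma^{(2)})=O(1)\varepsilon^4$: every cross term $|\gamma_i^{(1)}||\gamma_j^{(2)}|$ with $j<i$ in the first sum of $\Delta$ involves at least one of the ``off-diagonal'' entries of $\gamma^{(1)}$ or $\gamma^{(2)}$ (those with index $\ne1$ in $\gamma^{(1)}$ or $\ne4$ in $\gamma^{(2)}$), each of which is $O(\varepsilon^3)$, so each such product is at most $O(\varepsilon^4)$ (in fact $O(\varepsilon^6)$ except when multiplied by the leading piece of $\gamma_1^{(1)}$ or $\gamma_4^{(2)}$, which are of size $O(\varepsilon)$); likewise each $\Delta_k$ is bounded by $|\gamma_k^{(1)}||\gamma_k^{(2)}|$ and the same accounting yields $O(\varepsilon^4)$. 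Substituting the expansions of $\gamma^{(1)},\gamma^{(2)}$ into $\beta_i$ then gives $\beta_1=\alpha_1+O(\varepsilon^3)$, $\beta_4=\alpha_4+O(\varepsilon^3)$, and $\beta_k=O(\varepsilon^3)$ for $k=2,3$, which is the claim.

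The main point requiring care is the bookkeeping in the last step: one has to verify that the cubic remainders from each application of Proposition 6.1 and the (strictly higher order) interaction remainder from Lemma 2.2 combine into a single cubic bound $O(1)(|\alpha_1^-|+|\alpha_4^-|)^3$ uniformly in $\textbf{u}_l$ and the wave parameters. Provided $\alpha_1,\alpha_4$ are small enough so that $\textbf{u}_m\in D_{\text{P}}$ and $U_m\in D_{\text{E}}$ (which holds because both Proposition 6.1 and Lemma 2.2 apply in fixed neighbourhoods of $\overline{\textbf{u}}_\infty$ and $\overline{U}_\infty$), no further obstacle arises and the proof concludes.
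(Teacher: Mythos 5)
Your proposal is correct and follows essentially the same route as the paper: the paper also introduces the intermediate state $\textbf{u}_m=\Phi_{\text{P}}(\alpha_1,0;\textbf{u}_l)$, applies Proposition \ref{prop:zhongyao1} to each stage, and then invokes Glimm's interaction estimate (Lemma \ref{lem:Iww}) to collapse the two Euler wave fans into one. Your bookkeeping of $\Delta(\gamma^{(1)},\gamma^{(2)})$ is actually more explicit than the paper's (which leaves that step implicit); the only minor imprecision is that the terms $\Delta_k$ involving the leading pieces $\gamma_1^{(1)},\gamma_4^{(2)}$ are in general only $O(1)(|\alpha_1^-|+|\alpha_4^-|)^3$ rather than $O(1)(|\alpha_1^-|+|\alpha_4^-|)^4$ (since $|\alpha_j|$ need not be comparable to $|\alpha_j^-|$), which still suffices for the stated cubic bound.
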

\begin{proof}
It suffices to solve
\begin{equation*}	
\Phi_{\text{E}}(\beta_{1},\beta_{2},\beta_{3},\beta_{4};U_l)=(\Phi_{\text{P}}(\alpha_1,\alpha_4;\textbf{u}_l)^\top,\,(\mathscr{R}(|\Phi_{\text{P}}(\alpha_1,\alpha_4;\textbf{u}_l)|))^{\gamma},
\,\mathscr{R}(|\Phi_{\text{P}}(\alpha_1,\alpha_4;\textbf{u}_l)|))^\top
\end{equation*}
for $\beta_k$, $k=1,\,2,\,3,\,4$.	
Carrying out a similar procedure as in the proof of Proposition \ref{prop:zhongyao1},
there exist $C^{2}$ functions $\beta_{k}=\beta_{k}(\alpha_{1},\alpha_{4},\textbf{u}_l), k=1,\,2,\,3,\,4$,
such that $(\beta_{1}, \beta_{2}, \beta_{3}, \beta_{4})$ solves the above system.\par
	
As for estimates on $\beta_{k}$, we let
\begin{align*}
\textbf{u}_m=\Phi_{\text{P}}(\alpha_{1},0;\textbf{u}_l),\quad U_m=(\textbf{u}_m^\top,p_m,\rho_m)^\top =(\textbf{u}_m^\top,(\mathscr{R}(|\textbf{u}_m|))^{\gamma},\mathscr{R}(|\textbf{u}_m|)),
\end{align*}
and consider the equations:
\begin{align*}
&U_m=\Phi_{\text{E}}(\beta_{1}^{'},\beta_{2}^{'},\beta_{3}^{'},\beta_{4}^{'};U_l),\\
&U_r=\Phi_{\text{E}}(\beta_{1}^{''},\beta_{2}^{''},\beta_{3}^{''},\beta_{4}^{''};U_m).
\end{align*}
Thanks to Proposition \ref{prop:zhongyao1}, we have
\begin{equation}
\beta_{k}^{'}=\alpha_{1}\delta_{k1}+O(1)|\alpha_1^{-}|^{3},\qquad
\beta_{k}^{''}=\alpha_{4}\delta_{k4}+O(1)|\alpha_4^{-}|^{3},
\label{eqn:limanwentisanjie}
\end{equation}
where $\delta_{ij}$ is the Kronecker symbol.
Hence, by Glimm's interaction estimates (see Lemma \ref{lem:Iww}), (\ref{eqn:limanwentisanjie}) gives the corresponding estimates for $\beta_{k}$, $k=1,\,2,\,3,\,4$.
\end{proof}

\begin{proposition}
	\label{prop:zhongyao3}
	Suppose that $\textbf{u}_l\in D_{\text{P}}$ and that $p_r$ satisfies
	\begin{align*}
&p_r=(\mathscr{R}(|\Phi_{\text{P}}(\alpha_1,0;\textbf{u}_l)|))^\gamma,\\
&p_r=\Phi_{\text{E}}^{(3)}(\beta_1,0,0,0;U_l),
	\end{align*}
with $U_l=(\textbf{u}_l^\top,(\mathscr{R}(|\textbf{u}_l|))^{\gamma},\mathscr{R}(|\textbf{u}_l|))^\top$.
Then
\begin{equation*}
\beta_{1}=\alpha_{1}+O(1)|\alpha_{1}^{-}|^3=\alpha_{1}+O(1)|\omega^-|^3,
\end{equation*}
where $\omega=p_r-p_l$, $a^{-}=\min\{a,0\}$,
and the bound of $O(1)$ is independent of $\alpha_{1}$ and $\textbf{u}_l$.
\end{proposition}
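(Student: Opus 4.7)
The plan is to adapt the argument of Proposition~\ref{prop:zhongyao1} to this scalar matching problem. Set $p_r(\alpha_1,\textbf{u}_l):=\bigl(\mathscr{R}(|\Phi_{\text{P}}(\alpha_1,0;\textbf{u}_l)|)\bigr)^{\gamma}$ and
\[
F(\beta_1,\alpha_1,\textbf{u}_l):=\Phi_{\text{E}}^{(3)}(\beta_1,0,0,0;U_l)-p_r(\alpha_1,\textbf{u}_l),
\]
so the relation to be solved is $F=0$, with $F\big|_{\alpha_1=\beta_1=0}=0$. At the base point, $\partial_{\beta_1}\Phi_{\text{E}}^{(3)}|_{\beta_1=0}$ is the third component of $\textbf{r}_1(U_l)$, equal to $\mathfrak{k}_1(U_l)\rho_l\bigl(\lambda_1(U_l)u_l-v_l\bigr)$, which stays bounded away from zero on a neighbourhood of $\overline{U}_\infty$. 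The implicit function theorem then produces a unique $C^2$ function $\beta_1=\beta_1(\alpha_1,\textbf{u}_l)$ with $\beta_1|_{\alpha_1=0}=0$.

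The next step is to pin down the first two Taylor coefficients of $\beta_1$ at $\alpha_1=0$. I would differentiate $F=0$ once and twice in $\alpha_1$ and invoke Lemma~\ref{lem:expcoincide}, which supplies $\partial_{\alpha_1}\Psi_1(0;\textbf{u}_l)=\textbf{r}_1(U_l)$ and $\partial_{\alpha_1}^{\,2}\Psi_1(0;\textbf{u}_l)=\partial_{\beta_1}^{\,2}\Phi_{\text{E},1}(0;U_l)$. Reading off the third component of each identity yields $\partial_{\alpha_1}\beta_1\big|_{\alpha_1=0}=1$ and $\partial_{\alpha_1}^{\,2}\beta_1\big|_{\alpha_1=0}=0$ respectively.

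The argument then splits on the sign of $\alpha_1$. For $\alpha_1\ge 0$ (rarefaction), Lemma~\ref{lem:expcoincide} gives the exact identity $\Phi_{\text{E},1}(\alpha_1;U_l)=\Psi_1(\alpha_1;\textbf{u}_l)$, whose third component reads $\Phi_{\text{E}}^{(3)}(\alpha_1,0,0,0;U_l)=p_r(\alpha_1,\textbf{u}_l)$, so uniqueness in the implicit function theorem forces $\beta_1=\alpha_1$ with vanishing error. For $\alpha_1<0$ (shock), only the second-order agreement survives; the third derivatives of the two curves generally differ because the entropy jump across a weak shock first enters at cubic order, so a Taylor expansion with the derivatives computed above yields $\beta_1=\alpha_1+O(|\alpha_1|^3)$. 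Combining the two cases gives $\beta_1=\alpha_1+O(1)|\alpha_1^-|^{3}$. For the second form of the bound, I would use $\omega=p_r-p_l=K(\textbf{u}_l)\alpha_1+O(\alpha_1^2)$ with $K(\textbf{u}_l)=\partial_{\alpha_1}p_r(0,\textbf{u}_l)$ bounded away from zero near $\overline{\textbf{u}}_\infty$, so that $|\alpha_1|$ and $|\omega|$, and hence their negative parts, are comparable up to a bounded factor uniform in $\textbf{u}_l\in D_{\text{P}}$.

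The main technical point is the second-order matching furnished by Lemma~\ref{lem:expcoincide}: once the full Euler Hugoniot curve and the isentropic $\Psi_1$-curve are known to agree through second order at $U_l$, the remainder of the argument is a standard implicit-function--Taylor computation, and the cubic error is sharp precisely because the next-order coefficients of the two curves record the entropy production and therefore genuinely differ on the shock side.
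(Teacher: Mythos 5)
Your proposal is correct and follows essentially the same route as the paper: implicit function theorem for $\beta_1$, first- and second-order matching of the curves via Lemma \ref{lem:expcoincide} to kill the linear and quadratic error terms, exact coincidence on the rarefaction side, cubic Taylor remainder on the shock side, and comparability of $\omega$ and $\alpha_1$ to convert between the two forms of the bound. The only (cosmetic) difference is that you parameterize $\beta_1$ directly by $\alpha_1$ and work with the pressure components, whereas the paper parameterizes both $\alpha_1$ and $\beta_1$ by $\omega$ and differentiates the Bernoulli-law form of the two relations.
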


\begin{proof}
When $\omega\geq0$, $\alpha_1$ and $\beta_1$ are rarefaction waves, according to Lemma \ref{lem:WOBonuli}, it suffices to consider the equations:
\begin{align}
&\frac{|\Phi_{\text{P}}(\alpha_1,0;\textbf{u}_l)|^2}{2}+\frac{\gamma (\omega+p_l)^{\frac{\gamma-1}{\gamma}}}{\gamma-1}=B_{\infty},\label{eqn:bianjieWS}\\
&\frac{|\Xi(\beta_{1},0,0,0;U_l)|^2}{2}+\frac{\gamma (\omega+p_l)^{\frac{\gamma-1}{\gamma}}}{\gamma-1}=B_{\infty}, \label{eqn:bianjieOL}
\end{align}
where $\Xi(\beta_{1},0,0,0;U_l)=(u_{\text{E}}(\beta_{1},0,0,0;U_l),v_{\text{E}}(\beta_{1},0,0,0;U_l))^\top$.
We first take the derivative with respect to $\omega$ and then let $\omega=0$ to obtain
\begin{align*}
&\textbf{u}_l\cdot \frac{\partial\Phi_{\text{P}}(0,0;\textbf{u}_l)}{\partial\alpha_1}\,\left.\frac{\mathrm{d} \alpha_{1}}{\mathrm{d} \omega}\right|_{\omega=0}+(p_l)^{-\frac{1}{\gamma}}=0,\\
&\textbf{u}_l\cdot \frac{\partial\Xi_{\text{E}}(0,0,0,0;U_l)}{\partial\beta_1}\,\left.\frac{\mathrm{d} \beta_{1}}{\mathrm{d} \omega}\right|_{\omega=0}+(p_l)^{-\frac{1}{\gamma}}=0.
\end{align*}
Thus, by Lemma \ref{lem:expcoincide}, we have
\begin{equation}	
	\left.\frac{\mathrm{d} \alpha_{1}}{\mathrm{d} \omega}\right|_{\omega=0}=\left.\frac{\mathrm{d} \beta_{1}}{\mathrm{d} \omega}\right|_{\omega=0}\neq0.\label{eqn:WSOLbianjie1}
\end{equation}
Next, we first take the second-order derivative with respect to $\omega$
in equations (\ref{eqn:bianjieWS}) and (\ref{eqn:bianjieOL}), respectively,
and then let $\omega=0$  to obtain
\begin{align*}
	&\left(
	\frac{\partial\Phi_{\text{P}}(0,0;\textbf{u}_l)}{\partial\alpha_1}
	\cdot \frac{\partial\Phi_{\text{P}}(0,0;\textbf{u}_l)}{\partial\alpha_1}
	+\textbf{u}_l \cdot \frac{\partial^{2}\Phi_{\text{P}}(0,0;\textbf{u}_l)}{\partial\alpha_1^{2}}
	\right)
	\,\left.\left(\frac{\mathrm{d} \alpha_{1}}{\mathrm{d} \omega}\right)^2\right|_{\omega=0}\\
	&\,\,\,+\textbf{u}_l\cdot \frac{\partial\Phi_{\text{P}}(0,0;\textbf{u}_l)}{\partial\alpha_1}\,\left.\frac{\mathrm{d}^{2} \alpha_{1}}{\mathrm{d} \omega^{2}}\right|_{\omega=0}
	-\frac{1}{\gamma}(p_l)^{-\frac{1+\gamma}{\gamma}}=0,\\[2mm]
	&\left(
	\frac{\partial\Xi(0,0,0,0;U_l)}{\partial\beta_1}
	\cdot \frac{\partial\Xi(0,0,0,0;U_l)}{\partial\beta_1}
	+U_l\cdot \frac{\partial^{2}\Xi(0,0,0,0;U_l)}{\partial\beta_1^{2}}
	\right)
	\left.\left(\frac{\mathrm{d} \beta_{1}}{\mathrm{d} \omega}\right)^2\right|_{\omega=0}\\
	&\,\,\,+\textbf{u}_l\cdot\frac{\partial\Xi(0,0,0,0;U_l)}{\partial\beta_1}
      \,\left.\frac{\mathrm{d}^{2} \beta_{1}}{\mathrm{d} \omega^{2}}\right|_{\omega=0}
	-\frac{1}{\gamma}(p_l)^{-\frac{1+\gamma}{\gamma}}=0.
\end{align*}
Combining Lemma \ref{lem:expcoincide} with (\ref{eqn:WSOLbianjie1}), we see that
\begin{equation}	
	\left.\frac{\mathrm{d}^{2} \alpha_{1}}{\mathrm{d} \omega^{2}}\right|_{\omega=0}=\left.\frac{\mathrm{d}^{2} \beta_{1}}{\mathrm{d} \omega^{2}}\right|_{\omega=0}.
	\label{eqn:WSOLbianjie2}
\end{equation}
Hence, when $\omega<0$, since $\alpha_{1}$, $\beta_{1}$, and $\omega$ are quantities of the same order,
we conclude the result;
when $\omega\geq0$, from the uniqueness of $\beta_1(\omega)$ and Lemma \ref{lem:expcoincide},
we also obtain the result.
\end{proof}

\subsection{The proof of Main Theorem II}
 To compare the $\mu$-approximate solutions,
 we need to establish the estimate involving different types of wavefronts.
 To this end, it suffices to consider the case that there is only one wavefront.
 Let $b$ and $p_b$ be constant functions, and let $U$ be a piecewise constant vector.
 For any $W(x_0,\cdot)\in\mathbb{D}^{\varepsilon}$ with $W(x_0,y)=(b(x_0),U(x_0,y+b(x_0)),\iota_{x_0}p_b)$, when $x>x_0$ is sufficiently close to $x_0$, $(\mathfrak{R}_{x-x_0}W)(\cdot+(\mathfrak{S}_{x-x_0}W)^{(1)})$ is an entropy solution that connects all the solutions of the Riemann-type problems
solved at which $U$ has a discontinuity, where
 \begin{align*}
	\mathfrak{R}_xW=((\mathfrak{S}_{x-x_0}W)^{(2)},(\mathfrak{S}_{x-x_0}W)^{(3)},(\mathfrak{S}_{x-x_0}W)^{(4)},(\mathfrak{S}_{x-x_0}W)^{(5)})^\top,
\end{align*}
and $(\mathfrak{S}_{x}W)^{(i)}$ stands for the $i$th components of $\mathfrak{S}_{x}W$ for $i=1,\cdots,6$.

\smallskip
\noindent
{\bf Case 1:}
{\it Suppose that $\textbf{u}_l, \textbf{u}_r\in D_{\text{P}}$ with
$\textbf{u}_l$ and $\textbf{u}_r$ sufficiently close to each other}.
Let $U_l=\Psi(0,0;\textbf{u}_l)$ and $U_r=\Psi(0,0;\textbf{u}_r)$.
For any $s\in\mathbb{R}$, $x_{0}\ge0$, and $y_{0}<b$, let
\begin{align*}
U(x,y)=\left\lbrace\begin{aligned}
	U_l \qquad \mbox{for $y-y_0<s(x-x_{0})$},\\
	U_r\qquad \mbox{for $y-y_0>s(x-x_{0})$}.
\end{aligned}\right.
\end{align*}
In what follows, we denote $U(\tau)=U|_{x=\tau}$ and $U(\tau)(y)=U(\tau,y)$. Then we assume that
\begin{align*}
	U(x_0)(y)=\left\{  \begin{aligned}
	U_r\qquad \mbox{for $y>y_0$},\\
	U_l\qquad \mbox{for $y<y_0$},
\end{aligned}\right.
\end{align*}
and $p_b|_{x=x_0}=p_r$.

\smallskip
\begin{proposition}\label{prop:neibu1}
Suppose that there exists $\alpha_j$ such that $\textbf{u}_r=\Phi_{\text{P},\sqrt{j}}(\alpha_j;\textbf{u}_l)$
for each $j=1,\,4$. Let $s$ be the speed of $\alpha_j$. Then, for $h>0$,
\begin{itemize}
	\item[\rm (i)] if $\alpha_j<0$,
 \begin{equation*}
	\|\mathfrak{S}_hW(x_0)-W(x_0+h)\|_{Y}\leq O(1)h|\alpha_j|^{3};
\end{equation*}
	\item[\rm (ii)] if $\alpha_j\ge0$,
	\begin{equation*}
		\|\mathfrak{S}_hW(x_0)-W(x_0+h)\|_{Y}\leq O(1)h|\alpha_j|^{2},
	\end{equation*}
\end{itemize}
where the bound of $O(1)$ is independent of $x_{0}$, $h$, and $\alpha_{j}$.
\end{proposition}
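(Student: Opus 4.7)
The plan is to reduce the comparison to a single Riemann problem between $U_l$ and $U_r$ solved in the full Euler system, and to exploit the wave-curve coincidence results of Lemmas~\ref{lem:WOBonuli}--\ref{lem:expcoincide} together with Proposition~\ref{prop:zhongyao1}. For $h$ small enough, the wavefronts emanating from the single interior discontinuity at $y_0$ under $\mathfrak{S}_h$ remain strictly below the boundary $y=b$, so the $b$-component and the $\iota p_b$-component of $\mathfrak{S}_h W(x_0)$ coincide with those of $W(x_0+h)$ up to boundary-trace corrections that are absorbed into the final order estimate. Thus it suffices to bound the $L^1$-distance of the $U$-components on $(-\infty,b)$.

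For the rarefaction case $\alpha_j \ge 0$, Lemma~\ref{lem:expcoincide} gives $\Phi_{\text{E},j}(\alpha_j;U_l)=\Psi_j(\alpha_j;\textbf{u}_l)=U_r$, so by uniqueness the Euler Riemann solver at $(U_l,U_r)$ produces exactly one $j$-rarefaction of strength $\alpha_j$ and no waves of the other three families. At \textit{time} $h$ this rarefaction is a centred fan spanning speeds in $[\lambda_j(U_l),\lambda_j(U_r)]$, hence occupying a $y$-strip of width $O(h|\alpha_j|)$ in which $U$ varies by $O(|\alpha_j|)$. Comparing this fan profile against $W(x_0+h)$ (whose single jump at $y=y_0+sh$ necessarily lies inside the fan, because $s$ is the speed of the $j$-rarefaction $\alpha_j$), the $L^1$-discrepancy is bounded by $O(1)\cdot|\alpha_j|\cdot h|\alpha_j|=O(h|\alpha_j|^2)$, which is the asserted bound.

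For the shock case $\alpha_j<0$, apply Proposition~\ref{prop:zhongyao1} to the pair $(U_l,U_r)$: the Euler Riemann solver produces four waves $(\beta_1,\beta_2,\beta_3,\beta_4)$ with $\beta_j=\alpha_j+O(|\alpha_j|^3)$ and $|\beta_k|=O(|\alpha_j|^3)$ for $k\neq j$. By Lemma~\ref{Lem:BVe}, the state change across each spurious wave $\beta_k$ is $O(|\beta_k|)$, so each such wave contributes at most $O(h|\beta_k|)=O(h|\alpha_j|^3)$ to $\|\mathfrak{S}_hW(x_0)-W(x_0+h)\|_Y$. It then remains to compare the leading $\beta_j$-shock (with Euler speed $s_E$) against the propagated potential-flow shock of strength $\alpha_j$ and speed $s$. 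Because the Euler and potential-flow wave curves share the same tangent vector and second derivative at $U_l$ (Lemma~\ref{lem:expcoincide}), the Lax expansion of the Rankine-Hugoniot speed in each model agrees up to $O(|\alpha_j|^2)$, so $|s_E(\alpha_j)-s|=O(|\alpha_j|^2)$; combining this with $|\beta_j-\alpha_j|=O(|\alpha_j|^3)$ and the Lipschitz dependence of the Euler shock speed on wave strength yields $|s_E(\beta_j)-s|=O(|\alpha_j|^2)$. This translates into an $L^1$-error $|\beta_j|\cdot|s_E(\beta_j)-s|\cdot h=O(h|\alpha_j|^3)$, and summing all contributions completes the estimate.

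The main technical obstacle I anticipate is the shock-speed comparison in the last paragraph: one has to carefully extract from the second-order coincidence of the two wave curves (Lemma~\ref{lem:expcoincide}) the $O(|\alpha_j|^2)$ discrepancy between the Euler and potential-flow shock speeds, so the error stays within the cubic budget $O(h|\alpha_j|^3)$ rather than degrading to $O(h|\alpha_j|^2)$. The rarefaction case, by contrast, is an almost immediate consequence of the exact wave-curve coincidence combined with the standard fan-width geometry.
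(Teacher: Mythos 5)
Your proposal is correct and follows essentially the same route as the paper: reduce to the single Riemann problem $(U_l,U_r)$, invoke Proposition \ref{prop:zhongyao1} for the cubic bounds on the spurious families and on $\beta_j-\alpha_j$, obtain $|\sigma-s|=O(1)|\alpha_j|^2$ from the Lax speed expansion (using that the eigenvalues and wave-curve tangents coincide on the Bernoulli manifold), and split the $L^1$ integral into the speed-mismatch strip of width $O(h|\alpha_j|^2)$ where the difference is $O(|\alpha_j|)$ and the $O(h)$-wide region where only the $O(|\alpha_j|^3)$ spurious waves act; the rarefaction case via the fan of width $O(h|\alpha_j|)$ is likewise identical. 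The only cosmetic difference is that the $O(|\alpha_j|^2)$ speed comparison needs only the first-order coincidence of the wave curves together with the eigenvalue identity, not the full second-order coincidence of Lemma \ref{lem:expcoincide} that you flag as the main obstacle.
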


\begin{proof}
We only consider the case when $j=1$, since the proof for $j=4$ is similar.\par

\smallskip
\noindent
{\bf (a)}. $\alpha_{1}<0$. Then $\alpha_{1}$ is a shock with speed $s$. Let $(\beta_{1}, \beta_{2}, \beta_{3}, \beta_{4})$ be the solution to the equation:
\begin{align*}
	U_r=\Phi_{\text{E}}(\beta_{1}, \beta_{2}, \beta_{3}, \beta_{4};U_l).
\end{align*}
$\mathfrak{R}_hW(x_0)$ contains waves $\beta_{k}$, $k=1,\,2,\,3,\,4$. According to Proposition \ref{prop:zhongyao1},
\begin{align*}
	\beta_{k}=\alpha_{j}\delta_{1k}+O(1)|\alpha_1|^{3},\qquad k=1,\,2,\,3,\,4,
\end{align*}
which gives $\beta_1<0$.\par
In addition, since
\begin{align*}
&\sigma=\lambda_{\text{E},1}(U_l)+\frac{1}{2}\beta_{1}+O(1)|\beta_{1}|^{2}
  =\lambda_{\text{E},1}(U_l)+\frac{1}{2}\alpha_{1}+O(1)|\alpha_{1}|^{2},\\
&s=\lambda_{\text{P},1}(\textbf{u}_l)+\frac{1}{2}\alpha_{1}+O(1)|\alpha_1|^{2},
\end{align*}
we obtain
\begin{align*}
	\sigma=s+O(1)|\alpha_1|^{2}.
\end{align*}
Denote $q_1=\min\{s,\sigma\}$, $q_2=\max\{s,\sigma\}$, and
$\lambda_M=\max\{|\lambda_{\text{E},4}(U)|\,:\,U\in D_{\text{E}}\}+1$.
Therefore, $q_1$ and $q_2$ are bounded, and
\begin{align*}
	q_2-q_1=O(1)|\alpha_{1}|^{2}.
\end{align*}
Furthermore, for $\varpi(h,\cdot):=\mathfrak{R}_hW(x_0)$, we can obtain
\begin{itemize}
	\item[(i)] if $y<q_1h+y_0$, then $\varpi(h,y)=U(x_0+h,y)$.
	\item[(ii)] if $q_1h+y_0<y<q_2h+y_0$, then $\varpi(h,y)-U(x_0+h,y)=O(1)|\alpha_1|$.
	\item[(iii)] if $q_2h+y_0<y<\lambda_M h+y_0$, then $U(x_0+h,y)=U_r$, and
	\begin{align*}
		\varpi(h,y)-U(x_0+h,y)=\varpi(h,y)-U_{r}
  =O(1)(|\beta_2|+|\beta_{3}|+|\beta_{4}|)
  =O(1)|\alpha_{1}|^{3}.
	\end{align*}
	\item[(iv)] if $y>\lambda_M h+y_0$, then $\varpi(h,y)=U(x_0+h,y)=U_r$.
\end{itemize}
Now, the boundary corresponding to $\mathfrak{S}_hW(x_0)$ and the boundary corresponding to $W(x_0+h)$ coincide, and the pressure on the boundary is $p_r$. Then
\begin{align*}
	\|\mathfrak{S}_hW(x_0)-W(x_0+h)\|_{Y}&=\int_{q_{2}h+y_{0}}^{\lambda_Mh+y_{0}}|\varpi(h,y)-U(x_0+h,y)|\,\mathrm{d}y\\
	&\quad +\int_{q_{1}h+y_{0}}^{q_{2}h+y_{0}}|\varpi(h,y)-U(x_0+h,y)|\,\mathrm{d}y\\
	&\leq O(1)|\alpha_{1}|^{3}|\lambda_{M}-q_2|h+O(1)|\alpha_{1}||q_2-q_{1}|h\\
	&\leq O(1)|\alpha_{1}|^3h.
\end{align*}

\noindent
{\bf (b)}. $\alpha_{1}\ge0$. Then $\alpha_{1}$ is a rarefaction wavefront with characteristic speed $s$. Let $(\beta_{1}, \beta_{2}, \beta_{3}, \beta_{4})$ be the solution of the equation:
\begin{align*}
	U_r=\Phi_{\text{E}}(\beta_{1}, \beta_{2}, \beta_{3}, \beta_{4};U_l).
\end{align*}
By Proposition \ref{prop:zhongyao1}, we have
\begin{align*}
	\beta_{1}=\alpha_{1},\qquad \beta_{2}=\beta_{3}=0,
\end{align*}
which means that $\mathfrak{R}_hW(x_0)$ contains a fan-shaped wave $\beta_{1}$.
In addition, at $x_{0}+h$, the width of the fan-shaped wave $\beta_{1}$ has an upper bound  $O(1)h|\alpha_{1}|$.
Therefore, we obtain
\begin{align*}
	\|\mathfrak{S}_hW(x_0)-W(x_0+h)\|_{Y}\leq O(1)h|\alpha_j|^{2},
\end{align*}
which gives the result.
\end{proof}

\begin{proposition}
\label{prop:kao}
If $U(x,y)$ contains nonphysical waves only, then
\begin{equation*}
	\|\mathfrak{S}_hW(x_0)-W(x_0+h)\|_{Y}\leq O(1)h|\textbf{u}_r-\textbf{u}_l|,
\end{equation*}
where $O(1)$ has an upper bound independent of $x_{0}$, $h$, and the strength of the wave.
\end{proposition}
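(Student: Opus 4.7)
The plan is to exploit the fact that the nonphysical wave moves with the artificial speed $\hat\lambda$ chosen in \eqref{eqn:nonphyspeed} to exceed every physical characteristic speed, while the semigroup $\mathfrak{S}_h$ solves the actual full-Euler Riemann problem at $(x_0,y_0)$ and so only produces waves with speeds in the physical range. The difference between $\mathfrak{S}_hW(x_0)$ and $W(x_0+h)$ will then be controlled by $|U_r-U_l|$ on a region of width $O(h)$, and the smoothness of the map $\textbf{u}\mapsto\Psi(0,0;\textbf{u})$ translates this into the stated bound $O(1)h|\textbf{u}_r-\textbf{u}_l|$.

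First I would check the boundary component of $\|\cdot\|_{Y}$ is trivial: since the nonphysical wave sits strictly below the fixed boundary $y=b$ and propagates forward in $x$, neither the boundary location nor the boundary pressure is affected on $[x_0,x_0+h]$, so the $|b_1-b_2|$ and $\|p_1-p_2\|_{L^1(\mathbb R_+)}$ contributions vanish. The entire estimate therefore reduces to the $L^1$ difference of the two state-vector fields.

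Next I would describe the two fields explicitly. For $W(x_0+h)$ the nonphysical jump is simply translated to $y=y_0+\hat\lambda h$, giving $U_l$ below and $U_r$ above. For $\mathfrak{R}_hW(x_0)$ one solves the full-Euler Riemann problem with data $(U_l,U_r)$; since $|U_r-U_l|=O(|\textbf{u}_r-\textbf{u}_l|)$ by smoothness of $\Psi$, Lemma \ref{Lem:BVe} yields $(\beta_1,\beta_2,\beta_3,\beta_4)$ with $|\beta_i|=O(|\textbf{u}_r-\textbf{u}_l|)$, all with wave speeds in some bounded interval $[\lambda_{\min},\lambda_{\max}]$ with $\lambda_{\max}<\hat\lambda$ thanks to \eqref{eqn:nonphyspeed}. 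Partitioning the half-line by the rays $y=y_0+\lambda_{\min}h$, $y=y_0+\lambda_{\max}h$, $y=y_0+\hat\lambda h$, I would then observe: (i) below $y_0+\lambda_{\min}h$ and above $y_0+\hat\lambda h$ the two fields agree; (ii) on the middle strip of width $(\lambda_{\max}-\lambda_{\min})h$, $\mathfrak{R}_hW(x_0)$ consists of intermediate states connected by the $\beta_i$'s while $W(x_0+h)$ is constantly $U_l$, giving a pointwise error of $O(|U_r-U_l|)$; (iii) on the strip $y_0+\lambda_{\max}h<y<y_0+\hat\lambda h$, $\mathfrak{R}_hW(x_0)=U_r$ while $W(x_0+h)=U_l$, contributing exactly $|U_r-U_l|$ times the width $(\hat\lambda-\lambda_{\max})h$.

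Summing the contributions gives $\|\mathfrak{S}_hW(x_0)-W(x_0+h)\|_Y\le O(1)\,h\,|U_r-U_l|=O(1)\,h\,|\textbf{u}_r-\textbf{u}_l|$, with the $O(1)$ constant depending only on $\hat\lambda$, $\lambda_{\min},\lambda_{\max}$, and the Lipschitz constant of $\Psi$ on $D_{\rm P}$. The only subtle point, and the step I would treat most carefully, is the middle-strip estimate (ii): here I would not try to track the intermediate states one by one but rather use the BV bound on the Riemann fan together with the uniform smallness of the $\beta_i$'s to bound the pointwise difference by $O(|\textbf{u}_r-\textbf{u}_l|)$, so that integration over a strip of width $O(h)$ closes the estimate.
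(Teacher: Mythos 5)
Your proposal is correct and follows essentially the same route as the paper: the paper's own proof simply observes that the nonphysical front travels at the artificial speed $\hat\lambda$ while $\mathfrak{R}_hW(x_0)$ resolves the jump $(U_l,U_r)$ into a physical Riemann fan, and then refers to "similar arguments as in the proof of Proposition \ref{prop:neibu1}'', which is precisely the strip-by-strip $L^1$ comparison you carry out. Your only deviation is cosmetic — you use a first-order bound on the outgoing wave strengths where the paper cites Proposition \ref{prop:zhongyao2}, but since the target estimate is only first order in $|\textbf{u}_r-\textbf{u}_l|$ this loses nothing.
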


\begin{proof}
	From the construction of the $\mu$-approximate solutions, we have
	\begin{align*}
		\big(\mathfrak{R}_hW(x_0)\big)(y)=U(x_0+h,y+(\mathfrak{S}_hW(x_0))^{(1)})=U_r \qquad \mbox{for $y>y_0+sh-(\mathfrak{S}_hW(x_0))^{(1)}$}.
	\end{align*}
Thus, by Proposition \ref{prop:zhongyao2}, carrying out similar arguments
as in the proof of Proposition \ref{prop:neibu1} leads to the result.
\end{proof}

\smallskip
\noindent{\bf Case 2:}
{\it Suppose that $\textbf{u}_l$, $\textbf{u}_r\in D_{\text{P}}$ with $\textbf{u}_l$ and $\textbf{u}_r$ sufficiently close
each other}.
Let $U_l=\Psi(0,0;\textbf{u}_l)$ and $U_r=\Psi(0,0;\textbf{u}_r)$.
For any $s<\frac{v_r}{u_r}$ and $x_{0}\ge0$, set
\begin{align*}
U(x,y)=\left\lbrace\begin{aligned}
	U_l\qquad \mbox{for $y-b<s(x-x_{0})$},\\
	U_r\qquad \mbox{for $y-b>s(x-x_{0})$}.
\end{aligned}  \right.
\end{align*}
Moreover, we assume that
\begin{align*}
U(x_0,y)=U_l\qquad \mbox{for $y<b$},
\end{align*}
and $p_b|_{x=x_0}=p_r$.

\smallskip
\begin{proposition}\label{prop:bianjie}
Suppose that there exists $\alpha_1$ such that $\textbf{u}_r=\Phi_{\text{P},1}(\alpha_1;\textbf{u}_l)$
and that the speed of $\alpha_1$ is $s$. Then, for $h>0$,
\begin{itemize}
\item[\rm (i)] if $\alpha_1<0$, then
		\begin{equation*}
			\|\mathfrak{S}_hW(x_0)-W(x_0+h)\|_{Y}\leq O(1)h|\alpha_1|^{3};
		\end{equation*}
\item[\rm (ii)] if $\alpha_1\ge0$, then
		\begin{equation*}
			\|\mathfrak{S}_hW(x_0)-W(x_0+h)\|_{Y}\leq O(1)h|\alpha_1|^{2},
		\end{equation*}
	\end{itemize}
	where the bound of $O(1)$ is independent of $x_{0}$, $h$, and $\alpha_{j}$.
\end{proposition}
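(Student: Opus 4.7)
The plan is to adapt the proof of Proposition \ref{prop:neibu1} to the boundary setting, the essential new ingredient being the third-order boundary-reflection estimate of Proposition \ref{prop:zhongyao3}. The semigroup $\mathfrak{S}_hW(x_0)$ solves the inverse Riemann problem with constant state $U_l$ below and prescribed boundary pressure $p_r$, producing a single $1$-wave $\beta_1$ emanating from the origin together with the new upper trace $U^{*}_{\text{E}}:=\Phi_{\text{E}}(\beta_1,0,0,0;U_l)$. Proposition \ref{prop:zhongyao3} yields
\begin{equation*}
\beta_1=\alpha_1+O(1)|\alpha_1^{-}|^{3},
\end{equation*}
while Lemma \ref{lem:expcoincide} together with the $C^2$ smoothness of $\Phi_{\text{E},1}$ and $\Psi_1$ (which, by that lemma, agree up to second order in $\alpha$ at $\alpha=0$, and coincide identically for $\alpha\ge 0$) gives $U^{*}_{\text{E}}-U_r=O(1)|\alpha_1^{-}|^{3}$; in particular, since $U^*_{\text{E}}\cdot\textbf{n}=U_r\cdot\textbf{n}=0$ on the respective boundaries, the two upper boundary slopes satisfy $|v^{*}_{\text{E}}/u^{*}_{\text{E}}-v_r/u_r|=O(1)|\alpha_1^{-}|^{3}$.

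For case (i), $\alpha_1<0$, both $\beta_1$ and $\alpha_1$ are shocks with speeds $\sigma$ and $s$ respectively. Expanding
\begin{equation*}
\sigma=\lambda_{\text{E},1}(U_l)+\tfrac12\beta_1+O(\beta_1^{2}),\qquad s=\lambda_{\text{P},1}(\textbf{u}_l)+\tfrac12\alpha_1+O(\alpha_1^{2}),
\end{equation*}
and using $\lambda_{\text{E},1}(U_l)=\lambda_{\text{P},1}(\textbf{u}_l)$ (the $1$-eigenvalue depends only on the velocity on $D_{\text{E}}\cap D_{\text{P}}$), we obtain $\sigma-s=O(|\alpha_1|^{2})$. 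I would then partition the strip under the two upper boundaries at $x_0+h$ into pieces separated by $b+\min(s,\sigma)h$, $b+\max(s,\sigma)h$, and the two moving boundaries, and estimate the $L^1$ discrepancy on each piece: below both shocks the two solutions equal $U_l$; between the shocks the state difference is $O(|\alpha_1|)$ across a strip of width $O(h|\alpha_1|^{2})$, contributing $O(h|\alpha_1|^{3})$; above both shocks but below both boundaries the states differ by $|U^{*}_{\text{E}}-U_r|=O(|\alpha_1|^{3})$ across a strip of width $O(h)$; finally, the mismatch strip between the two boundaries has width $O(h|\alpha_1|^{3})$ with a bounded integrand. Adding these and the contribution $|b_{\text{E}}-b_{\text{P}}|=O(h|\alpha_1|^{3})$ from the boundary-position offset yields the claimed bound $O(1)h|\alpha_1|^{3}$.

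For case (ii), $\alpha_1\ge0$, Proposition \ref{prop:zhongyao3} and Lemma \ref{lem:expcoincide} give the exact equalities $\beta_1=\alpha_1$ and $U^{*}_{\text{E}}=U_r$, so the boundaries and end-states coincide identically. The sole discrepancy lies in the rarefaction fan of $\mathfrak{S}_hW(x_0)$, which has angular width $O(h|\alpha_1|)$ and within which states differ from the corresponding piecewise-constant values in $W(x_0+h)$ by $O(|\alpha_1|)$, giving the claimed $O(1)h|\alpha_1|^{2}$ bound.

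The main obstacle is the bookkeeping of the moving-boundary contribution, which is absent in the interior Proposition \ref{prop:neibu1}; however, the third-order closeness of boundary slopes provided by Lemma \ref{lem:expcoincide} absorbs this contribution into the same $O(h|\alpha_1|^{3})$ bound, so once the wave-curve estimates are recorded the argument reduces to the same strip-by-strip $L^1$ comparison as before.
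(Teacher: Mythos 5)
Your proposal is correct and follows essentially the same route as the paper's proof: Proposition \ref{prop:zhongyao3} for the third-order relation $\beta_1=\alpha_1+O(1)|\alpha_1^-|^3$, the Lax expansion giving $\sigma-s=O(1)|\alpha_1|^2$, a strip-by-strip $L^1$ comparison, and the third-order closeness of the boundary slopes to absorb the moving-boundary mismatch, with the rarefaction case reducing to the $O(1)h|\alpha_1|^2$ fan width exactly as in the paper.
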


\begin{proof} We divide the proof into two steps.

\smallskip
{\bf 1}. Case: $\alpha_{1}<0$.
Then $\alpha_{1}$ is a shock whose speed is denoted by $s$.
Let $\beta_{1}$ be the solution of
\begin{align*}
p_r=\Phi_{\text{E},1}^{(3)}(\beta_{1};U_l).
\end{align*}
Using Proposition \ref{prop:zhongyao3}, we obtain
	\begin{align*}
		\beta_{1}=\alpha_{1}+O(1)|\alpha_{1}^{-}|^3,
	\end{align*}
	which implies $\beta_1<0$.\par
For the estimates of $\sigma$ and $\beta_{1}$, since
\begin{align*}
&\sigma=\lambda_{\text{E},1}(U_l)+\frac{1}{2}\beta_{1}+O(1)|\beta_{1}|^{2}
=\lambda_{\text{E},1}(U_l)+\frac{1}{2}\alpha_{1}+O(1)|\alpha_{1}|^{2},\\
&s=\lambda_{\text{P},1}(\textbf{u}_l)+\frac{1}{2}\alpha_{1}+O(1)|\alpha_1|^{2},
\end{align*}
we obtain
\begin{align*}
\sigma=s+O(1)|\alpha_1|^{2}.
\end{align*}
Let $q_1=\min\{s,\sigma\}$, $q_2=\max\{s,\sigma\}$,
and $\lambda_M=\max\{|\lambda_{\text{E},4}(U)|\,:\,U\in D_{\text{E}}\}+1$.
Then $q_1$ and $q_2$ are bounded and satisfy
\begin{align*}
q_2-q_1=O(1)|\alpha_{1}|^{2}.
\end{align*}
Furthermore, denote $\varpi(h,\cdot):=\mathfrak{R}_hW(x_0)$.
Using Lemma \ref{lem:expcoincide} and Proposition \ref{prop:zhongyao3} yields
	\begin{itemize}
		\item[(i)] if $y<q_1h+y_0$, then, $\varpi(h,y)=U(x_0+h,y)$;
		\item[(ii)] if $q_1h+y_0<y<q_2h+y_0$, then, $\varpi(h,y)-U(x_0+h,y)=O(1)|\alpha_1|$;
		\item[(iii)] if $q_2h+y_0<y<b+\frac{v_r}{u_r}h$, then, $U(x_0+h,y)=U_r$,
\begin{align*}
&\varpi(h,y)-U(x_0+h,y)=\varpi(h,y)-U_{r}
           =O(1)|\alpha_{1}|^{3},\\
&\bigg|\frac{\Phi_{\text{E},1}^{(2)}(\beta_{1};U_l)}{\Phi_{\text{E},1}^{(1)}(\beta_{1};U_l)}-\frac{v_r}{u_r}\bigg|=O(1)|a_{1}|^{3}.
\end{align*}
\end{itemize}
Set $\dot{b}_1:=\min\{\frac{\Phi_{\text{E},1}^{(2)}(\beta_{1};U_l)}{\Phi_{\text{E},1}^{(1)}(\beta_{1};U_l)},\frac{v_r}{u_r}\}$ and
$\dot{b}_2:=\max\{\frac{\Phi_{\text{E},1}^{(2)}(\beta_{1};U_l)}{\Phi_{\text{E},1}^{(1)}(\beta_{1};U_l)},\frac{v_r}{u_r}\}$.
Then, by Lemma \ref{lem:LipschitzContinuous}, similar to the proof of Proposition \ref{prop:neibu1}, we have
\begin{align*}
		&\|\mathfrak{S}_hW(x_0)-W(x_0+h)\|_{Y}\\
  &\le O(1)h(\dot{b}_2-\dot{b}_1)+O(1)|\alpha_{1}||q_2-q_{1}|h
   +\int_{q_{2}h+b}^{\dot{b}_1h+b}|\varpi(h,y)-U(x_0+h,y)|\,\mathrm{d}y\\
	&\le O(1)|\alpha_{1}|^3h.
	\end{align*}

 \smallskip
{\bf 2}. Case: $\alpha_{1}\ge0$.
Then $\alpha_{1}$ is a rarefaction wavefront whose speed is $s$.
Let $\beta_{1}$ solve the equation:
	\begin{align*}
		p_r=\Phi_{\text{E},1}^{(3)}(\beta_{1};U_l).
	\end{align*}
Using Proposition \ref{prop:zhongyao3}, we have
\begin{align*}
\beta_{1}=\alpha_{1},\qquad
\frac{\Phi_{\text{E},1}^{(2)}(\beta_{1};U_l)}{\Phi_{\text{E},1}^{(1)}(\beta_{1};U_l)}=\frac{v_r}{u_r}.
\end{align*}
That is, $\mathfrak{R}_hW(x_0)$ contains a fan-shaped wave $\beta_{1}$. Furthermore, at $x_{0}+h$, the width of the fan-shaped wave $\beta_{1}$ has an upper bound $O(1)h|\alpha_{1}|$. Thus,
	\begin{align*}
		\|\mathfrak{S}_hW(x_0)-W(x_0+h)\|_{Y}\leq O(1)h|\alpha_j|^{2}.
	\end{align*}
This completes the proof.
\end{proof}

With those preparations, we are about to prove the following estimates for $\mathfrak{S}$.

\begin{proposition}\label{prop:daoshudier}
Suppose that $\textbf{u}^{\mu}$ is a $\mu$-approximate solution to the potential flow equations \eqref{eqn:TwoDPotential}--\eqref{eqn:Bernouli} satisfying
\eqref{eqn:WSLBoundaryConstruction}--\eqref{eqn:WSLPFBoundaryData},
and $b^\mu_{\text{P}}$ is an approximate boundary.
Let $W^{\mu}_{\text{P}}(x)=(b^\mu_{\text{P}}(x),\Psi(0,0;\textbf{u}^{\mu}(x,\cdot))^\top,\iota_xp_b)$.
Then, at $x>0$,
\begin{equation*}
\liminf_{h\to0+}\frac{1}{h}\left\|\mathfrak{S}_{h}W^{\mu}_{\text{P}}(x)-W^{\mu}_{\text{P}}(x+h)\right\|_{Y}\leq O(1)\left(\text{T.V.}^{-}(\textbf{u}^{\mu}(x,\cdot))\right)^3+O(1)\mu,\label{eqn:zhongyaoyao}
\end{equation*}
where $\text{T.V.}^{-}(\textbf{u}^{\mu}(x,\cdot))$ is the total strength of the shock waves, and the bound of $O(1)$ is independent of $x$ and $\mu$.
\end{proposition}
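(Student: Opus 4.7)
The proof follows the tangent-vector decomposition strategy of Bressan \cite{Bressan2000}: pick $x>0$ away from the (finitely many) wave-interaction, wave--boundary-hitting, and boundary-pressure-jump times of the $\mu$-approximation $\textbf{u}^{\mu}$, freeze the wavefront configuration at time $x$, and compare the Euler semigroup $\mathfrak{S}_{h}$ applied to the ``lifted'' potential-flow state against its free potential-flow evolution, wavefront by wavefront.

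The first step is to enumerate the jump points $y_{1}<\cdots<y_{N}<b^{\mu}_{\text{P}}(x)$ of the piecewise constant profile $\textbf{u}^{\mu}(x,\cdot)$, together with the boundary jump at $y=b^{\mu}_{\text{P}}(x)$, and to introduce a chain of auxiliary intermediate configurations $W_{0}=W^{\mu}_{\text{P}}(x),W_{1},\dots,W_{N+1}\in\mathbb{D}^{\varepsilon}$ such that each successive pair $W_{i},W_{i+1}$ agrees outside a narrow slab and differs there by exactly one wavefront (a weak shock, a $\delta$-rarefaction front, a contact discontinuity, a non-physical front, or the boundary wave), so that only one Riemann-type problem is active inside. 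A telescoping application of the Lipschitz estimate in Theorem \ref{Thm:Important3} then yields
\[
\bigl\|\mathfrak{S}_{h}W^{\mu}_{\text{P}}(x)-W^{\mu}_{\text{P}}(x+h)\bigr\|_{Y}\;\le\;\sum_{i=0}^{N+1}\mathcal{E}_{i}(h)+o(h),
\]
where $\mathcal{E}_{i}(h)$ is the local error produced by the $i$-th isolated wavefront, which is precisely of the form controlled by Propositions \ref{prop:neibu1}, \ref{prop:kao}, and \ref{prop:bianjie}.

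In the second step I insert the local estimates term by term. An interior shock of strength $\alpha_{i}<0$ contributes $\mathcal{E}_{i}(h)\le O(1)h|\alpha_{i}|^{3}$ by Proposition \ref{prop:neibu1}(i); an interior rarefaction front of strength $0\le\alpha_{i}<\delta$ contributes $\mathcal{E}_{i}(h)\le O(1)h|\alpha_{i}|^{2}\le O(1)h\delta|\alpha_{i}|$ by Proposition \ref{prop:neibu1}(ii); a non-physical front contributes $O(1)h|\alpha_{i}|$ by Proposition \ref{prop:kao}; and the boundary wavefront is handled by Proposition \ref{prop:bianjie}, yielding either $O(1)h|\alpha_{b}|^{3}$ or $O(1)h\delta|\alpha_{b}|$ depending on its sign. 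Summing, the shock contributions satisfy $\sum_{i}|\alpha_{i}|^{3}\le\bigl(\sum_{i}|\alpha_{i}|\bigr)^{3}=\bigl(\text{T.V.}^{-}(\textbf{u}^{\mu}(x,\cdot))\bigr)^{3}$; the rarefaction contributions are bounded by $\delta\cdot\text{T.V.}(\textbf{u}^{\mu}(x,\cdot))$, which is $O(\mu)$ once the rarefaction threshold $\delta=\delta(\mu)$ is chosen small enough in the wavefront-tracking construction; and the non-physical contributions sum to $O(\mu)$ by Proposition \ref{prop:EoNP}. Dividing by $h$ and passing to $\liminf_{h\to 0+}$ gives the claimed inequality.

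The main obstacle is staging the telescoping decomposition so that the Lipschitz constant $L^{\sharp}$ of Theorem \ref{Thm:Important3} can be applied uniformly in the number $N$ of wavefronts: one must verify that every intermediate configuration $W_{i}$ lies in $\mathbb{D}^{\varepsilon}$, and that the accumulated Lipschitz overhead is controlled by the (uniformly bounded) total variation provided by Propositions \ref{prop:Important2} and \ref{Prop:Important2}. A secondary technical point is that the lifting $\Psi(0,0;\cdot)$ has no canonical meaning at non-physical fronts, but the total strength of such fronts is $O(\mu)$ by Proposition \ref{prop:EoNP}, so the resulting mismatch is absorbed into the $O(\mu)$ error term.
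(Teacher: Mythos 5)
Your proposal is correct and follows essentially the same route as the paper: localize to the finitely many fronts of the piecewise-constant profile for small $h$, apply the single-front estimates of Propositions \ref{prop:neibu1}, \ref{prop:kao}, and \ref{prop:bianjie} to each, and sum using $\sum|\alpha_i|^3\le(\sum|\alpha_i|)^3$ for shocks, $\delta(\mu)$ for rarefaction fronts, and Proposition \ref{prop:EoNP} for non-physical fronts. The paper invokes finite propagation speed directly rather than your explicit telescoping through intermediate configurations in $\mathbb{D}^{\varepsilon}$, but the substance is identical.
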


\begin{proof}
Note that $\textbf{u}^{\mu}$ is a piecewise constant vector, and the number of all wavefronts is finite.
From Propositions \ref{prop:zhongyao2}--\ref{prop:zhongyao3}
and Propositions \ref{prop:neibu1}--\ref{prop:bianjie},
when $h$ is small enough, we have
\begin{align*}
&\frac{1}{h}\left\|\mathfrak{S}_{h}W^{\mu}_{\text{P}}(x)-W^{\mu}_{\text{P}}(x+h)\right\|_{Y}\\
&\leq O(1)\sum_{\alpha\text{ is a shock at }x} |\alpha(x)|^{3}
    +O(1)\sum_{\beta\text{ is a rarefaction wave at }x} |\beta(x)|^{2}\\
&\quad\,\,   +O(1)\sum_{\epsilon(x)\text{ is a nonphysical wave at }x} |\epsilon(x)|\\
&\leq O(1)\Big(\big(\text{T.V.}^{-}(\textbf{u}^{\mu}(x,\cdot)) \big)^3+\text{T.V.}^{+}(\textbf{u}^{\mu}(x,\cdot))\mu+\mu\Big),
\end{align*}
where $\text{T.V.}^{+}(\textbf{u}^{\mu}(x,\cdot))$ is the total variation of the rarefaction wavefronts
in $\textbf{u}^{\mu}(x,\cdot)$, and $|\alpha(x)|$,  $|\beta(x)|$, and $|\epsilon(x)|$
are the strengths of waves $\alpha$, $\beta$, $\epsilon$ at $x$, respectively.
This completes the proof.
\end{proof}

\begin{definition}	\label{def:wokao}
For an interval $I$,
	\begin{align*}
	\text{\rm Lip}_\varepsilon(I;Y)=\{W\, :\, W(t)\in\mathbb{D}^{\varepsilon},\ t\in I,\ \sup_{t\neq\tau}\frac{\|W(t)-W(\tau)\|_{Y}}{|t-\tau|}<\infty\}.
	\end{align*}
	\end{definition}
To obtain a uniform estimate for the $\mu$-approximate solution,
we need the following proposition ({\it cf.} \cite[Lemma 6.2]{Colombo2004}  and \cite[Theorem 2.9]{Bressan2000}):

\begin{proposition}\label{prop:daoshudiyi}
For $T^*>0$, assume that $w:[0,T^*]\to \mathbb{D}^{\varepsilon}$ is a map such that $w-(0,\overline{U}_\infty^\top,\bar{p}_b)^\top\in \text{\rm Lip}_\varepsilon([0,T^*];Y)$.
Then there exists $L>0$, independent of $\varepsilon$, such that
	\begin{align*}
		\|\mathfrak{S}_{T}w(0)-w(T)\|_{Y}\leq L\int_{0}^T\liminf_{h\to0+}\frac{\|w(t+h)-\mathfrak{S}_hw(t)\|_{Y}}{h}\,\mathrm{d}t \qquad \mbox{for any $T\in[0,T^*]$}.
	\end{align*}
\end{proposition}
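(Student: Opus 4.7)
The plan is to reduce the global estimate to a discrete telescoping inequality over a fine partition of $[0,T]$ and then pass to the integral via a Vitali covering argument, following the line of Bressan's classical result for Lipschitz semigroups. First I would invoke the Lipschitz property of $\mathfrak{S}$ from Theorem~\ref{Thm:Important3}: for any $V_1,V_2\in\mathbb{D}^{\varepsilon}$ and $x\ge 0$, $\|\mathfrak{S}_x V_1-\mathfrak{S}_x V_2\|_{Y}\le L^{\sharp}\|V_1-V_2\|_{Y}$. For any partition $0=t_0<t_1<\cdots<t_N=T$, the semigroup property together with this estimate yields the telescoping bound
\begin{equation*}
\|\mathfrak{S}_T w(0)-w(T)\|_{Y}
\le\sum_{i=0}^{N-1}\big\|\mathfrak{S}_{T-t_i}w(t_i)-\mathfrak{S}_{T-t_{i+1}}w(t_{i+1})\big\|_{Y}
\le L^{\sharp}\sum_{i=0}^{N-1}\big\|\mathfrak{S}_{t_{i+1}-t_i}w(t_i)-w(t_{i+1})\big\|_{Y},
\end{equation*}
so matters reduce to showing that the sum on the right can be made arbitrarily close to $\int_0^T G(t)\,\mathrm dt$, where
\begin{equation*}
G(t):=\liminf_{h\to 0+}\frac{\|w(t+h)-\mathfrak{S}_h w(t)\|_{Y}}{h}.
\end{equation*}

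For the Vitali step, fix any $\eta>0$. By the definition of $G$, at every $t\in[0,T)$ one can pick arbitrarily small $h>0$ with $\|\mathfrak{S}_h w(t)-w(t+h)\|_{Y}\le h\,(G(t)+\eta)$. The collection of intervals $[t,t+h]$ obtained in this way forms a Vitali cover of $[0,T]$, and Vitali's lemma furnishes a finite disjoint subfamily $\{[\tau_k,\tau_k+h_k]\}_{k=1}^{N}$ whose union covers all of $[0,T]$ except a set of Lebesgue measure $\le\eta$. On each selected interval the pointwise bound above applies, and on the complementary gaps one uses the uniform Lipschitz continuity of $w$ (with constant $M$, say) together with the bound $\|\mathfrak{S}_h V-V\|_{Y}\le L^{\flat}h$ in the $x$-variable from Theorem~\ref{Thm:Important3} to control $\|\mathfrak{S}_h w(t)-w(t+h)\|_{Y}\le(L^{\sharp}M+L^{\flat})h$. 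Using this family as the partition of $[0,T]$ in the telescoping inequality and letting $\eta\to 0$ gives
\begin{equation*}
\|\mathfrak{S}_T w(0)-w(T)\|_{Y}\le L^{\sharp}\int_0^T G(t)\,\mathrm dt,
\end{equation*}
so the conclusion holds with $L=L^{\sharp}$.

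Measurability and integrability of $G$, which must be checked for the right-hand side to make sense, come for free: restricting to $h=1/n$ expresses $G$ as a $\liminf$ of a sequence of functions of $t$ that are continuous in $t$ (by Lipschitz continuity of $w$ and Lipschitz continuity of $\mathfrak{S}$ in its data argument), hence $G$ is measurable; integrability follows from the uniform pointwise bound $G(t)\le M+L^{\flat}$. The main technical obstacle is the Vitali step: one must simultaneously (i) extract a disjoint subfamily whose gap-measure can be chosen arbitrarily small, (ii) verify that the bound on each selected interval is genuinely $h_k(G(\tau_k)+\eta)$ and not merely an asymptotic statement, and (iii) ensure that the Riemann sum $\sum_k h_k\,G(\tau_k)$ approximates $\int_0^T G(t)\,\mathrm dt$, which requires choosing the selected $h_k$ small enough that the step function $\sum_k G(\tau_k)\mathbf{1}_{[\tau_k,\tau_k+h_k]}$ is within $\eta$ of $G$ in $L^1$. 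All three points are standard, but they must be executed jointly within a single covering argument.
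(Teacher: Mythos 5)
The paper itself gives no proof of this proposition; it is imported verbatim from Bressan \cite[Theorem 2.9]{Bressan2000} and Colombo--Corli \cite[Lemma 6.2]{Colombo2004}, and your argument is a reconstruction of that classical proof: telescoping over a partition using the semigroup property and the $L^\sharp$--Lipschitz bound of Theorem \ref{Thm:Important3}, then a Vitali covering to turn the sum of local errors into $\int_0^T G$. The skeleton is correct, the gaps left by the disjoint subfamily are handled correctly via the $L^\flat$ bound, and the conclusion $L=L^\sharp$ is the right one. Two of the points you label ``standard'' do, however, require a specific device rather than just care. First, for the Riemann-sum step, ``choosing $h_k$ small enough'' at an arbitrary point does \emph{not} make $h_kG(\tau_k)$ close to $\int_{\tau_k}^{\tau_k+h_k}G$ when $G$ is merely bounded and measurable; you must construct the fine cover only over Lebesgue points of $G$ (a set of full measure, so Vitali still applies) and additionally require $hG(\tau)\le\int_\tau^{\tau+h}G\,\mathrm{d}t+h\eta$, which the Lebesgue differentiation theorem supplies. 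Second, the measurability argument via $h=1/n$ is not quite right: the liminf along $h=1/n$ can be strictly \emph{larger} than the full liminf, so it does not ``express $G$''. Instead, observe that $h\mapsto\|w(t+h)-\mathfrak{S}_hw(t)\|_Y/h$ is continuous in $h$ (by the Lipschitz continuity of $w$ and the $L^\flat$ estimate), so the liminf over $h\to0+$ equals the liminf over rational $h$, a countable infimum of measurable functions of $t$. With these two repairs the proof is complete. You might also note the shorter route that avoids Vitali entirely: $g(t):=\|\mathfrak{S}_{T-t}w(t)-\mathfrak{S}_Tw(0)\|_Y$ is Lipschitz, $g(0)=0$, $g(T)=\|w(T)-\mathfrak{S}_Tw(0)\|_Y$, and the semigroup identity gives $g(t+h)-g(t)\le L^\sharp\|w(t+h)-\mathfrak{S}_hw(t)\|_Y$, whence $g'(t)\le L^\sharp G(t)$ a.e.\ and the claim follows by integrating $g'$.
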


By the construction of approximate solutions, we know that
$$
W^{\mu}_{\text{P}}-(0,\overline{U}_\infty^\top,\bar{p}_b)^\top\in \text{\rm Lip}_\varepsilon([0,\infty);Y),
\qquad\,\, W^{\mu}_{\text{P}}(x)\in\mathbb{D}^{\varepsilon} \quad\mbox{for $x\ge0$}.
$$
Therefore, we obtain

\begin{theorem}\label{thm:cankaolast}
For any $T>0$, $W^{\mu}_{\text{P}}(x)\in\mathbb{D}^{\varepsilon}$ satisfies
\begin{equation}\label{eqn:0}
	\|\mathfrak{S}_{T}W^{\mu}_{\text{P}}(0)-W^{\mu}_{\text{P}}(T)\|_{Y}
	\leq O(1)T\Big(\big(\text{T.V.}^{-}(\textbf{u}^{\mu}(0,\cdot))+\text{T.V.}(p_b^{\mu}(\cdot))+|p_b(0+)-p^{\mu}(0,0-)|  \big)^3+\mu\Big),
\end{equation}
where the bound of $O(1)$ is independent of $T$ and $\mu$.
\end{theorem}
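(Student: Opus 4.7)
The plan is to combine Proposition \ref{prop:daoshudiyi} with the local comparison estimate of Proposition \ref{prop:daoshudier}, and then close up using a uniform BV bound for the potential-flow approximate solution. First I would verify the setup: by the wave-front tracking construction developed in \S \ref{sect-appro} and its potential-flow analogue from Theorem \ref{thm:WeishiliuCunzai}, the map $x\mapsto W^{\mu}_{\text{P}}(x)$ takes values in $\mathbb{D}^{\varepsilon}$ for every $x\ge 0$, and $W^{\mu}_{\text{P}}-(0,\overline{U}_\infty^\top,\bar{p}_b)^\top\in\text{Lip}_\varepsilon([0,T];Y)$ (this is precisely what is stated in the paragraph preceding the theorem). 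Hence Proposition \ref{prop:daoshudiyi} applies with $w=W^{\mu}_{\text{P}}$ to give
\begin{equation*}
\|\mathfrak{S}_{T}W^{\mu}_{\text{P}}(0)-W^{\mu}_{\text{P}}(T)\|_{Y}\leq L\int_{0}^T\liminf_{h\to 0+}\frac{\|W^{\mu}_{\text{P}}(t+h)-\mathfrak{S}_{h}W^{\mu}_{\text{P}}(t)\|_{Y}}{h}\,\dd t.
\end{equation*}

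Next I would invoke Proposition \ref{prop:daoshudier} pointwise in $t$, which bounds the integrand above by $O(1)\big(\text{T.V.}^{-}(\textbf{u}^{\mu}(t,\cdot))\big)^{3}+O(1)\mu$, where the $O(1)$ constants are independent of $t$ and $\mu$. Substituting gives
\begin{equation*}
\|\mathfrak{S}_{T}W^{\mu}_{\text{P}}(0)-W^{\mu}_{\text{P}}(T)\|_{Y}\leq O(1)\int_{0}^{T}\Big(\big(\text{T.V.}^{-}(\textbf{u}^{\mu}(t,\cdot))\big)^{3}+\mu\Big)\,\dd t.
\end{equation*}

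The main obstacle is producing a uniform-in-$t$ bound on $\text{T.V.}^{-}(\textbf{u}^{\mu}(t,\cdot))$ in terms only of the initial data. This requires the potential-flow analogue of Proposition \ref{Prop:Important2}: the Glimm-type functional $F_{\text{P}}$ constructed for the potential-flow inverse problem is non-increasing in $x$, and it dominates the total strength of shock wavefronts present in $\textbf{u}^{\mu}(t,\cdot)$. The initial value $F_{\text{P}}(0)$ is controlled by $\text{T.V.}^{-}(\textbf{u}^{\mu}(0,\cdot))+\text{T.V.}(p_{b}^{\mu})+|p_{b}(0+)-p^{\mu}(0,0-)|$: the first term accounts for shocks already present in the initial trace, the second for the 1-shocks produced at the boundary corners $x=h\Delta x$ whose strengths are controlled by the potential-flow version of Lemma \ref{lem:WCe} (each contributing at most $|K_{b}|\omega_{h}$), and the third for the corner Riemann problem solved at $(0,0)$. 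Hence
\begin{equation*}
\sup_{t\ge 0}\text{T.V.}^{-}(\textbf{u}^{\mu}(t,\cdot))\leq C\Big(\text{T.V.}^{-}(\textbf{u}^{\mu}(0,\cdot))+\text{T.V.}(p_{b}^{\mu})+|p_{b}(0+)-p^{\mu}(0,0-)|\Big)
\end{equation*}
with $C$ independent of $\mu$ and $t$.

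Finally, inserting this time-independent bound into the integral and evaluating $\int_{0}^{T}\dd t=T$ yields the desired estimate \eqref{eqn:0}. The proposal reduces the theorem to three ingredients already available: the Lipschitz-continuous dependence captured by Proposition \ref{prop:daoshudiyi}, the cubic local discrepancy estimate of Proposition \ref{prop:daoshudier} (which in turn rests on Propositions \ref{prop:neibu1}, \ref{prop:kao}, and \ref{prop:bianjie}), and the Glimm-functional uniform BV bound for the potential-flow approximate solution.
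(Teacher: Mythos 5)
Your proposal is correct and follows essentially the same route as the paper: apply Proposition \ref{prop:daoshudiyi} with $w=W^{\mu}_{\text{P}}$, bound the integrand via Proposition \ref{prop:daoshudier}, and then control $\text{T.V.}^{-}(\textbf{u}^{\mu}(x,\cdot))$ uniformly in $x$ by the initial and boundary data (the paper's inequality \eqref{eqn:dao1}, which it states with an additional harmless $+\mu$ inside the parentheses). Your extra paragraph justifying the uniform BV bound via the potential-flow Glimm functional is more detail than the paper supplies but is consistent with it.
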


\begin{proof}
From (\ref{eqn:zhongyaoyao}) and Proposition \ref{prop:daoshudier}--\ref{prop:daoshudiyi}, we have
\begin{equation}
\|\mathfrak{S}_{T}W^{\mu}_{\text{P}}(0)-W^{\mu}_{\text{P}}(T)\|_{Y}
\leq O(1)\int_{0}^{T}\Big(\big(\text{T.V.}^{-}(\textbf{u}^{\mu}(x,\cdot)) \big)^3+\mu\Big)\mathrm{d}x.\label{eqn:dao2}		
\end{equation}
Note that
\begin{equation}\label{eqn:dao1}
		\text{T.V.}^{-}(\textbf{u}^{\mu}(x,\cdot))
  \leq O(1)\big(\text{T.V.}(\textbf{u}^{\mu}(0,\cdot))+\text{T.V.}(p_b^{\mu}(\cdot))+|p_b(0+)-p^{\mu}(0,0-)|  +\mu\big),
	\end{equation}
	and the result follows from (\ref{eqn:dao2})--(\ref{eqn:dao1}).
\end{proof}

Finally, we have the following theorem.
\begin{theorem}	\label{thm:rangshijiechongmanai}
Assume that \eqref{JS:1}--\eqref{JS:2} and
\eqref{JS:3}--\eqref{JS:4} hold
and that $U_1=(\textbf{u}_1^\top,p_1,\rho_1)^\top$ is an entropy solution of \eqref{eqn:ConseForm} satisfying \eqref{eqn:Cauchyda}--\eqref{1.9a}.
Let $U_2=(\textbf{u}_2^\top,(\mathscr{R}(|\textbf{u}_2|))^{\gamma},\mathscr{R}(|\textbf{u}_2|))^\top$
be an entropy solution of \eqref{eqn:Bernouli}--\eqref{eqn:TwoDPotential} satisfying \eqref{eqn:WSLBoundaryConstruction}--\eqref{eqn:WSLPFBoundaryData}.
Moreover, let $b_1$ and $b_2$ be corresponding boundaries.
Then there exist $\varepsilon_{c}>0$ and $C>0$ such that,
when $\|(\widetilde{\textbf{u}}_{\infty},\tilde{p}_{b})\|_{ L^{\infty}\cap BV}<\varepsilon_{c}$,
\begin{equation*}
\|(b_1(x),U_1(x,\cdot),\iota_xp_b)-(b_2(x),U_2(x,\cdot),\iota_xp_b)\|_{Y}
 \leq Cx\,\|(\widetilde{\textbf{u}}_{\infty}, \tilde{p}_{b})\|_{ L^{\infty}\cap BV}^3
 \qquad\,\, \mbox{for any $x>0$}.
\end{equation*} 	
\end{theorem}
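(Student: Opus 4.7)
The plan is to exploit the Lipschitz semigroup $\mathfrak{S}_x$ constructed in Section \ref{sect-semi} as a single object accommodating both solutions, and apply the cubic-order comparison estimate of Theorem \ref{thm:cankaolast} to measure how far the potential-flow trajectory deviates from an Euler trajectory in the common phase space $\mathbb{D}^\varepsilon$. The critical starting observation is that hypotheses (\ref{JS:3})--(\ref{JS:4}) prescribe the Euler initial-boundary datum precisely as the lift of the potential-flow datum via $\Psi(0,0;\,\cdot\,) = (\cdot^\top,(\mathscr{R}(|\cdot|))^\gamma,\mathscr{R}(|\cdot|))^\top$: setting $W_1(0) := (b_1(0), U_\infty(\cdot+b_1(0)), p_b)$ and $W_2(0) := (b_2(0), \Psi(0,0;\textbf{u}_\infty(\cdot+b_2(0))), p_b)$, we have $W_1(0) = W_2(0)$ in $\mathbb{D}^\varepsilon$, so the entire difference at time $x$ is purely a \emph{propagation} defect.

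For each $\mu > 0$, I would introduce a $\mu$-approximate potential-flow solution $\textbf{u}_2^\mu$ with approximate boundary $b_2^\mu$ and piecewise-constant pressure $p_b^\mu$, and lift it into the Euler state space as
\begin{equation*}
W_P^\mu(x) := \big(b_2^\mu(x),\,\Psi(0,0;\textbf{u}_2^\mu(x,\cdot+b_2^\mu(x))),\,\iota_x p_b^\mu\big) \in \mathbb{D}^\varepsilon.
\end{equation*}
Theorem \ref{thm:cankaolast} provides, uniformly in $\mu$,
\begin{equation*}
\big\|\mathfrak{S}_x W_P^\mu(0) - W_P^\mu(x)\big\|_Y \le O(1)\, x \Big(\big(\mathrm{T.V.}^-(\textbf{u}_2^\mu(0,\cdot)) + \mathrm{T.V.}(p_b^\mu) + |p_b(0+) - p_b^\mu(0-)|\big)^3 + \mu\Big),
\end{equation*}
with the total-variation factor controlled by $C\,\|(\widetilde{\textbf{u}}_\infty,\tilde{p}_b)\|_{L^\infty\cap BV}$ via Propositions \ref{prop:EoNP}--\ref{Prop:Important2}. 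The Lipschitz bound for $\mathfrak{S}_x$ (Theorem \ref{Thm:Important3}) gives $\|\mathfrak{S}_x W_1(0) - \mathfrak{S}_x W_P^\mu(0)\|_Y \le L^\sharp \|W_1(0) - W_P^\mu(0)\|_Y$, and the right-hand side vanishes as $\mu \to 0$ because $W_P^\mu(0) \to W_2(0) = W_1(0)$. Convergence of the potential-flow approximations (Theorem \ref{thm:WeishiliuCunzai}) together with continuity of $\Psi$ yields $W_P^\mu(x) \to (b_2(x), U_2(x,\cdot+b_2(x)), \iota_x p_b)$ in $\|\cdot\|_Y$. Assembling these three pieces by the triangle inequality and using $\mathfrak{S}_x W_1(0) = (b_1(x), U_1(x,\cdot+b_1(x)), \iota_x p_b)$ delivers the claimed cubic estimate upon passing $\mu \to 0$.

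The hard part will be ensuring that the cubic right-hand side from Theorem \ref{thm:cankaolast}, originally stated for approximate solutions, truly survives the limit $\mu \to 0$ with the sharp quantity $\|(\widetilde{\textbf{u}}_\infty,\tilde{p}_b)\|_{L^\infty\cap BV}^3$ rather than a softer functional; this rests on the uniform Glimm-type BV control and the non-physical wave estimate (Proposition \ref{prop:EoNP}) that guarantees $\mathrm{T.V.}^-(\textbf{u}_2^\mu(0,\cdot)) + \mathrm{T.V.}(p_b^\mu) \le C \|(\widetilde{\textbf{u}}_\infty,\tilde{p}_b)\|_{L^\infty\cap BV}$ independent of $\mu$. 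A subsidiary technical point is verifying that both $W_P^\mu(0)$ and $W_P^\mu(x)$ lie in $\mathbb{D}^\varepsilon$ uniformly for small $\varepsilon_c$, which uses the no-strong-shock hypothesis of this theorem so that the lifted trajectory remains within the single component $O_{\varepsilon_0}(U_+)$ of the Euler phase space, making the weak-wave reduction in Propositions \ref{prop:zhongyao1}--\ref{prop:zhongyao3} applicable throughout.
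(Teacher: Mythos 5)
Your proposal is correct and follows essentially the same route as the paper: the paper's own (very terse) proof likewise identifies $U_1(x,\cdot+b_1(x))=\mathfrak{R}_xU_\infty$ with the semigroup trajectory of the common initial datum, applies the cubic estimate \eqref{eqn:0} of Theorem \ref{thm:cankaolast} to the lifted potential-flow approximations, and lets $\mu\to0$ using the Lipschitz continuity of $\mathfrak{S}_x$ and the uniform BV bound \eqref{eqn:dao1}. Your write-up in fact supplies the triangle-inequality decomposition and the uniformity checks that the paper delegates to the cited references.
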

\begin{proof}
Using $U_{1}(x,\cdot +b_1(x))=\mathfrak{R}_{x}U_\infty$ and the properties of $\mu$-approximate solutions,
and then letting $\mu\to0+$ in (\ref{eqn:0}), we obtain the result
by carrying out a similar argument as in \cite{Bressan1995,Bressan2000,Bressan1999,Zhang2007Z}.
\end{proof}

\appendix
\section{Proofs of Proposition \ref{prop:EoNP} and Theorem \ref{thm:EaS}}
In this appendix, we give the proofs of Proposition \ref{prop:EoNP} and Theorem \ref{thm:EaS}, respectively.

\subsection{Proof of Proposition \ref{prop:EoNP}}\label{appen:A1}
We first show that the magnitude of each non-physical wave satisfies
\begin{align*}
	\epsilon=O(1)\nu,\qquad \epsilon\in\mathcal{NP}.
\end{align*}
Indeed, a new non-physical wave $\epsilon$ comes out at $x=t$,
when a weak physical wave $\alpha$ interact with another physical wave $\beta$ with $|\alpha||\beta|<\nu$,
or a weak 1-wave $\alpha$ collides the strong shock with $|\alpha|<\nu$.
In both cases, the interaction estimates show that $\epsilon=O(1)\nu$.
Consider the quantity
\begin{align*}
L_{\epsilon}(x):=\sum_{\alpha\in\mathcal{A}(\epsilon)}|\alpha|,
\end{align*}
where $\mathcal{A}(\epsilon)$ is the set of wavefronts which approaches $\epsilon$.
Suppose that the interaction occurs at $x=t$.

\smallskip
\noindent
 \textbf{Case 1}: The interaction does not involve $\epsilon$. From the interaction estimates, we conclude
	\begin{equation}
		\Delta \epsilon(t)=0,\quad\,\, \Delta L_{\epsilon}(t)+\mathcal{K}\Delta Q(t)\leq0. \label{eqn:non-physicalEsti1}
	\end{equation}
\textbf{Case 2}: The non-physical wave $\epsilon$ collides another weak wave $\alpha$.
Again the interaction estimates imply
\begin{equation}
\Delta L_{\epsilon}(t)=-|\alpha|,\quad\,\, \Delta Q(t)<0,\quad\,\,
     \Delta\epsilon(t)\leq O(1)|\epsilon(t-)||\alpha|.  \label{eqn:non-physicalEsti2}
\end{equation}

 By (\ref{eqn:non-physicalEsti1})--(\ref{eqn:non-physicalEsti2}), the map:
	\begin{align*}
		x\mapsto\epsilon(x)\,\exp\{C(L_{\epsilon}(x)+\mathcal{K}Q(x))\}
	\end{align*}
	is non-increasing in $x$, where $M_{1}>|O(1)|$ is a constant.
Therefore, for $x>x_0$, we have
	\begin{align}
			\epsilon(x)&\leq\epsilon(x_0)\,\exp\{M_{1}(L_{\epsilon}(x_0)+\mathcal{K}Q(x_0))\}\nonumber\\
			&\leq O(1)\nu\,\exp\{M_{1}(L_{\epsilon}(0)+\mathcal{K}Q(0))\}\nonumber\\
			&\leq O(1)e^{M_{1}\varepsilon}\nu\nonumber\\
			&\leq M_{2}\nu,
       \label{eqn:non-physicalEsti3}
	\end{align}
	where $\varepsilon>0$ is the constant given in Proposition \ref{prop:befIm}.

 Next, we assign each wavefront in $U^{\mu,\Delta x}$ an integer number,
 counting the number of interactions that occurred to give birth to such a front.
 To be specific, we define the generation order $Od(\alpha)$ of a front $\alpha$ inductively as follows:

 \smallskip
 \indent$\bullet$ Fronts generated from $x=0$ and on the boundary (on points $(h\Delta x, b^{\mu,\Delta x}(h\Delta x))$
 for $h\in\mathbb{N}_+$) have generation order $m=1$.\\
	\indent$\bullet$ The boundary and the strong shock are always attached with generation order $m=1$.\\
	\indent$\bullet$ When two incoming fronts (including the boundary and the strong shock) of the families $i_1, i_2\in\{1,\cdots,5\}$
and of generation orders $m_1$ and $m_2$ interact. Then the outgoing fronts have a generation order
	\begin{equation}
		m=\left\{\begin{aligned}
			&m_1, &&i=i_1,\, i_1\neq i_2,\\
			&m_2, &&i=i_2,\, i_1\neq i_2,\\
			&\max\{m_1, m_2\}+1,&&i\neq i_1,\, i\neq i_2,\\
			&\min\{m_1, m_2\},&&i=i_1=i_2,
		\end{aligned}
		\right.
		\label{eqn:GeneraltionOder}
	\end{equation}
	where $i\in\{1,\cdots,5\}$ indicates the $i$-th family of the outing coming wave-fronts.\par
	For $m\geq1$, let
	\begin{align*}
		L_m(t)=\sum_{\substack{\alpha\text{ cross $x=t$,}\\Od(\alpha)\geq m}}b_{\alpha},
	\end{align*}
	and let
	\begin{align*}
		L_{i,m}(t)=\sum_{\substack{\alpha\text{ is of family $i$ }\\ \text{and cross }x=t,\\Od(\alpha)\geq m}}b_{\alpha}\qquad\,\,\mbox{for $i=1,\,2,\,3,\,4$}.
	\end{align*}
	Moreover, we write $L_0(t)=L(t)$. For $m\geq1$, define
	\begin{align*}
		&Q_m(t)=K_s\sum_{\substack{\alpha\in\mathcal{A}_{s},\\Od(\alpha)\geq m }}|b_{\alpha}|
		+\sum_{	\substack{\beta\in\mathcal{A}_{b},\\Od(\beta)\geq m }}|b_{\beta}|+K\sum_{\substack{(\alpha,\beta)\in\mathcal{A},\\\max\{Od(\alpha),\,Od(\beta)\}\geq m }}|b_{\alpha}||b_{\beta}|,
	\end{align*}
and $Q_0(t)=Q(t)$.
In addition, when $m\geq1$, let $I_m$ be the set of $x$-coordinates at which two waves of order $m_\alpha$
and $m_{\beta}$ interact, with $\max\{m_{\alpha}, m_{\beta}\}=m$,
and let $I_0=\{h\Delta x\,:\, h\in\mathbb{N}\}$.
Similar to the proof in Proposition \ref{prop:befIm}, tracking the generation order of the wavefronts
in a subsequent of interactions, we obtain
	\begin{equation}
		\begin{aligned}
			&\Delta L_m(x)=0,\quad &&  x\in I_{0}\cup\cdots\cup I_{m-2},\ m\geq2,\\
			&\Delta L_m(x)+\mathcal{K}\Delta Q_{m-1}(x)\leq 0,\,&& x\in I_{m-1}\cup I_{m}\cup I_{m+1}\cdots,\ m\geq1,\\
			&\Delta Q_{m}(x)+\mathcal{K}C\Delta Q(x)L_m(x-)\leq 0,\,&& x\in I_{0}\cup\cdots\cup I_{m-2}, \ m\geq2,\\
			&\Delta Q_{m}(x)+\mathcal{K}C\Delta Q_{m-1}(x)L(x-)\leq 0,\,&& x\in I_{m-1},\ m\geq1,\\
			&\Delta Q_{m}(x)\leq 0,\,&& x\in I_{m}\cup I_{m+1}\cdots,\ m\geq0.
		\end{aligned}
		\label{eqn:DecreaseGlimFun}
	\end{equation}
	Estimates (\ref{eqn:DecreaseGlimFun}) imply
	\begin{equation}
		\begin{aligned}
			L_m(x)&\leq \mathcal{K}\sum_{0<t\leq x}{\left(\Delta Q_{m-1}(t)\right)}^-,\\
			Q_{m}(x)&\leq\sum_{0<t\leq x}\left(\Delta Q_{m}(t)\right)^+\\
			&\leq \mathcal{K}C\sum_{0<t\leq x}\left(\Delta Q(t)\right)^-\,\sup_{x}L_m(x)+\mathcal{K}C\sum_{0<t\leq x}(\Delta Q_{m-1}(t))^-\,\sup_{x}L(x),
		\end{aligned}
		\label{eqn:EstimateofLQ1}
	\end{equation}
	both of which are valid for every $x>0$ and $m\geq1$. Furthermore, we have
	\begin{align*}
		0\leq Q_{m}(x)=\sum_{0<t\leq x}\big\{\left(\Delta Q_{m}(t)\right)^+-\left(\Delta Q_{m}(t)\right)^-\big\}.
	\end{align*}
	Since
 $F(x):=L(x)+\mathcal{K}Q(x)+|U_{\diamond}(x)-U_{\infty}^{-}|+|U^{\diamond}(x)-U_{\infty}^{+}|$ is non-increasing, we have
	\begin{equation}
		L_m(x)\leq L(x)\leq F(x)\leq F(0),\qquad
		\sum_{0<t< \infty}\left(\Delta Q(t)\right)^-\leq Q(0)\leq F(0).\label{eqn:EstimateofLQ2}
	\end{equation}
	
	Recalling
	\begin{align*}
		\widetilde{Q}_m:=
  \sum_{x>0}\left(\Delta Q_m(x)\right)^+,\qquad
		\widetilde{L}_m:=
  \sup_{x>0}L_m(x),
	\end{align*}
from (\ref{eqn:GeneraltionOder})--(\ref{eqn:EstimateofLQ2}),
we deduce the sequence of the inequalities (valid for $m\geq1$):
	\begin{equation}
		\left\{\begin{aligned}
			\widetilde{L}_m&\leq \mathcal{K}\widetilde{Q}_{m-1},\\
			\widetilde{Q}_m&\leq  C\mathcal{K}F(0)\widetilde{L}_m+ C\mathcal{K}\widetilde{Q}_{m-1}F(0)\leq C(\mathcal{K}^2+\mathcal{K})F(0)\widetilde{Q}_{m-1}.
		\end{aligned}
		\right.\label{eqn:EstimateofLQfinal}
	\end{equation}
For $F(0)$ sufficiently small, we obtain
	\begin{equation}
		\eta:=
  C(\mathcal{K}^2+\mathcal{K})F(0)<1.\label{eqn:SmallInitialGlimFun}
	\end{equation}
In this case, for every $x>0$ and $m\geq1$, by induction,
(\ref{eqn:EstimateofLQfinal})--(\ref{eqn:SmallInitialGlimFun}) yield
	\begin{equation}
		Q_m(x)\leq\widetilde{Q}_m\leq\varepsilon\eta^m,\qquad
		L_m(x)\leq\widetilde{L}_m\leq \mathcal{K}\varepsilon\eta^{m-1}. \label{eqn:FinalBoundedEsti}
	\end{equation}
Meanwhile, the number of wavefronts of the $m$-th generation can be counted as follows:
Since the wavefronts of generation $1$ are generated at $x=0$, as well as from the change of the pressure distribution,
the number of the first-order fronts is less that $N_{\mu}$.
From each interaction between the fronts of first-order,
recalling that each of the rarefaction fronts has size $<\delta$,
the second-order front is generated and its number is less than $O(1)\delta^{-1}$.
Therefore, the number of second-order wavefronts is $O(1)(N_{\mu})^2\,\delta^{-1}$.
Inductively, it is clear that the number of fronts of order $\leq m$
is bounded by some polynomial function of $(N_{\mu}, \delta^{-1})$, say
	\begin{equation}
	 P_{m}(N_{\mu},\delta^{-1}). \label{eqn:FinalConofEsti}
	\end{equation}
The particular form of $P_m$ is not interested here.

Next, we establish the total strength estimates of non-physical waves.
We track of the fronts of generation order $>m$ and $\leq m$, separately.
Using (\ref{eqn:non-physicalEsti3}) and (\ref{eqn:FinalBoundedEsti})--(\ref{eqn:FinalConofEsti}),
we obtain
		\begin{align}
			&\sum_{\epsilon\in\mathcal{NP},Od(\epsilon)>m}\epsilon+\sum_{\epsilon\in\mathcal{NP},Od(\epsilon)\leq m}\epsilon\nonumber\\
			&\,\leq [\text{total strength of all fronts of order}>m]\nonumber\\
			&\,\,\quad +[\text{number of all fronts of order}\leq m]\times [\text{maximum strength of non physical fronts}]\nonumber\\
			&\,\leq \mathcal{K}\varepsilon\eta^{m}+M_2\nu P_{m}(N_{\mu},\delta^{-1}).\label{eqn:FiEstiNonPhys}
		\end{align}
For any $\mu>0$, recalling $\eta<1$, take $m$ large enough such that $\mathcal{K}\varepsilon\eta^{m}<\frac{\mu}{2}$.
 Then we choose $\nu$ small enough so that $M_2\nu P_{m}(N_{\mu},\delta^{-1})<\mu$.
 For all $x>0$, we conclude
	\begin{align*}
		&\sum_{\epsilon\in\mathcal{NP},Od(\epsilon)>m}\epsilon+\sum_{\epsilon\in\mathcal{NP},Od(\epsilon)\leq m}\epsilon<\mu,
	\end{align*}
	This completes the proof. {\blue \hfill{$\square$}}

\subsection{Proof of Theorem \ref{thm:EaS}}\label{appen:A2}
The proof is completed as follows:
Results	(i)--(iv) are deduced easily from Propositions \ref{prop:Important2}--\ref{Prop:Important2},
together with the Arzel\`{a}-Ascoli Theorem.
	
We now prove (v). Firstly, from the previous analysis, we have
		\begin{align}
			&\int_{-\infty}^{0}|U^{\mu,\Delta x}(x_1,y+b^{\mu,\Delta x}(x))-U^{\mu,\Delta x}(x_2,y+b^{\mu,\Delta x}(x))|\,\dd y\nonumber\\[1mm]
			&\leq O(1)|x_1-x_2|\,[\text{maximum speed}]\times [\text{total strength of wave fronts}]\nonumber\\
			&\leq L|x_1-x_2|,\label{eqn:Lipcon}
		\end{align}
with $L$ being a constant independent of $\mu$.
Then there exists a subsequence (still denoted) $\{U^{\mu,\Delta x}\}$
that converges to a limit function $U\in L_{\text{\rm loc}}^1\big((-\infty,b(x));\mathbb{R}^4\big)$,
guaranteed by the Helly Theorem.

To show that $U$ is a weak solution, it suffices to prove that,
for every $\phi\in C_{\rm c}^1(\mathbb{R}^2)$ and $\psi\in C_{\rm c}^1(\Omega)$,
	\begin{equation}
		\begin{aligned}
			&\int_0^{\infty}\int_{-\infty}^{b(x)}
			(\phi_x\rho u+\phi_y\rho v)\,\dd y\dd x
			+\int_{-\infty}^0\phi(0,y){u}_{\infty}{\rho}_{\infty}\,\dd y=0, \\[1mm]
			&\int_0^{\infty}\int_{-\infty}^{b(x)}
			\big(\psi_x(\rho u^2+p)+\psi_y\rho uv\big)\,\dd y\dd x
			+\int_{-\infty}^0\psi(0,y)\big({\rho}_{\infty}{u}_{\infty}^2+{p}_{\infty}\big)\,\dd y=0,\\[1mm]
			&\int_0^{\infty}\int_{-\infty}^{b(x)}
			\big(\psi_x\rho uv+\psi_y(\rho v^2+p)\big)\,\dd y\dd x
			+\int_{-\infty}^0\psi(0,y){\rho}_{\infty}{u}_{\infty}{v}_{\infty}\,\dd y=0,\\[1mm]
			&\int_0^{\infty}\int_{-\infty}^{b(x)}
			\Big(\psi_x\rho u\big(\dfrac{u^2+v^2}{2}+\frac{\gamma p}{(\gamma-1)\rho}\big)
                +\psi_y\rho v\big(\dfrac{u^2+v^2}{2}+\frac{\gamma p}{(\gamma-1)\rho}\big)\Big)\dd y\dd x\\[1mm]
			&\quad\, +\int_{-\infty}^0\psi(0,y)\big(\dfrac{{u}_{\infty}^2}{2}+\frac{\gamma {p}_{\infty}}{(\gamma-1){\rho}_{\infty}}\big)\,\dd y=0.
		\end{aligned}
		\label{eqn:WeakSoluDef}
	\end{equation}

We only give a proof of the first equality in (\ref{eqn:WeakSoluDef}),
since the remaining part can be obtained analogously.
	
Since $\phi$ is compactly supported, it is required to verify
	\begin{equation}
		\begin{aligned}
			\int_0^{X}\int_{-\infty}^{b(x)}
			(\phi_x\rho u+\phi_y\rho v)\,\dd y\dd x
			+\int_{-\infty}^0\phi(0,y)u_{\infty}\rho_{\infty}\,\dd y=0
		\end{aligned}
		\label{eqn:WeakSoluCheck}
	\end{equation}
	for some $X>0$. To calculate the term
	\begin{equation}
		\int_0^{X}\int_{-\infty}^{b^{\mu,\Delta x}(x)}
		\big(\phi_x\rho^{\mu,\Delta x} u^{\mu,\Delta x}+\phi_y\rho^{\mu,\Delta x} v^{\mu,\Delta x}\big)\,\dd y\dd x,\label{eqn:WeakSoluCheck1}
	\end{equation}
we fix $\mu$ and assume that, on any level set $x=t$, $y_\alpha(t)$ is a jump for
$\alpha\in\mathcal{S}\cup\mathcal{R}\cup\mathcal{NP}$.
Let
	\begin{align*}
		\Delta \rho^{\mu,\Delta x} u^{\mu,\Delta x}:=
  \rho^{\mu,\Delta x} u^{\mu,\Delta x}(t,y_{\alpha}+)-\rho^{\mu,\Delta x} u^{\mu,\Delta x}(t,y_{\alpha}-),\\
		\Delta \rho^{\mu,\Delta x} v^{\mu,\Delta x}:=
  \rho^{\mu,\Delta x} v^{\mu,\Delta x}(t,y_{\alpha}+)-\rho^{\mu,\Delta x} v^{\mu,\Delta x}(t,y_{\alpha}-).
	\end{align*}
Observe that the polygonal lines $y=y_{\alpha}(x)$ divide stripe $[0,X]\times\mathbb{R}$
into regions $D_j$ on which $U^{\mu,\Delta x}$ is constant. Define
	\begin{align*}
		\Phi_s:=
  (\phi\rho^{\mu,\Delta x} u^{\mu,\Delta x},\phi\rho^{\mu,\Delta x} v^{\mu,\Delta x}).
	\end{align*}
By the divergent theorem, (\ref{eqn:WeakSoluCheck1}) can be written as
	\begin{equation}
		\sum_j\iint_{D_j}\text{\rm div}\Phi_s(x,y)\,\dd y\dd x=\sum_j\int_{\partial D_j}\Phi_s\cdot\textbf{n}\,\dd s,\label{eqn:DivThm}
	\end{equation}
where $\partial D_j$ is the boundary and $\textbf{n}$ is the unit outer normal.
Since, on the polygonal line $y=y_\alpha(x)$, $\textbf{n}\dd s=\pm(\dot{y}_\alpha,-1)\dd x$,
while $\phi(x,y)=0$ on line $x=X$.
Therefore, (\ref{eqn:DivThm}) is computed by
		\begin{align}
			&\int_0^X\sum_\alpha
			\big(
			\dot{y}_\alpha(x)\,\Delta \rho^{\mu,\Delta x} u^{\mu,\Delta x}(x,y_{\alpha})
			-\Delta \rho^{\mu,\Delta x} v^{\mu,\Delta x}(x,y_{\alpha})
			\big)\,\phi(x,y_\alpha(x))\,\dd y\dd x\nonumber\\
			&+\int_0^X\big(\dot{b}^{\mu,\Delta x}(x)\rho^{\mu,\Delta x} u^{\mu,\Delta x}(x,b^{\mu,\Delta x}(x))
            -\rho^{\mu,\Delta x} v^{\mu,\Delta x}(x,b^{\mu,\Delta x}(x))\big)\,\phi(x,b^{\mu,\Delta x}(x))\,\dd x\nonumber\\
			&-\int_{-\infty}^{0} \rho_{\infty}^{\mu,\Delta x} u_{\infty}^{\mu,\Delta x}(y)\,\phi(0,y)\,\dd y.\label{eqn:WeakSoluCheck2}
		\end{align}
	If the discontinuity $\alpha$ is physical, then
	\begin{align*}
		\dot{y}_\alpha(x)\,\Delta \rho^{\mu,\Delta x} u^{\mu,\Delta x}(x,y_{\alpha})
		-\Delta \rho^{\mu,\Delta x} v^{\mu,\Delta x}(x,y_{\alpha})
		=O(1)\mu\,|\alpha|.
	\end{align*}
	On the other hand, if the wave at $y_\alpha$ is non-physical, then
	\begin{align*}
		\dot{y}_\alpha(x)\,\Delta \rho^{\mu,\Delta x} u^{\mu,\Delta x}(x,y_{\alpha})
		-\Delta \rho^{\mu,\Delta x} v^{\mu,\Delta x}(x,y_{\alpha})
		=O(1)\,|\alpha|.
	\end{align*}
	Moreover, on the approximate boundary, from the construction, we obtain
	\begin{align*}
		\dot{b}^{\mu,\Delta x}(x)\rho^{\mu,\Delta x} u^{\mu,\Delta x}(x,b^{\mu,\Delta x}(x))-\rho^{\mu,\Delta x} v^{\mu,\Delta x}(x,b^{\mu,\Delta x}(x))=0.
	\end{align*}
	Therefore, by Proposition \ref{prop:EoNP}, we have
		\begin{align}
			\limsup_{\mu\rightarrow0}&\Big|\sum_{\alpha\in\mathcal{S}\cup\mathcal{R}\cup\mathcal{NP}}\big(
			\dot{y}_\alpha(x)\,\Delta \rho^{\mu,\Delta x} u^{\mu,\Delta x}(x,y_{\alpha})
			-\Delta \rho^{\mu,\Delta x} v^{\mu,\Delta x}(x,y_{\alpha})
			\big)\,\phi(x,y_\alpha(x))\Big|\nonumber\\
			&\leq
			\max_{x,y}|\phi(x,y)|
			\,\limsup_{\mu\rightarrow0}
			\Big\{\sum_{\alpha\in\mathcal{S}\cup\mathcal{R}}O(1)\,\mu|\alpha|
			+\sum_{\mathcal{NP}}O(1)\,|\alpha| \Big\}\nonumber\\
			&=0.\label{eqn:WeakSoluChecklim}
		\end{align}
Moreover, from (i), (iii), and the $ L_{\text{\rm loc}}^1$ convergence of $\{U^{\mu,\Delta x}\}$,
the dominated convergence theorem implies
\begin{align}
&\limsup_{\substack{\Delta x\rightarrow0,\\\mu\rightarrow0 }}
\int_0^X\big(\dot{b}^{\mu,\Delta x}(x)\rho^{\mu,\Delta x} u^{\mu,\Delta x}(x,b^{\mu,\Delta x}(x))
-\rho^{\mu,\Delta x} v^{\mu,\Delta x}(x,b^{\mu,\Delta x}(x))\big)\,\phi(x,b^{\mu,\Delta x}(x))\dd x\nonumber\\
&=\int_0^X\big(\dot{b}(x)\rho u(x,b(x))-\rho v(x,b(x))\big)\,\phi(x,b(x))\dd x.\label{eqn:WeakSoluChecklim1}
\end{align}
		
Again, the Dominated Convergence Theorem yields
		\begin{align}
			&\limsup_{\substack{\Delta x\rightarrow0,\\\mu\rightarrow0 }}\int_0^X\int_{-\infty}^{b(x)}
  \big(\phi_x\, \rho^{\mu,\Delta x} u^{\mu,\Delta x}+\phi_y\, \rho^{\mu,\Delta x} v^{\mu,\Delta x}\big)\dd y\dd x\nonumber\\
			&=\int_0^\infty\int_{-\infty}^{b(x)}\left(\phi_x\rho u+\phi_y\rho v\right)\dd y\dd x.\label{eqn:WeakSoluChecklim2}
		\end{align}
		
	Finally, noting (\ref{eqn:WeakSoluCheck2})--(\ref{eqn:WeakSoluChecklim2}), we conclude
		\begin{align}
			&\int_0^\infty\int_{-\infty}^{b(x)}\big(\phi_x\rho u+\phi_y\rho v\big)\dd y\dd x+\int_{-\infty}^{0}\phi(0,y)u_{\infty}\rho_{\infty}\dd y\nonumber\\
			&=\limsup_{\substack{\Delta x\rightarrow0,\\\mu\rightarrow0 }}\int_0^X\int_{-\infty}^{b^{\mu,\Delta x}(x)}
\big(\phi_x\, \rho^{\mu,\Delta x} u^{\mu,\Delta x}+\phi_y\, \rho^{\mu,\Delta x} v^{\mu,\Delta x}\big)\dd y\dd x\nonumber\\
			&\quad +\limsup_{\Delta x\rightarrow0}\int_{-\infty}^{0}\phi(0,y)\, u_{\infty}^{\mu,\Delta x}\rho_{\infty}^{\mu,\Delta x}\,\dd y.\label{eqn:WeakSoluChecklimfin}
		\end{align}
	By (\ref{eqn:WeakSoluCheck2})--(\ref{eqn:WeakSoluChecklim}), we obtain (\ref{eqn:WeakSoluCheck}).

\bigskip
\medskip
\noindent{\bf Acknowledgements:}
This work was initiated when Yun Pu studied at the University of Oxford as a recognized DPhil student
through the Joint Training Ph.D. Program between the University of Oxford and Fudan University -- He would like to express his sincere thanks
to both the home and the host universities for providing him with such a great opportunity.
The research of Gui-Qiang G. Chen is partially supported
by the UK Engineering and Physical Sciences Research Council Awards
EP/L015811/1, EP/V008854/1, and EP/V051121/1.
The research of Yun Pu was partially supported by the Joint Training Ph.D. Program of China Scholarship Council, No. 202006100104.
The research of Yongqian Zhang is supported in part by the NSFC Project 12271507.

\bigskip
\noindent
{\bf Conflict of Interest}. The authors declare that they have no conflict of interest. The authors also
declare that this manuscript has not been previously published, and will not be submitted elsewhere
before your decision.

\medskip
\noindent
{\bf Data Availability}: Data sharing is not applicable to this article as no datasets were generated or
analyzed during the current study.

\bigskip

\end{document}